\documentclass[11pt,letterpaper,leqno]{article}
\usepackage[T1]{fontenc}
\usepackage{times}
\usepackage[scaled=.9]{helvet}
\usepackage{textcomp}
\usepackage[margin=1in]{geometry}
\usepackage{xcolor}
\usepackage{authblk}
\usepackage{microtype}
\usepackage{etoolbox}
\PassOptionsToPackage{cmex10}{amsmath}
\RequirePackage{amsthm,amsmath,amsfonts,amssymb,mathtools}
\RequirePackage[numbers]{natbib}
\RequirePackage{aliascnt}
\RequirePackage{enumitem,booktabs,array}
\RequirePackage[colorlinks,citecolor=blue,urlcolor=blue,linkcolor=blue]{hyperref}
\RequirePackage[nameinlink,noabbrev]{cleveref}

\usepackage{mathrsfs}

\numberwithin{equation}{section}
\allowdisplaybreaks[3]
\theoremstyle{plain}
\newtheorem{theorem}{Theorem}[section]
\newaliascnt{proposition}{theorem}
\newtheorem{proposition}[proposition]{Proposition}
\aliascntresetthe{proposition}
\newaliascnt{lemma}{theorem}
\newtheorem{lemma}[lemma]{Lemma}
\aliascntresetthe{lemma}
\newaliascnt{corollary}{theorem}
\newtheorem{corollary}[corollary]{Corollary}
\aliascntresetthe{corollary}
\newaliascnt{claim}{theorem}

\aliascntresetthe{claim}
\theoremstyle{definition}
\newaliascnt{definition}{theorem}

\aliascntresetthe{definition}
\newaliascnt{assumption}{theorem}

\aliascntresetthe{assumption}
\newaliascnt{remark}{theorem}
\newtheorem{remark}[remark]{Remark}
\aliascntresetthe{remark}

\newcommand{\E}{\mathbb{E}}
\renewcommand{\P}{\mathbb{P}}
\newcommand{\R}{\mathbb{R}}

\hypersetup{linktoc=page}
\makeatletter
\patchcmd{\l@section}{1.0em \@plus\p@}{0.5em \@plus\p@}{}%
  {\PackageError{arxiv-layout}{Could not adjust contents spacing}{}}
\makeatother
\date{}
\title{Berry--Esseen bounds for products of random matrices under a \texorpdfstring{$2+\delta$}{2+delta} polynomial moment}
\hypersetup{pdftitle={Berry--Esseen bounds for products of random matrices under a 2+delta polynomial moment},pdfauthor={Zhi-Jun Cai, Qi-Man Shao, Hui Xiao, Zhuo-Song Zhang}}

\author[1]{Zhi-Jun Cai}
\author[2]{Qi-Man Shao}
\author[3]{Hui Xiao}
\author[2]{Zhuo-Song Zhang}
\affil[1]{Department of Statistics and Data Science,
The Chinese University of Hong Kong, Hong Kong, China\\
\href{mailto:zhijuncai@cuhk.edu.hk}
{\textnormal{zhijuncai@cuhk.edu.hk}}}
\affil[2]{Department of Statistics and Data Science and
Shenzhen International Center for Mathematics,
Southern University of Science and Technology, Shenzhen 518055, China\\
\href{mailto:shaoqm@sustech.edu.cn}{\textnormal{shaoqm@sustech.edu.cn}}, \href{mailto:zhangzs3@sustech.edu.cn}{\textnormal{zhangzs3@sustech.edu.cn}}}
\affil[3]{State Key Laboratory of Mathematical Sciences,
Academy of Mathematics and Systems Science,
Chinese Academy of Sciences,  Beijing 100190, China\\
\href{mailto:xiaohui@amss.ac.cn}{\textnormal{xiaohui@amss.ac.cn}}}
\begin{document}
\maketitle
\begin{abstract}
Let $(g_k)_{k \geq 1}$ be a sequence of independent and identically distributed matrices with law $\mu$ on $\mathrm{GL}(d,\mathbb{R})$, where $d\geq2$ is an integer. Suppose that the closed semigroup generated by the support of $\mu$ is strongly irreducible and proximal and that the polynomial moment of order $2+\delta$ is finite, where $0<\delta\leq 1$. We establish a Berry--Esseen bound of order $n^{-\delta/2}$ for $\log \| g_n \cdots g_1 v\|$, where $v$ is a nonzero vector in $\mathbb R^d$ and $\|\cdot\|$ is an arbitrary norm on $\mathbb R^d$. As an application, we also obtain a Berry--Esseen bound of order $n^{-\delta/2}$ for $\log \| g_n \cdots g_1\|$, where $\|\cdot\|$ is any matrix norm. The proof combines an $m$-dependent approximation with Stein's method. The argument uses identities for exchangeable pairs constructed by resampling individual matrices, together with a concentration inequality for sums of the approximating variables.
\end{abstract}

\noindent\textbf{MSC:} Primary 60F05, 60B20; Secondary 60G42, 60J05.

\smallskip
\noindent\textbf{Keywords:} Products of random matrices;
Berry--Esseen bound; Stein's method; Martingale approximation;
Concentration function.
\tableofcontents

\section{Introduction}
\label{sec-introduction}

We study Berry--Esseen bounds for products of independent and
identically distributed random matrices under polynomial logarithmic
moment conditions. Let $G = \mathrm{GL}(d,\mathbb{R})$ be the group of all invertible
$d\times d$ real matrices, where $d\geq 2$ is an integer.  Let $(g_k)_{k\geq1}$ be a sequence of independent random
matrices with common law $\mu$ on $G$. Fix a Euclidean norm
$\|\cdot\|$ on $\mathbb{R}^d$ and denote its induced operator norm by
the same symbol. The quantities of interest are the logarithmic
growth along a deterministic direction, $\log\|g_n \cdots g_1 v\|$ for
$\|v\|=1$, and the logarithm of the operator norm, $\log\|g_n \cdots g_1\|$.
For $g\in G$, write $N(g)=\max\{\|g\|,\|g^{-1}\|\}$. We say that
$\mu$ has a finite polynomial moment of order $q$ if
\begin{align}\label{eq-log-moment}
\int_{G}[\log N(g)]^q\,\mu(\mathrm{d}g)<\infty.
\end{align}
Under a finite first moment condition, Furstenberg and Kesten \cite{FK1960} established the following strong law of large numbers: 
$$\frac{1}{n} \log \|g_n \cdots g_1\| \xrightarrow[n\to\infty]{} \lambda_\mu \quad \text{almost surely}, $$ where
$\lambda_\mu=\lim_{n\to\infty} \frac{1}{n} \mathbb{E}\log\|g_n \cdots g_1\|$ is called the
top Lyapunov exponent of $\mu$
and it is independent of the choice of the norm $\|\cdot \|$. Throughout this paper, the closed semigroup
generated by the support of $\mu$ is assumed to be strongly
irreducible and proximal; the precise definitions will be given in
 \Cref{sec-main-results}.  Under these assumptions, Benoist and Quint \cite{BQ2016}
proved that, if $\mu$ has a finite second moment,
then there exists a constant $s>0$ such that
\begin{align*}
	\sup_{\|v\|=1}\sup_{z\in\mathbb{R}}
	\Big|
	\mathbb{P}\Big(
	\frac{\log\|g_n \cdots g_1 v\|-n\lambda_\mu}{s\sqrt{n}}\leq z
	\Big)-\Phi(z)
\Big|
	& \xrightarrow[n\to\infty]{} 0,\\
	\sup_{z\in\mathbb{R}}
\Big|
	\mathbb{P}\Big(
	\frac{\log\|g_n \cdots g_1 \|-n\lambda_\mu}{s\sqrt{n}}\leq z
	\Big)-\Phi(z)
	\Big|
	&\xrightarrow[n\to\infty]{} 0,
\end{align*}
where $\Phi$ is the standard normal distribution function on $\mathbb R$.
A natural quantitative question is how fast these errors tend
to zero under stronger polynomial moment conditions.

Under an exponential moment condition on $\mu$,
namely $\int_{G} N(g)^{\tau}\,\mu(\mathrm{d}g)<\infty$ for some constant $\tau>0$,
Le Page \cite{LePage1982} used spectral properties of the
transfer operators associated with the projective action
to establish the Berry--Esseen rate $O(n^{-1/2})$ for
$\log\|g_n \cdots g_1 v\|$, uniformly over unit vectors $v$.
In the same setting, Xiao, Grama and Liu \cite{XGL2022, XGL2025b}
extended this bound to expectations weighted by H\"older
continuous functions of the terminal projective state,
and also obtained an Edgeworth expansion and precise
moderate deviation expansions.
For the logarithm of the operator norm, the same authors
\cite{XGL2021} proved a bound of order $n^{-1/2}\log n$.

With polynomial moment assumptions, Cuny,
Dedecker and Jan \cite{CDJ2017} developed weighted coupling
estimates and obtained quantitative limit theorems,
including rates in Wasserstein distances.
For a finite polynomial moment of order $q\in(2,3]$,
their Wasserstein bounds imply Kolmogorov bounds of order
$n^{-(q-2)/(2(q-1))}$ when $2<q<3$ and
$n^{-1/4}\sqrt{\log n}$ when $q=3$, uniformly over
deterministic initial directions.
The rate $O(n^{-1/2})$ was obtained by Jirak
\cite{Jirak2023} for the stationary norm cocycle under
the stronger assumption of a finite polynomial moment
of some order $q>8$, as an application of his general
Berry--Esseen bounds for weakly dependent sequences.

Sharper Kolmogorov bounds in the range $2<q\leq3$
were obtained by Cuny, Dedecker, Merlev\`ede and Peligrad
\cite{CDMP2023}.
Their Theorem~2.1 gives the rate
$O((\log n/n)^{q/2-1})$ under a finite polynomial moment
of order $q$, for the stationary norm cocycle, for
deterministic initial directions uniformly over unit
vectors, and for the logarithm of the operator norm.
Their Theorem~2.2 also reduces the moment requirement
for the rate $O(n^{-1/2})$ to a finite fourth logarithmic
moment, for all these quantities.
At $q=3$, however, the bound in Theorem~2.1 still contains
an additional factor $\sqrt{\log n}$.
They explicitly ask whether the conclusion of
Theorem~2.2 remains valid under a finite third
polynomial moment alone.

The rate $O(n^{-1/2})$ under a finite third logarithmic
moment is known in several special settings.
Dinh, Kaufmann and Wu \cite{DKW2023} established this rate
by a spectral method for the norm cocycle and the logarithms
of the absolute values of matrix coefficients in the setting
of non-elementary random walks on $\mathrm{SL}(2,\mathbb{C})$.
For products of nonnegative allowable matrices, Cuny,
Dedecker and Merlev\`ede \cite{CDM2024} used exponentially
decaying coupling coefficients to establish the Berry--Esseen
rate $O(n^{1-q/2})$ for the norm cocycle and the logarithm
of the matrix norm, assuming strict contraction,
a polynomial moment condition of order $q\in(2,3]$,
and a positive limiting variance.
With an additional integrability condition, their
Theorem~8.9 also yields this rate for the logarithms
of matrix coefficients and the spectral radius.
Xiao, Grama and Liu \cite{XGL2025} obtained the rate
$O(n^{-1/2})$ for the logarithms of matrix coefficients,
the matrix norm and the spectral radius, assuming their
condition A2, a positive limiting variance, and a finite
third absolute moment of the logarithm of the sum of
the matrix entries.

In this paper, we prove the rate $n^{-\delta/2}$ under a finite
polynomial moment of order $2+\delta$, for each fixed
$0<\delta\leq1$. In particular, there is a constant $C$, independent
of $n$ and $v$, such that
\[
\sup_{\|v\|=1}\sup_{z\in\mathbb{R}}
\Big|
\mathbb{P}\Big(
\frac{\log\|g_n \cdots g_1 v\|-n\lambda_\mu}{s\sqrt{n}}\leq z
\Big)-\Phi(z)
\Big|
\leq Cn^{-\delta/2}.
\]
Bounds
of the same order hold for the stationary norm cocycle, its
average over initial directions with respect to the stationary
measure, and the logarithm of the operator norm; see
\Cref{thm-main}.

The significance of the result lies in the relationship it establishes between the moment condition and the rate of convergence.  The exponent $\delta/2$ is the classical exponent in
the Berry--Esseen bound for sums of independent random variables
with a finite moment of order $2+\delta$; see, for example,
Petrov \cite{Petrov1975}. Thus the dependence induced by the
projective dynamics causes no loss in this rate under strong
irreducibility and proximality. With $q=2+\delta$, our bounds
remove the factor $(\log n)^{\delta/2}$ from those in
\cite{CDMP2023}. At $\delta=1$, they give an affirmative answer to
the third-moment question posed there, in every fixed dimension
$d\geq2$. The same proof framework treats the entire range
$0<\delta\leq1$ and yields all four normal approximations.

Our approach combines an $m$-dependent approximation with Stein's method. Introduced by Stein \cite{Stein1972}, the method provides a flexible alternative to approaches based on characteristic functions and often simplifies error estimates for dependent random variables. An important technique in Stein's method is the use of exchangeable pairs, developed systematically by Stein \cite{Stein1986}. This approach has also proved effective in nonnormal approximation; for example,
\cite{ChatterjeeShao2011,ShaoZhang2019}.  For a systematic treatment of normal approximation by Stein's method, we refer to Chen, Goldstein and Shao \cite{ChenGoldsteinShao2011}.

The main difficulty in applying this approach to random matrix products is that the independence of the matrices does not imply local dependence of the cocycle increments: replacing one matrix can alter every subsequent projective state. We first censor the centered cocycle and take conditional expectations of the resulting increments given the most recent $m$ matrices. We then construct exchangeable pairs by resampling individual matrices, a coordinate replacement technique related to Wu's physical dependence coefficients \cite{Wu2005} and Chatterjee's normal approximation method \cite{Chatterjee2008}.

A key ingredient is a conditional concentration inequality for sums of these approximating variables. Concentration inequalities have been used in Stein's method to control errors in distribution functions; see, for example, \cite{ChenShao2007,ChenFang2015,FangLuoShao2020}. We prove the required inequality by bounding the concentration probability for a block in terms of that for a shorter block and then iterating the resulting bound. For related uses of recursive arguments in Stein's method, see \cite{Gotze1991,FangRollin2015,LiuZhang2023}. The inequality controls the short-interval probabilities arising in Stein's identity and the errors in distribution functions when passing from the $m$-dependent approximation to the original sums.

Throughout this paper, we use the following notation.
Let $\Phi$ and $\varphi$ denote the standard normal distribution
function and density, respectively.  For a real-valued random variable $X$ and $p\geq1$, write
$\lVert X\rVert_{p}:=(\mathbb{E}|X|^{p})^{1/p}$. For a measure $\mu$ and a $\mu$-integrable function $f$, write
$\mu(f):=\int f(x)\,\mu(\mathrm{d}x)$. For real-valued random
variables $Y$ and $Z$, write
$d_{\mathrm{K}}(\mathcal{L}(Y),\mathcal{L}(Z))
=\sup_{z\in\mathbb{R}}|\mathbb{P}(Y\leq z)-\mathbb{P}(Z\leq z)|$
for their Kolmogorov distance. Throughout the paper, $C$ and $c$ denote positive finite constants
that may change from line to line. Unless stated otherwise, they
may depend on $\mu$, $\delta$, $d$, and the fixed norm, but not on
$n$, the censoring level $a$, $m$, or the block length.

The rest of this paper is organized as follows.
\Cref{sec-main-results} introduces the notation and states
the main theorem.
\Cref{sec-preliminaries} constructs the $m$-dependent approximation and its martingale decomposition
and establishes the estimates needed in the subsequent proofs.
\Cref{sec-concentration} proves a conditional concentration inequality
for the sums of the approximating variables.
\Cref{sec-proof} combines Stein's method with the preceding approximation
arguments to prove the four bounds in \Cref{thm-main}.
The proofs of several auxiliary lemmas are deferred to 
\Cref{sec-app-a}.

\section{Main results}
\label{sec-main-results}

We use the notation \(G\), \(\mu\) and \(N(g)\) introduced in \Cref{sec-introduction}. Set \(A_{0}=I_{d}\) and $A_n = g_n \cdots g_1$ for $n \geq 1$.  Let $\Gamma_\mu$ be the closed
semigroup generated by $\operatorname{supp}\mu$. We say that $\Gamma_\mu$ is strongly irreducible 
if no finite union of nonzero proper subspaces of $\mathbb{R}^d$ is $\Gamma_{\mu}$-invariant.
We say that $\Gamma_\mu$ is proximal if $\Gamma_\mu$ contains a matrix with an
algebraically simple eigenvalue whose modulus is strictly larger
than the moduli of all its other eigenvalues. These two assumptions
are in force throughout the paper.

Let $X=\mathbb{P}(\mathbb{R}^d)$, and write $[v]$ for the
projective direction of a nonzero vector $v$. The action of $G$ on
$X$ is $g[v]=[gv]$. Its Markov operator is
$Pf(x)=\int_G f(gx)\,\mu(\mathrm{d}g)$ for $x \in X$.  Under the above
assumptions, there is a unique $\mu$-stationary probability measure
$\nu$ on $X$; see \cite{BQ2016,GR1985}. Thus
$\int_X Pf\,\mathrm{d}\nu=\int_X f\,\mathrm{d}\nu$ for every
bounded Borel function $f$.

The norm cocycle is defined by
\begin{equation}
	\sigma(g,[v])=\log\frac{\|gv\|}{\|v\|},
	\qquad g\in G,\quad v\in\mathbb{R}^d\setminus\{0\}.
	\label{eq-aa-03}
\end{equation}
It satisfies the cocycle identity
$\sigma(gh,x)=\sigma(g,hx)+\sigma(h,x)$ for all \(g,h\in G\) and \(x\in X\), and hence
\begin{equation}
	\sigma(A_n,x)
	=\sum_{k=1}^n\sigma(g_k,A_{k-1}x).
	\label{eq-aa-05}
\end{equation}
Under a finite first polynomial moment,
the Furstenberg formula \cite{BL1985,Furstenberg1963} gives
\begin{equation*}
	\lambda_\mu
	=\int_X\int_G\sigma(g,x)\,\mu(\mathrm{d}g)\nu(\mathrm{d}x).
\end{equation*}

Let $W_0$ have law $\nu$, independently of
$(g_{k})_{k\geq1}$, and put $W_k=A_kW_0$. Define  $\mathscr{F}_{0}=\sigma(W_{0})$ and
$\mathscr{F}_{k}=\sigma(W_0,g_1,\ldots,g_{k})$
for $k\geq1$. The chain
$(W_k)_{k\geq0}$ is stationary with transition operator $P$.
Define
\begin{align*}
	S_{n,x}
	&=\sigma(A_n,x)-n\lambda_\mu,
	\quad x\in X,\qquad
	S_{n}
	=\sigma(A_n,W_0)-n\lambda_\mu,
\\
	\overline{S}_{n}
	&=\int_X\sigma(A_n,x)\,\nu(\mathrm{d}x)-n\lambda_\mu.
\end{align*}
By \eqref{eq-aa-05}, the stationary sum also has the representation
$S_n=\sum_{k=1}^n\{\sigma(g_k,W_{k-1})-\lambda_\mu\}$.
Under a finite second polynomial moment, Benoist and Quint
\cite{BQ2016} proved that
$s^2:=\lim_{n\to\infty}n^{-1}\mathbb{E}S_n^2$ exists and is
strictly positive. We write $s=\sqrt{s^2}$.

The main result of this paper is stated below.
\begin{theorem}	\label{thm-main}
	Let $0<\delta\leq1$. Assume that $\Gamma_\mu$ is strongly
	irreducible and proximal and that
	\begin{equation}
		\int_G\bigl[\log N(g)\bigr]^{2+\delta}\,\mu(\mathrm{d}g)<\infty.
		\label{eq-aa-08}
	\end{equation}
	Then  there exists a finite constant $C>0$, depending
	only on $\mu$, $\delta$, $d$, and the fixed norm on
	$\mathbb{R}^d$, such that, for every $n\geq1$,
	\begin{align}
		\sup_{z\in\mathbb{R}}
		\Big|
		\mathbb{P}\Big(\frac{S_n}{s\sqrt{n}}\leq z\Big)-\Phi(z)
	\Big|
		&\leq Cn^{-\delta/2},
		\label{eq-tt-01}\\
		\sup_{x\in X}\sup_{z\in\mathbb{R}}
		\Big|
		\mathbb{P}\Big(\frac{S_{n,x}}{s\sqrt{n}}\leq z\Big)-\Phi(z)
		\Big|
		&\leq Cn^{-\delta/2},
		\label{eq-tt-02}\\
		\sup_{z\in\mathbb{R}}
	\Big|
		\mathbb{P}\Big(
		\frac{\overline{S}_n}{s\sqrt{n}}\leq z
	\Big)-\Phi(z)
		\Big|
		&\leq Cn^{-\delta/2},
		\label{eq-tt-03}\\
		\sup_{z\in\mathbb{R}}
	\Big|
		\mathbb{P}\Big(
		\frac{\log\|A_n\|-n\lambda_\mu}{s\sqrt{n}}\leq z
		\Big)-\Phi(z)
	\Big|
		&\leq Cn^{-\delta/2}.
		\label{eq-tt-04}
	\end{align}
\end{theorem}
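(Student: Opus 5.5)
The plan follows the strategy outlined in the introduction. First prove \eqref{eq-tt-01} for the stationary sum $S_n$, then transfer it to $S_{n,x}$, $\overline{S}_n$ and $\log\|A_n\|$ by comparison arguments.

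\textbf{Step 1: censoring and $m$-dependent approximation.} Write $X_k=\sigma(g_k,W_{k-1})-\lambda_\mu$. Censor at level $a\asymp\sqrt{n}$: replace $\sigma(g_k,\cdot)$ by its truncation at $\pm a$ and recenter. The $(2+\delta)$-moment \eqref{eq-aa-08} gives $n\,\mathbb{P}(\log N(g_1)>a)\le Cn^{-\delta/2}$ and a mean shift of order $n\,a^{-1-\delta}\le Cn^{1/2-\delta/2}$, which is harmless after normalization by $\sqrt n$.

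Next set $Y_k=\mathbb{E}[\bar X_k\mid g_{k-m+1},\dots,g_k]$ with $m\asymp\log n$. This is an $m$-dependent sequence of functions of i.i.d.\ matrices. By the exponential forgetting of the initial direction under strong irreducibility and proximality, $\sum_k(\bar X_k-Y_k)$ has a small $L^p$ norm. This is the coupling/contraction estimate of Benoist--Quint and Cuny--Dedecker--Jan type under polynomial moments, with decay polynomial in $m$ of high order after using the $a$-truncation. I would also record a martingale--coboundary decomposition of $\sum Y_k$ so that its variance equals $ns^2(1+O(n^{-1/2}))$ and its increments have conditional $(2+\delta)$-moments of order $\mathbb{E}|X|^{2+\delta}$.

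\textbf{Step 2: Stein's method with exchangeable pairs.} Let $T=\sum_k Y_k/(s\sqrt n)$. Pick $I$ uniform in $\{1,\dots,n\}$, resample $g_I$ independently, and let $T'$ be the resulting sum; this is an exchangeable pair. Only the $Y_k$ with $I\le k\le I+m-1$ change, so the approximate linearity $\mathbb{E}[T-T'\mid\mathcal{G}]=\lambda T+R$ holds with $\lambda\approx 1/n$ via the Efron--Stein/Chatterjee representation. The Kolmogorov bound then reduces to three controls:
\begin{itemize}
\item the variance of $\mathbb{E}[(T-T')^2\mid\mathcal G]/(2\lambda)$, which is of order $n^{-\delta/2}$ by $m$-dependence and truncation at $a$;
\item the term $\mathbb{E}|T-T'|^3/\lambda$, made into a $(2+\delta)$-type term by splitting at $|T-T'|$ versus $1$;
\item the probabilities $\mathbb{P}(z\le T\le z+|T-T'|)$ that arise from the nonsmooth test function.
\end{itemize}

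\textbf{Main obstacle: the concentration inequality.} The last control requires $\mathbb{P}(T\in[z,z+\varepsilon]\mid\text{block of }O(m)\text{ matrices})\le C(\varepsilon+n^{-\delta/2})$ uniformly. The conditioning is needed because $|T-T'|$ depends on the resampled block. The remaining sum is independent of the block only up to $m$-dependence boundaries, and its increments are not independent. I would prove this recursively. Split the index set into a long block $B$ of length $\ell$ and the rest. Apply the Stein/exchangeable-pair argument to the sum over $B$, with test function the indicator of a shifted interval. This bounds the concentration for length $\ell$ by $C(\varepsilon+\ell^{-\delta/2})$ plus a small multiple of the concentration for length $\ell/2$. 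Iterating over dyadic lengths down to $\ell\asymp m$ closes the bound. The same inequality also absorbs the $L^p$-small errors from Step 1, since $\mathbb{P}(|T-S_n/(s\sqrt n)|>n^{-\delta/2})$ is small by Markov, and concentration handles the shift.

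\textbf{Step 3: transfer.} For \eqref{eq-tt-02}, couple the start at $x$ with the stationary start $W_0$. Under strong irreducibility and proximality, $\sup_x\|\sigma(A_n,x)-\sigma(A_n,W_0)\|_{p}$ stays bounded, with the $p$-th moment controlled via the $(2+\delta)$ moment and regularity of $\nu$ (Benoist--Quint). Since the Step 2 concentration bound holds conditionally on $W_0$, the bounded perturbation costs only $O(n^{-1/2})$; in particular $\mathbb{P}(\text{difference}>t)$ decays fast enough that $t=n^{-\delta/2}\sqrt n$ suffices. Then \eqref{eq-tt-03} follows by averaging over $x$, and \eqref{eq-tt-04} from $0\le\log\|A_n\|-\sigma(A_n,x)$ together with the standard estimate that $\log\|A_n\|-\sigma(A_n,x)$ is tight, with polynomial tails, uniformly in $n$ for $\nu$-typical $x$. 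Both are handled the same way, via the concentration inequality of Step 2.
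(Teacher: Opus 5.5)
\textbf{The gap is in Step~1, and it determines the whole architecture.}

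You take $m\asymp\log n$ because of ``exponential forgetting'', and you claim that censoring gives high-order polynomial decay in $m$. Under only \eqref{eq-aa-08}, neither holds.

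\emph{Memory is only polynomial.} The hypotheses guarantee only $\sum_k k^{\delta}\delta_{1,\infty}(k)<\infty$ (\Cref{lem-1}), that is, $\delta_{1,\infty}(m)=o(m^{-1-\delta})$. The worst-case estimates \eqref{eq-ab-06}--\eqref{eq-ab-07} allow a single matrix with $\log N(g)\gtrsim m$ to undo a contraction of size $e^{-cm}$, and such a matrix appears with only polynomially small probability.

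\emph{Censoring does not help.} Censoring acts on the observable $\sigma(g,x)-\lambda_\mu$, not on the projective dynamics $x\mapsto gx$. With $a\asymp\sqrt n\gg m$, it does not even cap the relevant jumps.

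\emph{Consequences for the two approximation errors.} The available estimate for the variance mismatch is $|s_{a,m}^2-s^2|\le C(a^{-\delta}+m^{-\delta})$ (\Cref{prop-variance}), not $O(n^{-1/2})$. Passing from the $m$-local sum back to the censored stationary sum costs about $\sqrt n\,\delta_{1,\infty}(m)$ in Kolmogorov distance, even with the blockwise density-based comparison of \Cref{prop-localization}. An $L^p$ bound plus Markov is weaker still. With $m\asymp\log n$, the first error is only $(\log n)^{-\delta}$ and the second bound is not small at all. Both are $O(n^{-\delta/2})$ only once $m\gtrsim\sqrt n$, which is why the paper takes $a=m=\lfloor\sqrt n\rfloor$.

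\emph{What breaks in Step~2.} At that size your Stein step fails as sketched. Resampling $g_i$ changes up to $m\asymp\sqrt n$ summands. Any bound on $\Delta_i=S_{n,m}-S_{n,m}^{(i)}$ that grows linearly in $m$ gives error terms of order at least $m/\sqrt n\asymp 1$.

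\emph{The missing idea} is that every error term must grow only like $m^{1-\delta}$, so that $m^{1-\delta}/\sqrt n\le n^{-\delta/2}$. The paper gets this in two ways.
\begin{itemize}
\item From the Poisson/martingale structure of $Y_k$: the downstream change $\sum_{j>i}(Y_j-Y_j^{(i)})$ has conditional $L^1$ norm $O(1)$ and conditional second moment $O(1+a^{1-\delta}+m^{1-\delta})$ (\Cref{lem-futureL1}, \Cref{prop-budgets}). This rests on the weighted summability of $\delta_{1,\infty}$ and of the moduli $\omega_{j,a}$ (\Cref{lem-absmod}).
\item In the concentration inequality (\Cref{prop-profile}): the ``small multiple'' in your recursion is the factor $C(1+a^{1-\delta}+m^{1-\delta})/\alpha$ in the step $H\mapsto\lfloor H/16\rfloor$. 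It is small only at the scale $\alpha=L_*m^{1-\delta}$ with $L_*$ large. The bound is then converted into a conditional density bound by adding an independent uniform $U$ on $[-\alpha,\alpha]$.
\end{itemize}
Your recursion never identifies this scale.

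\textbf{Step~3 also fails at the level of rates.}

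\emph{Markov absorption of perturbations.} $D=\sigma(A_n,x)-\sigma(A_n,W_0)$ is bounded in $L^1$, and in $L^p$ only for $p<1+\delta/2$, which is essentially what \Cref{lem-1} yields. Such a perturbation cannot be absorbed by ``Markov plus concentration'' at threshold $n^{(1-\delta)/2}$ when $\delta$ is near $1$. At $\delta=1$ the threshold is $O(1)$, and $\mathbb{P}(|D|>1)$ need not be small. Conditioning on $W_0$ does not rescue this, because $D$ depends on all the matrices.

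\emph{What the paper does for \eqref{eq-tt-02}.} It first replaces $H^{[a]}(g_k,A_{k-1}x)$ by $X^{[a]}_{k,m}$ for $k>m$, blockwise. The remaining difference $D(x)$ then depends only on $W_0,g_1,\dots,g_m$. It is therefore independent of a long later block whose smoothed conditional density is $O(n^{-1/2})$, and \Cref{cor-cdfcomparison} charges only $C\,\mathbb{E}|D(x)|/\sqrt n$.

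\emph{For \eqref{eq-tt-03}.} This does not follow by ``averaging'' \eqref{eq-tt-02}: $\overline S_n$ averages random variables, not laws, so it needs the same perturbation argument.

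\emph{For \eqref{eq-tt-04}.} Polynomial tails of $\log\|A_n\|-\sigma(A_n,x)$ again fail for $\delta$ near $1$. The paper uses the deterministic bound $0\le\log\|g\|-\int_X\sigma(g,x)\,\nu(\mathrm{d}x)\le C_\nu$, so \eqref{eq-tt-04} follows from \eqref{eq-tt-03} at cost $O(n^{-1/2})$.
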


\begin{remark}
Although \Cref{thm-main} is stated for the Euclidean norm $\|\cdot\|$ of vectors in $\mathbb{R}^d$ and the induced operator norm 
$\|\cdot\|$ for matrices in $\mathrm{GL}(d,\mathbb{R})$, the Berry--Esseen bounds still hold for arbitrary vector norms and matrix norms. 
This follows from the fact that all norms on a finite-dimensional space are comparable with each other. 
\end{remark}

\begin{remark}
\Cref{thm-main} is stated for products of random matrices in the group $\mathrm{GL}(d, \mathbb R)$ 
over the field of real numbers $\mathbb R$. 
However, they can be extended to the setting of $\mathrm{GL}(d, \mathbb C)$ or $\mathrm{GL}(d, \mathbb K)$, 
where $\mathbb K$ is a non-Archimedean local field. 
\end{remark}

\section{Preliminaries}\label{sec-preliminaries}
Throughout this section, we assume that the conditions of
\Cref{thm-main} hold for a fixed $0<\delta\leq 1$,
unless a different moment assumption is explicitly stated.  We establish the estimates used in the subsequent proofs. 
\subsection{Martingale decomposition and censoring}
For $[v],[w]\in X$, we use the projective distance
\begin{equation}
	\rho([v],[w])
	=
	\frac{\lVert v\wedge w\rVert}
	{\lVert v\rVert\lVert w\rVert},
	\label{eq-ab-02}
\end{equation}
where $v$ and $w$ are nonzero representatives of $[v]$ and $[w]$,
respectively, $v\wedge w$ denotes their exterior product, and
we still use $\lVert\cdot\rVert$ to denote the induced norm on
$\wedge^{2}\mathbb{R}^{d}$.
We first collect some estimates for the norm cocycle and the
projective action.
The weighted summability estimate for projective contraction
follows from \cite[Lemma~6]{CDJ2017}.
The remaining bounds are elementary.  For completeness, we provide a proof in \Cref{sec-app-a}.
\begin{lemma}\label{lem-A1}
	The following statements hold.
	\begin{itemize}
		\item [(i)] There exists a constant $c>0$ such that
		\begin{equation}
			\sum_{r=1}^{\infty}
			r^{\delta}
			\sup_{x,y\in X}
			\P\bigl(\rho(A_{r}x,A_{r}y)>e^{-cr}\bigr)
			<\infty.
			\label{eq-ab-05}
		\end{equation}
		
		\item [(ii)] For every invertible $g$ and all $x,y\in X$, we have
		\begin{align}
			|\sigma(g,x)-\sigma(g,y)|
			&\leq
			\min\bigl\{
			2\log N(g),
			\sqrt{2} N(g)^{2}\rho(x,y)
			\bigr\},
			\label{eq-ab-06}\\
			\rho(gx,gy)
			&\leq
			N(g)^{2}\rho(x,y).
			\label{eq-ab-07}
		\end{align}
	\end{itemize}
\end{lemma}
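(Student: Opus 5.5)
Part (i) is the weighted summability estimate of \cite[Lemma~6]{CDJ2017}, and I would quote it directly. The hypotheses match: $\Gamma_\mu$ is strongly irreducible and proximal, and \eqref{eq-aa-08} gives a finite polynomial moment of order $2+\delta$. That lemma yields a constant $c>0$ for which $\sum_{r\ge1} r^{q-2}\sup_{x,y}\P(\rho(A_rx,A_ry)>e^{-cr})<\infty$ under a moment of order $q$. Taking $q=2+\delta$ gives \eqref{eq-ab-05}. The only thing to verify is that the distance used in \cite{CDJ2017} is $\rho$ from \eqref{eq-ab-02}, or is equivalent to it up to constants; a constant factor is absorbed by slightly decreasing $c$. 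No new work is needed for (i).

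For (ii), fix unit representatives $v,w$ of $x,y$ with $\langle v,w\rangle\ge0$, changing the sign of $w$ if necessary. Let $\theta\in[0,\pi/2]$ be the angle between them, so that $\rho(x,y)=\|v\wedge w\|=\sin\theta$ because the norm is Euclidean.

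\emph{Proof of \eqref{eq-ab-07}.} Decompose $w=\alpha v+w^\perp$ with $w^\perp\perp v$. Then $v\wedge w=v\wedge w^\perp$, so $\|v\wedge w\|=\|w^\perp\|$. Likewise $gv\wedge gw=gv\wedge gw^\perp$, so $\|gv\wedge gw\|\le\|gv\|\,\|gw^\perp\|$. Therefore
\[
\rho(gx,gy)\le\frac{\|gw^\perp\|}{\|gw\|}\le\frac{\|g\|\,\|w^\perp\|}{\|g^{-1}\|^{-1}}=\|g\|\,\|g^{-1}\|\,\rho(x,y)\le N(g)^2\rho(x,y).
\]
The point of this route is to avoid the crude bound $\|\wedge^2 g\|\le\|g\|^2$, which would only give a fourth power of $N(g)$. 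Instead only one factor $\|g\|$ is spent, on $w^\perp$, while the factor $\|gv\|$ cancels.

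\emph{Proof of \eqref{eq-ab-06}.} Write $\sigma(g,x)-\sigma(g,y)=\log\|gv\|-\log\|gw\|$. Since $\|g^{-1}\|^{-1}\le\|gu\|\le\|g\|$ for unit $u$, each term lies in $[-\log N(g),\log N(g)]$, and the difference is at most $2\log N(g)$.

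For the second bound, use $|\log a-\log b|\le|a-b|/\min(a,b)$ together with $|\|gv\|-\|gw\||\le\|g\|\,\|v-w\|$ and $\min(\|gv\|,\|gw\|)\ge\|g^{-1}\|^{-1}$. This gives $|\sigma(g,x)-\sigma(g,y)|\le N(g)^2\|v-w\|$. It remains to check $\|v-w\|\le\sqrt2\sin\theta$. We have $\|v-w\|^2=2(1-\cos\theta)$. Since $\cos\theta\ge0$, it follows that $1-\cos\theta\le(1-\cos\theta)(1+\cos\theta)=\sin^2\theta$. Hence $\|v-w\|\le\sqrt2\,\rho(x,y)$.

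Taking the minimum of the two bounds gives \eqref{eq-ab-06}. The main point of care is the sign choice for $w$, which is what makes the angle comparison valid; everything else is routine.
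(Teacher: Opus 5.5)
Your proof is correct and follows the paper's: part (i) is quoted from \cite[Lemma~6]{CDJ2017} with moment order $2+\delta$ (the paper uses that lemma's form with $\max_{k\le j\le 2k}$ and threshold $-\ell k$, and specializes to $k=j=r+1$, $c=\ell/2$). Part (ii) uses the same sign choice, the bound $\lVert v-w\rVert\le\sqrt2\,\rho(x,y)$, and the logarithm estimate. Your orthogonal-decomposition proof of \eqref{eq-ab-07} is the same computation as the paper's sine-formula argument $\rho(gx,gy)=\operatorname{dist}(gv,\mathbb{R}gw)/\lVert gv\rVert\le\lVert g\rVert\lVert g^{-1}\rVert\rho(x,y)$, with the roles of $v$ and $w$ exchanged.
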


 For any $k\geq 1$ and $x\in X$, let $$X_{k,x}=\sigma(g_{k}, A_{k-1}x)-\lambda_{\mu}.$$
For any $p\geq 1$ and $k\geq 1$, define
\[
\delta_{p,\infty}^{p}(k)= \sup_{x,y\in X}\E|X_{k,x}-X_{k,y}|^{p}.
\]

Proposition~3 of \cite{CDJ2017} gives a weighted summability
result for the coupling coefficients $\delta_{p,\infty}^{p}(k)$
under a finite polynomial moment assumption.
For a precise statement, see the following lemma.
\begin{lemma}\label{lem-1}
Assume that $\Gamma_\mu$ is strongly irreducible and proximal
and that \eqref{eq-log-moment} holds for some $q>1$.
Then, 
	\begin{align}\label{eq-ww-01}
		\sum_{k\geq 1} k^{q-p-1}\delta_{p,\infty}^{p}(k) < \infty,
		\qquad \forall\, p\in[1,q).
	\end{align}
\end{lemma}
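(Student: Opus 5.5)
Although \Cref{lem-1} is quoted from \cite[Proposition~3]{CDJ2017}, here is how I would derive it directly from \Cref{lem-A1}. I would bound $\delta_{p,\infty}^{p}(k)$ by coupling. Fix $x,y\in X$ and $k\geq 2$, and put $r=k-1$, $x'=A_{r}x$, $y'=A_{r}y$. Then
\[
X_{k,x}-X_{k,y}=\sigma(g_k,x')-\sigma(g_k,y'),
\]
where $g_k$ is independent of $(x',y')$. Split according to the event $E_r=\{\rho(x',y')\leq e^{-cr}\}$, with $c$ the constant of \Cref{lem-A1}(i).

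On $E_r$, \eqref{eq-ab-06} gives the pointwise bound
\[
|X_{k,x}-X_{k,y}|\leq \min\{2\log N(g_k),\sqrt2 N(g_k)^2e^{-cr}\}.
\]
Let $L=\log N(g_k)\geq 0$. If $L\leq cr/4$, then $N^2e^{-cr}\leq e^{-cr/2}$. If $L>cr/4$, use the bound $2L$. Hence
\[
\E\bigl[|X_{k,x}-X_{k,y}|^p\mathbf 1_{E_r}\bigr]\leq Ce^{-pcr/2}+2^p\E\bigl[L^p\mathbf 1_{\{L>cr/4\}}\bigr].
\]
By Markov, the second term is at most $C\,\E[L^q]\,r^{p-q}$ with $q>p$. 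On $E_r^c$, use $|\cdot|\leq 2L$ and independence of $g_k$ from $E_r$ to get the bound $2^p\E L^p\cdot \P(E_r^c)$. Taking sups,
\[
\delta_{p,\infty}^{p}(k)\leq Ce^{-c'r}+C\,\E\bigl[L^p\mathbf 1_{\{L>cr/4\}}\bigr]+C\sup_{x,y}\P\bigl(\rho(A_rx,A_ry)>e^{-cr}\bigr).
\]

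Summing with weight $k^{q-p-1}$, the exponential term is trivially summable. For the tail term, Fubini gives
\[
\sum_{r} r^{q-p-1}\E\bigl[L^p\mathbf 1_{\{L>cr/4\}}\bigr]\leq C\,\E\Bigl[L^p\sum_{r<4L/c}r^{q-p-1}\Bigr]\leq C\,\E[L^p\cdot L^{q-p}]=C\,\E L^q<\infty,
\]
valid since $q-p>0$.

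The remaining term is the main obstacle. It requires $\sum_r r^{q-p-1}\sup\P(\rho>e^{-cr})<\infty$. \Cref{lem-A1}(i) provides this only with weight $r^\delta$, that is, for $q=2+\delta$ and $p\geq1$, where $q-p-1\leq\delta$. So in the setting of the paper the bound follows, with the case $p<1$ excluded. For general $q$, one needs the version of \cite[Lemma~6]{CDJ2017} with weight $r^{q-2}$. I would invoke that version, which is what the moment $q$ yields via the large-deviation estimates for $\log\|A_r\|-\log$ of the second singular value. The needed weight is at most $r^{q-2}$, since $p\geq1$.
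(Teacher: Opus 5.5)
Your argument is correct, but it takes a different route from the paper. The paper gives no proof of \Cref{lem-1}: it simply imports it as Proposition~3 of \cite{CDJ2017}. You instead derive it from the ingredients collected in \Cref{lem-A1}, which is legitimate because the proof of \Cref{lem-A1} does not use \Cref{lem-1}, so there is no circularity.

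Your construction applies the two bounds in \eqref{eq-ab-06} only to the last step $g_k$, which is independent of $(A_{k-1}x,A_{k-1}y)$. The contraction estimate \eqref{eq-ab-05} then controls the event on which the two trajectories have not yet merged. The three resulting terms are summable for the reasons you give: exponential decay for the first; Fubini together with $\sum_{1\le r<t}r^{q-p-1}\le Ct^{q-p}$ (valid since $q>p$) for the second; and the weighted contraction bound for the third, since $q-p-1\le q-2$ when $p\ge1$. The $k=1$ term is trivially finite.

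What your route buys is transparency and near self-containment. Under the paper's standing $2+\delta$ moment hypothesis, it proves the lemma for every $q\le 2+\delta$ and $p\in[1,q)$ using only \Cref{lem-A1}. This covers every use the paper makes of the lemma, namely $p=1$ with $q=2$ or $q=2+\delta$.

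What the citation buys is the full statement for arbitrary $q>1$ under a moment of order $q$ only. For that range your argument needs the moment-$q$ version of \cite[Lemma~6]{CDJ2017} with weight $r^{q-2}$. So in full generality your proof reduces Proposition~3 of \cite{CDJ2017} to Lemma~6 there, rather than removing the dependence on that external contraction estimate.
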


Recall that $Pf(x)=\int_G f(gx)\,\mu(\mathrm{d}g)$ for $x \in X.$ Define 
\begin{align}\label{def-phi-x}
\phi(x)=\int_G (\sigma(g,x)-\lambda_{\mu})\,\mu(\mathrm{d}g), \quad  x \in X. 
\end{align}
Benoist and Quint \cite[Proposition~4.9]{BQ2016}
proved that the norm cocycle can be centered.
The following lemma gives a uniformly convergent
series representation of a solution to the corresponding
Poisson equation.
We include a short proof for completeness, since
the same argument will be used for the censored observable.
\begin{lemma}\label{lem-centering}
	The Poisson equation $(I-P)\psi=\phi$ admits a bounded
	solution $\psi$ on $X$ with $\nu$-mean zero, given by
	\[
	\psi(x)=\sum_{j=0}^{\infty}P^{j}\phi(x),  \quad  x \in X, 
	\]
	where the series converges uniformly on $X$.
	Moreover, the increment
	\begin{equation}\label{eq-ab-12}
		\sigma_{0}(g,x)=\sigma(g,x)-\lambda_{\mu}+\psi(gx)-\psi(x)
	\end{equation}
	satisfies
	\begin{equation}\label{eq-abb-12}
	\int_G \sigma_{0}(g,x)\,\mu(\mathrm{d}g)=0
	\qquad \text{for every } x\in X.
\end{equation}
\end{lemma}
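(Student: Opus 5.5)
The plan is to show that $\sup_{x\in X}|P^j\phi(x)|$ is summable in $j$, using the coupling coefficients of \Cref{lem-1}. Note first that $\phi$ is well defined and bounded. Indeed, $|\sigma(g,x)|\leq \log N(g)$, and the moment condition \eqref{eq-aa-08} gives $\int_G\log N(g)\,\mu(\mathrm{d}g)<\infty$.

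\textbf{Step 1: express $P^j\phi$ through the coefficients $\delta_{1,\infty}(k)$.} By the Markov property and independence,
\[
P^{j}\phi(x)=\E\,\phi(A_j x)=\E\bigl[\sigma(g_{j+1},A_jx)-\lambda_\mu\bigr]=\E X_{j+1,x}.
\]
Integrating against the stationary measure and using the Furstenberg formula gives $\int_X P^j\phi\,\mathrm{d}\nu=\nu(\phi)=0$. Hence
\[
|P^j\phi(x)|=\Bigl|\int_X\bigl(\E X_{j+1,x}-\E X_{j+1,y}\bigr)\nu(\mathrm{d}y)\Bigr|\leq \delta_{1,\infty}^{1}(j+1).
\]

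\textbf{Step 2: summability.} Apply \Cref{lem-1} with $q=2+\delta$ and $p=1\in[1,q)$. It gives $\sum_{k\geq1}k^{\delta}\delta_{1,\infty}^1(k)<\infty$, and in particular $\sum_k\delta_{1,\infty}^1(k)<\infty$. By the Weierstrass M-test, the series $\psi=\sum_j P^j\phi$ converges uniformly on $X$, and $\|\psi\|_\infty\leq\sum_k\delta^1_{1,\infty}(k)<\infty$.

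As an alternative route, one could bound $|\E X_{j+1,x}-\E X_{j+1,y}|$ directly via \Cref{lem-A1}. On the event $\{\rho(A_jx,A_jy)\leq e^{-cj}\}$ one uses $N(g)^2\rho$ from \eqref{eq-ab-06}, truncating $N(g_{j+1})$. On the complementary event one uses $2\log N(g)$ together with independence of $g_{j+1}$ from $A_j$, and then the summability \eqref{eq-ab-05}. This is the only step where any real work is needed, and \Cref{lem-1} already packages it.

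\textbf{Step 3: mean zero and the Poisson equation.} Since each term of the series has $\nu$-mean zero, uniform convergence gives $\nu(\psi)=0$. Moreover $P$ is a bounded operator on bounded functions and commutes with uniform limits. Therefore
\[
P\psi=\sum_{j\geq1}P^j\phi=\psi-\phi,
\]
that is, $(I-P)\psi=\phi$.

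\textbf{Step 4: the centering identity \eqref{eq-abb-12}.} Integrating \eqref{eq-ab-12} in $g$, which is legitimate since $\psi$ is bounded and $\sigma(\cdot,x)$ is integrable, gives
\[
\phi(x)+P\psi(x)-\psi(x)=\phi(x)-\phi(x)=0.
\]

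The main (mild) obstacle is Step 1: identifying $P^j\phi(x)$ with $\E X_{j+1,x}$ and using the stationarity of $\nu$ to compare with an average over $y$, so that the coupling coefficient bound applies uniformly in $x$.
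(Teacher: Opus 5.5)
Your proposal is correct and follows the paper's proof essentially step for step. You bound $|P^{j}\phi(x)|\leq\delta_{1,\infty}(j+1)$ using the stationarity of $\nu$ and $\nu(\phi)=0$, sum via \Cref{lem-1}, pass $P$ through the uniformly convergent series, and integrate the increment in $g$. The only cosmetic difference is that the paper applies \Cref{lem-1} with $q=2$, $p=1$ rather than your $q=2+\delta$, $p=1$; both choices give the needed summability.
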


\begin{proof}
	For every $j\geq0$ and $x \in X$, the Markov property gives
	$P^{j}\phi(x)=\E X_{j+1,x}$, and the stationarity of $\nu$ gives
	$\int_{X}P^{j}\phi(y)\,\nu(\mathrm{d}y)=0$.
	Hence, by the definition of $\delta_{1,\infty}$, we have
	\begin{equation}\label{eq-ab-13}
		|P^{j}\phi(x)|
		=
		\Bigl|
		\int_{X}
		\bigl(P^{j}\phi(x)-P^{j}\phi(y)\bigr)
		\,\nu(\mathrm{d}y)
		\Bigr|
		\leq\delta_{1,\infty}(j+1).
	\end{equation}
	Taking $q = 2$ and $p=1$ in \Cref{lem-1},
	we get $\sum_{k\geq1}\delta_{1,\infty}(k)<\infty$.
	Thus the series defining $\psi$ converges uniformly
	to a bounded function satisfying
	$\lVert\psi\rVert_{\infty}
	\leq\sum_{k\geq1}\delta_{1,\infty}(k)$
	and $\int_{X}\psi(x)\,\nu(\mathrm{d}x)=0$.
	Since $P$ is continuous in the supremum norm,
	\[
	P\psi=\sum_{j=1}^{\infty}P^{j}\phi=\psi-\phi.
	\]
	Integrating \eqref{eq-ab-12} with respect to $\mu$ gives
	\[
	\int_{G}\sigma_{0}(g,x)\,\mu(\mathrm{d}g)
	=\phi(x)+P\psi(x)-\psi(x)=0.
	\]
\end{proof}

To obtain the convergence rate under the $2+\delta$ moment condition,
we use a censoring argument (see also \cite{LeungShaoZhang2024} for the terminology ``censoring''). To this end, for any $a\geq 1$, define
the scalar censoring map by
\begin{equation}\label{def-censoring map}
	\tau^{[a]}(t)=
	\begin{cases}
		-a, & t<-a,\\
		t, & -a\leq t\leq a,\\
		a, & t>a.
	\end{cases}
\end{equation}
Define
\begin{align}
		c^{[a]}
	&=\int_{{X}} \int_G 
	\tau^{[a]}\bigl(\sigma(g,x)-\lambda_{\mu}\bigr)\,\mu(\mathrm{d}{g})\nu(\mathrm{d}{x}), \label{eq-ll-01}\\
	H^{[a]}(g,x)
	&=\tau^{[a]}\bigl(\sigma(g,x)-\lambda_{\mu}\bigr)-c^{[a]}, \label{eq-ll-01-Ha} \\
	\phi^{[a]}(x)
&	=\int_G H^{[a]}(g,x)\,\mu(\mathrm{d}{g}).  \label{eq-ll-01-phi-a}
\end{align}
Thus $H^{[a]}$ has mean zero under $\mu\otimes\nu$, and
$\nu(\phi^{[a]})=0$. 
Similarly to \Cref{lem-centering}, the following result gives a uniformly convergent
series representation of a solution to the corresponding 
Poisson equation associated with the censored
observable. In particular, this implies that the censored increment $\sigma_{0}^{[a]}$ remains centered. 

\begin{lemma}\label{lem-centering-02}
Let $a \geq 1$. The Poisson equation $(I-P)\psi^{[a]}=\phi^{[a]}$ admits a bounded
solution $\psi^{[a]}$ on $X$ with $\nu$-mean zero, given by
\begin{align}\label{def-psi-a}
\psi^{[a]}(x)=\sum_{j=0}^{\infty}P^{j}\phi^{[a]}(x),  \quad  x \in X, 
\end{align}
where the series converges uniformly on $X$.
Moreover, the increment
\begin{equation}\label{eq-ab-12-a}
\sigma_{0}^{[a]}(g,x) = H^{[a]}(g,x)+\psi^{[a]}(gx)-\psi^{[a]}(x)
\end{equation}
satisfies
\begin{equation*}
	\int_G \sigma_{0}^{[a]}(g,x)\,\mu(\mathrm{d}g)=0
	\qquad \text{for every } x\in X.
\end{equation*}
\end{lemma}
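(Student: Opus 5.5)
The plan is to follow the proof of \Cref{lem-centering} verbatim, with the censored observable in place of $\sigma-\lambda_\mu$. For $x\in X$ and $k\geq1$, set
\[
X^{[a]}_{k,x}=H^{[a]}(g_k,A_{k-1}x).
\]
By the Markov property and independence of $g_{j+1}$ from $A_j$, we have $P^{j}\phi^{[a]}(x)=\E X^{[a]}_{j+1,x}$ for every $j\geq0$. Stationarity of $\nu$ gives $\int_X P^j\phi^{[a]}\,\mathrm{d}\nu=\nu(\phi^{[a]})=0$; the latter holds because $c^{[a]}$ was chosen so that $H^{[a]}$ has $\mu\otimes\nu$-mean zero. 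Note also that $H^{[a]}$ is bounded by $2a$, so every integral here is finite. Averaging over $y\sim\nu$, as in \eqref{eq-ab-13}, yields
\[
|P^j\phi^{[a]}(x)|\leq \sup_{x,y\in X}\E\bigl|X^{[a]}_{j+1,x}-X^{[a]}_{j+1,y}\bigr|.
\]

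The key step is to compare this with the uncensored coupling coefficient. The map $\tau^{[a]}$ is $1$-Lipschitz, and the constant $c^{[a]}$ cancels in the difference. Hence
\[
|X^{[a]}_{k,x}-X^{[a]}_{k,y}|\leq |X_{k,x}-X_{k,y}|
\]
pointwise. Therefore $|P^j\phi^{[a]}(x)|\leq\delta_{1,\infty}(j+1)$, uniformly in $x$ and, importantly, uniformly in $a$. \Cref{lem-1} with $q=2$, $p=1$ gives $\sum_k\delta_{1,\infty}(k)<\infty$. So the series \eqref{def-psi-a} converges uniformly on $X$, with
\[
\|\psi^{[a]}\|_\infty\leq\sum_{k\ge1}\delta_{1,\infty}(k),
\]
a bound independent of $a$. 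Integrating term by term against $\nu$, which is justified by uniform convergence, gives $\nu(\psi^{[a]})=0$.

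Finally, $P$ is a contraction in the supremum norm, so it commutes with the uniformly convergent sum. Thus $P\psi^{[a]}=\sum_{j\ge1}P^j\phi^{[a]}=\psi^{[a]}-\phi^{[a]}$, which is the Poisson equation. Integrating \eqref{eq-ab-12-a} in $g$ then gives
\[
\int_G\sigma_0^{[a]}(g,x)\,\mu(\mathrm{d}g)=\phi^{[a]}(x)+P\psi^{[a]}(x)-\psi^{[a]}(x)=0
\]
for every $x\in X$.

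The only point needing care is the Lipschitz comparison, which transfers the summability of \Cref{lem-1} to the censored variables. I expect no real obstacle beyond this. The $a$-uniform bound on $\|\psi^{[a]}\|_\infty$ is worth recording explicitly, since it is what the later estimates will need.
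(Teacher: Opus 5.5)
Your proposal is correct and follows the paper's proof essentially step by step. The $1$-Lipschitz property of $\tau^{[a]}$ (with $c^{[a]}$ cancelling) combined with $\nu$-invariance gives $\lVert P^{j}\phi^{[a]}\rVert_\infty\leq\delta_{1,\infty}(j+1)$ uniformly in $a$, \Cref{lem-1} with $q=2$, $p=1$ gives uniform convergence, and continuity of $P$ in the supremum norm yields the Poisson equation and hence the centering of $\sigma_0^{[a]}$.
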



\begin{proof}
Note that $\tau^{[a]}$ is $1$-Lipschitz.  Hence, for every $g,x,y$,
\begin{align}\label{Lipschitz Ha cocycle}
\lvert H^{[a]}(g,x)-H^{[a]}(g,y)\rvert 
\leq \lvert \sigma(g,x)-\sigma(g,y) \rvert.
\end{align}
Applying this inequality, we have, for any $x,y\in X$,
\begin{align*}
\lvert P^{j}\phi^{[a]}(x) -P^{j}\phi^{[a]}(y)\rvert 
& \leq  \int_{G^{j+1}}  | \sigma(g_{j+1}, g_j \cdots g_1 x) - \sigma(g_{j+1}, g_j \cdots g_1 y) |  \mu(dg_1) \cdots \mu(dg_{j+1}) \notag\\
& \leq   \sup_{x,y \in X}\E|X_{j+1, x} - X_{j+1, y}| =  \delta_{1,\infty}(j+1),\qquad j\geq 0.
\end{align*}
As $\nu$ is $P$-invariant, we have
$\int_X P^{j}\phi^{[a]}(x)\,\nu(\mathrm{d}x) = \nu(\phi^{[a]}) =0$
for every $j\geq0$.  Consequently, we have  
\begin{equation}\label{eq-ac-01}
	\lVert P^{j}\phi^{[a]}\rVert _{\infty}
	\leq \delta_{1,\infty}(j+1).
\end{equation}
Since $\sum_{j=0}^{\infty} \delta_{1,\infty}(j+1)<\infty$ (taking $q = 2$ and $p=1$ in \Cref{lem-1}), 
it follows that the series
$\sum_{j=0}^{\infty}P^{j}\phi^{[a]}(x)$
converges uniformly in $x \in X$, and
\[
\sup_{a \geq 1}\lVert \psi^{[a]}\rVert _{\infty}
\leq  \sup_{a \geq 1} \sum_{j=0}^{\infty} \lVert P^{j}\phi^{[a]}\rVert _{\infty} 
\leq\sum_{j=0}^{\infty}\delta_{1,\infty}(j+1)<\infty.
\]
Since $P$ is continuous in the supremum norm, we may apply it
term by term to the series defining $\psi^{[a]}$. Hence
$P\psi^{[a]}=\sum_{j=1}^{\infty}P^{j}\phi^{[a]}$, and therefore
$(I-P)\psi^{[a]}=\phi^{[a]}$. Using this identity and \eqref{eq-ll-01-phi-a}, we have, for any $a \geq 1$ and $x \in X$, 
\begin{align*}
\int_G \sigma_{0}^{[a]}(g,x)\,\mu(\mathrm{d}{g})
&=\phi^{[a]}(x)+P\psi^{[a]}(x)-\psi^{[a]}(x)=0, 
\end{align*}
as desired. 
\end{proof}

Let $$L(g)=1+|\lambda_{\mu}|+\log N(g).$$
The following lemma quantifies the effect of censoring on the centering constant, the
Poisson solution, and the variance.

\begin{lemma}
\label{lem-clipvariance}
Under the stationary law of
$(g_1,W_{0})$, we have
\begin{equation}\label{eq-ac-07}
\lvert c^{[a]}\rvert \leq C a^{-1-\delta},\qquad
\lVert \psi^{[a]}-\psi\rVert _{\infty}\leq C a^{-\delta},\qquad
\big| \mathbb{E}\bigl[\sigma_{0}^{[a]}(g_1,W_{0})^{2}\bigr]-s^{2}\big|
\leq C a^{-\delta},
\end{equation}
where $C>0$ is a constant depending only on $\mu$, $\delta$,
$d$ and the fixed norm $\| \cdot \|$ on $\mathbb R^d$.
\end{lemma}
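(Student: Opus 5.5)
Write $Y=\sigma(g_1,W_0)-\lambda_\mu$. Under the stationary law, $\E Y=0$ by the Furstenberg formula. Also $|Y|\leq L(g_1)$, so $\E|Y|^{2+\delta}<\infty$ by \eqref{eq-aa-08}. All three bounds come from tail estimates for $Y$ combined with the Poisson-equation representations of \Cref{lem-centering} and \Cref{lem-centering-02}.

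\emph{Centering constant.} Since $\E Y=0$, we have $c^{[a]}=\E[\tau^{[a]}(Y)-Y]$. The integrand vanishes on $\{|Y|\leq a\}$ and is bounded by $|Y|$ otherwise. Hence
\[
|c^{[a]}|\leq \E\bigl[|Y|\mathbf 1\{|Y|>a\}\bigr]\leq a^{-1-\delta}\E|Y|^{2+\delta}\leq Ca^{-1-\delta}.
\]

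\emph{Poisson solution.} Put $D_a(g,x)=H^{[a]}(g,x)-(\sigma(g,x)-\lambda_\mu)$. Then $\psi^{[a]}-\psi=\sum_{j\geq0}P^j(\phi^{[a]}-\phi)$, where $\phi^{[a]}-\phi=\int D_a(g,\cdot)\,\mu(\mathrm{d}g)$ has $\nu$-mean zero. Split the sum at a level $J$ to be chosen.
\begin{itemize}[leftmargin=*]
\item For $j\geq J$, use the bounds \eqref{eq-ac-01} and \eqref{eq-ab-13}, which hold uniformly in $a$. They give $\|P^j(\phi^{[a]}-\phi)\|_\infty\leq 2\delta_{1,\infty}(j+1)$.
\item For $j<J$, use the crude bound $\|P^j(\phi^{[a]}-\phi)\|_\infty\leq \sup_x\int |D_a(g,x)|\,\mu(\mathrm{d}g)$.
\end{itemize}
For the crude bound, note $|\sigma(g,x)|\leq\log N(g)$ for every $x$, so $|\sigma(g,x)-\lambda_\mu|\leq L(g)$ uniformly in $x$. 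Therefore
\[
|D_a(g,x)|\leq L(g)\mathbf 1\{L(g)>a\}+|c^{[a]}|,
\]
and $\sup_x\int|D_a|\,\mathrm{d}\mu\leq Ca^{-1-\delta}$. For the tail sum, \Cref{lem-1} with $q=2+\delta$ and $p=1$ gives $\sum_k k^{\delta}\delta_{1,\infty}(k)<\infty$. Consequently $\sum_{k>J}\delta_{1,\infty}(k)\leq J^{-\delta}\sum_k k^\delta\delta_{1,\infty}(k)\leq CJ^{-\delta}$. Altogether,
\[
\|\psi^{[a]}-\psi\|_\infty\leq CJa^{-1-\delta}+CJ^{-\delta}.
\]
Taking $J\asymp a$ gives $Ca^{-\delta}$. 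This step is the main obstacle: the decay must come from balancing the weighted summability of \Cref{lem-1} against the $a^{-1-\delta}$ tail. A plain sup bound such as \eqref{eq-ac-01} does not produce any decay in $a$.

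\emph{Variance.} By \Cref{lem-centering}, $\sigma_0(g_1,W_0)$ is a stationary martingale increment. Its sum differs from $S_n$ by $\psi(W_n)-\psi(W_0)$, which is bounded, so $s^2=\E\sigma_0(g_1,W_0)^2$. Next, by \eqref{eq-ab-12} and \eqref{eq-ab-12-a},
\[
\sigma_0^{[a]}-\sigma_0=D_a(g_1,W_0)+(\psi^{[a]}-\psi)(W_1)-(\psi^{[a]}-\psi)(W_0).
\]
From the previous two steps, $\|D_a(g_1,W_0)\|_2^2\leq \E[L^2\mathbf 1\{L>a\}]+Ca^{-2-2\delta}\leq Ca^{-\delta}$, and $\|(\psi^{[a]}-\psi)(W_i)\|_2\leq Ca^{-\delta}$ for $i=0,1$. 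Hence $\|\sigma_0^{[a]}-\sigma_0\|_2\leq Ca^{-\delta/2}$.

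Writing $|\E(\sigma_0^{[a]})^2-\E\sigma_0^2|\leq\|\sigma_0^{[a]}-\sigma_0\|_2\,\|\sigma_0^{[a]}+\sigma_0\|_2$ would only give $a^{-\delta/2}$. To get $a^{-\delta}$, expand instead:
\[
\E(\sigma_0^{[a]})^2-\E\sigma_0^2=2\E[\sigma_0(\sigma_0^{[a]}-\sigma_0)]+\E(\sigma_0^{[a]}-\sigma_0)^2 .
\]
The second term is $O(a^{-\delta})$ by the $L^2$ bound above. For the cross term, decompose $\sigma_0^{[a]}-\sigma_0$ as in the display:
\begin{itemize}[leftmargin=*]
\item The $\psi$-difference terms contribute $O(a^{-\delta})$, since $\sigma_0\in L^2$ and those terms are $O(a^{-\delta})$ in sup norm.
\item For the term $\E[\sigma_0 D_a]$, use $|\sigma_0|\leq L+C$ and $|D_a|\leq L\mathbf 1\{L>a\}+Ca^{-1-\delta}$. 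This gives $\E[\sigma_0 D_a]\leq \E[(L+C)L\mathbf 1\{L>a\}]+Ca^{-1-\delta}\leq Ca^{-\delta}\E L^{2+\delta}+Ca^{-1-\delta}$.
\end{itemize}
This yields the third bound in \eqref{eq-ac-07}.
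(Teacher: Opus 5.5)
Your proposal is correct and follows the paper's proof essentially step by step. You use the same tail bound $|c^{[a]}|\le \mathbb{E}[L(g_1)\mathbf{1}_{\{L(g_1)>a\}}]$, the same split of $\sum_j P^j(\phi^{[a]}-\phi)$ at level $J\asymp a$ (balancing $\|\phi^{[a]}-\phi\|_\infty\le Ca^{-1-\delta}$ against the weighted summability from \Cref{lem-1}), the same identification $s^2=\mathbb{E}\sigma_0(g_1,W_0)^2$ via the martingale sums, and the same expansion $(\sigma_0^{[a]})^2-\sigma_0^2=2\sigma_0(\sigma_0^{[a]}-\sigma_0)+(\sigma_0^{[a]}-\sigma_0)^2$ in place of Cauchy--Schwarz, which is exactly how the paper gets $a^{-\delta}$ rather than $a^{-\delta/2}$.
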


\begin{proof}
Since $\sigma(g,x)-\lambda_{\mu}$ has stationary mean zero and
$\lvert\sigma(g,x)-\lambda_{\mu}\rvert \leq L(g)$, it follows that
\begin{align}\label{bound c-a}
	\begin{aligned}
	\lvert c^{[a]}\rvert 
	&=\biggl\lvert  \int_{{X}} \int_G 
	\bigl[\tau^{[a]}\bigl(\sigma(g,x)-\lambda_{\mu}\bigr)-\bigl(\sigma(g,x)-\lambda_{\mu}\bigr)\bigr]
	\,\mu(\mathrm{d}{g})\nu(\mathrm{d}{x})\biggr\rvert \\
	&\leq \mathbb{E}\bigl[L(g_1)\mathbf{1}_{\{L(g_1)>a\}}\bigr] \\
	&\leq a^{-1-\delta}\mathbb{E} (L(g_1)^{2+\delta}).
	\end{aligned}
\end{align}
This proves the first inequality in \eqref{eq-ac-07}. 

The same censoring estimate gives
\begin{align*}
\sup_{x \in X} \biggl\lvert  \int_G 
 \bigl[\tau^{[a]}\bigl(\sigma(g,x)-\lambda_{\mu}\bigr)-\bigl(\sigma(g,x)-\lambda_{\mu}\bigr)\bigr]
 \,\mu(\mathrm{d}{g}) \biggr\rvert 
\leq \mathbb{E}\bigl[L(g_1)\mathbf{1}_{\{L(g_1)>a\}}\bigr]
\end{align*}
and hence, by \eqref{def-phi-x} and \eqref{eq-ll-01-phi-a}, 
\begin{align*}
\lVert \phi^{[a]}-\phi\rVert _{\infty}
& = \sup_{x \in X}  
\left| \int_G  \bigl[ \tau^{[a]} \bigl( \sigma(g,x)-\lambda_{\mu} \bigr) - c^{[a]} \bigr]  \mu(\mathrm{d}{g}) 
 - \int_G  \bigl(\sigma(g,x)-\lambda_{\mu}\bigr)  \mu(\mathrm{d}{g})  \right|  \notag\\
 & \leq  \sup_{x \in X} \biggl\lvert  \int_G 
 \bigl[\tau^{[a]}\bigl(\sigma(g,x)-\lambda_{\mu}\bigr)-\bigl(\sigma(g,x)-\lambda_{\mu}\bigr)\bigr]
 \,\mu(\mathrm{d}{g}) \biggr\rvert  +  \lvert c^{[a]}\rvert  \notag\\ 
& \leq\mathbb{E}\bigl[L(g_1)\mathbf{1}_{\{L(g_1)>a\}}\bigr]
 +\lvert c^{[a]}\rvert 
\leq C a^{-1-\delta}.
\end{align*}
Combining this with \eqref{eq-ab-13} and \eqref{eq-ac-01}, we get 
\begin{align*}
\lVert \psi^{[a]}-\psi\rVert _{\infty}
&\leq\sum_{j=0}^{\lceil a\rceil-1}
 \lVert P^{j}(\phi^{[a]}-\phi)\rVert _{\infty}
 +2\sum_{j=\lceil a\rceil}^{\infty}\delta_{1,\infty}(j+1)\\
&\leq C\lceil a\rceil a^{-1-\delta}
 +2\lceil a \rceil^{-\delta}
 \sum_{j=\lceil a\rceil}^{\infty}(j+1)^{\delta}\delta_{1,\infty}(j+1)\\
&\leq C a^{-\delta},
\end{align*}
where in the last inequality we used \Cref{lem-1} with $q = 2 + \delta$ and $p=1$. 
This proves the second inequality in \eqref{eq-ac-07}. 

For the last inequality in \eqref{eq-ac-07},  it follows from \eqref{eq-ab-12-a} and \eqref{eq-ab-12} that
\begin{align*}
\sigma_{0}^{[a]}(g,x)-\sigma_{0}(g,x)
={}&\tau^{[a]}\bigl(\sigma(g,x)-\lambda_{\mu}\bigr)-\bigl(\sigma(g,x)-\lambda_{\mu}\bigr)-c^{[a]}\\
&+(\psi^{[a]}-\psi)(gx)-(\psi^{[a]}-\psi)(x).
\end{align*}
The preceding estimates imply the pointwise bound
\begin{align}\label{eq-ii-01}
\lvert \sigma_{0}^{[a]}(g,x)-\sigma_{0}(g,x)\rvert 
\leq L(g)\mathbf{1}_{\{L(g)>a\}}+C a^{-\delta}.
\end{align}
Since  $\lvert \sigma_{0}(g,x)\rvert \leq C L(g)$, it follows that
\begin{equation}\label{eq-clipvariance-cross}
	\begin{aligned}
		\mathbb{E}\bigl[
		\lvert\sigma_{0}(g_{1},W_{0})\rvert
		\lvert\sigma_{0}^{[a]}(g_{1},W_{0})
		-\sigma_{0}(g_{1},W_{0})\rvert
		\bigr]
		&\leq C\mathbb{E}\bigl[
		L(g_{1})^{2}\mathbf{1}_{\{L(g_{1})>a\}}
		\bigr]
		+C a^{-\delta}\mathbb{E}L(g_{1})\\
		&\leq C a^{-\delta}.
	\end{aligned}
\end{equation}
Moreover, by \eqref{eq-ii-01} and  Markov's inequality, we have
\begin{equation}\label{eq-clipvariance-square}
	\mathbb{E}\bigl(
	\lvert\sigma_{0}^{[a]}(g_{1},W_{0})
	-\sigma_{0}(g_{1},W_{0})\rvert
	\bigr)^{2}
	\leq C a^{-\delta}+C a^{-2\delta}
	\leq C a^{-\delta}.
\end{equation}
To identify the limiting variance, set
\[
M_n:=\sum_{k=1}^n \sigma_{0}(g_k,W_{k-1})
=S_n+\psi(W_n)-\psi(W_0), 
\]
where $W_n = A_n W_0$.  By \eqref{eq-abb-12} and  stationarity, we have
\begin{align*}
	\frac{1}{n}\mathbb{E}M_n^2&=\frac{1}{n}\sum_{k=1}^{n}  \E [\sigma_{0}(g_k,W_{k-1})]^{2}+\frac{1}{n}\sum_{1\leq k\neq  l}^{n}\E \sigma_{0}(g_k,W_{k-1})\sigma_{0}(g_{l},W_{l-1})\nonumber\\
	&=\frac{1}{n}\sum_{k=1}^{n}\E [\sigma_{0}(g_k,W_{k-1})]^{2} \\
	&=\mathbb{E}\sigma_{0}(g_1,W_0)^2.
\end{align*}
Since $\lVert M_n-S_n\rVert_\infty\leq2\lVert\psi\rVert_\infty$,
the Cauchy--Schwarz inequality gives
\[
\left|
\frac{\mathbb{E}M_n^2}{n}
-\frac{\mathbb{E}S_n^2}{n}
\right|
\leq
\frac{4\lVert\psi\rVert_\infty}{\sqrt{n}}
\left(\frac{\mathbb{E}S_n^2}{n}\right)^{1/2}
+\frac{4\lVert\psi\rVert_\infty^2}{n}
\longrightarrow0,
\]
where we used $n^{-1}\mathbb{E}S_n^2\to s^2$.
Hence $\mathbb{E}\sigma_0(g_1,W_0)^2=s^2$.
Finally, expanding
$(\sigma_{0}^{[a]})^{2}-\sigma_{0}^{2}
=2\sigma_{0}(\sigma_{0}^{[a]}-\sigma_{0})
+(\sigma_{0}^{[a]}-\sigma_{0})^{2}$
and applying \eqref{eq-clipvariance-cross} and
\eqref{eq-clipvariance-square}, we get the last inequality
in \eqref{eq-ac-07}.
\end{proof}

\subsection{An \texorpdfstring{$m$}{m}-dependent approximation and its properties}
In this subsection, we construct an $m$-dependent martingale
approximation for the censored cocycle. We then establish
variance estimates and moment bounds for the effects of
replacing individual matrices.
The construction begins by taking conditional expectations of the censored variables given the most recent $m$ matrices (see also \cite{Jirak2016} for a similar approximation).
Throughout this subsection, the parameter $m$ is an integer with
$m\geq2$, and $a\geq1$.
For $m\geq 2$,  let
\[
\mathscr{F}_{k,m}=\sigma(g_{k-m+1},\ldots,g_{k}),\qquad k\geq m.
\]
Define 
\[
x_{a,m}(g_{1},\ldots,g_{m})
=\int_X H^{[a]}(g_{m},g_{m-1}\cdots g_{1}x)\,\nu(\mathrm{d}{x}),
\qquad
X^{[a]}_{k,m}=x_{a,m}(g_{k-m+1},\ldots,g_{k}).
\]
Note that $W_{k-m}$ has the law $\nu$ and is independent of
$\mathscr{F}_{k,m}$.  Moreover, since $W_n = A_n W_0$, we have 
$
W_{k-1}=g_{k-1}\cdots g_{k-m+1}W_{k-m}.
$
Conditioning on $\mathscr{F}_{k,m}$ therefore yields the
exact projection identity
\[
X^{[a]}_{k,m}
=\mathbb{E}\bigl[H^{[a]}(g_{k},W_{k-1})\mid\mathscr{F}_{k,m}\bigr].
\]
In particular, since $H^{[a]}(g_{k}, W_{k-1})$ has mean zero under $\mu\otimes\nu$, we have $\mathbb{E} X^{[a]}_{k,m}=0$. For $m\geq 2$, denote
\[
U_{k}=(g_{k-m+2},\ldots,g_{k}).
\]
Then the chain $(U_k)$ has the transition operator $P_m$ given as follows: 
for any $u_1,\ldots,u_{m-1}\in G$ and any nonnegative measurable function $F:G^{m-1}\to\mathbb{R}$, 
\begin{align}\label{def-operator-Pm}
(P_mF)(u_1,\ldots,u_{m-1})
=
\int_G
F(u_2,\ldots,u_{m-1},g)\,\mu(\mathrm{d}g).
\end{align}
Thus, at each step, we remove the first coordinate and add
an independent matrix with law $\mu$ at the end.
For any $u_1,\ldots,u_{m-1}\in G$, define
\begin{align}
	\label{eq-vv-02}
	\begin{aligned}
		\phi^{[a]}_{m}(u_{1},\ldots,u_{m-1})
	&=
	\int_G  x_{a,m}(u_{1},\ldots,u_{m-1},g)\,\mu(\mathrm{d}g)
	\\
	&=
	\int_X
	\phi^{[a]}(u_{m-1}\cdots u_{1}x)\,\nu(\mathrm{d}x), 
	\end{aligned}
\end{align}  
where the last identity holds due to \eqref{eq-ll-01-phi-a}. 
Since $\nu(\phi^{[a]})=0$, $\phi_{m}^{[a]}$ has mean zero under the stationary law
$\mu^{\otimes(m-1)}$.  Applying $P_m$ a total of $m-1$ times amounts to integrating
$\phi_m^{[a]}$ with respect to $\mu$ in each of its
$m-1$ variables. Thus
$P_{m}^{m-1}\phi^{[a]}_{m}=0$ pointwise. 
For any $u_1,\ldots,u_{m-1}\in G$, define 
\[
\psi^{[a]}_{m}(u_{1},\ldots,u_{m-1}) = \sum_{j=0}^{m-2}P_{m}^{j}\phi^{[a]}_{m} (u_{1},\ldots,u_{m-1}).
\]
Subtracting $P_m\psi_m^{[a]}$ from $\psi_m^{[a]}$
gives the Poisson equation
\begin{align}\label{eq-vv-01}
	(I-P_{m})\psi_{m}^{[a]}
	=\phi^{[a]}_{m}-P_{m}^{m-1}\phi^{[a]}_{m}
	=\phi^{[a]}_{m}.
\end{align}

For $k\geq m$, define
\begin{equation}\label{eq-ad-01}
	Y_{k,m}^{[a]}
	=X^{[a]}_{k,m}
	+\psi_m^{[a]}(U_k)-\psi_m^{[a]}(U_{k-1}).
\end{equation}
The following lemma gives a martingale decomposition of
$(X_{k,m}^{[a]})_{k\geq m}$ and bounds for
$\psi_m^{[a]}$ and $Y_{k,m}^{[a]}$.

\begin{lemma}\label{lem-finiteG}
	For any $a\geq1$ and $m\geq2$,
	the following statements hold.
	\begin{enumerate}
		\item[(i)]
		The sequence $(Y_{k,m}^{[a]})_{k\geq m}$ is a stationary
		martingale difference sequence with respect to
		$(\mathscr{F}_k)_{k\geq0}$, where $\mathscr{F}_{k}=\sigma(W_0,g_1,\ldots,g_{k})$. 
		\item[(ii)]
		The function $\psi_m^{[a]}$ has the representation
		\begin{equation}\label{eq-ad-02}
			\psi_m^{[a]}(u_1,\ldots,u_{m-1})
			= \sum_{j=0}^{m-2} \int_X
			(P^j\phi^{[a]})
			(u_{m-1}\cdots u_{j+1}x)\,\nu(\mathrm{d}x).
		\end{equation}
		\item[(iii)]
There exists a constant	$C>0$ depending only on $\mu$, $\delta$,
	$d$ and the fixed norm, such that
		\begin{equation}\label{eq-ad-04}
			\begin{aligned}
				\sup_{a\geq1,\;m\geq2}
				\lVert\psi_m^{[a]}\rVert_\infty
				\leq C,\quad
				\lvert Y_{k,m}^{[a]}\rvert
				&\leq C \big( 1+\min\{\log N(g_k),a\} \big),
				\qquad k\geq m.
			\end{aligned}
		\end{equation}
		
		\item[(iv)]
		For all integers $j, n$ with $m \leq j \leq n$,
		\begin{equation}\label{eq-ad-06}
			\sum_{k=j}^{n}Y_{k,m}^{[a]}
			-\sum_{k=j}^{n}X_{k,m}^{[a]}
			=\psi_m^{[a]}(U_n)-\psi_m^{[a]}(U_{j-1}).
		\end{equation}
	\end{enumerate}
\end{lemma}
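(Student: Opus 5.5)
I would prove the four parts in the order (ii), (iv), (i), (iii), since the representation in (ii) drives the bounds in (iii).

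\textbf{Part (ii).} This is a direct computation from \eqref{def-operator-Pm} and \eqref{eq-vv-02}. Applying $P_m$ $j$ times to $\phi^{[a]}_m$ drops $u_1,\ldots,u_j$, shifts the remaining arguments to the front, and integrates $j$ fresh matrices $h_1,\ldots,h_j$ at the end. This gives
\[
P_m^j\phi_m^{[a]}(u_1,\ldots,u_{m-1})=\int_{G^j}\int_X \phi^{[a]}(h_j\cdots h_1u_{m-1}\cdots u_{j+1}x)\,\nu(\mathrm{d}x)\,\mu(\mathrm{d}h_1)\cdots\mu(\mathrm{d}h_j).
\]
Integrating out the $h_i$ turns $\phi^{[a]}$ into $P^j\phi^{[a]}$, which is \eqref{eq-ad-02}.

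\textbf{Part (iv).} Summing \eqref{eq-ad-01} over $k=j,\dots,n$ telescopes, giving \eqref{eq-ad-06} immediately.

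\textbf{Part (i).} Adaptedness is clear, since $U_k$ and $X^{[a]}_{k,m}$ are functions of $g_{k-m+1},\ldots,g_k$. Stationarity holds because $(g_k)$ is i.i.d. and $Y_{k,m}^{[a]}$ is a fixed function of $(g_{k-m+1},\ldots,g_k)$. For the martingale property, $g_k$ is independent of $\mathscr F_{k-1}$, so
\[
\E[X^{[a]}_{k,m}\mid\mathscr F_{k-1}]=\phi_m^{[a]}(U_{k-1}),\qquad \E[\psi_m^{[a]}(U_k)\mid\mathscr F_{k-1}]=P_m\psi_m^{[a]}(U_{k-1}).
\]
Then the Poisson equation \eqref{eq-vv-01} gives $\E[Y^{[a]}_{k,m}\mid\mathscr F_{k-1}]=0$. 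Integrability follows from (iii), or simply from boundedness of the censored terms.

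\textbf{Part (iii).} The main point is that the bound must hold uniformly in $m$ and $a$. A naive bound would give $m$ terms of size up to $2a$, so I instead use the decay estimate. By (ii) and \eqref{eq-ac-01},
\[
\|\psi_m^{[a]}\|_\infty\le\sum_{j=0}^{m-2}\|P^j\phi^{[a]}\|_\infty\le\sum_{j\ge0}\delta_{1,\infty}(j+1),
\]
and the right side is finite by \Cref{lem-1} with $q=2$, $p=1$, independently of $a$ and $m$.

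For $Y$, first bound $|H^{[a]}(g,x)|\le\min\{|\sigma(g,x)-\lambda_\mu|,a\}+|c^{[a]}|$. Next, $|\sigma(g,x)|\le\log N(g)$ because $\|g^{-1}\|^{-1}\le\|gv\|/\|v\|\le\|g\|$, and $|c^{[a]}|\le C$ by \Cref{lem-clipvariance}. Hence
\[
|H^{[a]}(g,x)|\le \min\{\log N(g)+|\lambda_\mu|,a\}+C\le C\bigl(1+\min\{\log N(g),a\}\bigr).
\]
Integrating over $\nu$ gives the same bound for $|X^{[a]}_{k,m}|$ with $g=g_k$. Adding $2\sup\|\psi_m^{[a]}\|_\infty\le C$ yields \eqref{eq-ad-04}.

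The only genuinely nontrivial step is the uniform bound on $\psi_m^{[a]}$. It depends on identifying $P_m^j\phi_m^{[a]}$ with $\nu$-averages of $P^j\phi^{[a]}$ in (ii), so that the uniform estimate \eqref{eq-ac-01} can be used.
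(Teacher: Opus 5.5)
Your proposal is correct and follows essentially the same route as the paper: the explicit computation of $P_m^j\phi_m^{[a]}$ for (ii), telescoping \eqref{eq-ad-01} for (iv), the finite Poisson equation \eqref{eq-vv-01} for the martingale property in (i), and \eqref{eq-ac-01} with \Cref{lem-1} ($q=2$, $p=1$) for the bound on $\psi_m^{[a]}$ that is uniform in $a$ and $m$. The only cosmetic difference is that you bound $|c^{[a]}|$ via \Cref{lem-clipvariance} instead of directly by $\mathbb{E}L(g_1)$ as in \eqref{bound c-a}; this is equally valid, since that lemma is proved earlier and does not depend on the present one.
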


\begin{proof}
	We first prove (i).
	For fixed $a$ and $m$, each $Y_{k,m}^{[a]}$ is bounded
	and $\mathscr{F}_k$-measurable.
	The sequence is stationary because the matrices $g_k$
	are i.i.d. and each $Y_{k,m}^{[a]}$ is the same measurable
	function of $(g_{k-m+1},\ldots,g_k)$.
By the definitions of $x_{a,m}$ and $\psi_{m}^{[a]}$ and the finite Poisson equation \eqref{eq-vv-01}, 
we have, for any $u_{1},\ldots,u_{m-1} \in G$,  
\begin{align}\label{eq-hh-01}
	\begin{aligned}
	&\int_G \bigl[
	x_{a,m}(u_{1},\ldots,u_{m-1},g)
	+\psi_{m}^{[a]}(u_{2},\ldots,u_{m-1},g)
	-\psi_{m}^{[a]}(u_{1},\ldots,u_{m-1})
	\bigr]\,\mu(\mathrm{d}{g})\\
	&\qquad=
	\phi^{[a]}_{m}(u_{1},\ldots,u_{m-1})
	+P_{m}\psi_{m}^{[a]}(u_{1},\ldots,u_{m-1})
	-\psi_{m}^{[a]}(u_{1},\ldots,u_{m-1})\\
	&\qquad=0.
	\end{aligned}
\end{align}
Since $U_{k-1}$ is $\mathscr{F}_{k-1}$-measurable and
$g_k$ is independent of $\mathscr{F}_{k-1}$
with law $\mu$, the equality \eqref{eq-hh-01} gives
$
\mathbb{E}(Y_{k,m}^{[a]}\mid\mathscr{F}_{k-1})=0 
$
for any $k \geq m$. 
This proves (i).

We next prove (ii).
For $j=0$, the formula follows from the definition of $\phi^{[a]}_{m}$ in \eqref{eq-vv-02}.
Using \eqref{def-operator-Pm} and \eqref{eq-vv-02}, we get 
\begin{align*}
P_{m} \phi^{[a]}_{m}(u_{1},\ldots,u_{m-1})
& = \int_G  \phi^{[a]}_{m}(u_{2},\ldots,u_{m-1}, g) \mu(dg)  \\
&=\int_X \int_G 
 \phi^{[a]}(g u_{m-1}\cdots u_{2}x)
 \,\mu(\mathrm{d}{g})\nu(\mathrm{d}{x})\\
&=\int_X (P\phi^{[a]})(u_{m-1}\cdots u_{2}x)
 \,\nu(\mathrm{d}{x}).
\end{align*}
By induction, $j$ applications of $P_m$ remove the first $j$
coordinates and integrate over $j$ independent matrices
with law $\mu$.  Consequently, we have
\[
P_{m}^{j}\phi^{[a]}_{m}(u_{1},\ldots,u_{m-1})
=\int(P^{j}\phi^{[a]})
 (u_{m-1}\cdots u_{j+1}x)\,\nu(\mathrm{d}{x}),
\qquad 0\leq j\leq m-2.
\]
Summing these identities, we obtain  \eqref{eq-ad-02}.

For (iii), taking supremum norms in \eqref{eq-ad-02}
and using \eqref{eq-ac-01}, we obtain
\[
\lVert \psi_{m}^{[a]}\rVert _{\infty}
\leq\sum_{j=0}^{m-2}\delta_{1,\infty}(j+1)
\leq\sum_{r=1}^{\infty}\delta_{1,\infty}(r) \leq C,
\]
which proves the first inequality in \eqref{eq-ad-04}. 
For the second inequality in \eqref{eq-ad-04}, 
since $L(g)=1+|\lambda_\mu|+\log N(g)$, 
by \eqref{def-censoring map}, we have
$\lvert \tau^{[a]}(\sigma(g,x)-\lambda_{\mu})\rvert \leq\min\{L(g),a\}$.
Moreover, $\lvert c^{[a]}\rvert \leq\mathbb{E} L(g_1)$ (see \eqref{bound c-a}).
Combining the bounds for $\tau^{[a]}$ and $c^{[a]}$
with the definition of $x_{a,m}$ and \eqref{eq-ad-01},
we obtain
\[
\lvert Y_{k,m}^{[a]}\rvert 
\leq \min\{L(g_{k}),a\}
 +\mathbb{E} L(g_1)+2\sup_{a\geq1,\;m\geq2}\lVert \psi_m^{[a]}\rVert _{\infty}
\leq C\big( 1+\min\{L(g_{k}),a\} \big).
\]

Finally, summing \eqref{eq-ad-01} over $k = j, \ldots, n$ yields \eqref{eq-ad-06}.
\end{proof}

 The following lemma quantifies the local oscillations of
$P^{j}\phi^{[a]}$ with respect to the projective distance $\rho$
and provides the corresponding summability and decay estimates. We defer its proof to \Cref{sec-app-a}.
\begin{lemma}\label{lem-absmod}
	For $a\geq1$, $j\geq 0$ and real $r\geq 1$, let
	\[
	\omega_{j,a}(r)
	=\sup_{\rho(x,y)\leq e^{-r}}
	\lvert P^{j}\phi^{[a]}(x)
	-P^{j}\phi^{[a]}(y)\rvert .
	\]
	There is a constant $C>0$ independent of $m,a$ such that
	\begin{align}
		\sum_{r=1}^{\infty}r^{\delta-1}
		\sum_{j=0}^{\infty}\omega_{j,a}(r)
		&\leq C,\quad \text{and}\quad
		\sum_{j=0}^{m-2}\sum_{r=1}^{\infty}\omega_{j,a}(r)
		\leq C m^{1-\delta},\qquad m\geq 2.
		\label{eq-ac-03}
	\end{align}
	For every real $r\geq 1$, one also has
	\begin{equation}\label{eq-ac-04}
		\sum_{j=0}^{\infty}\omega_{j,a}(r)
		\leq C r^{-\delta}.
	\end{equation}
\end{lemma}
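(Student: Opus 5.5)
The plan is to bound $\omega_{j,a}(r)$ by a coupling argument and then sum. I will use two complementary estimates, one for small $j$ relative to $r$ and one for large $j$.

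\emph{Coupling representation.} By the Markov property,
\[
P^{j}\phi^{[a]}(x)-P^{j}\phi^{[a]}(y)=\E\bigl[H^{[a]}(g_{j+1},A_jx)-H^{[a]}(g_{j+1},A_jy)\bigr].
\]
By \eqref{Lipschitz Ha cocycle} and \eqref{eq-ab-06}, the absolute value is at most
\[
\E\min\bigl\{2\log N(g_{j+1}),\ \sqrt2\,N(g_{j+1})^2\rho(A_jx,A_jy)\bigr\}.
\]
Independently of $r$ we also have $\omega_{j,a}(r)\le \delta_{1,\infty}(j+1)$. This gives the large-$j$ bound.

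\emph{Small-$j$ bound.} By \eqref{eq-ab-07}, if $\rho(x,y)\le e^{-r}$ then $\rho(A_jx,A_jy)\le \prod_{i\le j}N(g_i)^2e^{-r}$. I split according to whether $\sum_{i\le j+1}\log N(g_i)\le r/4$.
\begin{itemize}
\item On this event the min is at most $\sqrt2 e^{-r/2}$.
\item Off it, I bound the min by $2\log N(g_{j+1})$ and apply Markov/Chebyshev with the $(2+\delta)$-moment, in the form $\E[\log N(g_{j+1})\mathbf 1\{\sum_{i\le j+1}\log N(g_i)>r/4\}]$.
\end{itemize}
Controlling the sum crudely, either $\log N(g_{j+1})>r/(8(j+1))$ or some earlier $\log N(g_i)>r/(8(j+1))$. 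This yields
\[
\omega_{j,a}(r)\le Ce^{-r/2}+C(j+1)\,\E\bigl[L\,\mathbf 1\{L>r/(8(j+1))\}\bigr]+\dots,
\]
which is of order $(j+1)^{2+\delta}r^{-1-\delta}$ (after using independence for the cross terms). The constants are independent of $a$, since censoring only helps through \eqref{Lipschitz Ha cocycle}.

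\emph{Summation.} For \eqref{eq-ac-04}, I split at $j\approx r^{\theta}$. For $j\le J$ I use the small-$j$ bound, and for $j>J$ I use $\sum_{j>J}\delta_{1,\infty}(j+1)\le CJ^{-(1+\delta)}$. The tail rate $J^{-(1+\delta)}$ comes from \Cref{lem-1} with $q=2+\delta$, $p=1$: since $\sum k^{\delta}\delta_{1,\infty}(k)<\infty$, monotone-type tails decay like $J^{-\delta}$. I expect this to be the main obstacle, because \Cref{lem-1} alone gives only $\sum_{j\ge J}\delta_{1,\infty}(j)\le CJ^{-\delta}$, and the crude small-$j$ bound then forces $J$ to be a small power of $r$, losing the exponent.

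I would therefore replace the crude small-$j$ bound by the sharper contraction estimate of \Cref{lem-A1}(i), which gives a bound of the form
\[
\omega_{j,a}(r)\le C\,\P\bigl(\max_{i\le j+1}\log N(g_i)\gtrsim r\bigr)\text{-type terms}+\delta_{1,\infty}\text{-tails}.
\]
Concretely, I would bound $\omega_{j,a}(r)$ by $\min\{\delta_{1,\infty}(j+1),\ \eta(r)\}$, where $\eta(r)=\E[L(g)\mathbf 1\{L(g)>cr\}]+\sum_{i}\P(\log N(g_i)>cr/i)$-type tails. The goal is $\sum_j\omega_{j,a}(r)\le Cr^{-\delta}$: I split at $j\sim r$ and use $\sum_{j\ge r}\delta_{1,\infty}(j)\le Cr^{-\delta}$ from \Cref{lem-1}, together with $r\,\E[L\mathbf 1\{L>cr\}]\le Cr^{-\delta}$ for $j<r$. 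The latter needs the per-$j$ bound to be uniformly $C\,\E[L\mathbf 1\{L>cr\}]+Ce^{-cr}$ for $j\le r$, and obtaining that uniformity (instead of a factor $j$) is the delicate point I would attack using the weighted coupling estimate \eqref{eq-ab-05}, which controls how far the contracted pair can separate.

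Once \eqref{eq-ac-04} holds, the remaining statements follow by summation. For the first sum in \eqref{eq-ac-03} I would not use \eqref{eq-ac-04} directly, since $\sum_r r^{\delta-1}r^{-\delta}$ diverges. Instead I write the sum as $\sum_j\sum_r r^{\delta-1}\omega_{j,a}(r)$ and use the two-regime bound: for $r\ge j$ the tail terms give $\sum_{r}r^{\delta-1}\E[L\mathbf 1\{L>cr\}]\le C\E L^{1+\delta}$, and for $r<j$ the bound $\delta_{1,\infty}(j+1)$ gives $\sum_j j^{\delta}\delta_{1,\infty}(j+1)<\infty$ by \Cref{lem-1}. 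The second sum in \eqref{eq-ac-03} comes from $\sum_{j\le m-2}\sum_r\omega_{j,a}(r)\le \sum_{j\le m}\bigl(\sum_{r\le j}\delta_{1,\infty}(j+1)+\sum_{r>j}\text{tail}\bigr)$, and both pieces are bounded by $Cm^{1-\delta}$ via the weighted summability.
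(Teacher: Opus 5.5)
Your two-regime structure is right. Suppose you had a bound $\omega_{j,a}(r)\le C\,\E[L_1\mathbf 1\{L_1>\epsilon r\}]+Cr^{-3}$ holding uniformly in $j+1\le r/C_0$, where $L_i=\log N(g_i)$ and $T_j=\sum_{i\le j}L_i$. Then the summations you sketch would go through, once you account for the $\asymp r$ values of $j$ attached to each $r$; this turns your $\E L_1^{1+\delta}$ into $\E L_1^{2+\delta}$. But that uniform bound is essentially the whole content of the lemma, and the proposal does not prove it. The pigeonhole bound loses powers of $j$, as you notice. The repair you propose via \eqref{eq-ab-05} cannot work, because \eqref{eq-ab-05} is a contraction estimate: it bounds $\P(\rho(A_kx,A_ky)>e^{-c_0k})$. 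For $k<r/c_0$ this says nothing at scale $e^{-r}$, so it cannot limit how far a pair at distance $e^{-r}$ is pushed apart during the first $j\ll r$ steps. What controls that is the expansion bound $\rho(A_jx,A_jy)\le e^{2T_j}\rho(x,y)$. Together with \eqref{eq-ab-06} it gives \eqref{eq-ac-05}, namely $\omega_{j,a}(r)\le Ce^{-r/2}+C\E[L_{j+1}\mathbf 1\{T_{j+1}>r/4\}]$. This is useful only for $r>C_0(j+1)$ with $C_0$ large compared with $\E L_1$, not for all $j\le r$, since typically $T_{j+1}\approx(j+1)\E L_1$. So the missing ingredient is a large-deviation estimate for the partial sums $T_j$ under the $(2+\delta)$-moment. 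Your pointwise route could be completed this way, but none of it is in the proposal. A single large factor among the first $j$ costs $j\,\P(L_1>\epsilon r)\le(C_0\epsilon)^{-1}\E[L_1\mathbf 1\{L_1>\epsilon r\}]$ when $j\le r/C_0$. The truncated sum can be handled by a Bennett or Fuk--Nagaev inequality, which gives a polynomial remainder $r^{-K}$; $e^{-cr}$ is too optimistic under polynomial moments.

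The paper avoids pointwise uniformity by summing over $r$ before $j$. For $r\le C_0(j+1)$ it uses $\omega_{j,a}(r)\le 2\delta_{1,\infty}(j+1)$ and \Cref{lem-1}, as you do. For $r>C_0(j+1)$ it sums \eqref{eq-ac-05} with weight $r^{\gamma-1}$ at fixed $j$, which gives $C\E[L_{j+1}T_{j+1}^{\gamma}\mathbf 1\{T_{j+1}>C_0(j+1)/4\}]$. It then uses three tools. First, the inclusion $\{T_{j+1}>C_0(j+1)/4\}\subseteq\{L_{j+1}>cj\}\cup\{T_j>cj\}$, valid for $c>\E L_1+1$ and $C_0/4\ge 2c$. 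Second, independence of $L_{j+1}$ and $T_j$. Third, the Baum--Katz-type estimate of \Cref{lem-completeconv}, $\sum_n n^{\delta-1}\E[T_n\mathbf 1\{T_n>n(\E L_1+1)\}]<\infty$, proved by truncating at level $n$ and bounding a sixth moment. The choice $\gamma=\delta$ yields the first bound in \eqref{eq-ac-03}. The choice $\gamma=1$, combined with $t^2\min\{m,t\}\le m^{1-\delta}t^{2+\delta}$ and $t\min\{m,t\}^2\le m^{1-\delta}t^{2+\delta}$, yields the second. Finally, \eqref{eq-ac-04} needs no separate argument. Since $r\mapsto\omega_{j,a}(r)$ is nonincreasing, $C\ge\sum_{t=\lceil r/2\rceil}^{r}t^{\delta-1}\sum_{j}\omega_{j,a}(t)\ge c\,r^{\delta}\sum_{j}\omega_{j,a}(r)$. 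So proving the summed bound first is both easier and sufficient.
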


The next lemma gives the bounds of  the difference between $\psi^{[a]}(W_k)$ and
$\psi_m^{[a]}(U_k)$ in  $L^1$ norms.
\begin{lemma}
\label{lem-weightedprojection}
For all  $a\geq1$, $m\geq2$ and $k\geq m$, we have
\begin{align}
\mathbb{E}\lvert \psi_{m}^{[a]}(U_{k})-\psi^{[a]}(W_{k})\rvert 
&\leq C m^{-\delta},\label{eq-ad-07}\\
\mathbb{E}\bigl[L(g_{k})
 \lvert \psi_{m}^{[a]}(U_{k})-\psi^{[a]}(W_{k})\rvert \bigr]
&\leq C m^{-\delta},
\label{eq-ad-08}
\end{align}
where we recall that $L(g)=1+|\lambda_\mu|+\log N(g)$.
\end{lemma}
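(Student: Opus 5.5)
We compare the two Poisson solutions term by term. Since $W_k=g_k\cdots g_{k-m+2}W_{k-m+1}$, with $W_{k-m+1}$ of law $\nu$ and independent of $U_k$, the representations \eqref{def-psi-a} and \eqref{eq-ad-02} give
\[
\psi_m^{[a]}(U_k)-\psi^{[a]}(W_k)
=\sum_{j=0}^{m-2}\Bigl[\int_X (P^j\phi^{[a]})(g_k\cdots g_{k-m+2+j}x)\,\nu(\mathrm{d}x)-(P^j\phi^{[a]})(W_k)\Bigr]
-\sum_{j\geq m-1}P^j\phi^{[a]}(W_k).
\]
Here the $j$-th term on the left uses $U_k=(g_{k-m+2},\dots,g_k)$, so it involves the last $m-1-j$ matrices. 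By \eqref{eq-ac-01} and \Cref{lem-1} with $q=2+\delta$, $p=1$, the tail is bounded by $\sum_{j\geq m}\delta_{1,\infty}(j)\leq m^{-\delta}\sum_j j^\delta\delta_{1,\infty}(j)\leq Cm^{-\delta}$ uniformly. Since $\mathbb{E}L(g_k)<\infty$, this also handles the tail contribution to \eqref{eq-ad-08}.

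For the $j$-th main term, set $r_j=m-1-j\geq1$ and $B_j=g_k\cdots g_{k-r_j+1}$, a product of $r_j$ i.i.d.\ matrices. Write $W_k=B_jY$ with $Y=W_{k-r_j}$ independent of $B_j$. The term then equals $\int_X[(P^j\phi^{[a]})(B_jx)-(P^j\phi^{[a]})(B_jY)]\,\nu(\mathrm{d}x)$. Its absolute value is at most
\[
\omega_{j,a}(cr_j)+2\|P^j\phi^{[a]}\|_\infty\sup_{x,y}\mathbf 1\{\rho(B_jx,B_jy)>e^{-cr_j}\}
\]
after averaging in $x,y$. More precisely, taking expectations with $B_j$ independent of $(x,Y)$ gives
\[
\mathbb{E}|\cdot|\leq\omega_{j,a}(cr_j)+2\delta_{1,\infty}(j+1)\sup_{x,y}\mathbb{P}\bigl(\rho(B_jx,B_jy)>e^{-cr_j}\bigr).
\]
Summing over $j$ and reindexing by $r=r_j$, the probability part is at most $2\sum_{r\leq m-1}\delta_{1,\infty}(m-r)\,p_r$, where $p_r=\sup_{x,y}\mathbb{P}(\rho(A_rx,A_ry)>e^{-cr})$. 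We split this sum at $r=m/2$. For $r\leq m/2$ we use $\delta_{1,\infty}(m-r)$ with $m-r\geq m/2$, giving $Cm^{-\delta}$ by \Cref{lem-1}. For $r>m/2$ we bound $\delta_{1,\infty}\leq C$ and use \Cref{lem-A1}(i), $\sum_{r>m/2}p_r\leq Cm^{-\delta}$. The oscillation part is $\sum_{j}\omega_{j,a}(c(m-1-j))$. We split again: for $r=m-1-j\geq m/2$, \eqref{eq-ac-04} gives $\sum_j\omega_{j,a}(r)\leq Cr^{-\delta}$, which is not summable directly. So instead we use, for each fixed $j$, the bound $\omega_{j,a}(cr)\leq\omega_{j,a}(cm/2)$ together with $\sum_j\omega_{j,a}(cm/2)\leq Cm^{-\delta}$ from \eqref{eq-ac-04}. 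Here $\omega$ is nonincreasing in $r$, and the constant $c$ is absorbed by a rescaling that the proof of \Cref{lem-absmod} permits. For $r<m/2$ we have $j>m/2-1$, and we use $\omega_{j,a}\leq 2\delta_{1,\infty}(j+1)$, giving $\sum_{j\geq m/2-1}\delta_{1,\infty}(j+1)\leq Cm^{-\delta}$. This proves \eqref{eq-ad-07}.

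For \eqref{eq-ad-08}, the weight $L(g_k)$ is the main obstacle, because $g_k$ enters $B_j$. We peel it off by writing $B_j=g_kB_j'$, where $B_j'$ has $r_j-1$ factors and is independent of $g_k$. Conditioning on $g_k$ and using \eqref{eq-ab-07}, $\rho(B_jx,B_jy)\leq N(g_k)^2\rho(B_j'x,B_j'y)$. Instead of Lipschitz control we keep the cruder route: bound $|\cdot|$ by the oscillation of the function $f_j=(P^j\phi^{[a]})\circ g_k$ on pairs $(B_j'x,B_j'Y)$. Alternatively, and more simply, we replace $P^j\phi^{[a]}\circ g_k$ by $P^{j}\phi^{[a]}$ composed with the full product and note $|\cdot|\leq 2\|P^j\phi^{[a]}\|_\infty\leq2\delta_{1,\infty}(j+1)$ always. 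On the event $\{\rho(B_j'x,B_j'Y)\leq e^{-cr}\}$ and $\{\log N(g_k)\leq cr/4\}$, the contraction gives $\rho\leq e^{-cr/2}$ after $g_k$, so the oscillation bound $\omega_{j,a}(cr/2)$ applies. The complementary events have probability $p_{r-1}$ (independent of $g_k$, so $\mathbb{E}[L(g_k)\,\cdot]$ factorizes into $\mathbb{E}L\cdot p_{r-1}$) and $\mathbb{P}(\log N(g_k)>cr/4)$, weighted by $L(g_k)$. The last piece gives $\sum_r\delta_{1,\infty}(m-r)\,\mathbb{E}[L\mathbf 1\{L>cr/4\}]$, and $\mathbb{E}[L\mathbf 1\{L>t\}]\leq Ct^{-1-\delta}$ is summable with room to spare. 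The same splitting at $m/2$ then yields $Cm^{-\delta}$. For $r_j=1$, there is a single term bounded by $2\delta_{1,\infty}(m-1)\,\mathbb{E}L\leq Cm^{-1-\delta}$.
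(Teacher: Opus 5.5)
Your proof is correct, but for \eqref{eq-ad-07} it takes a heavier route than the paper does.

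For \eqref{eq-ad-08} your argument is essentially the paper's. You peel off $g_k$ and split into three events: failure of contraction for the remaining factors (controlled by \eqref{eq-ab-05}), large $\log N(g_k)$ (controlled by $\E[L(g_1)\mathbf{1}_{\{L(g_1)>t\}}]\le Ct^{-1-\delta}$), and the good event, where the oscillation modulus $\omega_{j,a}$ and \eqref{eq-ac-04} apply. The only difference is organizational. The paper runs this split only for $j\le m/2$, where $h_j-1\geq m/3$ makes every threshold of order $m$ at once. It disposes of $j>m/2$ with $\lVert P^j\phi^{[a]}\rVert_\infty\le\delta_{1,\infty}(j+1)$ and $\E L(g_k)<\infty$. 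You run the split for every $j$ with thresholds proportional to $r_j$ and split the sums at $r=m/2$ afterwards, which amounts to the same thing.

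For \eqref{eq-ad-07}, the paper needs neither $\omega_{j,a}$ nor the contraction probabilities $p_r$. Write $P^j\phi^{[a]}(z)=\E H^{[a]}(\tilde g_{j+1},\tilde g_j\cdots\tilde g_1 z)$. Then the $j$-th main term compares two trajectories driven by the same $r_j$ actual matrices and $j+1$ fresh ones, that is, $m$ steps in total. By Jensen and the $1$-Lipschitz property of $\tau^{[a]}$, its expectation is at most $\delta_{1,\infty}(m)$. Summing over $j$ gives $(m-1)\delta_{1,\infty}(m)\le Cm^{-\delta}$, since $\delta_{1,\infty}$ is nonincreasing and \Cref{lem-1} applies (see \eqref{eq-vv-10}).

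Your unified treatment of both bounds is valid. However, it uses \Cref{lem-absmod} and \Cref{lem-A1}(i) where a one-line coupling estimate suffices. That machinery is only needed for the weighted bound, where $g_k$ enters both the weight and the trajectory, so the expectation cannot be factored.

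Three small points of presentation:
\begin{itemize}
\item Your first display, with $\sup_{x,y}$ applied to an indicator, should have the supremum outside the probability. Your ``more precisely'' version does this correctly.
\item For $B_j'$, which has $r-1$ factors, the threshold should be $e^{-c(r-1)}$ to match $p_{r-1}$.
\item $\omega_{j,a}$ is only defined for arguments $\geq1$. This is harmless, since for $r<m/2$ and for small $m$ you fall back on the sup-norm bound anyway.
\end{itemize}
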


\begin{proof}
	For each summation index $j$, we write $h_{j}:=m-1-j$. Let \(\widetilde W_{k-h_{j}}\) be a random variable with
	distribution \(\nu\), independent of
	\(W_{k-h_{j}}, g_{k-h_{j}+1} ,\ldots,g_{k}\). 
By \eqref{eq-ad-02} and \eqref{def-psi-a}, the definition of $\psi^{[a]}$
and the triangle inequality, we have
	\begin{align}\label{eq-vv-09}
		\begin{aligned}
			\E	\lvert \psi_{m}^{[a]}(U_{k})-\psi^{[a]}(W_{k})\rvert
		& \leq \sum_{j=0}^{m-2} \E
		\big|(P^{j}\phi^{[a]})
		(g_{k}\cdots  g_{k-h_{j}+1} \widetilde{W}_{k-h_{j}})-	(P^{j}\phi^{[a]})
		(W_{k})\big|\\
		&\quad+\sum_{j=m-1}^{\infty}
		\E|(P^{j}\phi^{[a]})
		(W_{k})|.
		\end{aligned}
	\end{align}
For the first term of \eqref{eq-vv-09}, note that
$P^{j}\phi^{[a]}(x)=\E H^{[a]}(\widetilde{g}_{j+1}, \widetilde{g}_{j} \widetilde{g}_{j-1} \cdots \widetilde{g}_{1}x)$,  where $(\widetilde{g}_k)_{k\geq1}$ is an i.i.d. sequence with common distribution \(\mu\), independent of all other random variables.
Then, 
\begin{align}\label{eq-ww-08}
	\begin{aligned}
		&\E\big[	|(P^{j}\phi^{[a]})
		(g_{k}\cdots g_{k-m+j+2}\widetilde{W}_{k-h_{j}})-	(P^{j}\phi^{[a]})
		(W_{k})|\mid W_{k-h_{j}},\widetilde{W}_{k-h_{j}}\big]\\
		&\quad\leq \sup_{x,y}\E\big| H^{[a]}(\widetilde{g}_{j+1}, \widetilde{g}_{j} \cdots \widetilde{g}_{1} g_{k}\cdots  g_{k-h_{j}+1} x)- H^{[a]}(\widetilde{g}_{j+1}, \widetilde{g}_{j} \cdots \widetilde{g}_{1}g_{k}\cdots  g_{k-h_{j}+1} y)\big|\\
		&\quad\leq \sup_{x,y}\E|X_{m,x}-X_{m,y}|\leq \delta_{1,\infty}(m),
	\end{aligned}
\end{align}
where we used \eqref{Lipschitz Ha cocycle} in the second inequality.  
Combining \eqref{eq-ac-01}, \eqref{eq-vv-09} and \eqref{eq-ww-08}, we have
\begin{align}\label{eq-vv-9.1}
\mathbb{E}\big| \psi_{m}^{[a]}(U_{k})-\psi^{[a]}(W_{k})\big| 
&\leq(m-1)\delta_{1,\infty}(m)+\sum_{j=m-1}^{\infty}\delta_{1,\infty}(j+1).
\end{align}
By the definition of $\delta_{p,\infty}$ and the i.i.d.
property of $(g_k)$, for $p\geq1$ and $r\geq1$,
we have
\[
\begin{aligned}
	\delta_{p,\infty}^{p}(r+1)
	&=\sup_{x,y\in X}\int_G
	\mathbb{E}|X_{r,gx}-X_{r,gy}|^p\,\mu(\mathrm{d}g)\leq\sup_{u,v\in X}\mathbb{E}|X_{r,u}-X_{r,v}|^p=\delta_{p,\infty}^{p}(r).
\end{aligned}
\]
Taking $p=1$ gives
$\delta_{1,\infty}(r+1)\leq\delta_{1,\infty}(r)$. Hence,
\begin{align}\label{eq-vv-10}
m^{1+\delta}\delta_{1,\infty}(m)
\leq C\sum_{r=\lceil m/2\rceil}^{m}r^{\delta}\delta_{1,\infty}(r)
\leq C.
\end{align}
By \Cref{lem-1}, we have
$\sum_{j=m-1}^{\infty}\delta_{1,\infty}(j+1)\leq m^{-\delta}\sum_{r\geq m}r^{\delta}\delta_{1,\infty}(r)\leq Cm^{-\delta}.$  Substituting the bounds for $(m-1)\delta_{1,\infty}(m)$
and $\sum_{j=m-1}^{\infty}\delta_{1,\infty}(j+1)$
into \eqref{eq-vv-9.1} yields \eqref{eq-ad-07}.

We next prove \eqref{eq-ad-08}. Let \(c_{0}>0\) be a constant provided by \Cref{lem-A1}(i) for which \eqref{eq-ab-05} holds.
 Without loss of generality, we assume that  $m\geq\max\{12,6/c_0\}$;
 otherwise, \eqref{eq-ad-08} follows from
 $\mathbb{E}L(g_1)<\infty$ and the uniform bounds
 for $\psi_m^{[a]}$ and $\psi^{[a]}$, after enlarging $C$. By \eqref{eq-ad-02} and the definition of $\psi^{[a]}$, we have
\begin{align}\label{eq-hh-1.1}
	\mathbb{E}\bigl[L(g_{k})
	\lvert \psi_{m}^{[a]}(U_{k})-\psi^{[a]}(W_{k})\rvert \bigr]\leq I_{1}+I_{2},
\end{align}
where 
\begin{align*}
	I_{1}&=\sum_{j=0}^{\lfloor m/2\rfloor} \E\big[ L(g_{k})
	\big|(P^{j}\phi^{[a]})
	(g_{k}\cdots  g_{k-h_{j}+1} \widetilde{W}_{k-h_{j}})-	(P^{j}\phi^{[a]})
	(W_{k})\big|\big],\\
	I_2
	&=\sum_{j=\lfloor m/2\rfloor+1}^{m-2}
	\mathbb{E}\Bigl[
	L(g_k)
	\bigl|
	(P^j\phi^{[a]})
	(g_k\cdots g_{k-h_j+1}
	\widetilde W_{k-h_j})
	-(P^j\phi^{[a]})(W_k)
	\bigr|
	\Bigr]\\
	&\quad+\sum_{j=m-1}^{\infty}
	\mathbb{E}\Bigl[
	L(g_k)
	\bigl|(P^j\phi^{[a]})(W_k)\bigr|
	\Bigr].
\end{align*}
We first bound \(I_{2}\). By \eqref{eq-ac-01} and \Cref{lem-1}, we have
\begin{align}\label{eq-hh-02}
	I_{2}&\leq C\E L(g_{k})\sum_{j=\lfloor m/2\rfloor+1}^{\infty}\delta_{1,\infty}(j+1)\leq Cm^{-\delta}.
\end{align}
It remains to  bound  $I_{1}$.  
Set
\[
\mathcal A_{h_{j}}
:=
\biggl\{
\rho\bigl(
g_{k-1}\cdots g_{k-h_{j}+1} W_{k-h_{j}},
g_{k-1}\cdots g_{k-h_{j}+1} \widetilde W_{k-h_{j}}
\bigr)
>
e^{-c_{0}(h_{j}-1)}
\biggr\}
\]
and
\[
\mathcal B_{h_{j}}
:=
\biggl\{
\log N(g_{k})
>
\frac{c_{0}(h_{j}-1)}{4}
\biggr\}.
\]
We further decompose \(I_{1}\) according to $1
\leq
\mathbf 1_{\mathcal A_{h_{j}}}
+
\mathbf 1_{\mathcal B_{h_{j}}}
+
\mathbf 1_{\mathcal A_{h_{j}}^{c}\cap\mathcal B_{h_{j}}^{c}}$ and write
\[
I_{1}\leq I_{11}+I_{12}+I_{13},
\]
where 
\begin{align*}
I_{11}&=\sum_{j=0}^{\lfloor m/2\rfloor} \E\big[ L(g_{k})
\big|(P^{j}\phi^{[a]})
(g_{k}\cdots  g_{k-h_{j}+1} \widetilde{W}_{k-h_{j}})-	(P^{j}\phi^{[a]})
(W_{k})\big| \mathbf 1_{\mathcal A_{h_{j}}}\big].\\
I_{12}&=\sum_{j=0}^{\lfloor m/2\rfloor} \E\big[ L(g_{k})
\big|(P^{j}\phi^{[a]})
(g_{k}\cdots  g_{k-h_{j}+1} \widetilde{W}_{k-h_{j}})-	(P^{j}\phi^{[a]})
(W_{k})\big| \mathbf 1_{\mathcal B_{h_{j}}}\big],\\
I_{13}&=\sum_{j=0}^{\lfloor m/2\rfloor} \E\big[ L(g_{k})
\big|(P^{j}\phi^{[a]})
(g_{k}\cdots  g_{k-h_{j}+1} \widetilde{W}_{k-h_{j}})-	(P^{j}\phi^{[a]})
(W_{k})\big| \mathbf 1_{\mathcal A_{h_{j}}^{c}\cap\mathcal B_{h_{j}}^{c}}\big].
\end{align*}
We estimate $I_{11}$, $I_{12}$ and $I_{13}$ separately.
Note that $\mathcal A_{h_j}$ is measurable with respect to
the $\sigma$-algebra
$
\sigma(
W_{k-h_j},
\widetilde W_{k-h_j},
g_{k-h_j+1},
\ldots,
g_{k-1}
),
$
which is independent of $g_k$.
For every $0\leq j\leq\lfloor m/2\rfloor$, we have
$h_j-1\geq m/2-2\geq m/3$.
Thus \eqref{eq-ab-05}, conditioned on the two initial states, gives
$\P(\mathcal A_{h_j})\leq C(h_j-1)^{-\delta}\leq Cm^{-\delta}$.
Combining the bound for $\P(\mathcal A_{h_j})$ with
\eqref{eq-ac-01} and \Cref{lem-1}, we obtain
\begin{align}\label{eq-hh-03}
	I_{11}&\leq C\sum_{j=0}^{\lfloor m/2\rfloor} \delta_{1,\infty}(j+1)
	 \E L(g_{k})
	 \P(\mathcal A_{h_j})\leq Cm^{-\delta}\sum_{j=0}^{\lfloor m/2\rfloor}
	\delta_{1,\infty}(j+1)\leq Cm^{-\delta}.
\end{align}

On \(\mathcal B_{h_{j}}\), since $h_{j}-1\geq m/3$ for $m\geq 12$, it follows that $\mathcal B_{h_{j}}
\subset
\bigl\{
L(g_{k})>c_{1}m
\bigr\},$ where $c_{1}=c_{0}/12.$  Combining the inclusion for $\mathcal B_{h_j}$ with
\eqref{eq-ac-01} and \Cref{lem-1}, we obtain
\begin{align}\label{eq-hh-04}
	I_{12}&\leq C\sum_{j=0}^{\lfloor m/2\rfloor} \delta_{1,\infty}(j+1)\cdot  \E\bigl[
	L(g_{k})
	\mathbf 1_{\{L(g_{k})>c_{1}m\}}
	\bigr]\leq Cm^{-\delta}.
\end{align}

Finally, on
\(\mathcal A_{h_{j}}^{c}\cap\mathcal B_{h_{j}}^{c}\), by \eqref{eq-ab-07}, we have
\[
\begin{aligned}
	\rho\bigl(
	g_{k}\cdots g_{k-h_{j}+1} W_{k-h_{j}},
	g_{k}\cdots g_{k-h_{j}+1} \widetilde W_{k-h_{j}}
	\bigr)&\leq
	\exp\bigl(
	2\log N(g_{k})-c_{0}(h_{j}-1)
	\bigr)\\
	&\leq
	e^{-c_{0}(h_{j}-1)/2}\leq
	e^{-c_{2}m},
\end{aligned}
\]
where $c_2=c_0/6$.
 Hence, by the definition of
\(\omega_{j,a}\) in \Cref{lem-absmod}, we have
\[
|(P^{j}\phi^{[a]})
(g_{k}\cdots  g_{k-h_{j}+1} \widetilde{W}_{k-h_{j}})-	(P^{j}\phi^{[a]})
(W_{k})|\leq
\omega_{j,a}(c_{2}m)
\]
on \(\mathcal A_{h_{j}}^{c}\cap\mathcal B_{h_{j}}^{c}\). Therefore, we have
\begin{align}\label{eq-hh-05}
I_{13}&\leq  \E L(g_1)
 \sum_{j=0}^{\lfloor m/2\rfloor} \omega_{j,a}(c_{2}m)\leq Cm^{-\delta},
\end{align}
where the last inequality follows from \eqref{eq-ac-04}. Substituting \eqref{eq-hh-02}, \eqref{eq-hh-03}, \eqref{eq-hh-04} and \eqref{eq-hh-05} into \eqref{eq-hh-1.1}, we obtain   \eqref{eq-ad-08}. 
\end{proof}

Let
$s_{a,m}^{2}=\mathbb{E}[(Y^{[a]}_{m,m})^{2}]$. 
The following lemma provides a quantitative approximation of
\(s_{a,m}^{2}\) by \(s^{2}\), with an error of order
\(a^{-\delta}+m^{-\delta}\).
\begin{lemma}\label{prop-variance}
   There exists a positive constant $C$ such that
\begin{equation}\label{eq-ad-09}
\lvert s_{a,m}^{2}-s^{2}\rvert 
\leq C\bigl(a^{-\delta}+m^{-\delta}\bigr).
\end{equation}
In particular, $s_{m,m}^{2}$ is bounded above and bounded away
from zero for every sufficiently large $m$.
\end{lemma}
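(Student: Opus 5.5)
The plan is to compare $Y^{[a]}_{m,m}$ with the censored martingale increment $\sigma_0^{[a]}(g_m,W_{m-1})$ in $L^2$-type norm, and then invoke the last inequality in \eqref{eq-ac-07}. By stationarity we may work with any index $k\geq m$. Write
\[
Y^{[a]}_{k,m}-\sigma_0^{[a]}(g_k,W_{k-1})
=\bigl(X^{[a]}_{k,m}-H^{[a]}(g_k,W_{k-1})\bigr)
+\bigl(\psi_m^{[a]}(U_k)-\psi^{[a]}(W_k)\bigr)
-\bigl(\psi_m^{[a]}(U_{k-1})-\psi^{[a]}(W_{k-1})\bigr)
=:D_1+D_2-D_3 .
\]
Since $\lvert s_{a,m}^2-s^2\rvert\leq \lvert \mathbb{E}(Y^{[a]}_{k,m})^2-\mathbb{E}\sigma_0^{[a]}(g_k,W_{k-1})^2\rvert+Ca^{-\delta}$, it suffices to show $\mathbb{E}\bigl[(\lvert Y^{[a]}_{k,m}\rvert+\lvert\sigma_0^{[a]}(g_k,W_{k-1})\rvert)\,\lvert D_1+D_2-D_3\rvert\bigr]\leq Cm^{-\delta}$, using $a^2-b^2=(a+b)(a-b)$. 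Both $\lvert Y^{[a]}_{k,m}\rvert$ and $\lvert\sigma_0^{[a]}\rvert$ are bounded by $CL(g_k)$, by \eqref{eq-ad-04} and the bound $\lvert\sigma_0^{[a]}\rvert\le \min\{L,a\}+C$. So the goal reduces to $\mathbb{E}[L(g_k)(\lvert D_1\rvert+\lvert D_2\rvert+\lvert D_3\rvert)]\leq Cm^{-\delta}$.

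For $D_2$ this is exactly \eqref{eq-ad-08}. For $D_3$, note that $L(g_k)$ is independent of $D_3$, which depends only on $W_{k-m}$ and $g_{k-m+1},\dots,g_{k-1}$. Hence $\mathbb{E}[L(g_k)\lvert D_3\rvert]=\mathbb{E}L(g_1)\,\mathbb{E}\lvert D_3\rvert\leq Cm^{-\delta}$ by \eqref{eq-ad-07} applied at index $k-1\geq m$; we take $k=m+1$ or larger, using stationarity.

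For $D_1$, write $D_1=\int_X[H^{[a]}(g_k,g_{k-1}\cdots g_{k-m+1}x)-H^{[a]}(g_k,W_{k-1})]\,\nu(\mathrm{d}x)$, with $W_{k-1}=g_{k-1}\cdots g_{k-m+1}W_{k-m}$. By \eqref{Lipschitz Ha cocycle} and \eqref{eq-ab-06}, the integrand is at most $\min\{2\log N(g_k),\sqrt2N(g_k)^2\rho(A'x,A'W_{k-m})\}$, where $A'$ is the product of the $m-1$ matrices $g_{k-m+1},\dots,g_{k-1}$, which are independent of $g_k$. I would split exactly as in the proof of \eqref{eq-ad-08}. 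On the event that $\rho(A'x,A'W_{k-m})>e^{-c_0(m-1)}$, Lemma \ref{lem-A1}(i) together with monotonicity in $r$ gives probability at most $Cm^{-1-\delta}$, indeed summably small; combined with $L(g_k)\cdot2\log N(g_k)\le 2L(g_k)^2$ and independence, this contributes $Cm^{-\delta}$. On $\{\log N(g_k)>c_0(m-1)/4\}$ the bound is $C\mathbb{E}[L^2\mathbf 1_{\{L>cm\}}]\leq Cm^{-\delta}$ by the $2+\delta$ moment. On the complement of both, the term is at most $L(g_k)\sqrt2 e^{-c_0(m-1)/2}$, which is exponentially small.

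The main obstacle is the probability bound in the first case. \eqref{eq-ab-05} only gives summability of $r^\delta\sup\P(\cdot)$, not a pointwise rate. However, $\sup_{x,y}\P(\rho(A_rx,A_ry)>e^{-cr})$ is essentially monotone, after adjusting $c$ and using \eqref{eq-ab-07} plus a moment tail. Summing $r^\delta p_r$ over $r\in[m/2,m]$ then gives $p_m\le Cm^{-1-\delta}$; alternatively one can get away with $m^{-\delta}$ by using $\E L(g_k)\lvert D_1\rvert$ with the bound $L(g_k)^2$ and a Hölder split. Once these pieces are in place, \eqref{eq-ad-09} follows. The final claim follows by taking $a=m$: then $\lvert s_{m,m}^2-s^2\rvert\le Cm^{-\delta}<s^2/2$ for large $m$, and $s^2>0$ by Benoist--Quint.
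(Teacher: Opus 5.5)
Your proof is correct, but it handles the cross terms differently from the paper. The paper first uses the two Poisson equations and stationarity to rewrite $\mathbb{E}[(Y^{[a]}_{k,m})^{2}]=\mathbb{E}[(X^{[a]}_{k,m})^{2}]+2\mathbb{E}[X^{[a]}_{k,m}\psi_{m}^{[a]}(U_{k})]$ and $\mathbb{E}[\sigma_{0}^{[a]}(g_{k},W_{k-1})^{2}]=\mathbb{E}[H^{[a]}(g_{k},W_{k-1})^{2}]+2\mathbb{E}[H^{[a]}(g_{k},W_{k-1})\psi^{[a]}(W_{k})]$. Your lagged term $D_3$ therefore never appears there. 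The projection identity $X^{[a]}_{k,m}=\mathbb{E}[H^{[a]}(g_k,W_{k-1})\mid\mathscr{F}_{k,m}]$ then makes the square terms differ by at most $\mathbb{E}|D_1|^{2}$, and the cross terms reduce to \eqref{eq-ad-08}.

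You instead bound $\mathbb{E}[L(g_k)(|D_1|+|D_2|+|D_3|)]$ directly via $a^2-b^2=(a+b)(a-b)$. This avoids the Poisson-equation algebra at the cost of the extra term $D_3$, which your independence argument together with \eqref{eq-ad-07} handles cleanly. Both routes rest on \eqref{eq-ad-08}.

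Your treatment of $D_1$ is much heavier than needed, and the ``main obstacle'' you identify is not one. Since $|D_1|\le 2\log N(g_k)\le 2L(g_k)$, and $\mathbb{E}|D_1|\le\delta_{1,\infty}(m)$ by \eqref{Lipschitz Ha cocycle} (because $W_{k-m}$ is independent of $g_{k-m+1},\dots,g_k$), splitting on $\{L(g_k)\le m\}$ gives
\[
\mathbb{E}\bigl[L(g_k)|D_1|\bigr]\le m\,\delta_{1,\infty}(m)+2\,\mathbb{E}\bigl[L(g_1)^{2}\mathbf{1}_{\{L(g_1)>m\}}\bigr]\le Cm^{-\delta}.
\]
This uses \eqref{eq-vv-10} and the $2+\delta$ moment, exactly as the paper bounds its term $I_1$, and no contraction argument is needed.

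Even within your three-event split, convergence of $\sum_r r^{\delta}p_r$ already forces $\sup_r r^{\delta}p_r<\infty$, hence $p_{m-1}\le Cm^{-\delta}$. That suffices, because the probability is multiplied only by $2\,\mathbb{E}[L(g_1)\log N(g_1)]<\infty$; the paper uses exactly this pointwise consequence in proving \eqref{eq-ad-08}. The rate $m^{-1-\delta}$ is never needed, so you can drop the monotonicity discussion, whose sketch via \eqref{eq-ab-07} is not convincing as written.
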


\begin{proof}
	By the triangle inequality and \Cref{lem-clipvariance}, we have
	\begin{align}\label{eq-ww-05}
		\begin{aligned}
		| s_{a,m}^{2}-s^{2}| &\leq \big|s_{a,m}^{2}-\mathbb{E}\bigl[\sigma_{0}^{[a]}(g_1,W_{0})^{2}\bigr]\big|+\big| \mathbb{E}\bigl[\sigma_{0}^{[a]}(g_1,W_{0})^{2}\bigr]-s^{2}\big|\\
		&\leq \big|s_{a,m}^{2}-\mathbb{E}\bigl[\sigma_{0}^{[a]}(g_1,W_{0})^{2}\bigr]\big|+C a^{-\delta}.
		\end{aligned}
	\end{align}
Fix $k\geq m$. To estimate
$|s_{a,m}^{2}-\mathbb{E}[\sigma_{0}^{[a]}(g_1,W_{0})^{2}]|$,
we first expand the second moments of
$Y_{k,m}^{[a]}$ and $\sigma_{0}^{[a]}(g_k,W_{k-1})$
using the definitions. By
	\eqref{eq-ad-01}, we have
	\begin{align}\label{eq-vv-20}
		\begin{aligned}
					\mathbb{E}\bigl[(Y_{k,m}^{[a]})^{2}\bigr]
			={}&\mathbb{E}\bigl[(X^{[a]}_{k,m})^{2}\bigr]
			+2\mathbb{E}\bigl[X^{[a]}_{k,m}\psi_{m}^{[a]}(U_{k})\bigr]
			-2\mathbb{E}\bigl[X^{[a]}_{k,m}\psi_{m}^{[a]}(U_{k-1})\bigr]\\
			&+\mathbb{E}\bigl[\psi_{m}^{[a]}(U_{k})^{2}\bigr]
			+\mathbb{E}\bigl[\psi_{m}^{[a]}(U_{k-1})^{2}\bigr]
			-2\mathbb{E}\bigl[\psi_{m}^{[a]}(U_{k})\psi_{m}^{[a]}(U_{k-1})\bigr].
		\end{aligned}
	\end{align}
	For $\mathbb{E}\bigl[X^{[a]}_{k,m}\psi_{m}^{[a]}(U_{k-1})\bigr]$, by the definitions of $X_{k,m}^{[a]}$ and $\phi_{m}^{[a]}$, we have 
	\begin{align*}
		\E[X_{k,m}^{[a]}\mid U_{k-1}]=\E\Big[\int H^{[a]}(g_{k},  g_{k-1}\cdots g_{k-m+1}x)\nu(dx) \mid U_{k-1}\Big]=\phi_{m}^{[a]}(U_{k-1}).
	\end{align*}
	Consequently, we have
	\begin{align}\label{eq-vv-20a}
		\begin{aligned}
				\mathbb{E}\bigl[X^{[a]}_{k,m}\psi_{m}^{[a]}(U_{k-1})\bigr]
		&=\mathbb{E}\bigl[\phi^{[a]}_{m}(U_{k-1})\psi_{m}^{[a]}(U_{k-1})\bigr]\\
		&=\mathbb{E}\bigl[\psi_{m}^{[a]}(U_{k-1})^{2}\bigr]
		-\mathbb{E}\bigl[(P_{m}\psi_m^{[a]})(U_{k-1})
		\psi_{m}^{[a]}(U_{k-1})\bigr],
		\end{aligned}
	\end{align}
	where the second identity uses the fact that
	$\phi^{[a]}_{m}=(I-P_{m})\psi_{m}^{[a]}$.  Similarly, conditioning on \(U_{k-1}\) and using the definition
	of \(P_{m}\), we have
\begin{align}\label{eq-vv-20b}
	\begin{aligned}
	\mathbb{E}\bigl[
	\psi_{m}^{[a]}(U_{k})\psi_{m}^{[a]}(U_{k-1})
	\bigr]
	&=
	\mathbb{E}\Bigl[
	\mathbb{E}\bigl[\psi_{m}^{[a]}(U_{k})\mid U_{k-1}\bigr]
	\psi_{m}^{[a]}(U_{k-1})
	\Bigr]\\
	&=
	\mathbb{E}\bigl[
	(P_{m}\psi_{m}^{[a]})(U_{k-1})
	\psi_{m}^{[a]}(U_{k-1})
	\bigr].
\end{aligned}
\end{align}
By stationarity, we have
$
	\mathbb{E}\psi_{m}^{[a]}(U_{k})^{2}
	=\mathbb{E}\psi_{m}^{[a]}(U_{k-1})^{2}.
$
Substituting \eqref{eq-vv-20a} and \eqref{eq-vv-20b}
into \eqref{eq-vv-20} and using stationarity, we obtain
\begin{align}\label{eq-vv-21}
	\mathbb{E}\bigl[(Y_{k,m}^{[a]})^{2}\bigr]
=\mathbb{E}\bigl[(X^{[a]}_{k,m})^{2}\bigr]
+2\mathbb{E}\bigl[X^{[a]}_{k,m}\psi_{m}^{[a]}(U_{k})\bigr].
\end{align}
	For $\mathbb{E}\bigl[\sigma_{0}^{[a]}(g_1,W_{0})^{2}]$,  by the definition of $\sigma_{0}^{[a]}(g_{k}, W_{k-1})$, we have
	\begin{align*}
		\mathbb{E}\bigl[\sigma_{0}^{[a]}(g_{k},W_{k-1})^{2}\bigr]
		={}&
		\mathbb{E}\bigl[H^{[a]}(g_{k},W_{k-1})^{2}\bigr]
		+\mathbb{E}\bigl[\psi^{[a]}(W_{k})^{2}\bigr]
		+\mathbb{E}\bigl[\psi^{[a]}(W_{k-1})^{2}\bigr]\\
		&+
		2\mathbb{E}\bigl[
		H^{[a]}(g_{k},W_{k-1})
		\psi^{[a]}(W_{k})
		\bigr]-
		2\mathbb{E}\bigl[
		H^{[a]}(g_{k},W_{k-1})
		\psi^{[a]}(W_{k-1})
		\bigr]\\
		&-
		2\mathbb{E}\bigl[
		\psi^{[a]}(W_{k})
		\psi^{[a]}(W_{k-1})
		\bigr].
	\end{align*}
By	conditioning on 
	$W_{k-1}$  and using 	$(I-P)\psi^{[a]}=\phi^{[a]}$, we have
	\begin{align*}
		\mathbb{E}\bigl[H^{[a]}(g_{k},W_{k-1})\psi^{[a]}(W_{k-1})\bigr]
		&=
		\mathbb{E}\bigl[\phi^{[a]}(W_{k-1})\psi^{[a]}(W_{k-1})\bigr]\\
		&=
		\mathbb{E}\bigl[\psi^{[a]}(W_{k-1})^{2}\bigr]
		-\mathbb{E}\bigl[(P\psi^{[a]})(W_{k-1})\psi^{[a]}(W_{k-1})\bigr],
	\end{align*}
	and 
	\[
	\mathbb{E}\bigl[\psi^{[a]}(W_{k})\psi^{[a]}(W_{k-1})\bigr]=
	\mathbb{E}\bigl[(P\psi^{[a]})(W_{k-1})\psi^{[a]}(W_{k-1})\bigr].
	\]
Also, 
 stationarity gives
$
	\mathbb{E}\psi^{[a]}(W_{k})^{2}=\mathbb{E}\psi^{[a]}(W_{k-1})^{2}.
$
Therefore, 
\begin{align}\label{eq-vv-22}
	\mathbb{E}\bigl[\sigma_{0}^{[a]}(g_{k},W_{k-1})^{2}\bigr]
	=\mathbb{E}\bigl[H^{[a]}(g_{k},W_{k-1})^{2}\bigr]
	+2\mathbb{E}\bigl[H^{[a]}(g_{k},W_{k-1})\psi^{[a]}(W_{k})\bigr].
\end{align}
Combining \eqref{eq-vv-21} and \eqref{eq-vv-22}, and using stationarity, we have
\begin{align}
	\big|s_{a,m}^{2}-\mathbb{E}\bigl[\sigma_{0}^{[a]}(g_1,W_{0})^{2}\bigr]\big|\leq I_{1}+I_{2},
\end{align}
where 
\begin{align*}
	I_{1}&= \big| \mathbb{E}\bigl[(X^{[a]}_{k,m})^{2}\bigr] -\mathbb{E}\bigl[H^{[a]}(g_{k},W_{k-1})^{2}\bigr]\big|,\\
	I_{2}&=2\big| \mathbb{E}\bigl[X^{[a]}_{k,m}\psi_{m}^{[a]}(U_{k})\bigr]-\mathbb{E}\bigl[H^{[a]}(g_{k},W_{k-1})\psi^{[a]}(W_{k})\bigr] \big|.
\end{align*}
For $I_{1}$, the conditional projection identity
$
X_{k,m}^{[a]}=\E[H^{[a]}(g_{k},W_{k-1})\mid\mathscr{F}_{k,m}]
$
gives
\[
\mathbb{E}\bigl[X^{[a]}_{k,m}\bigl(H^{[a]}(g_{k},W_{k-1})-X^{[a]}_{k,m}\bigr)\bigr]=0.
\]
Hence, 
\begin{align*}
	I_{1}&\leq  \mathbb{E}\lvert X^{[a]}_{k,m}-H^{[a]}(g_{k},W_{k-1})\rvert ^{2}\\
	&\leq \mathbb{E}\big\{\lvert X^{[a]}_{k,m}-H^{[a]}(g_{k},W_{k-1})\rvert ^{2}\mathbf{1}_{\{L(g_{k})\leq m\}}\big\}
+\mathbb{E}\big\{\lvert X^{[a]}_{k,m}-H^{[a]}(g_{k},W_{k-1})\rvert ^{2}\mathbf{1}_{\{L(g_{k})> m\}}\big\}.
\end{align*}
Using
$
W_{k-1}
=g_{k-1}\cdots
g_{k-m+1}W_{k-m},
$
the definition of $X_{k,m}^{[a]}$ gives
\[
\begin{aligned}
	H^{[a]}(g_{k},W_{k-1})-X_{k,m}^{[a]}=
	\int_{X}\Bigl\{
	H^{[a]}(g_{k},W_{k-1})
	-
	H^{[a]}\bigl(
	g_{k},
	g_{k-1}\cdots g_{k-m+1}y
	\bigr)
	\Bigr\}\,\nu(\mathrm{d}y).
\end{aligned}
\]
Since $\tau^{[a]}$ is
$1$-Lipschitz and $W_{k-m}$ is independent of
$g_{k-m+1},\ldots,g_{k}$,
the definition of $\delta_{1,\infty}(m)$ yields
\begin{equation}\label{eq-ai-03}
	\mathbb{E}\bigl|
	H^{[a]}(g_{k},W_{k-1})
	-X_{k,m}^{[a]}
	\bigr|
	\leq\delta_{1,\infty}(m).
\end{equation}
Combining this with the inequality $|X_{k,m}^{[a]}|+|H^{[a]}(g_{k},W_{k-1})|\leq CL(g_{k})$ and \eqref{eq-vv-10}, we have
\begin{align}\label{eq-vv-23}
	I_{1}\leq Cm \delta_{1,\infty}(m)
	+C\mathbb{E}\bigl[L(g_{k})^{2}
	\mathbf{1}_{\{L(g_{k})>m\}}\bigr]\leq Cm^{-\delta}.
\end{align}

For $I_{2}$, noting that $\psi_{m}^{[a]}(U_{k})$ is
$\mathscr{F}_{k,m}$-measurable, it follows that
\[
\mathbb{E}\bigl[X^{[a]}_{k,m}\psi_{m}^{[a]}(U_{k})\bigr]
=\mathbb{E}\bigl[H^{[a]}(g_{k},W_{k-1})\psi_{m}^{[a]}(U_{k})\bigr].
\]
Using the above identity together with the inequality $\lvert H^{[a]}(g_{k},W_{k-1})\rvert \leq C L(g_{k})$, and then applying \Cref{lem-weightedprojection}, we obtain
\begin{align}\label{eq-vv-24}
I_{2}\leq C\mathbb{E}\bigl[L(g_{k})
\lvert \psi_{m}^{[a]}(U_{k})-\psi^{[a]}(W_{k})\rvert \bigr]
\leq C m^{-\delta}.
\end{align}
Combining  \eqref{eq-ww-05}, \eqref{eq-vv-23} and \eqref{eq-vv-24},  we  complete  the proof of \eqref{eq-ad-09} and hence the lemma. 
\end{proof}

 For notational
simplicity, when no confusion can arise,  we simply write $Y_{k}=Y_{k,m}^{[a]}$ and  $X_{k,m}=X_{k,m}^{[a]}.$
For $m\geq2$ and $h\in G=\mathrm{GL}(d,\R)$,  let
$T_{h}:G^{m-1}\to G^{m-1}$ be the shift map given by
\[
T_{h}(u_{1},\ldots,u_{m-1})
:=(u_{2},\ldots,u_{m-1},h).
\]
For $u=(u_{1},\ldots,u_{m-1})\in G^{m-1}$, write
\[
x_{a,m}(u,h)
:=x_{a,m}(u_{1},\ldots,u_{m-1},h).
\]
With this notation, define
$F_{a,m}:G^{m-1}\times G\to\mathbb{R}$ by
\[
F_{a,m}(u,h)
:=x_{a,m}(u,h)
+\psi_{m}^{[a]}(T_{h}u)-\psi_{m}^{[a]}(u).
\]

Recall that $(g_{k})_{k\geq1}$ is a sequence of independent
random matrices with common law $\mu$.
For every deterministic initial state $u\in G^{m-1}$, define
\begin{align}\label{eq-defi-Uu}
	U_{0}(u):=u,\qquad
	U_{r}(u):=T_{g_{r}}\bigl(U_{r-1}(u)\bigr),
	\qquad r\geq1.
\end{align}
For every $r\geq1$, $u\in G^{m-1}$ and $h\in G$, define
\[
\begin{aligned}
	X_{r}(u;h)&:=x_{a,m}\bigl(U_{r-1}(u),h\bigr),\quad
	Y_{r}(u;h):=F_{a,m}\bigl(U_{r-1}(u),h\bigr).
\end{aligned}
\]
 For fixed $u$ and $h$, these random
variables depend only on $g_{1},\ldots,g_{r-1}$.
The corresponding random variables at step $r$ are defined by
\begin{align}\label{eq-defi-Yr}
	X_{r}(u):=X_{r}(u;g_{r}),\qquad
	Y_{r}(u):=Y_{r}(u;g_{r}).
\end{align}
Thus
\begin{align}\label{eq-ww-06}
	Y_{r}(u)
	=X_{r}(u)+\psi_{m}^{[a]}\bigl(U_{r}(u)\bigr)
	-\psi_{m}^{[a]}\bigl(U_{r-1}(u)\bigr).
\end{align}

Fix two deterministic initial states $u,v\in G^{m-1}$.
For each fixed $r\geq1$, write $g:=g_{r}$ and let $g'$ have
law $\mu$, independently of the entire sequence
$(g_{j})_{j\geq1}$. Define
\[
\Gamma_{r}(u,v)
:=Y_{r}(u;g)-Y_{r}(v;g)
-Y_{r}(u;g')+Y_{r}(v;g').
\]

After $m-1$ steps, all coordinates of the initial states
have been replaced.  Hence, for every $r\geq m$,
\begin{align}\label{eq-ww-09}
	U_{r-1}(u)=U_{r-1}(v).
\end{align}
Consequently,
\[
Y_{r}(u;h)=Y_{r}(v;h)
\quad\text{for every }h\in G,
\qquad
\Gamma_{r}(u,v)=0.
\]

To prove the concentration inequality and \Cref{thm-main},
we need moment bounds on the changes in $Y_{r}$ and $Y_{r}^{2}$
caused by changing the initial state.
\begin{lemma}
\label{prop-budgets}
Let $\ell_{a}(g)=1+\min\{L(g), a\}$, where we recall that
$L(g)=1+|\lambda_{\mu}|+\log N(g)$. Then
there exist nonnegative deterministic coefficients
$\beta_{r,1},\beta_{r,2},\beta_{r,3}$, all zero for $r\geq m$, such
that
\begin{align}
\sup_{u,v}\mathbb{E}\lvert Y_{r}(u)-Y_{r}(v)\rvert ^{2}
&\leq\beta_{r,1},\label{eq-ae-02}\\
\sup_{u,v}\mathbb{E}\lvert (Y_{r}(u))^{2}-(Y_{r}(v))^{2}\rvert 
&\leq \beta_{r,2},\label{eq-ae-03}\\
\sup_{u,v}\mathbb{E}\bigl[
 (\ell_{a}(g)+\ell_{a}(g'))\lvert \Gamma_{r}(u,v)\rvert 
\bigr]
&\leq\beta_{r,3},\label{eq-ae-04}
\end{align}
and
\begin{equation}\label{eq-ae-05}
\sum_{r=1}^{\infty}
 \bigl(\beta_{r,1}+\beta_{r,2}+\beta_{r,3}\bigr)
\leq C\bigl(1+a^{1-\delta}+m^{1-\delta}\bigr),
\end{equation}
where $C>0$ is a constant depending only on $\mu$, $\delta$,
$d$, and the fixed norm.
\end{lemma}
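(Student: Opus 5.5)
Since $U_{r-1}(u)=U_{r-1}(v)$ for $r\geq m$ by \eqref{eq-ww-09}, I will set $\beta_{r,i}=0$ for $r\geq m$ and only treat $1\leq r\leq m-1$. Write $j:=m-r$, the number of original coordinates of $u$ still present in $U_{r-1}(u)$. Then $U_{r-1}(u)=(u_r,\ldots,u_{m-1},g_1,\ldots,g_{r-1})$, and the same holds for $v$. The input $h=g_r$ (or $g'$) of $x_{a,m}$ in $Y_r(u;h)$ therefore sits at the last position, after $r-1$ fresh matrices.

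\textbf{Increment of $x_{a,m}$.} We have
\[
x_{a,m}(U_{r-1}(u),h)=\int_X H^{[a]}\bigl(h,g_{r-1}\cdots g_1\,u_{m-1}\cdots u_r x\bigr)\,\nu(\mathrm{d}x).
\]
Hence $x_{a,m}(U_{r-1}(u),h)-x_{a,m}(U_{r-1}(v),h)$ is an average over $\nu$ of differences $H^{[a]}(h,A_{r-1}x')-H^{[a]}(h,A_{r-1}y')$ between two starting points $x',y'$. By \eqref{Lipschitz Ha cocycle}, its $p$-th moment is at most $\delta_{p,\infty}^p(r)$. This difference is also bounded by $2\min\{L(h),a\}+$const, through $\tau^{[a]}$.

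\textbf{Increment of $\psi_m^{[a]}$.} From \eqref{eq-ad-02}, $\psi_m^{[a]}(U_r(u))-\psi_m^{[a]}(U_r(v))$ is a sum over $i\leq m-2$ of terms $\int(P^i\phi^{[a]})(\cdots)\,\mathrm{d}\nu$. Only the terms with $i<j$ still see $u$ and $v$. Each such term is a difference of $P^i\phi^{[a]}$ at two points obtained from $A_r=g_r\cdots g_1$ applied to two starting directions. I bound it in two ways. The first is $2\|P^i\phi^{[a]}\|_\infty\le 2\delta_{1,\infty}(i+1)$. The second is the modulus $\omega_{i,a}(c r)$ on the event $\{\rho(A_rx,A_ry)\le e^{-cr}\}$, with \Cref{lem-A1}(i) controlling the complementary event. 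Summing over $i$ and using \eqref{eq-ac-04} gives
\[
\mathbb{E}|\Delta\psi|\le C\Bigl(r^{-\delta}\wedge\sum_{i<j}\delta_{1,\infty}(i+1)\Bigr)+C\sup_{x,y}\mathbb{P}(\rho>e^{-cr}).
\]
Since $\psi$ is uniformly bounded, the squared increment obeys the same bound up to a constant. The $U_{r-1}$ term is handled identically with $r-1$ in place of $r$.

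\textbf{Assembling $\beta_{r,1}$.} I set
\[
\beta_{r,1}=C\bigl(\delta_{2,\infty}^2(r)+\epsilon_r\bigr),
\]
where $\epsilon_r$ is the $\psi$-bound above. By \Cref{lem-1} with $q=2+\delta$ and $p=2$, we get $\sum_r r^{\delta-1}\delta_{2,\infty}^2(r)<\infty$. This does not give summability of $\delta_{2,\infty}^2(r)$ itself, only that the sum up to $m$ is at most $Cm^{1-\delta}$, which is exactly the allowed budget. For $\epsilon_r$: the probability tail is summable by \Cref{lem-A1}(i), and $\sum_{r<m}\sum_i\omega_{i,a}$ is at most $Cm^{1-\delta}$ by \eqref{eq-ac-03}.

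\textbf{$\beta_{r,2}$.} Use $|Y^2(u)-Y^2(v)|\le|Y(u)-Y(v)|\,(|Y(u)|+|Y(v)|)$ together with $|Y|\le C\ell_a(g_r)$ from \eqref{eq-ad-04}. The $\psi$-parts give $C\,\mathbb{E}[\ell_a(g)]\cdot\epsilon_r$, since $g_r$ is independent of $g_1,\ldots,g_{r-1}$. For the $x_{a,m}$-part, I bound $\mathbb{E}[\ell_a(g_r)\,|\Delta H|]$ by truncation. On $\{L(g_r)\le t\}$ use $t\,\delta_{1,\infty}(r)$, noting that the increment is conditionally controlled given $g_r$ via \eqref{eq-ab-06} and contraction. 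On $\{L(g_r)>t\}$ use $\mathbb{E}[\ell_a^2\mathbf 1_{L>t}]$. Alternatively, apply Cauchy--Schwarz with $\delta_{2,\infty}(r)$ and accept $\sqrt{\cdot}$ summability via $\delta_{2+\delta'}$. I would choose $t\asymp r$, which gives terms $r\,\delta_{1,\infty}(r)+r^{-\delta}$, and the sum up to $m$ is $Cm^{1-\delta}$. The $a^{1-\delta}$ term arises when the truncation at level $a$ is used in place of $r$ for large $r$: I use $\min(r,a)$, where $\mathbb{E}[\ell_a^2\mathbf 1_{L>r}]\le\mathbb{E}[\min(L,a)^2\mathbf 1_{L>r}]$ and $\sum_r$ of this is at most $\mathbb{E}[\min(L,a)^2L]\le Ca^{1-\delta}$.

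\textbf{$\beta_{r,3}$.} $\Gamma_r$ is a double difference. I bound $|\Gamma_r|$ by the sum of the $u/v$ differences at $g$ and at $g'$, and repeat the weighted argument with weight $\ell_a(g)+\ell_a(g')$. The cross weight $\ell_a(g')$ times the difference at $g$ is easy by independence. The main obstacle is the same-variable weight $\ell_a(g)\,|\Delta_u x_{a,m}(\cdot,g)|$, since it requires the truncation balancing above to produce exactly $a^{1-\delta}+m^{1-\delta}$ and not a logarithmic loss. I would first try splitting on $\{\log N(g)>cr\}$ versus its complement, with the projective contraction event from \Cref{lem-A1}(i), mirroring the $I_{11},I_{12},I_{13}$ decomposition in \Cref{lem-weightedprojection}.
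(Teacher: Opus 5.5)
Your $\beta_{r,1}$ argument is sound. For the $x_{a,m}$-increment it takes a different route from the paper: you use \Cref{lem-1} with $p=2$ and the cutoff $r<m$ to get $\sum_{r<m}\delta_{2,\infty}^{2}(r)\leq Cm^{1-\delta}$, whereas the paper truncates at $\min\{r,a\}$ and gets $Ca^{1-\delta}$.

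\textbf{The gap.} It is in $\beta_{r,2}$, and it carries over to $\beta_{r,3}$. You say the $\psi$-parts contribute $C\,\mathbb{E}[\ell_a(g)]\,\epsilon_r$ because $g_r$ is independent of $g_1,\ldots,g_{r-1}$. That holds for $\psi_m^{[a]}(U_{r-1}(u))-\psi_m^{[a]}(U_{r-1}(v))$. It fails for $\psi_m^{[a]}(U_{r}(u))-\psi_m^{[a]}(U_{r}(v))$, because $U_r(u)=(u_{r+1},\ldots,u_{m-1},g_1,\ldots,g_r)$ contains $g_r$. By \eqref{eq-ad-02}, its terms are $P^{i}\phi^{[a]}$ evaluated at $g_rg_{r-1}\cdots g_1u_{m-1}\cdots u_{r+i+1}x$. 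In particular, your bad event $E=\{\rho(A_rx',A_ry')>e^{-cr}\}$ depends on $g_r$. So $\mathbb{E}[\ell_a(g_r)\mathbf{1}_{E}]$ is not bounded by $C\,p_r$, where $p_r:=\sup_{x,y}\mathbb{P}(\rho(A_rx,A_ry)>e^{-cr})$.

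This weighted term is the main difficulty of the lemma. It reappears in $\Gamma_r$ as $\ell_a(g)\bigl|\psi_m^{[a]}(T_gU_{r-1}(u))-\psi_m^{[a]}(T_gU_{r-1}(v))\bigr|$, since the $\psi_m^{[a]}(U_{r-1})$ terms cancel in $\Gamma_r$. Conversely, the term you single out as the main obstacle for $\beta_{r,3}$, namely $\ell_a(g)\bigl|x_{a,m}(U_{r-1}(u),g)-x_{a,m}(U_{r-1}(v),g)\bigr|$, is not one. The truncation at level $\min\{r,a\}$ you already gave for $\beta_{r,2}$ handles it.

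\textbf{How to close it.} You need a way to decouple $g_r$ from the contraction event. The paper splits each $i$-term over three events.
\begin{itemize}
\item $\{\rho(A_{r-1}x',A_{r-1}y')>e^{-c_0(r-1)}\}$: this is independent of $g_r$ and has probability at most $p_{r-1}$, which is summable.
\item $\{\log N(g_r)>c_0(r-1)/4\}$: here $\sum_r\mathbb{E}\bigl[\ell_a(g_r)\mathbf{1}_{\{\log N(g_r)>c_0(r-1)/4\}}\bigr]\leq C\,\mathbb{E}L(g_1)^2$.
\item The complement: \eqref{eq-ab-07} gives $\rho(A_rx',A_ry')\leq e^{-c_0r/4}$. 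The increment is then at most the deterministic quantity $\omega_{i,a}(c_0r/4)$, which is summed using \eqref{eq-ac-03}.
\end{itemize}
On the first two events the increment is at most $2\delta_{1,\infty}(i+1)$, which is summable in $i$. You name exactly this split (the $I_{11},I_{12},I_{13}$ pattern of \Cref{lem-weightedprojection}), but you apply it to the wrong term.

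Alternatively, you can keep your own $g_r$-dependent event $E$ and use H\"older: $\mathbb{E}[\ell_a(g_r)\mathbf{1}_E]\leq C\,\mathbb{P}(E)^{(1+\delta)/(2+\delta)}$. Then $\sum_{r<m}p_r^{(1+\delta)/(2+\delta)}\leq C(1+m^{1-\delta})$ follows from $\sum_r r^{\delta}p_r<\infty$ by H\"older's inequality for sums. Either way, an argument of this kind is needed; independence alone does not give the bound.
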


\begin{proof}
	
	We first prove  the estimate required for \eqref{eq-ae-02}. By \eqref{eq-ww-06} and the inequality $(x+y+z)^{2}\leq 3(x^{2}+y^{2}+z^{2})$, we have
	\begin{align}\label{eq-ae-06}
		\sup_{u,v}\mathbb{E}\lvert Y_{r}(u)-Y_{r}(v)\rvert ^{2}\leq 3I_{r,1}+3I_{r,2}+3I_{r,3},
	\end{align}
	where 
	\begin{align*}
		I_{r,1}&=\sup_{u,v}\E|X_{r}(u)-X_{r}(v)|^{2},\\
		I_{r,2}&=\sup_{u,v}\E\big|\psi_{m}^{[a]}\bigl(U_{r}(u)\bigr)-\psi_{m}^{[a]}\bigl(U_{r}(v)\bigr)   \big|^{2},\\
		I_{r,3}&=\sup_{u,v}\E\big|\psi_{m}^{[a]}\bigl(U_{r-1}(u)\bigr)-\psi_{m}^{[a]}\bigl(U_{r-1}(v)\bigr)   \big|^{2}.
	\end{align*}
We first estimate $I_{r,1}$. For $1\leq r\leq m-1$,
we have $X_r(u)=x_{a,m}(u_r,\ldots,u_{m-1},g_1,\ldots,g_r)$
by definition. For $r\geq m$, the difference
$X_r(u)-X_r(v)$ vanishes.  By the definition of
	\(\delta_{1,\infty}(r)\) and the $1$-Lipschitz property of the
	censoring map, we have
\begin{align}\label{eq-wa-01}
	\sup_{u,v}
\mathbb{E}\lvert X_{r}(u)-X_{r}(v)\rvert
\leq
\delta_{1,\infty}(r).
\end{align}
	Moreover, we have
$
	\lvert X_{r}(u)-X_{r}(v)\rvert
	\leq
	2\min\{L(g_{r}),a\}.
$
Using \eqref{eq-wa-01}, we obtain
\begin{align}	\label{eq-raw-Ir1}
	\begin{aligned}
		I_{r,1}
	&=\sup_{u,v}\mathbb{E}\Bigl[
	|X_{r}(u)-X_{r}(v)|^{2}
	\bigl(
	\mathbf{1}_{\{L(g_{r})\leq r\}}
	+\mathbf{1}_{\{L(g_{r})>r\}}
	\bigr)\Bigr]\\
	&\leq
	2\min\{r,a\}\sup_{u,v}\mathbb{E}|X_{r}(u)-X_{r}(v)|
	+4\mathbb{E}\bigl[
	\min\{L(g_{r}),a\}^{2}
	\mathbf{1}_{\{L(g_{r})>r\}}
	\bigr]\\
	&\leq
	C\min\{r,a\}\delta_{1,\infty}(r)
	+C\mathbb{E}\bigl[
	\min\{L(g_{r}),a\}^{2}
	\mathbf{1}_{\{L(g_{r})>r\}}
	\bigr].
	\end{aligned}
\end{align}
Since $\min\{r,a\}\leq a^{1-\delta}r^{\delta}$ for $r,a\geq1$,
\Cref{lem-1} yields
	\[
	\sum_{r=1}^{\infty}
	\min\{r,a\}\delta_{1,\infty}(r)
	\leq
	a^{1-\delta}
	\sum_{r=1}^{\infty}
	r^{\delta}\delta_{1,\infty}(r)
	\leq
	C a^{1-\delta}.
	\]
	For the second term in \eqref{eq-raw-Ir1},  by Tonelli's theorem, we have
	\begin{align*}
		\sum_{r=1}^{\infty}
		\mathbb{E}\bigl[
		\min\{L(g_{r}),a\}^{2}
		\mathbf{1}_{\{L(g_{r})>r\}}
		\bigr]&=
		\mathbb{E}\Bigl[
		\min\{L(g_{1}),a\}^{2} 	\sum_{r=1}^{\infty}
		\mathbf{1}_{\{L(g_{1})>r\}}
		\Bigr]\\
		&\leq
		C\mathbb{E}\bigl[
		L(g_{1})\min\{L(g_{1}),a\}^{2}
		\bigr]\\
		&\leq
		C a^{1-\delta}
		\mathbb{E}L(g_{1})^{2+\delta}
		\leq
		C a^{1-\delta}.
	\end{align*}
	Consequently, we have
\begin{align}\label{eq-wa-01a}
		\sum_{r=1}^{\infty}I_{r,1}
	\leq
	C a^{1-\delta}.
\end{align}
We next estimate \(I_{r,2}\). 
By \eqref{eq-ad-02}, for \(1\leq r\leq m-2\), we have
\begin{equation}\label{eq-psi-initial-state-expansion}
	\begin{aligned}
		\psi_{m}^{[a]}\bigl(U_{r}(u)\bigr)
		={}&
		\sum_{j=0}^{m-r-2}
		\int
		(P^{j}\phi^{[a]})
		\bigl(
		g_{r}\cdots g_{1}
		u_{m-1}\cdots u_{r+j+1}x
		\bigr)\,\nu(\mathrm{d}x)\\
		&+
		\sum_{j=m-r-1}^{m-2}
		\int
		(P^{j}\phi^{[a]})
		\bigl(
		g_{r}\cdots g_{r-m+j+2}x
		\bigr)\,\nu(\mathrm{d}x).
	\end{aligned}
\end{equation}
Combining the expansion of $\psi_{m}^{[a]}(U_{r}(u))$
with \eqref{eq-ww-08}, we obtain
\[
\sup_{u,v}
\mathbb{E}\bigl|
\psi_{m}^{[a]}\bigl(U_{r}(u)\bigr)
-
\psi_{m}^{[a]}\bigl(U_{r}(v)\bigr)
\bigr|
\leq
\sum_{j=0}^{m-r-2}
\delta_{1,\infty}(r+j+1).
\]
Since \(\psi_{m}^{[a]}\) is uniformly bounded, we have
$
|
\psi_{m}^{[a]}(U_{r}(u))
-
\psi_{m}^{[a]}(U_{r}(v))
|^{2}
\leq
C
|
\psi_{m}^{[a]}(U_{r}(u))
-
\psi_{m}^{[a]}(U_{r}(v))
|.
$
Therefore,
\[
I_{r,2}
\leq
C\sum_{j=0}^{m-r-2}
\delta_{1,\infty}(r+j+1),
\qquad
1\leq r\leq m-2.
\]
Moreover, \(I_{r,2}=0\) for \(r\geq m-1\) by \eqref{eq-ww-09}. Hence,
\begin{align}\label{eq-ww-10}
	\begin{aligned}
		\sum_{r=1}^{\infty}I_{r,2}
	&\leq
	C\sum_{r=1}^{m-2}
	\sum_{j=0}^{m-r-2}
	\delta_{1,\infty}(r+j+1)\\
	&\leq
	C\sum_{t=1}^{\infty}
	\min\{t,m\}\delta_{1,\infty}(t)\\
	&\leq
	C m^{1-\delta}
	\sum_{t=1}^{\infty}
	t^{\delta}\delta_{1,\infty}(t)
	\leq
	C m^{1-\delta}.
	\end{aligned}
\end{align}
By an argument similar to that leading to \eqref{eq-ww-10},
we have
\[
I_{r,3}
\leq
C\sum_{j=0}^{m-r-1}\delta_{1,\infty}(r+j),
\qquad
1\leq r\leq m-1,
\]
and 
 \(I_{r,3}=0\) for \(r\geq m\). Thus,
\begin{align}\label{eq-ww-10a}
	\sum_{r=1}^{\infty}I_{r,3}
	&\leq
	C\sum_{r=1}^{m-1}
	\sum_{j=0}^{m-r-1}
	\delta_{1,\infty}(r+j)
	\leq
	C\sum_{t=1}^{\infty}
	\min\{t,m\}\delta_{1,\infty}(t)
	\leq
	C m^{1-\delta}.
\end{align}
Combining  \eqref{eq-wa-01a}, \eqref{eq-ww-10}  and \eqref{eq-ww-10a}, we obtain the desired bound in \eqref{eq-ae-02}.

We next prove  the estimate required for \eqref{eq-ae-03}. By the definition of \(Y_{r}\)
and the uniform boundedness of \(\psi_{m}^{[a]}\), we have
$
\lvert Y_{r}(u)\rvert+\lvert Y_{r}(v)\rvert
\leq
C\ell_{a}(g_{r}).
$
Hence,
\begin{align*}
	\bigl\lvert
	(Y_{r}(u))^{2}-(Y_{r}(v))^{2}
	\bigr\rvert
	&=
	\lvert Y_{r}(u)-Y_{r}(v)\rvert
	\lvert Y_{r}(u)+Y_{r}(v)\rvert\leq
	C\ell_{a}(g_{r})
	\lvert Y_{r}(u)-Y_{r}(v)\rvert.
\end{align*}
Using \eqref{eq-ww-06} and the triangle inequality, we have
\[
\sup_{u,v}
\mathbb{E}\bigl\lvert
(Y_{r}(u))^{2}-(Y_{r}(v))^{2}
\bigr\rvert
\leq
C\bigl(I_{r,4}+I_{r,5}+I_{r,6}\bigr),
\]
where
\begin{align*}
	I_{r,4}
	&:=
	\sup_{u,v}
	\mathbb{E}\bigl[
	\ell_{a}(g_{r})
	\lvert X_{r}(u)-X_{r}(v)\rvert
	\bigr],\\
	I_{r,5}
	&:=
	\sup_{u,v}
	\mathbb{E}\bigl[
	\ell_{a}(g_{r})
	\bigl|
	\psi_{m}^{[a]}\bigl(U_{r}(u)\bigr)
	-
	\psi_{m}^{[a]}\bigl(U_{r}(v)\bigr)
	\bigr|
	\bigr],\\
	I_{r,6}
	&:=
	\sup_{u,v}
	\mathbb{E}\bigl[
	\ell_{a}(g_{r})
	\bigl|
	\psi_{m}^{[a]}\bigl(U_{r-1}(u)\bigr)
	-
	\psi_{m}^{[a]}\bigl(U_{r-1}(v)\bigr)
	\bigr|
	\bigr].
\end{align*}
We estimate \(I_{r,4}\), \(I_{r,6}\), and \(I_{r,5}\) one by one. 
We first estimate $I_{r,4}$.
Since $\ell_{a}(g_{r})\leq 2\min\{L(g_{r}),a\}$,
the same argument as for $I_{r,1}$ yields
\begin{align}\label{eq-ww-10b}
	\begin{aligned}
	\sum_{r=1}^{\infty}I_{r,4}
&\leq
C\sum_{r=1}^{\infty}\min\{r,a\}\delta_{1,\infty}(r)
+C\sum_{r=1}^{\infty}\mathbb{E}\bigl[
\min\{L(g_{r}),a\}^{2}
\mathbf{1}_{\{L(g_{r})>r\}}
\bigr]\\
&\leq
Ca^{1-\delta}\Bigl(
\sum_{r=1}^{\infty}r^{\delta}\delta_{1,\infty}(r)
+\mathbb{E}[L(g_{1})^{2+\delta}]
\Bigr)
\leq Ca^{1-\delta}.
	\end{aligned}
\end{align}
We next estimate \(I_{r,6}\). Since
$
\psi_{m}^{[a]}\bigl(U_{r-1}(u)\bigr)
-
\psi_{m}^{[a]}\bigl(U_{r-1}(v)\bigr)
$
depends only on \(g_{1},\ldots,g_{r-1}\), it is independent of
\(g_{r}\). Hence,
\begin{align*}
	I_{r,6}
	&=
	\mathbb{E}\ell_{a}(g_{r})
	\sup_{u,v}
	\mathbb{E}\bigl|
	\psi_{m}^{[a]}\bigl(U_{r-1}(u)\bigr)
	-
	\psi_{m}^{[a]}\bigl(U_{r-1}(v)\bigr)
	\bigr|\\
	&\leq
	C\sum_{j=0}^{m-r-1}
	\delta_{1,\infty}(r+j),
	\qquad
	1\leq r\leq m-1.
\end{align*}
Moreover, \(I_{r,6}=0\) for \(r\geq m\). Therefore,
\begin{align}\label{eq-ww-10c}
	\sum_{r=1}^{\infty}I_{r,6}
	&\leq
	C\sum_{r=1}^{m-1}
	\sum_{j=0}^{m-r-1}
	\delta_{1,\infty}(r+j)
	\leq
	C\sum_{t=1}^{\infty}
	\min\{t,m\}\delta_{1,\infty}(t)
	\leq
	C m^{1-\delta}.
\end{align}
It remains to estimate \(I_{r,5}\). For \(1\leq r\leq m-2\)
and \(0\leq j\leq m-r-2\), set
\begin{align*}
	D_{r,j}(u,v,x)
	:=
	\bigl|
	&(P^{j}\phi^{[a]})
	\bigl(
	g_{r}\cdots g_{1}
	u_{m-1}\cdots u_{r+j+1}x
	\bigr)-
	(P^{j}\phi^{[a]})
	\bigl(
	g_{r}\cdots g_{1}
	v_{m-1}\cdots v_{r+j+1}x
	\bigr)
	\bigr|.
\end{align*}
By \eqref{eq-psi-initial-state-expansion} and the triangle
inequality, we have
\[
I_{r,5}
\leq
\sum_{j=0}^{m-r-2}
\sup_{u,v}
\int
\mathbb{E}\bigl[
\ell_{a}(g_{r})D_{r,j}(u,v,x)
\bigr]
\,\nu(\mathrm{d}{x}).
\]
Let \(c_{0}>0\) be a constant provided by
Lemma~\ref{lem-A1}(i) for which \eqref{eq-ab-05} holds, and put
$
r_{0}
:=
\max\{
2,\lceil 4/c_{0}\rceil
\}.
$
We first consider \(1\leq r<r_{0}\). Since
$
D_{r,j}(u,v,x)
\leq
2\delta_{1,\infty}(j+1)
$
and
$
\mathbb{E}\ell_{a}(g_{r})\leq C,
$
we obtain
$
I_{r,5}
\leq
C\sum_{j=0}^{m-r-2}\delta_{1,\infty}(j+1)
\leq C.
$
Since \(r_{0}\) is independent of \(a\) and \(m\),
$
\sum_{1\leq r<r_{0}}I_{r,5}\leq C.
$
We now assume \(r\geq r_{0}\). For
\(0\leq j\leq m-r-2\), define
\[
\begin{aligned}
	\mathcal A_{r,j}(u,v,x)
	:=\biggl\{
	&\rho\bigl(
	g_{r-1}\cdots g_{1}
	u_{m-1}\cdots u_{r+j+1}x, g_{r-1}\cdots g_{1}
	v_{m-1}\cdots v_{r+j+1}x
	\bigr)>
	e^{-c_{0}(r-1)}
	\biggr\}
\end{aligned}
\]
and
\[
\mathcal B_{r}
:=
\Bigl\{
\log N(g_{r})
>
c_{0}(r-1)/4
\Bigr\}.
\]
Using
$
1
\leq
\mathbf{1}_{\mathcal A_{r,j}}
+
\mathbf{1}_{\mathcal B_{r}}
+
\mathbf{1}_{\mathcal A_{r,j}^{c}\cap\mathcal B_{r}^{c}},
$
we split the estimate of \(I_{r,5}\) into the three contributions
corresponding to
\(\mathcal A_{r,j}\), \(\mathcal B_{r}\), and
\(\mathcal A_{r,j}^{c}\cap\mathcal B_{r}^{c}\), respectively,
and estimate them separately.
On \(\mathcal A_{r,j}(u,v,x)\),  we have
\begin{align*}
	&\mathbb{E}\bigl[
	\ell_{a}(g_{r})
	D_{r,j}(u,v,x)
	\mathbf{1}_{\mathcal A_{r,j}(u,v,x)}
	\bigr]\leq
	C\delta_{1,\infty}(j+1)\P(\mathcal A_{r,j}(u,v,x)).
\end{align*}
Set
$p_t:=\sup_{x,y\in X}\P(\rho(A_t x,A_t y)>e^{-c_0t})$.
The two initial states in $\mathcal A_{r,j}(u,v,x)$ are deterministic,
so $\P(\mathcal A_{r,j}(u,v,x))\leq p_{r-1}$ uniformly in
$u,v,x,j$ whenever the event is defined. Consequently,
\Cref{lem-A1}(i) gives
\begin{align*}
	&\sum_{r=r_{0}}^{m-2}
	\sum_{j=0}^{m-r-2}
	\delta_{1,\infty}(j+1)
	\sup_{u,v,x}\P(\mathcal A_{r,j}(u,v,x))\leq
	\sum_{j=0}^{\infty}\delta_{1,\infty}(j+1)
	\sum_{r=2}^{\infty}p_{r-1}
	\leq C.
\end{align*}
This bound remains valid after taking the suprema and the
$\nu$-integral in the definition of $I_{r,5}$.

On \(\mathcal B_{r}\), we have
$
	\mathbb{E}[
	\ell_{a}(g_{r})
	D_{r,j}(u,v,x)
	\mathbf{1}_{\mathcal B_{r}}
	]\leq
	2\delta_{1,\infty}(j+1)
	\mathbb{E}[
	\ell_{a}(g_{r})
	\mathbf{1}_{\mathcal B_{r}}
	].
$
Combining this with \Cref{lem-1}, we have
\begin{align}
	\sum_{r=r_{0}}^{m-2}
	\sum_{j=0}^{m-r-2} \delta_{1,\infty}(j+1)
	\mathbb{E}\bigl[
	\ell_{a}(g_{r})
	\mathbf{1}_{\mathcal B_{r}}
	\bigr]\leq C	\sum_{r=r_{0}}^{\infty}
	\mathbb{E}\bigl[
	\ell_{a}(g_{r})
	\mathbf{1}_{\mathcal B_{r}}
	\bigr]\leq C\E L(g_{1})^{2}\leq C.
	\end{align}

Finally, consider
\(\mathcal A_{r,j}^{c}(u,v,x)\cap\mathcal B_{r}^{c}\). In this case, 
 by \eqref{eq-ab-07} in Lemma~\ref{lem-A1}(ii), we have
\begin{align*}
	&\rho\bigl(
	g_{r}\cdots g_{1}
	u_{m-1}\cdots u_{r+j+1}x,
	g_{r}\cdots g_{1}
	v_{m-1}\cdots v_{r+j+1}x
	\bigr)\\
&\quad\leq
	\exp\bigl(
	2\log N(g_{r})-c_{0}(r-1)
	\bigr)\leq
	e^{-c_{0}r/4}.
\end{align*}
By the definition of \(\omega_{j,a}\), we have
$
D_{r,j}(u,v,x)
\leq
\omega_{j,a}\bigl(c_{0}r/4\bigr).
$
Therefore,
\begin{align*}
	&\mathbb{E}\bigl[
	\ell_{a}(g_{r})
	D_{r,j}(u,v,x)
	\mathbf{1}_{\mathcal A_{r,j}^{c}(u,v,x)
		\cap\mathcal B_{r}^{c}}
	\bigr]\leq
	C\omega_{j,a}\bigl(c_{0}r/4\bigr),
\end{align*}
where $\omega_{j,a}$ is defined in \Cref{lem-absmod}.
Since \(\omega_{j,a}\) is nonincreasing, a change of scale in
the argument gives
\begin{align*}
	&\sum_{r=r_{0}}^{m-2}
	\sum_{j=0}^{m-r-2}
	\omega_{j,a}\biggl(\frac{c_{0}r}{4}\biggr)\leq
	C\sum_{j=0}^{m-2}
	\sum_{t=1}^{\infty}\omega_{j,a}(t)\leq
	C m^{1-\delta},
\end{align*}
where the last inequality follows from \eqref{eq-ac-03}.
Collecting the estimates on the three events, together with
the finitely many indices \(1\leq r<r_{0}\), we obtain
\begin{align}\label{eq-ww-11a}
\sum_{r=1}^{\infty}I_{r,5}
\leq
C\bigl(1+m^{1-\delta}\bigr)
\leq
C m^{1-\delta}.
\end{align}
Using  \eqref{eq-ww-10b}, \eqref{eq-ww-10c}, and \eqref{eq-ww-11a}, we conclude the estimate required for \eqref{eq-ae-03}.

We finally prove the estimate required for \eqref{eq-ae-04}. By the definition of $\Gamma_r(u,v)$, the terms involving
$\psi_m^{[a]}(U_{r-1}(u))$ and $\psi_m^{[a]}(U_{r-1}(v))$
cancel, and hence
\begin{align*}
	\Gamma_{r}(u,v)
	={}&
	X_{r}(u;g)-X_{r}(v;g)
	-X_{r}(u;g')+X_{r}(v;g')\\
	&+
	\psi_{m}^{[a]}\bigl(T_{g}U_{r-1}(u)\bigr)
	-\psi_{m}^{[a]}\bigl(T_{g}U_{r-1}(v)\bigr)\\
	&-
	\psi_{m}^{[a]}\bigl(T_{g'}U_{r-1}(u)\bigr)
	+\psi_{m}^{[a]}\bigl(T_{g'}U_{r-1}(v)\bigr).
\end{align*}
By the triangle inequality, the symmetry of \(g\) and \(g'\), and
the independence of \(g'\) from \(g,g_{1},\ldots,g_{r-1}\), we have
\begin{align*}
	&\sup_{u,v}
	\mathbb{E}\bigl[
	\bigl(\ell_{a}(g)+\ell_{a}(g')\bigr)
	\lvert\Gamma_{r}(u,v)\rvert
	\bigr]\leq
	C I_{r,4}
	+C I_{r,5}
	+C\delta_{1,\infty}(r)
	+C\sum_{j=0}^{m-r-2}
	\delta_{1,\infty}(r+j+1).
\end{align*}
Therefore, by \eqref{eq-ww-10b}, \eqref{eq-ww-11a},
\eqref{eq-ww-10}, and \Cref{lem-1}, we obtain
\[
\sum_{r=1}^{\infty}
\sup_{u,v}
\mathbb{E}\bigl[
\bigl(\ell_{a}(g)+\ell_{a}(g')\bigr)
\lvert\Gamma_{r}(u,v)\rvert
\bigr]
\leq
C\bigl(1+a^{1-\delta}+m^{1-\delta}\bigr).
\]
Since \(\Gamma_{r}(u,v)=0\) for \(r\geq m\), we obtain
\eqref{eq-ae-04}.
\end{proof}

We next establish a uniform conditional \(L^1\) bound for partial sums of the differences \(Y_{r}(u)-Y_{r}(v)\) arising from different initial states.
\begin{lemma}
	\label{lem-futureL1}
	Let $\mathscr{H}$ be a sub-$\sigma$-algebra of the underlying
	probability space. Let $u,v$ be $\mathscr{H}$-measurable
	random variables with values in $G^{m-1}$.
	Let $(g_{r})_{r\geq1}$ be an i.i.d. sequence with common law
	$\mu$, independent of $\mathscr{H}$. Let $Y_{r}(u)$ be defined as in \eqref{eq-defi-Yr}.
		Then, for every pair of deterministic integers
	$1\leq p\leq q\leq m-1$,
	\[
	\begin{aligned}
		&\mathbb{E}\bigg[
		\Big|
		\sum_{r=p}^{q}\bigl(Y_{r}(u)-Y_{r}(v)\bigr)
		\Big|
		\,\bigg|\,\mathscr{H}
		\bigg]
		\leq C_{0}
		\quad\text{almost surely},
	\end{aligned}
	\]
	where
	\[
	C_{0}:=\sum_{r=1}^{\infty}\delta_{1,\infty}(r)+4\sup_{a\geq1,\;m\geq2}
	\|\psi_{m}^{[a]}\|_{\infty}<\infty.
	\]
\end{lemma}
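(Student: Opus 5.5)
Because $(g_r)$ is independent of $\mathscr{H}$ and $u,v$ are $\mathscr{H}$-measurable, the conditional expectation is obtained by freezing $u,v$ as deterministic states and taking the expectation over $(g_r)$ alone. So it suffices to prove the bound for deterministic $u,v\in G^{m-1}$, uniformly in $u,v$. Formally, we apply the substitution (freezing) lemma to the jointly measurable map $(u,v,(g_r))\mapsto|\sum_{r=p}^q(Y_r(u)-Y_r(v))|$. From now on, $u$ and $v$ are fixed.

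By \eqref{eq-ww-06}, the sum telescopes:
\[
\sum_{r=p}^{q}\bigl(Y_r(u)-Y_r(v)\bigr)
=\sum_{r=p}^{q}\bigl(X_r(u)-X_r(v)\bigr)
+\bigl[\psi_m^{[a]}(U_q(u))-\psi_m^{[a]}(U_q(v))\bigr]
-\bigl[\psi_m^{[a]}(U_{p-1}(u))-\psi_m^{[a]}(U_{p-1}(v))\bigr].
\]
The two bracketed terms together contribute at most $4\sup_{a,m}\|\psi_m^{[a]}\|_\infty$ in absolute value, pointwise. This is finite by \Cref{lem-finiteG}(iii).

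For the remaining sum, we use the triangle inequality and \eqref{eq-wa-01}:
\[
\mathbb{E}\Bigl|\sum_{r=p}^q\bigl(X_r(u)-X_r(v)\bigr)\Bigr|
\le\sum_{r=p}^q\sup_{u,v}\mathbb{E}|X_r(u)-X_r(v)|
\le\sum_{r=1}^\infty\delta_{1,\infty}(r).
\]
This series is finite by \Cref{lem-1} with $q=2$, $p=1$. To see \eqref{eq-wa-01}, note that for $r\le m-1$ we have $X_r(u)=x_{a,m}(u_r,\dots,u_{m-1},g_1,\dots,g_r)$. This equals the $\nu$-integral of $H^{[a]}(g_r,g_{r-1}\cdots g_1 u_{m-1}\cdots u_r x)$. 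Since $\tau^{[a]}$ is $1$-Lipschitz, the difference between the two initial states is bounded by $\sup_{x,y}\mathbb{E}|X_{r,x}-X_{r,y}|=\delta_{1,\infty}(r)$.

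Adding the two contributions gives the constant $C_0$. The only point needing care is the measurability and freezing step that justifies the reduction to deterministic $u,v$; everything else is a direct telescoping estimate.
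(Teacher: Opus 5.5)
Your proposal is correct and follows the paper's proof. Both telescope the $\psi_m^{[a]}$ terms to get the $4\sup_{a,m}\|\psi_m^{[a]}\|_\infty$ contribution, bound each $\mathbb{E}[|X_r(u)-X_r(v)|\mid\mathscr{H}]$ by $\delta_{1,\infty}(r)$ using the $1$-Lipschitz property of $\tau^{[a]}$, and sum. Your explicit freezing (substitution) step just makes precise the paper's remark that, conditionally on $\mathscr{H}$, the states $u,v$ are fixed while $g_1,g_2,\ldots$ remain i.i.d.\ with law $\mu$.
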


\begin{proof}
	It follows from the definition of $Y_{r}(u)$ that, for every
	\(r\geq1\),
	\begin{align*}
			Y_{r}(u)-Y_{r}(v)
		&=X_{r}(u)-X_{r}(v)+\big[\psi_{m}^{[a]}\bigl(U_{r}(u)\bigr)
		-\psi_{m}^{[a]}\bigl(U_{r}(v)\bigr)\big]\\
		&\quad-\big[\psi_{m}^{[a]}\bigl(U_{r-1}(u)\bigr)
		-\psi_{m}^{[a]}\bigl(U_{r-1}(v)\bigr)\big].
	\end{align*}
		Conditionally on $\mathscr{H}$, the initial states $u,v$
	are fixed, while $g_{1},\ldots,g_{r}$ remain independent
	with common law $\mu$. By the definition of $x_{a,m}$,
	the $1$-Lipschitz property of censoring, and the definition
	of $\delta_{1,\infty}(r)$, we have
	\[
	\begin{aligned}
		\mathbb{E}\big[
		|X_{r}(u)-X_{r}(v)|
		\mid\mathscr{H}
		\big]\leq
		\sup_{x,y\in X}
		\mathbb{E}|X_{r,x}-X_{r,y}|=\delta_{1,\infty}(r),
		\qquad 1\leq r\leq m-1.
	\end{aligned}
	\]
	Moreover, we have $|\psi_{m}^{[a]}(U_{r-1}(u))|\leq \sup_{a\geq1,\;m\geq2}
	\|\psi_{m}^{[a]}\|_{\infty}.$ Therefore, 
	\[
	\begin{aligned}
		\mathbb{E}\bigg[
		\bigg|
		\sum_{r=p}^{q}\bigl(Y_{r}(u)-Y_{r}(v)\bigr)
		\bigg|
		\,\Big|\,\mathscr{H}
		\bigg]&
	\leq
		\sum_{r=p}^{q}
		\mathbb{E}\left[
		|X_{r}(u)-X_{r}(v)|
		\,\middle|\,\mathscr{H}
		\right]
		+4\sup_{a\geq1,\;m\geq2}
		\|\psi_{m}^{[a]}\|_{\infty}\\
		&\leq
		\sum_{r=p}^{q}\delta_{1,\infty}(r)+4\sup_{a\geq1,\;m\geq2}
		\|\psi_{m}^{[a]}\|_{\infty}
		\leq C_0<\infty.
	\end{aligned}
	\]
\end{proof}

Fix $n\geq1$ and define
\[
S_{n,m}
:=\sum_{k=m}^{m+n-1}Y_{k},
\]
where we recall that $Y_{k}=Y_{k,m}^{[a]}$.
 Let $(g'_{k})_{k\geq 1}$ be an independent copy of $(g_{k})_{k\geq 1}$. Fix $i\in\{m,\ldots,m+n-1\}$.
Define the sequence $(g_{k}^{(i)})_{k\geq1}$ by
\[
g_{k}^{(i)}
:=
\begin{cases}
	g_{i}^{\prime},&k=i,\\
	g_{k},&k\neq i.
\end{cases}
\]
For $k\geq m-1$, define the corresponding states
$
U_{k}^{(i)}
:=
\bigl(g_{k-m+2}^{(i)},\ldots,
g_{k}^{(i)}\bigr),
$
and, for $k\geq m$, define
$
Y_{k}^{(i)}
:=
F_{a,m}\bigl(U_{k-1}^{(i)},g_{k}^{(i)}\bigr).
$
Thus the superscript $(i)$ indicates replacement of the
single coordinate $g_{i}$ by
$g_{i}^{\prime}$ in the defining function.
Set
\[
S_{n,m}^{(i)}
:=\sum_{k=m}^{m+n-1}Y_{k}^{(i)},
\qquad
\Delta_{i}
:=S_{n,m}-S_{n,m}^{(i)}.
\]
Since $Y_{k}$ depends only on
$g_{k-m+1},\ldots,g_{k}$, we have
$
Y_{k}^{(i)}=Y_{k}$
 if $k<i$ or $k>i+m-1.$
Consequently, we have
\[
\Delta_{i}
=
Y_{i}-Y_{i}^{(i)}
+\sum_{j=i+1}^{\min\{i+m-1,m+n-1\}}
\bigl(Y_{j}-Y_{j}^{(i)}\bigr).
\]
An empty sum is understood to be zero. Conditionally on
$\mathscr{F}_{i}\vee\sigma(g_{i}^{\prime})$,
the states $U_{i}$ and $U_{i}^{(i)}$ are fixed, whereas
$g_{i+1},g_{i+2},\ldots$ remain
independent with common law $\mu$.
Applying \Cref{lem-futureL1} with
$
\mathscr{H}
=\mathscr{F}_{i}\vee\sigma(g_{i}^{\prime}),$
$
u=U_{i},$
$v=U_{i}^{(i)},$
and with $(g_{i+r})_{r\geq1}$ as the driving sequence,
we have
\[
\mathbb{E}\biggl[
\biggl|
\sum_{j=i+1}^{\min\{i+m-1,m+n-1\}}
\bigl(Y_{j}-Y_{j}^{(i)}\bigr)
\biggr|
\,\Bigm|\,
\mathscr{F}_{i}\vee\sigma(g_{i}^{\prime})
\biggr]
\leq C.
\]
Since $Y_{i}-Y_{i}^{(i)}$ is measurable with respect to
$\mathscr{F}_{i}\vee\sigma(g_{i}^{\prime})$,
the triangle inequality yields
\begin{equation}\label{eq-ae-08}
	\mathbb{E}\bigl[
	|\Delta_{i}|
	\,\bigm|\,
	\mathscr{F}_{i},g_{i}^{\prime}
	\bigr]
	\leq
	\bigl|Y_{i}-Y_{i}^{(i)}\bigr|+C.
\end{equation}

We next establish the conditional second-moment bound.
Recall that
\[
\int F_{a,m}(u,h)\,\mu(\mathrm{d}h)=0
\qquad
\text{for every }u\in G^{m-1}.
\]
For every $j>i$, both $U_{j-1}$ and $U_{j-1}^{(i)}$
are measurable with respect to
$\mathscr{F}_{j-1}\vee\sigma(g_{i}^{\prime})$,
while  $g_{j}$
is independent of this $\sigma$-algebra and has law $\mu$.
Hence
\[
\mathbb{E}\bigl[
Y_{j}-Y_{j}^{(i)}
\,\bigm|\,
\mathscr{F}_{j-1}\vee\sigma(g_{i}^{\prime})
\bigr]
=0.
\]
In particular, for $i<j<k$, the tower property gives
$
\mathbb{E}\bigl[
\bigl(Y_{j}-Y_{j}^{(i)}\bigr)
\bigl(Y_{k}-Y_{k}^{(i)}\bigr)
\,\bigm|\,
\mathscr{F}_{i},g_{i}^{\prime}
\bigr]
=0.
$
Also, for every $j>i$,
$
\mathbb{E}\bigl[
\bigl(Y_{i}-Y_{i}^{(i)}\bigr)
\bigl(Y_{j}-Y_{j}^{(i)}\bigr)
\,\bigm|\,
\mathscr{F}_{i},g_{i}^{\prime}
\bigr]
=0.
$
Expanding the square of $\Delta_{i}$ therefore yields
\[
\begin{aligned}
	&\mathbb{E}\bigl[
	\Delta_{i}^{2}
	\,\bigm|\,
	\mathscr{F}_{i},g_{i}^{\prime}
	\bigr]=
	\bigl(Y_{i}-Y_{i}^{(i)}\bigr)^{2}
	+\sum_{j=i+1}^{\min\{i+m-1,m+n-1\}}
	\mathbb{E}\bigl[
	\bigl(Y_{j}-Y_{j}^{(i)}\bigr)^{2}
	\,\bigm|\,
	\mathscr{F}_{i},g_{i}^{\prime}
	\bigr].
\end{aligned}
\]
\Cref{prop-budgets}, applied with initial
states $U_{i}$ and $U_{i}^{(i)}$, gives
\[
\mathbb{E}\bigl[
\bigl(Y_{i+r}-Y_{i+r}^{(i)}\bigr)^{2}
\,\bigm|\,
\mathscr{F}_{i},g_{i}^{\prime}
\bigr]
\leq\beta_{r,1},
\qquad 1\leq r\leq m-1.
\]
Using the summability bound for $\beta_{r,1}$, we obtain
\begin{equation}\label{eq-ae-09}
	\begin{aligned}
		\mathbb{E}\bigl[
		\Delta_{i}^{2}
		\,\bigm|\,
		\mathscr{F}_{i},g_{i}^{\prime}
		\bigr]&\leq
		\bigl(Y_{i}-Y_{i}^{(i)}\bigr)^{2}
		+\sum_{r=1}^{m-1}\beta_{r,1}\\
		&\leq
		\bigl(Y_{i}-Y_{i}^{(i)}\bigr)^{2}
		+C\bigl(1+a^{1-\delta}+m^{1-\delta}\bigr).
	\end{aligned}
\end{equation}

More generally, for any deterministic integers
$m\leq s\leq i\leq t$, \eqref{eq-ae-08} and
\eqref{eq-ae-09} hold with $\Delta_{i}$ replaced by
$\sum_{k=s}^{t}\bigl(Y_{k}-Y_{k}^{(i)}\bigr)$.
Indeed, the corresponding  sum runs from $i+1$
to $\min\{i+m-1,t\}$, and the preceding arguments apply
with this endpoint. The constants are independent of
the chosen interval. From these estimates, we obtain the following corollary.

\begin{corollary}
	\label{cor-mixed}
Let $S_{n,m}$ and $S_{n,m}^{(i)}$ be defined as above.  Then there exists a constant $C>0$, depending only on $ \mu, \delta, d$, and the fixed norm, such that the following statements hold.
	\begin{enumerate}
	\item [(i)] 	For every $i\in\{m,\ldots,m+n-1\}$, we have
	\begin{equation}\label{eq-ae-10}
		\begin{aligned}
			&\mathbb{E}\Bigl[
			\bigl(Y_{i}-Y_{i}^{(i)}\bigr)^{2}|\Delta_{i}|
			\Bigr]+
			\mathbb{E}\Bigl[
			\bigl|Y_{i}^{2}-\bigl(Y_{i}^{(i)}\bigr)^{2}\bigr|
			|\Delta_{i}|
			\Bigr]\leq
			C\bigl(1+a^{1-\delta}+m^{1-\delta}\bigr).
		\end{aligned}
	\end{equation}
	\item [(ii)]  For $m\leq i<j$, define the sequence $\big(g_k^{(i,j)}\big)_{k\geq1}$ by
	\[
	g_{k}^{(i,j)}
	:=
	\begin{cases}
		g_{i}^{\prime},&k=i,\\
		g_{j}^{\prime},&k=j,\\
		g_{k},&k\notin\{i,j\}.
	\end{cases}
	\]
	For $k\geq m$, set
	\[
	Y_{k}^{(i,j)}
	:=
	F_{a,m}\bigl(
	\bigl(
	g_{k-m+1}^{(i,j)},\ldots,
	g_{k-1}^{(i,j)}
	\bigr),
	g_{k}^{(i,j)}
	\bigr),
	\]
	and define
	$
	S_{n,m}^{(i,j)}
	:=\sum_{k=m}^{m+n-1}Y_{k}^{(i,j)}.
	$
Define
$
	\Delta_{j}^{(i)}
	:=S_{n,m}^{(i)}-S_{n,m}^{(i,j)},
$
	and 
	$
	\Gamma_{ij}
	:=
	Y_{j}-Y_{j}^{(i)}
	-Y_{j}^{(j)}+Y_{j}^{(i,j)}.
	$
	Then,
	\begin{equation}\label{eq-ae-11}
		\begin{aligned}
			&\sum_{j=i+1}^{\min\{i+m-1,m+n-1\}}
			\mathbb{E}\Bigl[
			\bigl|
			\bigl(Y_{i}-Y_{i}^{(i)}\bigr)\Gamma_{ij}
			\bigr|
			\bigl(
			|\Delta_{j}|+|\Delta_{j}^{(i)}|
			\bigr)
			\Bigr]\leq
			C\bigl(1+a^{1-\delta}+m^{1-\delta}\bigr).
		\end{aligned}
	\end{equation}
	\item [(iii)]  
	Both \eqref{eq-ae-10} and \eqref{eq-ae-11} remain valid when
	$S_{n,m}$ and each of its replaced versions are replaced by the
	corresponding sums over a deterministic interval $[s,t]$ with
	$m\leq s\leq i\leq t$. In this case, the upper summation limit
	in \eqref{eq-ae-11} is $\min\{i+m-1,t\}$. The constants are
	independent of the chosen interval.
	\end{enumerate}

\end{corollary}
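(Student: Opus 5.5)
The proof conditions on $\mathscr{F}_i\vee\sigma(g_i')$ throughout and uses the conditional bounds \eqref{eq-ae-08} and \eqref{eq-ae-09}. The key observation is that $Y_i-Y_i^{(i)}$ and $Y_i^2-(Y_i^{(i)})^2$ are measurable with respect to this $\sigma$-algebra. Moreover, by \Cref{lem-finiteG}(iii), each of them is dominated by $C\ell_a(g_i)\ell_a(g_i')$, or more crudely by $C(\ell_a(g_i)^2+\ell_a(g_i')^2)$.

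\emph{Part (i).} Apply the tower property together with \eqref{eq-ae-08}:
\[
\mathbb{E}\bigl[(Y_i-Y_i^{(i)})^2|\Delta_i|\bigr]\leq \mathbb{E}|Y_i-Y_i^{(i)}|^3+C\,\mathbb{E}(Y_i-Y_i^{(i)})^2 .
\]
The quadratic term is bounded by $C\,\mathbb{E}\ell_a(g_1)^2\leq C$. For the cubic term, use $|Y_i-Y_i^{(i)}|\leq C(\ell_a(g_i)+\ell_a(g_i'))$ and $\ell_a(g)\leq 2+\min\{L(g),a\}$, which gives
\[
\mathbb{E}\min\{L,a\}^3\leq a^{1-\delta}\,\mathbb{E}L^{2+\delta}\leq Ca^{1-\delta}.
\]
This bounded-by-truncation step is where the factor $a^{1-\delta}$ enters. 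The second expectation in \eqref{eq-ae-10} is handled identically, since $|Y_i^2-(Y_i^{(i)})^2|\leq C(\ell_a(g_i)+\ell_a(g_i'))^2$ has the same cubic structure. Note that part (i) uses only \eqref{eq-ae-08}, not \eqref{eq-ae-09}.

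\emph{Part (ii).} This is the harder part, since $\Gamma_{ij}$ involves the future index $j>i$ and $\Delta_j$ depends on $g_j,g_j'$ and beyond. I would set $r=j-i$ and condition on $\mathscr{G}_j:=\mathscr{F}_j\vee\sigma(g_i',g_j')$. Given $\mathscr{G}_j$, the factors $(Y_i-Y_i^{(i)})$ and $\Gamma_{ij}$ are measurable, and the conditional version of \eqref{eq-ae-08} applied at the index $j$ gives
\[
\mathbb{E}[|\Delta_j|\mid\mathscr{G}_j]\leq |Y_j-Y_j^{(j)}|+C\leq C(\ell_a(g_j)+\ell_a(g_j')) .
\]
The same bound holds for $\Delta_j^{(i)}$, because the argument of \Cref{lem-futureL1} applies to any initial states. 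The $j$-th term is therefore at most
\[
C\,\mathbb{E}\bigl[\ell_a(g_i)\ell_a(g_i')\,(\ell_a(g_j)+\ell_a(g_j'))\,|\Gamma_{ij}|\bigr].
\]
Next, condition on $\mathscr{F}_i\vee\sigma(g_i')$. The quantity $\Gamma_{ij}$ is exactly $\Gamma_r(u,v)$ with $u=U_i$, $v=U_i^{(i)}$, with $g=g_j$, $g'=g_j'$, and with the driving sequence $(g_{i+s})_{s\geq 1}$. By \eqref{eq-ae-04}, its weighted conditional expectation is at most $\beta_{r,3}$. Hence the $j$-th term is bounded by $C\,\mathbb{E}[\ell_a(g_i)\ell_a(g_i')]\,\beta_{r,3}\leq C\beta_{r,3}$, because $g_i,g_i'$ are independent with finite second moments of $\ell_a$. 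Summing over $r$ and applying \eqref{eq-ae-05} yields \eqref{eq-ae-11}.

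The main obstacle is making this conditioning rigorous. The prefactor $(Y_i-Y_i^{(i)})$ depends on $g_i$ and $g_i'$, which also enter the initial states $u,v$ of $\Gamma_r$. Because the bound \eqref{eq-ae-04} is uniform in $u,v$, pulling out $\ell_a(g_i)\ell_a(g_i')$ under the conditioning is legitimate. One must also verify that the bound on $\mathbb{E}[|\Delta_j|\mid\mathscr{G}_j]$ holds even though $g_i'$ is present; this is fine because $\mathscr{G}_j$ is independent of the future sequence $g_{j+1},g_{j+2},\ldots$.

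\emph{Part (iii).} The same arguments go through verbatim for a deterministic interval $[s,t]$, using the remark preceding the corollary that \eqref{eq-ae-08} and \eqref{eq-ae-09} hold for interval sums with constants independent of the interval.
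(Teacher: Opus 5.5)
Your proposal is correct and follows the paper's proof essentially step by step. Part (i) uses the tower property, \eqref{eq-ae-08} and $\mathbb{E}\ell_a(g_1)^3\le Ca^{1-\delta}$; part (ii) first conditions on $\mathscr{F}_j\vee\sigma(g_i',g_j')$ to replace $|\Delta_j|+|\Delta_j^{(i)}|$ by $C(\ell_a(g_j)+\ell_a(g_j'))$, then conditions on $\mathscr{F}_i\vee\sigma(g_i')$, applies \eqref{eq-ae-04} with $r=j-i$, and sums using \eqref{eq-ae-05}. One slip is harmless: $|Y_i^2-(Y_i^{(i)})^2|$ is not bounded by $C\ell_a(g_i)\ell_a(g_i')$ in general, only by $C(\ell_a(g_i)^2+\ell_a(g_i')^2)$, but you never use the false bound, and in (ii) you could simply pull out $|Y_i-Y_i^{(i)}|$ itself, which is already $\mathscr{F}_i\vee\sigma(g_i')$-measurable, as the paper does.
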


\begin{proof}
	Recall that
$
	\ell_{a}(h)=1+\min\{L(h),a\}.
$
	The uniform  bound \eqref{eq-ad-04} gives
	\[
	|F_{a,m}(u,h)|\leq C\ell_{a}(h),
	\qquad
	u\in G^{m-1},\quad h\in G.
	\]
	Moreover,
$
	\mathbb{E}\ell_{a}(g_1)
	+\mathbb{E}\ell_{a}(g_1)^{2}
	\leq C,$
	$\mathbb{E}\ell_{a}(g_1)^{3}
	\leq Ca^{1-\delta}.$
	For the first term of \eqref{eq-ae-10}, the variable
	$(Y_{i}-Y_{i}^{(i)})^{2}$ is measurable with
	respect to
	$\mathscr{F}_{i}\vee\sigma(g_{i}^{\prime})$.
	Therefore, by \eqref{eq-ae-08}, we have
	\[
	\begin{aligned}
		\mathbb{E}\Bigl[
		\bigl(Y_{i}-Y_{i}^{(i)}\bigr)^{2}|\Delta_{i}|
		\Bigr]
		&=
		\mathbb{E}\Bigl[
		\bigl(Y_{i}-Y_{i}^{(i)}\bigr)^{2}
		\mathbb{E}\bigl[
		|\Delta_{i}|
		\,\bigm|\,
		\mathscr{F}_{i},g_{i}^{\prime}
		\bigr]
		\Bigr]\\
		&\quad\leq
		\mathbb{E}\bigl|Y_{i}-Y_{i}^{(i)}\bigr|^{3}
		+C\mathbb{E}\bigl(Y_{i}-Y_{i}^{(i)}\bigr)^{2}\\
		&\quad\leq Ca^{1-\delta},
	\end{aligned}
	\]
	where 
	the last inequality follows from
$
	|Y_{i}-Y_{i}^{(i)}|
	\leq
	C\bigl(
	\ell_{a}(g_{i})
	+\ell_{a}(g_{i}^{\prime})
	\bigr)
$
	and the preceding moment bounds.
	
For the second term of \eqref{eq-ae-10}, note that
$\bigl|Y_{i}^{2}-(Y_{i}^{(i)})^{2}\bigr|$ is measurable with respect to
$\mathscr{F}_{i}\vee\sigma(g_{i}^{\prime})$ and satisfies
$\bigl|Y_{i}^{2}-(Y_{i}^{(i)})^{2}\bigr|
\leq Y_{i}^{2}+(Y_{i}^{(i)})^{2}$.
	Applying \eqref{eq-ae-08} once more gives
	\[
	\begin{aligned}
		\mathbb{E}\Bigl[
		\bigl|Y_{i}^{2}-\bigl(Y_{i}^{(i)}\bigr)^{2}\bigr|
		|\Delta_{i}|
		\Bigr]
		&\leq
		\mathbb{E}\Bigl[
		\bigl|Y_{i}^{2}-\bigl(Y_{i}^{(i)}\bigr)^{2}\bigr|
		\bigl(
		|Y_{i}-Y_{i}^{(i)}|+C
		\bigr)
		\Bigr]\\
		&\quad\leq
		\mathbb{E}\bigl(
		|Y_{i}|+|Y_{i}^{(i)}|
		\bigr)^{3}
		+C\mathbb{E}\bigl(
		|Y_{i}|+|Y_{i}^{(i)}|
		\bigr)^{2}\\
		&\quad\leq Ca^{1-\delta}.
	\end{aligned}
	\]
	Thus \eqref{eq-ae-10} holds.
	
	We now prove \eqref{eq-ae-11}.
	Fix
$
	i<j\leq\min\{i+m-1,m+n-1\}.
$
	Condition first on
$
	\mathscr{F}_{j}
	\vee\sigma\bigl(
	g_{i}^{\prime},
	g_{j}^{\prime}
	\bigr).
$
	Under this conditioning, the original and replaced
	states through time $j$ are fixed, while the matrices
	strictly after $j$ remain independent with law $\mu$.
	Applying \eqref{eq-ae-08} to the original sequence and to
	the sequence with $g_{i}$ replaced by $g_{i}^{\prime}$,
	respectively, yields
	\[
	\begin{aligned}
		\mathbb{E}\bigl[
		|\Delta_{j}|+|\Delta_{j}^{(i)}|
		\,\bigm|\,
		\mathscr{F}_{j},
		g_{i}^{\prime},
		g_{j}^{\prime}
		\bigr]
		\leq
		\bigl|Y_{j}-Y_{j}^{(j)}\bigr|
		+\bigl|Y_{j}^{(i)}-Y_{j}^{(i,j)}\bigr|
		+2C\leq
		C\bigl(
		\ell_{a}(g_{j})
		+\ell_{a}(g_{j}^{\prime})
		\bigr).
	\end{aligned}
	\]
	Both $\Gamma_{ij}$ and $Y_{i}-Y_{i}^{(i)}$ are measurable
	under this conditioning. The tower property therefore gives
	\[
	\begin{aligned}
		&\mathbb{E}\Bigl[
		\bigl|
		\bigl(Y_{i}-Y_{i}^{(i)}\bigr)\Gamma_{ij}
		\bigr|
		\bigl(
		|\Delta_{j}|+|\Delta_{j}^{(i)}|
		\bigr)
		\,\Bigm|\,
		\mathscr{F}_{i},g_{i}^{\prime}
		\Bigr]\\
		&\quad\leq
		C\bigl|Y_{i}-Y_{i}^{(i)}\bigr|
		\mathbb{E}\Bigl[
		|\Gamma_{ij}|
		\bigl(
		\ell_{a}(g_{j})
		+\ell_{a}(g_{j}^{\prime})
		\bigr)
		\,\Bigm|\,
		\mathscr{F}_{i},g_{i}^{\prime}
		\Bigr].
	\end{aligned}
	\]
	
Conditionally on
$\mathscr{F}_{i}\vee\sigma(g_{i}^{\prime})$,
the two initial states $U_{i}$ and $U_{i}^{(i)}$ are fixed.
Both constructions use the same matrices
$g_{i+1},\ldots,g_{j-1}$.
At time $j$, we consider $g_{j}$ and its independent copy
$g_{j}^{\prime}$.
Thus \eqref{eq-ae-04}, with $r=j-i$, gives
	\[
	\mathbb{E}\Bigl[
	|\Gamma_{ij}|
	\bigl(
	\ell_{a}(g_{j})
	+\ell_{a}(g_{j}^{\prime})
	\bigr)
	\,\Bigm|\,
	\mathscr{F}_{i},g_{i}^{\prime}
	\Bigr]
	\leq\beta_{j-i,3}.
	\]
	Combining these inequalities, summing over $j$, and
	taking expectations, we obtain
	\[
	\begin{aligned}
		&\sum_{j=i+1}^{\min\{i+m-1,m+n-1\}}
		\mathbb{E}\Bigl[
		\bigl|
		\bigl(Y_{i}-Y_{i}^{(i)}\bigr)\Gamma_{ij}
		\bigr|
		\bigl(
		|\Delta_{j}|+|\Delta_{j}^{(i)}|
		\bigr)
		\Bigr]\\
		&\quad\quad\quad\leq
		C\mathbb{E}\bigl|Y_{i}-Y_{i}^{(i)}\bigr|
		\sum_{r=1}^{m-1}\beta_{r,3}\leq
		C\bigl(1+a^{1-\delta}+m^{1-\delta}\bigr),
	\end{aligned}
	\]
	where the last inequality follows from
$
	\mathbb{E}|Y_{i}-Y_{i}^{(i)}|
	\leq
	C\mathbb{E}[
	\ell_{a}(g_{i})
	+\ell_{a}(g_{i}^{\prime})]
	\leq C
$
	and \eqref{eq-ae-05}. This proves \eqref{eq-ae-11}.
	
Finally, the constant in \eqref{eq-ae-08} is independent
of the endpoints of the summation interval.
	Replacing the full sum by a sum over $[s,t]$ therefore
	leaves the proof unchanged, except that the upper
	summation limit becomes $\min\{i+m-1,t\}$.
	This proves the final assertion.
\end{proof}

\section{A concentration inequality}\label{sec-concentration}
To prove Berry--Esseen bounds using Stein's method, we need to
handle the discontinuity of the indicator function
$\mathbf{1}_{\{w\leq z\}}$.
A perturbation of the argument changes the value of the indicator
only when the argument crosses the threshold $z$.
We therefore need upper bounds on probabilities of the form
\[
\mathbb{P}(a\leq W\leq b),
\]
where $W$ is the statistic of interest and $a$ and $b$ are
possibly random interval endpoints satisfying $a\leq b$.
Such upper bounds are called concentration inequalities.

Concentration inequalities and their extensions have been
extensively studied.
For sums of independent random variables, concentration
inequalities and their randomized versions can be found in
\cite{ChenShao2001,ChenShao2007,ShaoZhou2016,LeiShao2022}.
Exponential randomized concentration inequalities were developed
in \cite{Shao2010,LeungShaoZhang2024}.
For locally dependent random variables, related results can be
found in \cite{ChenShao2004,Zhang2024,CaiShaoZhang2026}.

In this section, we establish a sharper conditional concentration
inequality for the $m$-dependent martingale difference sequence
$(Y_{k,m}^{[a]})$ obtained from the approximation
of random matrix products. The inequality improves the dependence of the upper bound on $m$,
yielding the probability bounds required for the proof of
\Cref{thm-main} when $m$ grows with the sample size.

Throughout this section, the censoring level $a\geq1$ and the integer 
$m\geq2$ are fixed. We use the $m$-dependent martingale difference sequence
$Y_{i}=Y^{[a]}_{i,m}$ constructed in \Cref{lem-finiteG}. We further assume that there exist two fixed positive constants $c^{*}_{1}$ and $c^{*}_{2}$ such that $c^{*}_{1}s\leq s_{a,m}\leq c^{*}_{2}s$.
We consider a block of consecutive matrices.
A matrix near its right end can also affect terms after the block.
We include all such terms in the sum before conditioning
on the matrices on either side of the block.
Specifically, we take $g_{m},\ldots,g_{N}$ as the central block,
where $N=m+H-1$ and $H\geq m$.
The matrices $g_{1},\ldots,g_{m-1}$ and
$g_{N+1},\ldots,g_{N+m-1}$ form the left and right
boundary blocks, respectively. Let
\begin{align}\label{eq-ua-01}
	\mathscr{G}
	&=\sigma\bigl(g_{1},\ldots,g_{m-1},
	g_{N+1},\ldots,g_{N+m-1}\bigr),
	\qquad
	S_{H,m}=\sum_{i=m}^{m+H-1}Y_{i}.
\end{align}
For $1\leq r<m$, define
\[
h_{r,m}\bigl(g_{N+1},\ldots,g_{N+r}\bigr)
=\mathbb{E}\bigl(Y_{N+r}\mid
             g_{N+1},\ldots,g_{N+r}\bigr).
\]
This definition integrates the other $m-r$ coordinates of the
kernel with respect to their independent product law. Those
coordinates have indices $N+r-m+1,\ldots,N$. They are central
coordinates because $H\geq m$. Moreover, the other boundary coordinates are independent
of the coordinates defining $Y_{N+r}$.  Consequently,
$
\mathbb{E}(Y_{N+r}\mid\mathscr{G})=h_{r,m}(g_{N+1},\ldots,g_{N+r}).
$
Define
\begin{equation}\label{eq-af-01}
R=\sum_{r=1}^{m-1}\bigl(Y_{N+r}-h_{r,m}\bigr),
\qquad V=S_{H,m}+R.
\end{equation}

For a central index $i$, integrating $Y_{i}$ with respect to
$g_{i}$ gives zero for every fixed state $U_{i-1}$, by
\Cref{lem-finiteG}.
 Hence,  $\mathbb{E}(Y_{i}\mid\mathscr{G})=0$. Summing this
identity and using the definition of $h_{r,m}$ yields
\begin{equation}\label{eq-af-02}
V=\sum_{i=m}^{N+m-1}Y_{i}
 -\mathbb{E}\Bigl[\sum_{i=m}^{N+m-1}Y_{i}\Bigm|\mathscr{G}\Bigr].
\end{equation}
In particular, $\mathbb{E}(V\mid\mathscr{G})=0$.

We can also express \(V\) in terms of \(X^{[a]}_{i,m}\). Applying \eqref{eq-ad-06} with \(j=m\) and \(n=N+m-1\) gives
\[
\sum_{i=m}^{N+m-1}Y_{i}
 =\sum_{i=m}^{N+m-1}X^{[a]}_{i,m}
  +\psi_{m}^{[a]}\bigl(U_{N+m-1}\bigr)
  -\psi_{m}^{[a]}\bigl(U_{m-1}\bigr).
\]
The states $U_{m-1}$ and $U_{N+m-1}$ consist of the
left and right boundary coordinates, respectively. The two terms involving $\psi_{m}^{[a]}$
are thus $\mathscr{G}$-measurable and disappear when we
subtract the conditional expectation given $\mathscr{G}$.
It follows that
\begin{equation}\label{eq-af-03}
V=\sum_{i=m}^{N+m-1}X^{[a]}_{i,m}
 -\mathbb{E}\biggl(\sum_{i=m}^{N+m-1}X^{[a]}_{i,m}\biggm|\mathscr{G}\biggr).
\end{equation}

We now state the conditional concentration inequality for $V$.
\begin{proposition}
	\label{prop-profile}
Fix $\eta>1$ and set $a=m$.
There exist constants $L_{*}>0$ and $C>0$, depending only on
$\eta$, $c^{*}_{1}$, $c^{*}_{2}$, $s$, $\mu$, $\delta$, $d$,
and the fixed norm, such that, for
$\alpha=L_{*}m^{1-\delta}$ and every $H\geq64m$, we have
	\begin{equation}\label{eq-ag-17}
		\mathbb{E}\biggl[
		\sup_{z\in\mathbb{R}}
		\mathbb{P}\bigl(z<V\leq z+2\alpha\mid\mathscr{G}\bigr)
		\biggr]
		\leq
		C\biggl\{
		\frac{\alpha}{\sqrt{H}}
		+
		\biggl(\frac{m}{H}\biggr)^{\eta}
		\biggr\}.
	\end{equation}
\end{proposition}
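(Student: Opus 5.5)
We will prove \eqref{eq-ag-17} by a recursive argument on the block length, in the spirit described in the introduction. Write $Q(H):=\mathbb{E}[\sup_{z}\mathbb{P}(z<V\leq z+2\alpha\mid\mathscr{G})]$ for the block of length $H$ (with the corresponding $V$, $\mathscr{G}$). The first step is a Stein-type identity for $V$ conditionally on $\mathscr{G}$. Conditionally on $\mathscr{G}$ the central matrices $g_m,\ldots,g_N$ are i.i.d., and we build exchangeable pairs by resampling a uniformly chosen central coordinate $g_i$ by $g_i'$. The quantities $\Delta_i$, $\Gamma_{ij}$ and the bounds of \Cref{cor-mixed}(iii) then apply on the interval $[m,N+m-1]$. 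The boundary correction $R$ does not affect the argument, because replacing a central matrix does not change $h_{r,m}$. Taking $f$ to be the usual concentration test function, namely $f'=\mathbf{1}_{[z-\alpha,z+3\alpha]}$ with $f$ centred at $z+\alpha$ and $\|f\|_\infty\leq 2\alpha$, we obtain
\[
\mathbb{E}[Vf(V)\mid\mathscr{G}]=\tfrac12\sum_i\mathbb{E}\Bigl[\Delta_i\int_{-\Delta_i}^{0}f'(V+t)\,dt\Bigm|\mathscr{G}\Bigr].
\]
The right-hand side is bounded below by $\mathbf{1}\{z<V\leq z+2\alpha\}$ times $\tfrac12\sum_i\Delta_i^2\mathbf{1}\{|\Delta_i|\leq\alpha\}$. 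The left-hand side is at most $2\alpha\,\mathbb{E}[|V|\mid\mathscr{G}]\leq 2\alpha(\mathbb{E}[V^2\mid\mathscr{G}])^{1/2}$, and the conditional second moment is $O(H)$ by the martingale structure and \Cref{prop-budgets}. This conditional second-moment bound should hold outside an event of probability $(m/H)^\eta$, using high moments of the bounded boundary contributions.

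The second step is to show that $D:=\tfrac12\sum_i\Delta_i^2\mathbf{1}\{|\Delta_i|\leq\alpha\}$ concentrates around its mean. Its mean is at least $cH s_{a,m}^2$, using \Cref{prop-variance}, the assumption $c_1^*s\leq s_{a,m}$, and \eqref{eq-ae-09} together with truncation at level $\alpha=L_*m^{1-\delta}$. This truncation level is chosen so that the part of $\mathbb{E}\Delta_i^2$ lost to $\{|\Delta_i|>\alpha\}$ is small. That loss is controlled via the $L^1$ control \eqref{eq-ae-08} and the third-moment-type bounds \eqref{eq-ae-10} of order $m^{1-\delta}$ once $a=m$; taking $L_*$ large makes it a small fraction. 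Write
\[
\mathbb{P}(z<V\leq z+2\alpha)\leq \frac{\mathbb{E}[Vf(V)]}{cH}+\frac{\mathbb{E}\bigl[|D-\mathbb{E}D|\,\mathbf{1}\{z<V\leq z+2\alpha\}\bigr]}{cH}.
\]
The first term gives $\alpha/\sqrt H$. For the second term, we split the index set into subblocks of length $H'\approx H/8$ and condition so that each fluctuation term is coupled with the concentration probability of $V$ restricted to a shorter block. Concretely, we remove from $V$ a subblock sum and the corresponding $\Delta_i$'s, which interact only within distance $m$ by $m$-dependence. We bound the fluctuation of $D$ by its variance, which is $O(Hm^{1-\delta})$-ish using \eqref{eq-ae-11} and the $\Gamma_{ij}$ estimates. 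We bound the remaining indicator by the concentration function of the complementary block, $Q(H-H')$ enlarged by a factor. This should produce a recursion of the form
\[
Q(H)\leq C\frac{\alpha}{\sqrt H}+\theta\,Q(H/2)+C\Bigl(\frac mH\Bigr)^{\eta'},\qquad \theta<1 .
\]

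The third step iterates this recursion from $H$ down to block lengths of order $64m$, where we use the trivial bound $Q\leq1$. The geometric factor $\theta^{k}$ after $k\approx\log_2(H/m)$ steps yields $(m/H)^{\log_2(1/\theta)}$. We choose $\theta$ small by taking the subblock fraction and $L_*$ appropriately, so that $\log_2(1/\theta)\geq\eta$. The terms $\alpha/\sqrt{H2^{-k}}$ sum geometrically to $O(\alpha/\sqrt H)$ because their weights $\theta^k 2^{k/2}$ are summable when $\theta<2^{-1/2}$.

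The main obstacle is the second step: decoupling $D-\mathbb{E}D$ from the event $\{z<V\leq z+2\alpha\}$ with a contraction factor $\theta$ small enough for arbitrary $\eta$. We would first try conditioning on a far subblock of length $\asymp H$, separated by gaps of length $m$, so that the fluctuation of $D$ over one half is independent of $V$ restricted to the other half. The conditional concentration of that other half is then bounded by $Q$ at half length, and Chebyshev's inequality on the fluctuation of $D$ supplies the small factor.
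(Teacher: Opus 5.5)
Your Step~1 identity does not hold here, and the positivity argument built on it cannot be repaired without changing the mechanism. Because $\int_{-\Delta_i}^{0}f'(V+t)\,\mathrm{d}t=f(V)-f(V^{(i)})$ and $(V,V^{(i)})$ is exchangeable given $\mathscr{G}$, your right-hand side equals $\mathbb{E}[(\sum_i\Delta_i)f(V)\mid\mathscr{G}]$. The identity therefore amounts to the linearity condition $\mathbb{E}[\sum_i\Delta_i-V\mid V,\mathscr{G}]=0$, and this fails. Test it with $f(v)=v$. By martingale orthogonality, $\mathbb{E}V^{2}=Hs_{a,m}^{2}+\mathbb{E}R^{2}$ with $\mathbb{E}R^{2}\leq Cm^{1-\delta}$. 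On the other hand, $\frac12\sum_{i=m}^{N}\mathbb{E}\Delta_i^{2}=Hs_{a,m}^{2}+\frac12\sum_{i=m}^{N}\sum_{j=i+1}^{i+m-1}\mathbb{E}(Y_j-Y_j^{(i)})^{2}$. This exceeds $\mathbb{E}V^2$ by an amount linear in $H$, because $Y_j$ genuinely depends on $g_i$ for $i<j\leq i+m-1$.

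The correct identity, which the paper uses, is $\mathbb{E}[Y_if(V)\mid\mathscr{G}]=\frac12\mathbb{E}[(Y_i-Y_i^{(i)})(f(V)-f(V^{(i)}))\mid\mathscr{G}]$. It sums to $\mathbb{E}[S_{H,m}f(V)\mid\mathscr{G}]$ and leaves $\mathbb{E}[Rf(V)\mid\mathscr{G}]$ to be handled separately. Its kernel $(Y_i-Y_i^{(i)})\int_{-\Delta_i}^{0}f'(V+t)\,\mathrm{d}t$ is not sign-definite, so your lower bound by $\mathbf{1}\{z<V\leq z+2\alpha\}\,D$ is lost. The off-diagonal pieces $(Y_i-Y_i^{(i)})(Y_j-Y_j^{(i)})$, $i<j\leq i+m-1$, are centred but not small in $L^{1}$ compared with the diagonal piece, so they cannot be absorbed. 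Removing them requires a second swap at $j$, which yields $\Gamma_{ij}$ times a difference of $f'$ at two nearby arguments. For an indicator $f'$, that difference is again a concentration probability of $V$.

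This is why the paper uses no positivity at all. It replaces the indicator by the trapezoid $\chi_{z,\alpha}$ and expands $\mathbb{E}[\chi_{z,\alpha}(V)\mid\mathscr{G}]$ through the correct identity with $F_{z,\alpha}$. The leading term is then $O(\alpha/\sqrt{H})$, and the predictable quadratic variation is handled by \Cref{lem-predictable}. Every remaining error term is reduced to differences of $\chi_{z,\alpha}$: the symmetrized $2\widehat{Y}_i-(Y_i-Y_i^{(i)})^2$ term, the Taylor remainder, the $\Gamma_{ij}$ terms and the $R$ terms. By \Cref{lem-chi-properties}(iii) and \Cref{lem-protected}, each such difference costs $\frac{2}{\alpha}\mathfrak{C}^{[a]}_{\lfloor H/16\rfloor,m}(\alpha)\,\mathbb{E}[Z|D|]$. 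Here $Z,D$ are local, and the concentration is taken on a far sub-block independent of them. With $a=m$, \eqref{eq-ae-10}, \eqref{eq-ae-11} and \eqref{eq-af-06} give $\sum_i\mathbb{E}[Z|D|]\leq CHm^{1-\delta}$. The contraction factor is therefore $Cm^{1-\delta}/\alpha=C/L_{*}$, which is made $\leq 16^{-\eta}$ by enlarging $L_{*}$.

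Your Step~2 has no comparable source of contraction. The estimates you invoke, \eqref{eq-ae-11} and the $\Gamma_{ij}$ bounds, are third-order mixed moments. They do not control $\operatorname{Var}\bigl(\sum_i\Delta_i^{2}\mathbf{1}\{|\Delta_i|\leq\alpha\}\bigr)$, which needs fourth moments that a $2+\delta$ moment does not provide. After truncation, the natural variance bound picks up a factor of order $\alpha^{2}$, so enlarging $L_*$ makes $\theta$ worse rather than better. At the bottom scales $H\asymp m$, nothing you cite makes the ratio of the standard deviation of $D$ to $H$ small. So the recursion $Q(H)\leq C\alpha/\sqrt{H}+\theta\,Q(H/2)+\cdots$, with $\theta$ small enough for arbitrary $\eta$, is not established.
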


To prove \Cref{prop-profile}, we first estimate $R$ and
its changes under single-coordinate replacements.
\begin{lemma}
	\label{lem-outgoing}
	For each $m\leq i\leq N$, let
	\[
	R^{(i)}
	:=
	\sum_{r=1}^{m-1}
	\Bigl[
	Y_{N+r}^{(i)}
	-h_{r,m}\bigl(
	g_{N+1},\ldots,g_{N+r}
	\bigr)
	\Bigr],
	\qquad
	V^{(i)}
	:=
	\sum_{k=m}^{N}Y_{k}^{(i)}+R^{(i)}.
	\]
	Then  there exists a constant $C>0$, depending only on
	 $c^{*}_{1}$, $c^{*}_{2}$, $s$, $\mu$, $\delta$, $d$, and the fixed norm,  such that the following statements hold.
	
	\begin{enumerate}
		\item[(i)]
	We have
		\begin{equation}\label{eq-af-04}
			\mathbb{E}\bigl[
			R^{2}\bigm|\mathscr{F}_{N}
			\bigr]
			\leq
			C\bigl(1+a^{1-\delta}+m^{1-\delta}\bigr).
		\end{equation}
		Moreover, for every $m\leq i\leq N$,
		\[
		\mathbb{E}\Bigl[
		\bigl(R^{(i)}\bigr)^{2}
		\,\Bigm|\,
		\mathscr{F}_{N}\vee
		\sigma(g_{i}^{\prime})
		\Bigr]
		\leq
		C\bigl(1+a^{1-\delta}+m^{1-\delta}\bigr).
		\]
		
		\item[(ii)]
		Consider the  sum
	$
		S_{N,m}
		=
		\sum_{k=m}^{N+m-1}Y_{k}.
$ 
		For every $m\leq k\leq N+m-1$, define
	$
		\Delta_{k}
		:=
		S_{N,m}-S_{N,m}^{(k)}.
$
Then, 
		for each central index $m\leq i\leq N$, we have
		\begin{equation}\label{eq-af-05}
			\Delta_{i}
			=
			V-V^{(i)}
			=
			\sum_{j=i}^{i+m-1}
			\bigl(Y_{j}-Y_{j}^{(i)}\bigr).
		\end{equation}
		\item[(iii)]
		For every $m\leq i\leq N$, we have
		\begin{equation}\label{eq-af-06}
			\begin{aligned}
				&\mathbb{E}\Bigl[
				\bigl|Y_{i}-Y_{i}^{(i)}\bigr|
				\bigl(
				|R^{(i)}|+|R-R^{(i)}|
				\bigr)
				|\Delta_{i}|
				\Bigr]\leq
				C\bigl(1+a^{1-\delta}+m^{1-\delta}\bigr).
			\end{aligned}
		\end{equation}
	\end{enumerate}
\end{lemma}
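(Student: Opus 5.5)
For (i), I will condition on $\mathscr{F}_N$ and exploit the martingale structure of the outgoing terms. Write $Y_{N+r}=F_{a,m}(U_{N+r-1},g_{N+r})$. The function $h_{r,m}$ is obtained by integrating $Y_{N+r}$ over the central coordinates $g_{N+r-m+1},\dots,g_N$ with respect to an independent product law. Hence $Y_{N+r}-h_{r,m}$ is exactly $Y_r(u)-\mathbb{E}_{\tilde u}Y_r(\tilde u)$, where $u$ is the $\mathscr{F}_N$-measurable state $U_N$ and $\tilde u$ is an independent copy of the state with law $\mu^{\otimes(m-1)}$, driven by $(g_{N+r})_{r\ge1}$.

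Both $Y_r(u)$ and $Y_r(\tilde u)$ are martingale differences in $r$ (\Cref{lem-finiteG}(i)), so their difference is as well. By Jensen, $\mathbb{E}[(Y_{N+r}-h_{r,m})^2\mid\mathscr{F}_N]\le\sup_{u,v}\mathbb{E}|Y_r(u)-Y_r(v)|^2\le\beta_{r,1}$. Orthogonality of martingale increments then gives $\mathbb{E}[R^2\mid\mathscr{F}_N]\le\sum_r\beta_{r,1}$, and \eqref{eq-ae-05} bounds this sum by $C(1+a^{1-\delta}+m^{1-\delta})$.

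To justify orthogonality I will condition on $\mathscr{F}_{N+r-1}$ together with the auxiliary copy, since $h_{r,m}$ is itself a martingale difference in $r$ by $\int F_{a,m}(u,h)\mu(\mathrm dh)=0$. The same argument applies verbatim to $R^{(i)}$, conditioning on $\mathscr{F}_N\vee\sigma(g_i')$ with starting state $U_N^{(i)}$, because $h_{r,m}$ does not depend on central coordinates.

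For (ii), $Y_k^{(i)}=Y_k$ unless $i\le k\le i+m-1$, and $i+m-1\le N+m-1$. So $\Delta_i=\sum_{j=i}^{i+m-1}(Y_j-Y_j^{(i)})$. Since $h_{r,m}$ is unchanged by the replacement, $V-V^{(i)}=\sum_{k=m}^N(Y_k-Y_k^{(i)})+\sum_r(Y_{N+r}-Y^{(i)}_{N+r})$, which is the same sum.

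For (iii), I bound $|R-R^{(i)}|\le|R|+|R^{(i)}|$ and treat $|R^{(i)}|$ and $|R|$ separately. Each term has the form $\mathbb{E}[|Y_i-Y_i^{(i)}|\,|R^\sharp|\,|\Delta_i|]$ with $R^\sharp\in\{R,R^{(i)}\}$. I apply Cauchy--Schwarz conditionally on $\mathscr{H}=\mathscr{F}_i\vee\sigma(g_i')$:
\[
\mathbb{E}\bigl[|R^\sharp||\Delta_i|\bigm|\mathscr{H}\bigr]\le \bigl(\mathbb{E}[(R^\sharp)^2\mid\mathscr{H}]\bigr)^{1/2}\bigl(\mathbb{E}[\Delta_i^2\mid\mathscr{H}]\bigr)^{1/2}.
\]
By the tower property and (i), $\mathbb{E}[(R^\sharp)^2\mid\mathscr{H}]\le C(1+a^{1-\delta}+m^{1-\delta})$. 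By \eqref{eq-ae-09}, $\mathbb{E}[\Delta_i^2\mid\mathscr{H}]\le(Y_i-Y_i^{(i)})^2+C(1+a^{1-\delta}+m^{1-\delta})$. With $D=|Y_i-Y_i^{(i)}|\le C(\ell_a(g_i)+\ell_a(g_i'))$ and $K=1+a^{1-\delta}+m^{1-\delta}$, the term is bounded by $C\,\mathbb{E}[D\sqrt K(D+\sqrt K)]\le C\sqrt K\,\mathbb{E}D^2+CK\,\mathbb{E}D\le CK$.

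The main obstacle is the Cauchy--Schwarz step in (iii): it loses a factor $\sqrt K$, which is harmless only because $\mathbb{E}D^2\le C$ holds uniformly in $a$. In (i), the care needed is in verifying that the conditional orthogonality survives the comparison with the integrated version $h_{r,m}$. I expect both to close as described.
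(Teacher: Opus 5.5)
Your proposal is correct and follows the paper's proof essentially step by step. In (i) you use the conditional-mean-zero property of $Y_{N+r}-h_{r,m}$ given $\mathscr{F}_{N+r-1}$ for orthogonality, and Jensen over the integrated central coordinates together with \eqref{eq-ae-02} for the diagonal terms. In (ii) you use the support argument with $h_{r,m}$ unchanged by the replacement. In (iii) you combine $|R-R^{(i)}|\leq|R|+|R^{(i)}|$, conditional Cauchy--Schwarz given $\mathscr{F}_i\vee\sigma(g_i')$, and \eqref{eq-ae-09}, the latter applied on the interval $[m,N+m-1]$ as allowed by the remark preceding \Cref{cor-mixed}.
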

\begin{proof}
	We first prove \eqref{eq-af-04}.  As shown in 	\Cref{lem-finiteG}, $(Y_{k})_{k\geq m}$ is a martingale difference sequence with respect to the filtration $(\mathscr{F}_{k})_{k\geq m}$. By the definition of $h_{r,m}$, we have $h_{r,m}\bigl(g_{N+1},\ldots,g_{N+r}\bigr)
	=\mathbb{E}\bigl(Y_{N+r}\mid
	g_{N+1},\ldots,g_{N+r}\bigr).$ Hence, for every fixed $g_{1},\ldots,g_{r-1}$, we have
\[
\int h_{r,m}(g_1,\ldots,g_{r-1},g)\,\mu(\mathrm dg)=0.
\]
Since $g_{N+r}$ is independent of $\mathscr F_{N+r-1}$, both $Y_{N+r}$ and $h_{r,m}$ have conditional mean zero given $\mathscr F_{N+r-1}$. Hence
\begin{align*}
	\mathbb{E}\bigl[
Y_{N+r}
-h_{r,m}\bigl(
g_{N+1},\ldots,g_{N+r}
\bigr)
\,\bigm|\,
\mathscr{F}_{N+r-1}
\bigr]
=0.
\end{align*}
Consequently, we have
\begin{align}\label{eq-kk-02}
	\mathbb{E}\bigl[
	\bigl(Y_{N+r}-h_{r,m}\bigr)
	\bigl(Y_{N+s}-h_{s,m}\bigr)
	\,\bigm|\,
	\mathscr{F}_{N}
	\bigr]
	=0, \quad 1\leq r<s<m
\end{align}
We next estimate the conditional second moment of each
summand.  Fix $1\leq r<m$. Conditionally on $\mathscr{F}_{N}$,
the matrices $g_{N+1},\ldots,g_{N+r}$
are independent with common law $\mu$.
By the definition of $h_{r,m}$ and Jensen's inequality,
we have
\begin{equation}\label{eq-kk-04}
	\begin{aligned}
		&\mathbb{E}\Bigl[
		\bigl|
		Y_{N+r}
		-h_{r,m}(g_{N+1},\ldots,g_{N+r})
		\bigr|^{2}
		\,\Bigm|\,
		\mathscr{F}_{N}
		\Bigr]\\
		&\quad\leq
		\int_{G^{m-r}}
		\mathbb{E}\Bigl[
		\Bigl|
		Y_{N+r}
		-F_{a,m}\Bigl(
		\bigl(
		v_{1},\ldots,v_{m-r},
		g_{N+1},\ldots,g_{N+r-1}
		\bigr),
		g_{N+r}
		\Bigr)
		\Bigr|^{2}
		\,\Bigm|\,
		\mathscr{F}_{N}
		\Bigr]
		\prod_{\ell=1}^{m-r}\mu(\mathrm{d}v_{\ell})\\
		&\quad\leq \beta_{r,1},
	\end{aligned}
\end{equation}
where the last inequality follows from \eqref{eq-ae-02},
which holds uniformly over the initial states.
Combining \eqref{eq-kk-02} and \eqref{eq-kk-04}, and then using
\eqref{eq-ae-05}, we have
	\[
	\begin{aligned}
		\mathbb{E}\bigl[
		R^{2}\bigm|\mathscr{F}_{N}
		\bigr]
		&=
		\sum_{r=1}^{m-1}
		\mathbb{E}\Bigl[
		\bigl|Y_{N+r}-h_{r,m}\bigr|^{2}
		\,\Bigm|\,
		\mathscr{F}_{N}
		\Bigr]\leq
		\sum_{r=1}^{m-1}\beta_{r,1}\leq
		C\bigl(1+a^{1-\delta}+m^{1-\delta}\bigr).
	\end{aligned}
	\]
	This proves \eqref{eq-af-04}. Conditionally on
	$\mathscr{F}_{N}\vee\sigma(g_{i}^{\prime})$,
	the state $U_{N}^{(i)}$ is fixed, while
	$g_{N+1},\ldots,g_{N+m-1}$ remain independent with
	common law $\mu$.
	Since the preceding estimates are uniform over the initial states,
	the same argument gives
\begin{equation}\label{eq-af-04-resampled}
	\mathbb{E}\bigl[
	\bigl(R^{(i)}\bigr)^{2}
	\,\bigm|\,
	\mathscr{F}_{N}\vee\sigma(g_{i}^{\prime})
	\bigr]
	\leq
	C\bigl(1+a^{1-\delta}+m^{1-\delta}\bigr).
\end{equation}

	We next prove \eqref{eq-af-05}.
The replacement at $i$ leaves every $h_{r,m}$ unchanged,
since $i\leq N$ and $h_{r,m}$ depends only on
$g_{N+1},\ldots,g_{N+r}$.
	Moreover, since $Y_j$ depends only on $g_{j-m+1},\ldots,g_j$, we have
$
	Y_{j}-Y_{j}^{(i)}=0$,
	if $j<i$ or $j>i+m-1.$
	Since $m\leq i\leq N$, the  interval
	$\{i,\ldots,i+m-1\}$ is contained in
	$\{m,\ldots,N+m-1\}$. It follows that
	\[
	\begin{aligned}
		V-V^{(i)}
		&=
		\sum_{j=m}^{N+m-1}
		\bigl(Y_{j}-Y_{j}^{(i)}\bigr)
		=
		S_{N,m}-S_{N,m}^{(i)}
		=
		\sum_{j=i}^{i+m-1}
		\bigl(Y_{j}-Y_{j}^{(i)}\bigr)
		=\Delta_{i}.
	\end{aligned}
	\]
	
	It remains to prove \eqref{eq-af-06}.
	Since
$
	\mathscr{F}_{i}\vee\sigma(g_{i}^{\prime})
	\subseteq
	\mathscr{F}_{N}\vee\sigma(g_{i}^{\prime}),
$
it follows from \eqref{eq-af-04},
\eqref{eq-af-04-resampled} and the tower property that
	\[
	\mathbb{E}\bigl[
	\bigl(
	|R^{(i)}|+|R-R^{(i)}|
	\bigr)^{2}
	\,\bigm|\,
	\mathscr{F}_{i},g_{i}^{\prime}
	\bigr]\leq  	C\mathbb{E}\bigl[
	\bigl(|R|^{2}+
	|R^{(i)}|^{2}\bigr)
	\,\bigm|\,
	\mathscr{F}_{i},g_{i}^{\prime}
	\bigr]
	\leq
	C\bigl(1+a^{1-\delta}+m^{1-\delta}\bigr).
	\]
	Combining this with the
	conditional Cauchy--Schwarz inequality  and \eqref{eq-ae-09}, we have
	\[
	\begin{aligned}
		&\mathbb{E}\bigl[
		\bigl(
		|R^{(i)}|+|R-R^{(i)}|
		\bigr)|\Delta_{i}|
		\,\Bigm|\,
		\mathscr{F}_{i},g_{i}^{\prime}
		\bigr]\\
		&\quad\leq
		C\bigl(1+a^{1-\delta}+m^{1-\delta}\bigr)^{1/2}
		\Bigl[
		\bigl(Y_{i}-Y_{i}^{(i)}\bigr)^{2}
		+C\bigl(1+a^{1-\delta}+m^{1-\delta}\bigr)
		\Bigr]^{1/2}.
	\end{aligned}
	\]
	Finally, $Y_{i}-Y_{i}^{(i)}$ is measurable with respect
	to $\mathscr{F}_{i}\vee\sigma(g_{i}^{\prime})$.
	Multiplying both sides of the preceding inequality by
	$|Y_{i}-Y_{i}^{(i)}|$, taking expectations,
	and using $\sqrt{x^{2}+y^{2}}\leq |x|+|y|$, we obtain
	\[
	\begin{aligned}
		\mathbb{E}\Bigl[
		\bigl|Y_{i}-Y_{i}^{(i)}\bigr|
		\bigl(
		|R^{(i)}|+|R-R^{(i)}|
		\bigr)|\Delta_{i}|
		\Bigr]
		&\leq
		C\bigl(1+a^{1-\delta}+m^{1-\delta}\bigr)^{1/2}
		\mathbb{E}\bigl[
		\bigl(Y_{i}-Y_{i}^{(i)}\bigr)^{2}
		\bigr]\\
		&\quad
		+C\bigl(1+a^{1-\delta}+m^{1-\delta}\bigr)
		\mathbb{E}\bigl|Y_{i}-Y_{i}^{(i)}\bigr|\\
		&\leq
		C\bigl(1+a^{1-\delta}+m^{1-\delta}\bigr).
	\end{aligned}
	\]
	This proves \eqref{eq-af-06}.
\end{proof}
The following lemma gives an $L^1$ bound for the difference
between $\sum_{i=m}^{N}\mathbb{E}(Y_{i}^{2}\mid\mathscr{F}_{i-1})$
and $Hs_{a,m}^{2}$.
\begin{lemma}
\label{lem-predictable}
With $N=m+H-1$, we have
\begin{equation}\label{eq-af-07}
\mathbb{E}\biggl|
 \sum_{i=m}^{N}\bigl\{
   \mathbb{E}(Y_{i}^{2}\mid\mathscr{F}_{i-1})-s_{a,m}^{2}\bigr\}
 \biggr|
 \leq C\sqrt{H\bigl(1+a^{1-\delta}+m^{1-\delta}\bigr)},
\end{equation}
	where $C$ is a positive constant depending only on $c^{*}_{1},c^{*}_{2}, s, \mu, \delta, d$ and the fixed norm.  
\end{lemma}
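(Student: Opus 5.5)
We will write $\mathbb{E}(Y_i^2\mid\mathscr{F}_{i-1})=\Psi(U_{i-1})$, where
\[
\Psi(u):=\int_G F_{a,m}(u,h)^2\,\mu(\mathrm{d}h),\qquad u\in G^{m-1}.
\]
This holds because $g_i$ is independent of $\mathscr{F}_{i-1}$ and $Y_i=F_{a,m}(U_{i-1},g_i)$. By stationarity, $\mathbb{E}\Psi(U_{i-1})=s_{a,m}^2$. The quantity to control is therefore $\sum_{i=m}^N\{\Psi(U_{i-1})-\mathbb{E}\Psi\}$, a sum of centered functions of the $(m-1)$-dependent stationary chain $(U_k)$. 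We bound its $L^1$ norm by its $L^2$ norm and expand the variance as
\[
\sum_{i,k}\operatorname{Cov}\bigl(\Psi(U_{i-1}),\Psi(U_{k-1})\bigr).
\]
Only pairs with $|i-k|\leq m-2$ contribute, by $(m-1)$-dependence. The target is to show that
\[
\sum_{r\geq0}\bigl|\operatorname{Cov}\bigl(\Psi(U_{0}),\Psi(U_{r})\bigr)\bigr|\leq C\bigl(1+a^{1-\delta}+m^{1-\delta}\bigr),
\]
which gives variance $\leq CH(1+a^{1-\delta}+m^{1-\delta})$ and hence \eqref{eq-af-07}.

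For the covariance at lag $r\geq1$, we use a coupling. Condition on $U_0=u$. The state $U_r$ is $U_r(u)$ in the notation \eqref{eq-defi-Uu}, driven by fresh matrices. Let $v$ be an independent copy of $U_0$. Then
\[
\operatorname{Cov}\bigl(\Psi(U_0),\Psi(U_r)\bigr)=\mathbb{E}\Bigl[\Psi(u)\bigl\{\mathbb{E}\Psi(U_r(u))-\mathbb{E}\Psi(U_r(v))\bigr\}\Bigr],
\]
with $u,v$ i.i.d.\ from the stationary law. Next, $\Psi(U_r(u))=\mathbb{E}[Y_{r+1}(u)^2\mid g_1,\dots,g_r]$, so
\[
\bigl|\mathbb{E}\Psi(U_r(u))-\mathbb{E}\Psi(U_r(v))\bigr|\leq \sup_{u,v}\mathbb{E}\bigl|Y_{r+1}(u)^2-Y_{r+1}(v)^2\bigr|\leq\beta_{r+1,2},
\]
by \eqref{eq-ae-03} of \Cref{prop-budgets}. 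We also have $\mathbb{E}\Psi(U_0)\leq C\,\mathbb{E}\ell_a(g)^2\leq C$, using $|F_{a,m}(u,h)|\leq C\ell_a(h)$. Together these give $|\operatorname{Cov}|\leq C\beta_{r+1,2}$. The point that needs care is that the supremum bound in \eqref{eq-ae-03} is uniform in $u,v$, so we can pull it out before integrating against $\Psi(u)\,\mathrm{d}\mathrm{law}(u)$. Summing over $r$ and applying \eqref{eq-ae-05}, the lagged terms contribute at most $C(1+a^{1-\delta}+m^{1-\delta})$ per index $i$.

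The lag-zero (diagonal) term is the main obstacle. It equals $\operatorname{Var}\Psi(U_0)\leq\mathbb{E}\Psi(U_0)^2$. The naive bound $\Psi(u)\leq C\,\mathbb{E}\ell_a(g)^2$ uniformly in $u$ gives $\operatorname{Var}\Psi\leq C$. This is acceptable here, because $\Psi$ is an integral over $h$, which removes the heavy tail of $g_i$. The remaining $u$-dependence enters only through bounded $\psi_m^{[a]}$ terms and through $x_{a,m}(u,h)$, which is itself bounded by $\min\{L(h),a\}+C$. So the diagonal also costs only $C$ per index.

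Assembling, $\mathbb{E}\bigl(\sum_{i}\{\Psi(U_{i-1})-s_{a,m}^2\}\bigr)^2\leq CH(1+a^{1-\delta}+m^{1-\delta})$, and Cauchy--Schwarz yields \eqref{eq-af-07}. The step to double-check is the identification $\Psi(U_r(u))=\mathbb{E}[Y_{r+1}(u)^2\mid g_1,\dots,g_r]$, so that \eqref{eq-ae-03} applies with the correct index shift. Since $\beta_{r,2}=0$ for $r\geq m$, the shift does not affect the summation bound.
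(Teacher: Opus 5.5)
Your proposal is correct and follows essentially the same route as the paper. Both arguments bound the $L^1$ norm by the $L^2$ norm and handle the diagonal using the uniform boundedness of $\mathbb{E}(Y_i^2\mid\mathscr{F}_{i-1})$ as a function of $U_{i-1}$. Both then bound each lag-$r$ covariance by $C\beta_{r+1,2}$ through the uniform-in-$(u,v)$ estimate \eqref{eq-ae-03}, and sum these bounds with \eqref{eq-ae-05}.
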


\begin{proof}
For notational convenience, in the following proof, let
$\widehat{Y}_{i}=\mathbb{E}(Y_{i}^{2}\mid\mathscr{F}_{i-1})$.
Then, we have $0\leq \widehat{Y}_{i}\leq C$ and $\mathbb{E} \widehat{Y}_{i}=s_{a,m}^{2}$. Recalling that $U_{i-1}
=
(g_{i-m+1},\ldots,g_{i-1})$,  we see that
the random variable $\widehat{Y}_{i}$ depends only on
$
U_{i-1}
$
and 
\[
\mathbb{E}\bigl[
\widehat{Y}_{i+r}\bigm|U_{i-1}
\bigr]
=
\mathbb{E}\bigl[
Y_{i+r}^{2}\bigm|U_{i-1}
\bigr],\quad  \text{for }1\leq r<m-1
\]
By  \eqref{eq-ae-03}
and the inequality
$|\mathbb{E}A-\mathbb{E}B|\leq\mathbb{E}|A-B|$, we have
\begin{align}\label{eq-kk-08}
	\sup_{u,v}\Bigl|
	\mathbb{E}\bigl[
	Y_{i+r}^{2}\bigm|U_{i-1}=u
	\bigr]
	-
	\mathbb{E}\bigl[
	Y_{i+r}^{2}\bigm|U_{i-1}=v
	\bigr]
	\Bigr|
	\leq \beta_{r+1,2}.
\end{align}
By the tower property and stationarity, we have
$
\mathbb{E}[
\mathbb{E}[
Y_{i+r}^{2}\bigm|U_{i-1}
]
]
=
\mathbb{E}Y_{i+r}^{2}
=
s_{a,m}^{2}.
$
Combining this  with \eqref{eq-kk-08}, we have
\[
\begin{aligned}
	&\Bigl|
	\mathbb{E}\bigl[
	Y_{i+r}^{2}\bigm|U_{i-1}=u
	\bigr]
	-s_{a,m}^{2}
	\Bigr|=
	\Bigl|
	\mathbb{E}\Bigl[
	\mathbb{E}\bigl[
	Y_{i+r}^{2}\bigm|U_{i-1}=u
	\bigr]
	-
	\mathbb{E}\bigl[
	Y_{i+r}^{2}\bigm|U_{i-1}
	\bigr]
	\Bigr]
	\Bigr|\leq \beta_{r+1,2}.
\end{aligned}
\]
Therefore,
\[
\Bigl|
\mathbb{E}\bigl[
\widehat{Y}_{i+r}\bigm|U_{i-1}
\bigr]
-s_{a,m}^{2}
\Bigr|
\leq \beta_{r+1,2},
\qquad 1\leq r<m-1.
\]
For $r\geq m-1$, $\widehat{Y}_{i+r}$ depends only on
$g_{i},g_{i+1},\ldots$ and is therefore independent of
$U_{i-1}$. Hence
$\mathbb{E}(\widehat{Y}_{i+r}\mid U_{i-1})
=\mathbb{E}\widehat{Y}_{i+r}=s_{a,m}^{2}$.
Conditioning the product on $U_{i-1}$ now gives, for $r\geq1$,
\begin{align}\label{eq-kk-09}
\bigl|\operatorname{Cov}(\widehat{Y}_{i},\widehat{Y}_{i+r})\bigr|
 =\bigl|\mathbb{E}\bigl[
   (\widehat{Y}_{i}-s_{a,m}^{2})
   \{\mathbb{E}(\widehat{Y}_{i+r}\mid U_{i-1})-s_{a,m}^{2}\}\bigr]\bigr|\leq C\beta_{r+1,2}.
\end{align}
Combining \eqref{eq-kk-09} and \eqref{eq-ae-05}, we obtain
\begin{align*}
\mathbb{E}\bigg|\sum_{i=m}^{N}
                   (\widehat{Y}_{i}-s_{a,m}^{2})\bigg|^{2}
 \leq CH+CH\sum_{r=1}^{m-2}\beta_{r+1,2}\leq CH\bigl(1+a^{1-\delta}+m^{1-\delta}\bigr),
\end{align*}
as desired.
\end{proof}

We next introduce two functions used in the recursion.
For $z\in\mathbb{R}$ and $\alpha>0$, let
\[
\chi_{z,\alpha}(t)=
\begin{cases}
0,&t\leq z-\alpha,\\
\dfrac{t-z+\alpha}{\alpha},&z-\alpha<t<z,\\
1,&z\leq t\leq z+\alpha,\\
\dfrac{z+2\alpha-t}{\alpha},&z+\alpha<t<z+2\alpha,\\
0,&t\geq z+2\alpha.
\end{cases}
\qquad
F_{z,\alpha}(t)=\int_{-\infty}^{t}\chi_{z,\alpha}(u)\,\,\mathrm{d} u.
\]

The following lemma establishes some useful properties of the function $\chi_{z,\alpha}(t).$

\begin{lemma}
	\label{lem-chi-properties}
	For every $z\in\mathbb{R}$ and $\alpha>0$, the following
	properties hold.
	\begin{enumerate}
		\item[(i)]
		The function $\chi_{z,\alpha}$ takes values in $[0,1]$ and is
		$1/\alpha$-Lipschitz. In particular,
		\[
		\bigl|
		\chi_{z,\alpha}(x)-\chi_{z,\alpha}(y)
		\bigr|
		\leq
		|x-y|/\alpha,
		\qquad x,y\in\mathbb{R}.
		\]
		
		\item[(ii)]
		For every $t\in\mathbb{R}$,
		\[
		0\leq F_{z,\alpha}(t)\leq 2\alpha.
		\]
		
		\item[(iii)]
For every $x,y\in\mathbb{R}$,
		\[
		\begin{aligned}
			&\bigl|
			\chi_{z,\alpha}(x+y)
			-
			\chi_{z,\alpha}(x)
			\bigr| \leq
			\frac{|y|}{\alpha}
			\int_{0}^{1}
			\bigl[
			\chi_{z-\alpha-ty,\alpha}(x)
			+
			\chi_{z+\alpha-ty,\alpha}(x)
			\bigr]
			\,\mathrm{d}t.
		\end{aligned}
		\]
	\end{enumerate}
\end{lemma}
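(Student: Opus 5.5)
The three properties of Lemma~\ref{lem-chi-properties} follow from the explicit piecewise-linear form of $\chi_{z,\alpha}$. I would verify them in order and reduce (iii) to a pointwise inequality between $|\chi'_{z,\alpha}|$ and the two shifted copies of $\chi$.

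For (i), $\chi_{z,\alpha}$ is continuous, since the pieces agree at the breakpoints $z-\alpha,z,z+\alpha,z+2\alpha$. It takes values in $[0,1]$ by inspection. Its derivative exists off the four breakpoints and takes only the values $0,\pm1/\alpha$. A continuous piecewise-linear function with slopes bounded by $1/\alpha$ is $1/\alpha$-Lipschitz, which gives (i).

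For (ii), $\chi_{z,\alpha}\geq0$, so $F_{z,\alpha}$ is nondecreasing and $F_{z,\alpha}(t)\geq0$. Also $\chi_{z,\alpha}$ vanishes outside $[z-\alpha,z+2\alpha]$. Hence $F_{z,\alpha}(t)\leq\int_{\mathbb R}\chi_{z,\alpha}=\alpha/2+\alpha+\alpha/2=2\alpha$, computing the areas of the two triangles and the rectangle.

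For (iii), I would write
\[
\chi_{z,\alpha}(x+y)-\chi_{z,\alpha}(x)=\int_0^1 y\,\chi'_{z,\alpha}(x+ty)\,\mathrm dt.
\]
This is valid because $\chi_{z,\alpha}$ is Lipschitz, hence absolutely continuous; the derivative is undefined only on a finite set, which is Lebesgue-null in $t$ when $y\neq0$, and the case $y=0$ is trivial. The key pointwise claim is that for every $w$ off the breakpoints,
\[
|\chi'_{z,\alpha}(w)|\leq \frac1\alpha\bigl[\mathbf 1_{(z-\alpha,z)}(w)+\mathbf 1_{(z+\alpha,z+2\alpha)}(w)\bigr].
\]
I then compare these indicators with shifted copies of $\chi$. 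By the definition, $\chi_{z',\alpha}\equiv1$ on $[z',z'+\alpha]$. Taking $z'=z-\alpha$ gives $\chi_{z-\alpha,\alpha}(w)=1$ on $[z-\alpha,z]$. Taking $z'=z+\alpha$ gives $\chi_{z+\alpha,\alpha}(w)=1$ on $[z+\alpha,z+2\alpha]$. Therefore
\[
|\chi'_{z,\alpha}(w)|\leq\frac1\alpha\bigl[\chi_{z-\alpha,\alpha}(w)+\chi_{z+\alpha,\alpha}(w)\bigr]
\]
for almost every $w$.

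Now substitute $w=x+ty$. Since $\chi_{c,\alpha}(x+ty)=\chi_{c-ty,\alpha}(x)$ by translation invariance of the construction, the integrand becomes $\chi_{z-\alpha-ty,\alpha}(x)+\chi_{z+\alpha-ty,\alpha}(x)$. Taking absolute values inside the $t$-integral then gives (iii).

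The only delicate point is matching the translation convention $\chi_{c,\alpha}(x+ty)=\chi_{c-ty,\alpha}(x)$ and the supports of the two shifted plateaus with the stated form. Beyond that, everything is an elementary check.
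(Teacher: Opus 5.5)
Your proposal is correct and follows the paper's route. It uses the slope bound for (i) and the area computation $\alpha/2+\alpha+\alpha/2$ for (ii). For (iii) it bounds $|\chi_{z,\alpha}'|$ almost everywhere by the indicators of the two ramps, dominates these by the plateaus of $\chi_{z-\alpha,\alpha}$ and $\chi_{z+\alpha,\alpha}$, and then applies the translation identity $\chi_{c,\alpha}(x+ty)=\chi_{c-ty,\alpha}(x)$ inside the $t$-integral.
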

\begin{proof}
	By definition, $\chi_{z,\alpha}$ takes values in $[0,1]$ and is
	piecewise linear, with slopes belonging to
	$\{-1/\alpha,0,1/\alpha\}$. Hence it is $1/\alpha$-Lipschitz,
	which proves {\rm (i)}.
	Since $\chi_{z,\alpha}\geq0$, the function $F_{z,\alpha}$ is
	nonnegative. Moreover,
$
	\int_{\mathbb{R}}\chi_{z,\alpha}(u)\,\mathrm{d}u
	=
	\alpha/2+\alpha+\alpha/2
	=
	2\alpha.
$
	Therefore, for each $t\in \R$, 
$
	0\leq F_{z,\alpha}(t)\leq 2\alpha
$
	and {\rm (ii)} follows.
	Finally, $\chi_{z,\alpha}$ is absolutely continuous and, for
	almost every $x\in\mathbb{R}$,
$
	|\chi_{z,\alpha}'(x)|
	=
[
	\mathbf{1}_{(z-\alpha,z)}(x)
	+
	\mathbf{1}_{(z+\alpha,z+2\alpha)}(x)
	]/\alpha.
$
Combining this with the inequalities  $	\mathbf{1}_{(z-\alpha,z)}(x)
\leq \chi_{z-\alpha,\alpha}(x)$ and  $
	\mathbf{1}_{(z+\alpha,z+2\alpha)}(x)
	\leq \chi_{z+\alpha,\alpha}(x),$ we have
\begin{align}\label{eq-ua-02}
		|\chi_{z,\alpha}'(x)|
	\leq
	\frac{1}{\alpha}
	\bigl[
	\chi_{z-\alpha,\alpha}(x)
	+
	\chi_{z+\alpha,\alpha}(x)
	\bigr].
\end{align}
Now, for any \(x,y\in\mathbb{R}\), by the absolute continuity of \(\chi_{z,\alpha}\) and \eqref{eq-ua-02}, we have
	\begin{align*}
			\bigl|	\chi_{z,\alpha}(x+y)-\chi_{z,\alpha}(x)	\bigl|
		&=
		\Big|y\int_{0}^{1}\chi_{z,\alpha}'(x+ty)\,\mathrm{d}t\Big|\\
		&\leq 	\frac{|y|}{\alpha}
		\int_{0}^{1}
		\bigl[
		\chi_{z-\alpha,\alpha}(x+ty)
		+
		\chi_{z+\alpha,\alpha}(x+ty)
		\bigr]
		\,\mathrm{d}t\\
		&\leq \frac{|y|}{\alpha}
		\int_{0}^{1}
		\bigl[
		\chi_{z-\alpha-ty,\alpha}(x)
		+
		\chi_{z+\alpha-ty,\alpha}(x)
		\bigr]
		\,\mathrm{d}t,
	\end{align*}
where the last inequality follows from  the translation identity
$
	\chi_{w,\alpha}(x+ty)
	=
	\chi_{w-ty,\alpha}(x).
$
	This completes the proof.
\end{proof}

For $\alpha>0$, we consider the quantity
\begin{equation}\label{eq-af-08}
\mathfrak{C}^{[a]}_{H,m}(\alpha)
 =\mathbb{E}\Bigl[\sup_{z\in\mathbb{R}}
       \mathbb{E}\bigl(\chi_{z,\alpha}(V)\mid\mathscr{G}\bigr)\Bigr].
\end{equation}
Since the conditional expectation is Lipschitz in $z$,
with constant $1/\alpha$, the supremum in \eqref{eq-af-08}
can be taken over rational $z$ and is therefore measurable. 

To derive a recursive bound for
$\mathfrak{C}^{[a]}_{H,m}(\alpha)$,
we first establish an estimate for handling
coordinate replacements.
\begin{lemma}
	\label{lem-protected}
	 Let $H\geq 64m$ and
$
	N=m+H-1.
$
	Let $\mathcal{I}\subseteq\{1,2,\ldots\}$
	be the union of at most two deterministic finite intervals of integers
	(the empty set is allowed), such that
$
	|\mathcal{I}|\leq 6m.
$
	Let $\mathscr{R}$ be a $\sigma$-field independent of
	$\sigma(g_{k}:k\geq1)$, and let $Z,D$ be real-valued
	random variables measurable with respect to
$
	\sigma(
	g_{k}:k\in\mathcal{I}
	)\vee\mathscr{R},
$
	with $Z\geq0$.
	Then the following statements hold.
	\begin{enumerate}
		\item[(i)]
		There exist deterministic integers $p\leq q$, depending only on
		$H,m,\mathcal{I}$, such that
	$
		q-p+1=\lfloor H/16\rfloor
$
		and
		\[
		\{p-m+1,\ldots,q+m-1\}
		\subseteq
		\{m,\ldots,N\}
		\setminus\mathcal{I}.
		\]
		\item[(ii)] Let $	\mathcal{J}:=\{p,\ldots,q\}$.  Define
	$
		\mathscr{G}_{\mathcal{J}}
		:=
		\sigma\bigl(
		g_{p-m+1},\ldots,g_{p-1},
		g_{q+1},\ldots,g_{q+m-1}
		\bigr)
	$
		and
		\[
		V'
		:=
		\sum_{k=p}^{q+m-1}Y_{k}
		-
		\mathbb{E}\Bigl[
		\sum_{k=p}^{q+m-1}Y_{k}
		\,\Bigm|\,
		\mathscr{G}_{\mathcal{J}}
		\Bigr].
		\]
	Let
	$
		\mathscr{F}_{\mathcal{J}}^{\mathrm{out}}
		:=
		\sigma\bigl(
		g_{k}:k\geq1,\ k\notin\mathcal{J}
		\bigr)\vee\mathscr{R}.
$
		Then, for every $\alpha>0$, every real-valued
		$\mathscr{F}_{\mathcal{J}}^{\mathrm{out}}$-measurable random
		variable $T$, and every real-valued $\mathscr{G}$-measurable
		random variable $z$, we have
		\begin{equation}\label{eq-af-09}
			\mathbb{E}\Bigl[
			Z\bigl|
			\chi_{z,\alpha}(V'+T+D)
			-
			\chi_{z,\alpha}(V'+T)
			\bigr|
			\Bigr]
			\leq
			\frac{2}{\alpha}
			\mathfrak{C}^{[a]}_{\lfloor H/16\rfloor,m}(\alpha)
			\mathbb{E}[Z|D|],
		\end{equation}
		where $\mathscr{G}$ is defined   by \eqref{eq-ua-01}.
	\end{enumerate}
\end{lemma}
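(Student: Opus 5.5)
Part (i) is a pigeonhole argument. The index set $\{m,\ldots,N\}$ has $H$ elements. The set $\mathcal{I}$ consists of at most two intervals of total length at most $6m$. Removing $\mathcal{I}$ from $\{m,\ldots,N\}$ leaves at most three maximal gaps, and their total length is at least $H-6m$. Hence some gap has length at least $(H-6m)/3$. A block $\{p-m+1,\ldots,q+m-1\}$ with $q-p+1=\lfloor H/16\rfloor$ has length $\lfloor H/16\rfloor+2m-2$. It fits inside such a gap because $H\geq 64m$ gives
\[
(H-6m)/3\geq H/16+2m .
\]
We choose $p$ deterministically, for instance as the left end of the longest gap shifted by $m-1$, with ties broken by taking the leftmost gap. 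Then $p$ and $q$ depend only on $H,m,\mathcal{I}$.

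For part (ii) we first identify the law of $V'$. After re-indexing, the block $g_{p-m+1},\ldots,g_{q+m-1}$ has the same structure as $g_1,\ldots,g_{N'+m-1}$ with $H'=\lfloor H/16\rfloor$ and $N'=m+H'-1$. Here $\mathscr{G}_{\mathcal{J}}$ plays the role of $\mathscr{G}$, and the representation \eqref{eq-af-02} shows that $V'$ is the analogue of $V$. The summands $Y_k$ for $k\in[p,q+m-1]$ depend only on $g_{k-m+1},\ldots,g_k$, all of which lie in the block, and the $g$'s are i.i.d. Therefore, conditionally on $\mathscr{G}_{\mathcal{J}}$, the law of $V'$ equals the conditional law of $V$ for block length $H'$.

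Next we use independence. The variables $T$, $Z$, $D$ and $z$ are measurable with respect to $\mathscr{F}^{\mathrm{out}}_{\mathcal{J}}$. For $z$ this holds because $\mathscr{G}$ involves only the boundary indices $1,\ldots,m-1$ and $N+1,\ldots,N+m-1$, which lie outside $\mathcal{J}\subseteq\{m,\ldots,N\}$. For $Z$ and $D$ it holds because $\mathcal{I}\cap\mathcal{J}=\emptyset$. The central matrices $(g_k)_{k\in\mathcal{J}}$ are independent of $\mathscr{F}^{\mathrm{out}}_{\mathcal{J}}$, and $V'$ is a function of them together with the boundary variables in $\mathscr{G}_{\mathcal{J}}\subseteq\mathscr{F}^{\mathrm{out}}_{\mathcal{J}}$. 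Consequently the conditional law of $V'$ given $\mathscr{F}^{\mathrm{out}}_{\mathcal{J}}$ coincides with its conditional law given $\mathscr{G}_{\mathcal{J}}$.

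We then apply Lemma~\ref{lem-chi-properties}(iii) with $x=V'+T$ and $y=D$:
\[
\bigl|\chi_{z,\alpha}(V'+T+D)-\chi_{z,\alpha}(V'+T)\bigr|\leq \frac{|D|}{\alpha}\int_0^1\bigl[\chi_{z-\alpha-tD-T,\alpha}(V')+\chi_{z+\alpha-tD-T,\alpha}(V')\bigr]\,\mathrm{d}t .
\]
We multiply by $Z$ and condition on $\mathscr{F}^{\mathrm{out}}_{\mathcal{J}}$. The shifts $z\pm\alpha-tD-T$ are measurable with respect to this $\sigma$-field, so each conditional expectation $\mathbb{E}[\chi_{w,\alpha}(V')\mid\mathscr{F}^{\mathrm{out}}_{\mathcal{J}}]$ is bounded by
\[
\sup_{w\in\mathbb{R}}\mathbb{E}\bigl[\chi_{w,\alpha}(V')\mid\mathscr{G}_{\mathcal{J}}\bigr].
\]
This supremum is $\mathscr{G}_{\mathcal{J}}$-measurable, and by the earlier remark it is measurable when taken over rational $w$.

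The main obstacle is the final step, where this supremum sits next to the weight $Z|D|$. The supremum depends only on the boundary matrices of $\mathcal{J}$. Those boundary indices, $p-m+1,\ldots,p-1$ and $q+1,\ldots,q+m-1$, lie in $\{m,\ldots,N\}\setminus\mathcal{I}$ by part (i). Since $Z|D|$ is measurable with respect to $\sigma(g_k:k\in\mathcal{I})\vee\mathscr{R}$, the supremum is independent of $Z|D|$. The expectation therefore factorizes as
\[
\mathbb{E}\bigl[Z|D|\bigr]\cdot\mathbb{E}\Bigl[\sup_{w\in\mathbb{R}}\mathbb{E}\bigl[\chi_{w,\alpha}(V')\mid\mathscr{G}_{\mathcal{J}}\bigr]\Bigr].
\]
By stationarity the second factor equals $\mathfrak{C}^{[a]}_{\lfloor H/16\rfloor,m}(\alpha)$. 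Each of the two $\chi$ terms contributes this amount, which gives the factor $2/\alpha$ and yields \eqref{eq-af-09}.
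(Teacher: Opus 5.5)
Your proposal is correct and follows the paper's proof essentially step by step. For (i) you use the same pigeonhole choice of a remaining gap of length at least $(H-6m)/3$. For (ii) you apply \Cref{lem-chi-properties}(iii), condition on $\mathscr{F}^{\mathrm{out}}_{\mathcal{J}}$, replace that $\sigma$-field by $\mathscr{G}_{\mathcal{J}}$, factorize using the independence of the boundary matrices from $Z|D|$, and conclude by stationarity. The only difference is cosmetic: you take the longest gap rather than the leftmost sufficiently long one, which is equally deterministic.
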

\begin{proof}
	We first prove  (i).
	Removing $\mathcal{I}$ from $\{m,\ldots,N\}$
	leaves at most three intervals of integers, whose
	total length is at least $H-6m$.
	At least one of these intervals therefore has length
	at least $(H-6m)/3$.
	Since $H\geq 64m$, it follows that
	\[
	\frac{H-6m}{3}
	\geq
	\lfloor H/16\rfloor+2(m-1),
	\qquad
	\lfloor H/16\rfloor\geq m.
	\]
	Choose the leftmost remaining interval whose length
	is at least $\lfloor H/16\rfloor+2(m-1)$, and let
	$b$ be its left endpoint. Set
$
	p:=b+m-1,$
	$q:=p+\lfloor H/16\rfloor-1.
$
	Then
$
	q-p+1=\lfloor H/16\rfloor
$
	and
	\[
	\{p-m+1,\ldots,q+m-1\}
	\subseteq
	\{m,\ldots,N\}\setminus\mathcal{I}.
	\]
	This choice depends only on $H,m,\mathcal{I}$.
	
	We next prove (ii).  By assumption, \(Z,z,T,D\) are \(\mathscr{F}_{\mathcal{J}}^{\mathrm{out}}\)-measurable and hence may be regarded as fixed conditional on \(\mathscr{F}_{\mathcal{J}}^{\mathrm{out}}\). Applying \Cref{lem-chi-properties} (iii)  with $x=V'+T$ and $y=D$,  we have
	\begin{align}\label{eq-ua-03}
		\begin{aligned}
				&\mathbb{E}\Bigl[ Z
		\bigl|
		\chi_{z,\alpha}(V'+T+D)-\chi_{z,\alpha}(V'+T)
		\bigr|
		\,\Bigm|\,
		\mathscr{F}_{\mathcal{J}}^{\mathrm{out}}
		\Bigr]\\
		&\quad\leq \alpha^{-1}\E \Big\{ Z|D| \int_{0}^{1}\big(\chi_{z-\alpha-tD,\alpha}(V'+T)+\chi_{z+\alpha-tD,\alpha}(V'+T) \big) \mathrm{d}t 		\,\Bigm|\,	\mathscr{F}_{\mathcal{J}}^{\mathrm{out}}\Big\}\\
		&\quad\leq 2\alpha^{-1}Z|D|\sup_{\theta\in \R}\E\big[\chi_{\theta,\alpha}(V')\mid \mathscr{F}_{\mathcal{J}}^{\mathrm{out}}\big],
		\end{aligned}
			\end{align}
			where the last inequality follows from the identity $\chi_{z,\alpha}(x+h)=\chi_{z-h,\alpha}(x).$ By the definitions of $V'$,  $\mathscr{F}_{\mathcal{J}}^{\mathrm{out}}$ and $\mathscr{G}_{\mathcal{J}}$, we have, for every
	$\theta\in\mathbb{R}$,
	\[
	\mathbb{E}\bigl[
	\chi_{\theta,\alpha}(V')
	\bigm|
	\mathscr{F}_{\mathcal{J}}^{\mathrm{out}}
	\bigr]
	=
	\mathbb{E}\bigl[
	\chi_{\theta,\alpha}(V')
	\bigm|
	\mathscr{G}_{\mathcal{J}}
	\bigr].
	\]
	Combining this with \eqref{eq-ua-03}, we have
	\begin{align}
		\mathbb{E}\Bigl[ Z
		\bigl|
		\chi_{z,\alpha}(V'+T+D)-\chi_{z,\alpha}(V'+T)
		\bigr|
		\Bigr]\leq \frac{2}{\alpha} \E\Big\{Z|D|\sup_{\theta\in \R}\E\big[\chi_{\theta,\alpha}(V')\mid  	\mathscr{G}_{\mathcal{J}} \big]\Big\}.
	\end{align}
Since the index sets defining $\mathscr{G}_{\mathcal{J}}$
	are disjoint from $\mathcal{I}$, and
	$\mathscr{R}$ is independent of $\sigma(g_{k}, k\geq1)$, it follows that $(Z,D)$ is independent of $\sup_{\theta\in \mathbb{Q}}\E\big[\chi_{\theta,\alpha}(V')\mid  	\mathscr{G}_{\mathcal{J}} \big]$,  where \(\mathbb{Q}\) denotes the set of rational numbers. 
	Since
$
	\theta\mapsto
	\mathbb{E}[
	\chi_{\theta,\alpha}(V')
	\mid
	\mathscr{G}_{\mathcal{J}}
	]
$
	is $1/\alpha$-Lipschitz and $\mathbb{Q}$ is dense in $\mathbb{R}$, we have
$
	\sup_{\theta\in\mathbb{Q}}
	\mathbb{E}[
	\chi_{\theta,\alpha}(V')
	\mid
	\mathscr{G}_{\mathcal{J}}
	]
	=
	\sup_{\theta\in\mathbb{R}}
	\mathbb{E}[
	\chi_{\theta,\alpha}(V')
	\mid
	\mathscr{G}_{\mathcal{J}}
	].
$
Thus, $(Z,D)$ is independent of $\sup_{\theta\in \mathbb{R}}\E[\chi_{\theta,\alpha}(V')\mid  	\mathscr{G}_{\mathcal{J}} ]$.
	Moreover,	by stationarity and \eqref{eq-af-08}, we have
$
	\mathbb{E}\bigl\{
	\sup_{\theta\in\mathbb{R}}
	\mathbb{E}\bigl[
	\chi_{\theta,\alpha}(V')
	\bigm|
	\mathscr{G}_{\mathcal{J}}
	\bigr]
	\bigr\}
	=
	\mathfrak{C}^{[a]}_{\lfloor H/16\rfloor,m}(\alpha).
$
Therefore, 
	\[
	\begin{aligned}
		\mathbb{E}\Bigl[
		Z\bigl|
		\chi_{z,\alpha}(V'+T+D)-\chi_{z,\alpha}(V'+T)
		\bigr|
		\Bigr]
		&\leq 
		\frac{2}{\alpha}
		\mathfrak{C}^{[a]}_{\lfloor H/16\rfloor,m}(\alpha)
		\mathbb{E}[Z|D|].
	\end{aligned}
	\]
	This proves \eqref{eq-af-09}.
\end{proof}

The following proposition gives a recursive estimate relating \(\mathfrak{C}^{[a]}_{H,m}(\alpha)\) to \(\mathfrak{C}^{[a]}_{\lfloor H/16\rfloor,m}(\alpha)\).
\begin{proposition}\label{prop-ramp}
		For  every $\alpha>0$ and
	$H\geq 64m$, we have
	\begin{equation}\label{eq-ag-04}
		\mathfrak{C}^{[a]}_{H,m}(\alpha)
		\leq C\frac{\alpha+\sqrt{1+a^{1-\delta}+m^{1-\delta}}}{\sqrt{H}}
		+\frac{C\bigl(1+a^{1-\delta}+m^{1-\delta}\bigr)}{\alpha}
		\mathfrak{C}^{[a]}_{\lfloor H/16\rfloor,m}(\alpha),
	\end{equation}
	where $C$ is a positive constant depending only on $c^{*}_{1},c^{*}_{2}, s, \mu, \delta, d$ and the fixed norm.  
\end{proposition}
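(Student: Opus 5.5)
Fix $z$ (later a $\mathscr G$-measurable random level) and use the Stein-type identity for the antiderivative $F_{z,\alpha}$. Since $\mathbb E(V\mid\mathscr G)=0$ and $V$ is a sum of martingale differences, the exchangeable-pair representation by single-coordinate resampling gives, conditionally on $\mathscr G$,
\[
\mathbb{E}\bigl[V F_{z,\alpha}(V)\mid\mathscr G\bigr]
=\frac12\sum_{i=m}^{N}\mathbb{E}\Bigl[\bigl(Y_i-Y_i^{(i)}\bigr)\bigl(F_{z,\alpha}(V)-F_{z,\alpha}(V^{(i)})\bigr)\Bigm|\mathscr G\Bigr]+\mathcal E_R .
\]
Here $\mathcal E_R$ collects the contribution of the boundary correction $R$, and $V-V^{(i)}=\Delta_i$ by \eqref{eq-af-05}. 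The left side is at most $2\alpha\,\mathbb{E}(|V|\mid\mathscr G)$ by \Cref{lem-chi-properties}(ii). Its expectation is $O(\alpha\sqrt H)$, using $\mathbb{E}V^2\le CH$ from the martingale structure and \eqref{eq-af-04}. On the right, write $F(V)-F(V^{(i)})=\Delta_i\int_0^1\chi_{z,\alpha}(V-t\Delta_i)\,dt$. The main term is then $\tfrac12\sum_i\mathbb{E}[(Y_i-Y_i^{(i)})\Delta_i\chi_{z,\alpha}(V)\mid\mathscr G]$, plus remainders of the form $(Y_i-Y_i^{(i)})\Delta_i\,[\chi(V-t\Delta_i)-\chi(V)]$.

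For the main term, I expand $\Delta_i=(Y_i-Y_i^{(i)})+\sum_{j=i+1}^{i+m-1}(Y_j-Y_j^{(i)})$. The diagonal part has conditional expectation $\tfrac12(Y_i-Y_i^{(i)})^2$, whose predictable average is $\mathbb{E}(Y_i^2\mid\mathscr F_{i-1})+s^2_{a,m}$-type terms. Replacing $\chi(V)$ by $\chi(V^{(i)})$ or by a version independent of $g_i,g_i'$ costs terms controlled by \eqref{eq-ae-10}. Summing and using \Cref{lem-predictable}, the diagonal part yields $H s_{a,m}^2\,\mathbb{E}\chi_{z,\alpha}(V)$ up to an error $C\sqrt{H(1+a^{1-\delta}+m^{1-\delta})}$. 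The off-diagonal parts, with $j>i$, I rewrite using the double replacement $\Gamma_{ij}$. The martingale property kills the leading piece after symmetrizing in $g_j,g_j'$, and the remainder $(Y_i-Y_i^{(i)})\Gamma_{ij}$ times a $\chi$-difference is bounded via \eqref{eq-ae-11}. Since $s_{a,m}\ge c_1^*s$, I obtain
\[
H\,\mathbb{E}\chi_{z,\alpha}(V)\le C\alpha\sqrt H+C\sqrt{H(1+a^{1-\delta}+m^{1-\delta})}+\text{(remainders)}.
\]

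All remainders are of the form $\mathbb{E}[Z\,|\chi_{z,\alpha}(V'+T+D)-\chi_{z,\alpha}(V'+T)|]$. Here $Z\ge0$ and the perturbation $D$ (such as $t\Delta_i$, $R-R^{(i)}$, or the pieces $Y_j-Y_j^{(i)}$) depend only on coordinates in at most two windows of length $\le 3m$ around $i$ and $j$, plus the boundary blocks. I apply \Cref{lem-protected} with $\mathcal I$ equal to these windows. It produces a protected central block $\mathcal J$ of length $\lfloor H/16\rfloor$ whose centered sum $V'$ is shielded, and the rest of $V$ goes into $T$, which is $\mathscr F^{\mathrm{out}}_{\mathcal J}$-measurable. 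Each remainder is then bounded by $\frac{2}{\alpha}\mathfrak C^{[a]}_{\lfloor H/16\rfloor,m}(\alpha)\,\mathbb{E}[Z|D|]$. Summing $\mathbb{E}[Z|D|]$ over $i$ (and $j$) with \eqref{eq-ae-10}, \eqref{eq-ae-11} and \eqref{eq-af-06} gives $CH(1+a^{1-\delta}+m^{1-\delta})$. The terms involving $R$ in $\mathcal E_R$ are handled the same way, using \Cref{lem-outgoing}.

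Dividing by $H$ and taking the supremum over $z$ before the outer expectation gives \eqref{eq-ag-04}. To do this, choose $z$ as a $\mathscr G$-measurable near-maximizer over rationals, which \Cref{lem-protected} permits. The main obstacle is arranging that every remainder fits the exact format of \Cref{lem-protected}. Two points need care. First, the random shift $t\Delta_i$ is not a function of few coordinates, since $\Delta_i$ involves $g_i,\dots,g_{i+m-1}$; this is fine because it is local. Second, $V$ itself must be decomposed so that the shielded block $V'$ appears exactly, with its own boundary conditioning. I would handle the second point by writing $V=V'+\bigl(V-V'\bigr)$ and noting that $V-V'$ is measurable outside $\mathcal J$, because the conditional expectation in $V'$ depends only on the $\mathscr G_{\mathcal J}$ coordinates.
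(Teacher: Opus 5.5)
Most of your outline matches the paper's proof: single-coordinate exchangeable pairs, \Cref{lem-predictable} for the predictable variance, the $\Gamma_{ij}$ symmetrization for off-diagonal terms, \Cref{lem-protected} for every $\chi$-increment, and a rational near-maximizer for the supremum. The gap is at the right boundary.

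Put $K:=1+a^{1-\delta}+m^{1-\delta}$. For the last $m-1$ central indices, $i\geq N-m+2$, your expansion $\Delta_i=\sum_{j=i}^{i+m-1}(Y_j-Y_j^{(i)})$ runs past $N$. The pieces with $j>N$ add up to exactly $R-R^{(i)}$. For such $j$, the matrix $g_j$ lies in the right boundary block and is therefore $\mathscr{G}$-measurable. Conditionally on $\mathscr{G}$ you can neither integrate out $g_j$ nor exchange $g_j$ with $g_j'$. So your step ``the martingale property kills the leading piece after symmetrizing in $g_j,g_j'$'' fails for these terms.

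Treating $R-R^{(i)}$ as a perturbation $D$ in \Cref{lem-protected} does not help either. The quantity $\mathbb{E}[(Y_i-Y_i^{(i)})(R-R^{(i)})\chi_{z,\alpha}(V)\mid\mathscr{G}]$ is not a $\chi$-increment, and there is no vanishing reference term. The crude bound $\mathbb{E}|Y_i-Y_i^{(i)}|\,|R-R^{(i)}|\leq C\sqrt{K}$, summed over these $m-1$ indices and divided by $Hs_{a,m}^2$, leaves an extra error $Cm\sqrt{K}/H$. This error is absent from \eqref{eq-ag-04}. It is not bounded by $C(\alpha+\sqrt{K})/\sqrt{H}$ when $H$ is comparable to $m$: at $H=64m$ it is of order $\sqrt{K}$.

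The missing idea is to symmetrize in the central coordinate $g_i$ rather than in $g_j$. Keep your replacement of the interpolated argument by $V$, which is a legitimate remainder. Then integrate over $g_i'$ and exchange $g_i$ with $g_i'$; this is valid given $\mathscr{G}$ because $i$ is central. This gives \eqref{eq-ag-14}:
\begin{align*}
&\tfrac12\mathbb{E}\bigl[(Y_i-Y_i^{(i)})(R-R^{(i)})\chi_{z,\alpha}(V)\bigm|\mathscr{G}\bigr]\\
&\quad=\mathbb{E}\bigl[Y_iR\,\chi_{z,\alpha}(V)\bigm|\mathscr{G}\bigr]
-\tfrac12\mathbb{E}\bigl[(Y_i-Y_i^{(i)})R^{(i)}\{\chi_{z,\alpha}(V)-\chi_{z,\alpha}(V^{(i)})\}\bigm|\mathscr{G}\bigr].
\end{align*}

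Summed over $i$, the first term is $\mathbb{E}[S_{H,m}R\,\chi_{z,\alpha}(V)\mid\mathscr{G}]$. Its $L^1$ norm is at most $\sqrt{\mathbb{E}S_{H,m}^2\,\mathbb{E}R^2}\leq C\sqrt{HK}$ by \eqref{eq-af-04}. This is one source of the $\sqrt{K/H}$ term in \eqref{eq-ag-04}.

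The second term is a genuine $\chi$-increment with $Z=|(Y_i-Y_i^{(i)})R^{(i)}|$ and $D=\Delta_i$. Since $Z$ depends on the right boundary block, the excluded set in \Cref{lem-protected} must contain both $\{i-m+1,\ldots,i+m-1\}$ and $\{N-m+2,\ldots,N+m-1\}$. That is why the lemma allows two intervals. The resulting bound is then controlled by \eqref{eq-af-06}.

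With this change, and with your $\Gamma_{ij}$ argument restricted to $j\leq N$, the rest of your plan goes through. Your term $\mathcal{E}_R=\mathbb{E}[RF_{z,\alpha}(V)\mid\mathscr{G}]$ is harmless, since its absolute value is at most $2\alpha\,\mathbb{E}(|R|\mid\mathscr{G})$.
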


\begin{proof}
	Recall that
	\[
	N=m+H-1,
	\qquad
S_{H,m}=\sum_{i=m}^{N}Y_{i},
	\qquad
	V=S_{H,m}+R.
	\]
	For any $m\leq i\leq N$, let
$
	\widehat{Y}_{i}
	:=\mathbb{E}[Y_{i}^{2}\bigm|\mathscr{F}_{i-1}].
$
Fix $\alpha>0$ and an arbitrary real-valued
$\mathscr{G}$-measurable random variable $z$.
	For the remainder of the argument, fix $m\leq i\leq N$. The replacement at $i$ leaves
	$\mathscr{G}$, and hence $z$, unchanged. Integrating $g_i$ in the first identity below and $g_i^{\prime}$ in the second, and using $\E[Y_{i}\mid \mathscr{F}_{i-1}]=0=\E[Y_{i}^{(i)}\mid \mathscr{F}_{i-1}]$, we have
	\[
	\mathbb{E}\bigl[
	Y_{i}F_{z,\alpha}(V^{(i)})\bigm|\mathscr{G}
	\bigr]=0,
	\qquad
	\mathbb{E}\bigl[
	Y_{i}^{(i)}F_{z,\alpha}(V)\bigm|\mathscr{G}
	\bigr]=0.
	\]
On the other hand, by the i.i.d. property of \(g_i\) and \(g_i'\), we have
	\[
	\mathbb{E}\bigl[
	Y_{i}^{(i)}F_{z,\alpha}(V^{(i)})\bigm|\mathscr{G}
	\bigr]
	=
	\mathbb{E}\bigl[
	Y_{i}F_{z,\alpha}(V)\bigm|\mathscr{G}
	\bigr].
	\]
Combining the above equalities, we have
	\begin{align}	\label{eq-ag-01}
		\begin{aligned}
				\mathbb{E}\bigl[
		Y_{i}F_{z,\alpha}(V)\bigm|\mathscr{G}
		\bigr]
		&=
		\frac{1}{2}\mathbb{E}\Bigl[
		\bigl(Y_{i}-Y_{i}^{(i)}\bigr)
		\bigl\{F_{z,\alpha}(V)-F_{z,\alpha}(V^{(i)})\bigr\}
		\,\Bigm|\,\mathscr{G}
		\Bigr]\\
		&=
		\frac{1}{2}\mathbb{E}\biggl[
		\bigl(Y_{i}-Y_{i}^{(i)}\bigr)\Delta_{i}
		\int_{0}^{1}
		\chi_{z,\alpha}\bigl((1-t)V^{(i)}+tV\bigr)
		\,\mathrm{d}t
		\,\Bigm|\,\mathscr{G}
		\biggr],
		\end{aligned}
	\end{align}
where $V-V^{(i)}=\Delta_i$ by \eqref{eq-af-05}.
	The difference $Y_{j}-Y_{j}^{(i)}$ vanishes unless
	$i\leq j\leq i+m-1$. Hence
	\begin{equation}\label{eq-ag-02}
		\begin{aligned}
			\Delta_{i}
			&=
			\bigl(Y_{i}-Y_{i}^{(i)}\bigr)
			+\sum_{j=i+1}^{\min\{N,i+m-1\}}
			\bigl(Y_{j}-Y_{j}^{(i)}\bigr)
			+R-R^{(i)},\qquad m\leq i\leq N.
		\end{aligned}
	\end{equation}
Combining \eqref{eq-ag-01} and \eqref{eq-ag-02}, we have
	\begin{align}\label{eq-ag-03}
		\mathbb{E}\bigl[\chi_{z,\alpha}(V)\bigm|\mathscr{G}\bigr]=\widehat{I}_{z,1}+\widehat{I}_{z,2} +\widehat{I}_{z,3}+\widehat{I}_{z,4}-\widehat{I}_{z,5}-\widehat{I}_{z,6},
	\end{align}
	where 
	\begin{align*}
		\widehat{I}_{z,1}&=	\frac{1}{Hs_{a,m}^{2}}
		\mathbb{E}\bigl[S_{H,m} F_{z,\alpha}(V)\bigm|\mathscr{G}\bigr],\\
		\widehat{I}_{z,2}&=\mathbb{E}\biggl\{
		\Bigl[1-\frac{1}{Hs_{a,m}^{2}}
		\sum_{i=m}^{N}\widehat{Y}_{i}\Bigr]
		\chi_{z,\alpha}(V)
		\,\Bigm|\,\mathscr{G}
		\biggr\}\\
		\widehat{I}_{z,3}&=	\frac{1}{2Hs_{a,m}^{2}}\sum_{i=m}^{N}
		\mathbb{E}\Bigl\{
		\bigl[2\widehat{Y}_{i}
		-\bigl(Y_{i}-Y_{i}^{(i)}\bigr)^{2}\bigr]
		\chi_{z,\alpha}(V)
		\,\Bigm|\,\mathscr{G}
		\Bigr\},\\
	\widehat{I}_{z,4}&=\frac{1}{2Hs_{a,m}^{2}}\sum_{i=m}^{N}
		\mathbb{E}\biggl\{
		\bigl(Y_{i}-Y_{i}^{(i)}\bigr)^{2}
		\int_{0}^{1}
		\bigl\{\chi_{z,\alpha}(V)
		-\chi_{z,\alpha}\bigl((1-t)V^{(i)}+tV\bigr)\bigr\}
		\,\mathrm{d}t
		\,\Bigm|\,\mathscr{G}
		\biggr\},\\
	\widehat{I}_{z,5}&=\frac{1}{2Hs_{a,m}^{2}}\sum_{i=m}^{N}
		\sum_{j=i+1}^{\min\{N,i+m-1\}}
		\mathbb{E}\biggl\{
		\bigl(Y_{i}-Y_{i}^{(i)}\bigr)
		\bigl(Y_{j}-Y_{j}^{(i)}\bigr)\\
		&\hspace{24mm}\times\int_{0}^{1}
		\chi_{z,\alpha}\bigl((1-t)V^{(i)}+tV\bigr)
		\,\mathrm{d}t
		\,\biggm|\,\mathscr{G}
		\biggr\},\\
		\widehat{I}_{z,6}&=\frac{1}{2Hs_{a,m}^{2}}\sum_{i=m}^{N}
		\mathbb{E}\biggl\{
		\bigl(Y_{i}-Y_{i}^{(i)}\bigr)
		\bigl(R-R^{(i)}\bigr)
		\int_{0}^{1}
		\chi_{z,\alpha}\bigl((1-t)V^{(i)}+tV\bigr)
		\,\mathrm{d}t
		\,\biggm|\,\mathscr{G}
		\biggr\}.
	\end{align*}
For each $k=1,\ldots,6$, 	let $I_{k}=\E |\widehat{I}_{z,k}|$.
In what follows, we derive an upper bound for each of these six terms in turn.

\smallskip
\noindent
{\it (i) Upper bound for $I_{1}$.}	Recalling that  $(Y_{k})_{k\geq m}$ is a martingale difference sequence, we have	$\mathbb{E}(S_{H,m}^{2})=Hs_{a,m}^{2}$.
	Using $|F_{z,\alpha}|\leq2\alpha$ and the  Cauchy--Schwarz inequality, we have
	\begin{align}
		I_{1}&=\frac{1}{Hs_{a,m}^{2}}
		\mathbb{E}\Bigl| 
		\mathbb{E}\bigl[S_{H,m} F_{z,\alpha}(V)\bigm|\mathscr{G}\bigr]
		\Bigr|\leq
		\frac{2\alpha}{Hs_{a,m}^{2}}\mathbb{E}|S_{H,m}|
		\leq
		\frac{2\alpha\sqrt{Hs_{a,m}^{2}}}{Hs_{a,m}^{2}}
		\leq\frac{C\alpha}{\sqrt{H}},
		\label{eq-ag-05}
	\end{align}
	where 
	the last inequality uses the lower bound on $s_{a,m}$.
	
	\smallskip
	\noindent
{\it (ii) Upper bound for $I_{2}$.}
	Using  $0\leq\chi_{z,\alpha}\leq1$
	and \Cref{lem-predictable},  we have
	\begin{align}
		I_{2}&\leq \frac{1}{Hs_{a,m}^{2}}
		\mathbb{E}\biggl|
		\sum_{i=m}^{N}
		\bigl(\widehat{Y}_{i}-s_{a,m}^{2}\bigr)
		\biggr|\leq
		C\sqrt{\frac{1+a^{1-\delta}+m^{1-\delta}}{H}}.
		\label{eq-ag-06}
	\end{align}
	
	\smallskip
\noindent
{\it (iii) Upper bound for $I_{3}$.}
	It follows from the definitions of $Y_{i}$, $Y_{i}^{(i)}$ and $\widehat{Y}_{i}$ that 
	\[
	\begin{aligned}
		\mathbb{E}\bigl[
		Y_{i}^{(i)}
		\,\bigm|\,
		\sigma(g_{k}:k\geq1)
		\bigr]
		=0\quad\text{and}\quad
		\mathbb{E}\bigl[
		\bigl(Y_{i}^{(i)}\bigr)^{2}
		\,\bigm|\,
		\sigma(g_{k}:k\geq1)
		\bigr]
		=\widehat{Y}_{i}.
	\end{aligned}
	\]
Expanding \((Y_i-Y_i^{(i)})^2\) and using the preceding identities, we have
	\[
	\begin{aligned}
		&\mathbb{E}\Bigl[
		\Bigl\{2\widehat{Y}_{i}
		-\bigl(Y_{i}-Y_{i}^{(i)}\bigr)^{2}\Bigr\}
		\chi_{z,\alpha}(V)
		\,\Bigm|\,\mathscr{G}
		\Bigr]=
		\mathbb{E}\Bigl[
		\Bigl\{\bigl(Y_{i}^{(i)}\bigr)^{2}-Y_{i}^{2}\Bigr\}
		\chi_{z,\alpha}(V)
		\,\Bigm|\,\mathscr{G}
		\Bigr].
	\end{aligned}
	\]
By the conditional exchangeability of $g_{i}$ and
$g'_{i}$ given $\mathscr{G}$, exchanging these two variables
changes the sign of $(Y_i^{(i)})^2-Y_i^2$ and replaces $V$ by
$V^{(i)}$. Averaging the two resulting representations gives
	\begin{align}	\label{eq-ag-07}
		\begin{aligned}
					&\mathbb{E}\Bigl[
			\Bigl\{2\widehat{Y}_{i}
			-\bigl(Y_{i}-Y_{i}^{(i)}\bigr)^{2}\Bigr\}
			\chi_{z,\alpha}(V)
			\,\Bigm|\,\mathscr{G}
			\Bigr]
		\\
			&\quad=\frac{1}{2}\mathbb{E}\Bigl[
			\Bigl\{\bigl(Y_{i}^{(i)}\bigr)^{2}-Y_{i}^{2}\Bigr\}
			\bigl\{\chi_{z,\alpha}(V)-\chi_{z,\alpha}(V^{(i)})\bigr\}
			\,\Bigm|\,\mathscr{G}
			\Bigr].
		\end{aligned}
	\end{align}
We apply \Cref{lem-protected} to bound the above term.  To this end, let 
$
\mathcal{I}_{1}
:=
\{i-m+1,\ldots,i+m-1\},
$
so that $|\mathcal{I}_{1}|=2m-1$.
Choose deterministic integers $p_{1}\leq q_{1}$ such that
$
q_{1}-p_{1}+1=\lfloor H/16\rfloor
$
and
\[
\{p_{1}-m+1,\ldots,q_{1}+m-1\}
\subseteq
\{m,\ldots,N\}\setminus\mathcal{I}_{1}.
\]
Set
$
\mathcal{J}_{1}:=\{p_{1},\ldots,q_{1}\}.
$
Define
$
\mathscr{G}_{\mathcal{J}_{1}}
:=
\sigma\bigl(
g_{p_{1}-m+1},\ldots,g_{p_{1}-1},
g_{q_{1}+1},\ldots,g_{q_{1}+m-1}
\bigr)
$
and
\begin{align}\label{eq-ua-04}
V_{1}'
:=
\sum_{k=p_{1}}^{q_{1}+m-1}Y_{k}
-
\mathbb{E}\biggl[
\sum_{k=p_{1}}^{q_{1}+m-1}Y_{k}
\,\biggm|\,
\mathscr{G}_{\mathcal{J}_{1}}
\biggr].
\end{align}
Let
$
\mathscr{F}_{\mathcal{J}_{1}}^{\mathrm{out}}
:=
\sigma\bigl(
g_{k}:k\geq1,\ k\notin\mathcal{J}_{1}
\bigr)
\vee\sigma(g_{i}^{\prime}).
$
It follows from the definitions of  $V^{(i)}$ and $V_{1}'$ that 
\begin{align*}
V^{(i)}-V_{1}'&=\sum_{k=m}^{N+m-1}Y_{k}^{(i)}-\sum_{k=p_{1}}^{q_{1}+m-1}Y_{k}
-\mathbb{E}\Bigl[\sum_{k=m}^{N+m-1}Y_{k}\Bigm|\mathscr{G}\Bigr]+
\mathbb{E}\biggl[
\sum_{k=p_{1}}^{q_{1}+m-1}Y_{k}
\,\biggm|\,
\mathscr{G}_{\mathcal{J}_{1}}
\biggr]\\
&=\sum_{k=m}^{p_{1}-1}Y_{k}^{(i)}+\sum_{k=q_{1}+m}^{N+m-1}Y_{k}^{(i)}
-\mathbb{E}\Bigl[\sum_{k=m}^{N+m-1}Y_{k}\Bigm|\mathscr{G}\Bigr]+
\mathbb{E}\biggl[
\sum_{k=p_{1}}^{q_{1}+m-1}Y_{k}
\,\biggm|\,
\mathscr{G}_{\mathcal{J}_{1}}
\biggr].
\end{align*}
Hence $V^{(i)}-V_{1}'$ 
is $\mathscr{F}_{\mathcal{J}_{1}}^{\mathrm{out}}$-measurable. Moreover, by \eqref{eq-af-05}, $V-V^{(i)}$ is measurable with respect to $\sigma(g_{k}, k\in \mathcal{I}_{1})\vee \sigma(g'_{i})$.
Applying \eqref{eq-af-09} with
$
Z=|Y_{i}^{2}-(Y_{i}^{(i)})^{2}|,$
$
T=V^{(i)}-V_{1}',$
$
D=V-V^{(i)}$,
we have
\[
\begin{aligned}
	&\mathbb{E}\Bigl[
	\bigl|Y_{i}^{2}-\bigl(Y_{i}^{(i)}\bigr)^{2}\bigr|
	\bigl|\chi_{z,\alpha}(V)-\chi_{z,\alpha}\bigl(V^{(i)}\bigr)\bigr|
	\Bigr]\leq
	\frac{2}{\alpha}
	\mathfrak{C}^{[a]}_{\lfloor H/16\rfloor,m}(\alpha)
	\mathbb{E}\Bigl[
	\bigl|Y_{i}^{2}-\bigl(Y_{i}^{(i)}\bigr)^{2}\bigr|
	|\Delta_{i}|
	\Bigr].
\end{aligned}
\]
Combining this bound with \eqref{eq-ag-07}, the triangle
inequality, and conditional Jensen's inequality, we obtain
\begin{align}	\label{eq-ag-08}
	\begin{aligned}
		&
	\frac{1}{2Hs_{a,m}^{2}}\mathbb{E}
	\biggl|\sum_{i=m}^{N}
	\mathbb{E}\Bigl[
	\Bigl\{2\widehat{Y}_{i}
	-\bigl(Y_{i}-Y_{i}^{(i)}\bigr)^{2}\Bigr\}
	\chi_{z,\alpha}(V)
	\,\Bigm|\,\mathscr{G}
	\Bigr]
	\biggr|
\\
	&\quad\leq
	\frac{C}{H\alpha}
	\mathfrak{C}^{[a]}_{\lfloor H/16\rfloor,m}(\alpha)
	\sum_{i=m}^{N}
	\mathbb{E}\Bigl[
	\bigl|Y_{i}^{2}-\bigl(Y_{i}^{(i)}\bigr)^{2}\bigr|
	|\Delta_{i}|
	\Bigr]
\\
	&\quad\leq
	\frac{C\bigl(1+a^{1-\delta}+m^{1-\delta}\bigr)}{\alpha}
	\mathfrak{C}^{[a]}_{\lfloor H/16\rfloor,m}(\alpha),
	\end{aligned}
\end{align}
where 
the last inequality follows from \eqref{eq-ae-10}
and the identity $N-m+1=H$.
	
	\smallskip
	\noindent
{\it (iv) Upper bound for $I_{4}$.}
For each fixed central index $m\leq i\leq N$, use the same
deterministic intervals $\mathcal{I}_1$ and $\mathcal{J}_1$
as chosen for this index in the derivation of the upper
bound for $I_{3}$. Let $V_{1}'$ be defined by \eqref{eq-ua-04}.
For each $t\in[0,1]$, we shall  apply \eqref{eq-af-09} with
\[
\begin{aligned}
	Z=\bigl(Y_{i}-Y_{i}^{(i)}\bigr)^{2},\quad
	T=V^{(i)}-V_{1}'+t\Delta_{i},\quad
	D=(1-t)\Delta_{i}.
\end{aligned}
\]
As established above, $V^{(i)}-V_{1}'$ is
$\mathscr{F}_{\mathcal{J}_{1}}^{\mathrm{out}}$-measurable.
The variable $\Delta_{i}$ depends only on the matrices $(g_{k})_{k\in\mathcal{I}_{1}}$ and the copy
$g_{i}^{\prime}$. Since
$\mathcal{I}_{1}\cap\mathcal{J}_{1}=\varnothing$, it is also
$\mathscr{F}_{\mathcal{J}_{1}}^{\mathrm{out}}$-measurable.
Thus $T$ has the required measurability.
Using $\Delta_{i}=V-V^{(i)}$, we obtain
$
V_{1}'+T=(1-t)V^{(i)}+tV,$ and 
$
V_{1}'+T+D=V.
$
Consequently, \eqref{eq-af-09} gives
\begin{align}\label{eq-ua-05}
	\begin{aligned}
		&\mathbb{E}\Bigl[
		\bigl(Y_{i}-Y_{i}^{(i)}\bigr)^{2}
		\bigl|
		\chi_{z,\alpha}(V)
		-\chi_{z,\alpha}\bigl((1-t)V^{(i)}+tV\bigr)
		\bigr|
		\Bigr]\\
		&\quad\leq
		\frac{2}{\alpha}
		\mathfrak{C}^{[a]}_{\lfloor H/16\rfloor,m}(\alpha)
		\mathbb{E}\Bigl[
		\bigl(Y_{i}-Y_{i}^{(i)}\bigr)^{2}
		(1-t)|\Delta_{i}|
		\Bigr].
	\end{aligned}
\end{align}
By \eqref{eq-ua-05} and the triangle inequality, we have
\begin{align}		\label{eq-ag-09}
	\begin{aligned}
			I_{4}&\leq \frac{C}{H\alpha}
		\mathfrak{C}^{[a]}_{\lfloor H/16\rfloor,m}(\alpha)
		\sum_{i=m}^{N}\int_{0}^{1}
		\mathbb{E}\Bigl[
		\bigl(Y_{i}-Y_{i}^{(i)}\bigr)^{2}
		(1-t)|\Delta_{i}|
		\Bigr]\,\mathrm{d}t
		\\
		&\leq
		\frac{C\bigl(1+a^{1-\delta}+m^{1-\delta}\bigr)}{\alpha}
		\mathfrak{C}^{[a]}_{\lfloor H/16\rfloor,m}(\alpha),
	\end{aligned}
\end{align}
where 
the last inequality follows from \eqref{eq-ae-10} and the identity $N-m+1=H$.
	
	\smallskip
\noindent
{\it (v) Upper bound for $I_{5}$.}
	Fix $i<j\leq\min\{N,i+m-1\}$. Let \(V^{(i,j)}\) denote the version of \(V\) obtained by replacing
	\((g_{i},g_{j})\) with
	\((g_{i}^{\prime},g_{j}^{\prime})\), while leaving all other variables unchanged.  Integrating with respect to $g_j$ and
	$g_j^{\prime}$, respectively, and applying
	\eqref{eq-hh-01}, we obtain
	\[
	\begin{aligned}
		&\mathbb{E}\biggl[
		\bigl(Y_{i}-Y_{i}^{(i)}\bigr)
		\bigl(Y_{j}-Y_{j}^{(i)}\bigr)
		\int_{0}^{1}
		\chi_{z,\alpha}\bigl((1-t)V^{(i,j)}+tV^{(j)}\bigr)
		\,\mathrm{d}t
		\,\biggm|\,\mathscr{G}
		\biggr]=0,\\
		&\mathbb{E}\biggl[
		\bigl(Y_{i}-Y_{i}^{(i)}\bigr)
		\bigl(Y_{j}^{(j)}-Y_{j}^{(i,j)}\bigr)
		\int_{0}^{1}
		\chi_{z,\alpha}\bigl((1-t)V^{(i)}+tV\bigr)
		\,\mathrm{d}t
		\,\biggm|\,\mathscr{G}
		\biggr]=0.
	\end{aligned}
	\]
By the exchangeability of \(g_{j}\) and \(g_{j}^{\prime}\), together with the two identities above, we obtain
	\begin{align}	\label{eq-ag-10}
		\begin{aligned}
			&\mathbb{E}\biggl[
		\bigl(Y_{i}-Y_{i}^{(i)}\bigr)
		\bigl(Y_{j}-Y_{j}^{(i)}\bigr)
		\int_{0}^{1}
		\chi_{z,\alpha}\bigl((1-t)V^{(i)}+tV\bigr)
		\,\mathrm{d}t
		\,\biggm|\,\mathscr{G}
		\biggr]
	\\
		&\quad=\frac{1}{2}\mathbb{E}\biggl[
		\bigl(Y_{i}-Y_{i}^{(i)}\bigr)\Gamma_{ij}
		\int_{0}^{1}
		\Bigl\{
		\chi_{z,\alpha}\bigl((1-t)V^{(i)}+tV\bigr)
		-\chi_{z,\alpha}\bigl((1-t)V^{(i,j)}+tV^{(j)}\bigr)
		\Bigr\}\,\mathrm{d}t
		\,\biggm|\,\mathscr{G}
		\biggr],
		\end{aligned}
	\end{align}
	where we recall that $	\Gamma_{ij}
	:=Y_{j}-Y_{j}^{(i)}-Y_{j}^{(j)}+Y_{j}^{(i,j)}.$ 
	Since \(i\) and \(j\) are central indices, the four versions \(V\), \(V^{(i)}\), \(V^{(j)}\), and \(V^{(i,j)}\) have the same \(\mathscr G\)-measurable centering term. Thus
	\begin{align}
		&\bigl\{(1-t)V^{(i)}+tV\bigr\}
		-\bigl\{(1-t)V^{(i,j)}+tV^{(j)}\bigr\}=(1-t)\Delta_{j}^{(i)}+t\Delta_{j},
		\label{eq-ag-11}
	\end{align}
	where \(\Delta_{j}^{(i)}\) is defined in \Cref{cor-mixed}. Let $\mathcal{I}_{2}=	\{i-m+1,\ldots,j+m-1\}$ and $\mathscr{R}_{1}=\sigma(g'_{i}, g'_{j})$. The  interval has length
	$j-i+2m-1\leq3m-2$.
Moreover, 
	$\bigl|\bigl(Y_{i}-Y_{i}^{(i)}\bigr)\Gamma_{ij}\bigr|$
	and $\Delta_{j}^{(i)}, \Delta_{j}$  are measurable with respect to $\sigma(g_{k}, k\in \mathcal{I}_{2})\vee\mathscr{R}_{1}$.
Choose deterministic integers $p_{2}\leq q_{2}$ such that
$
q_{2}-p_{2}+1=\lfloor H/16\rfloor$ and
$\{p_{2}-m+1,\ldots,q_{2}+m-1\}
\subseteq
\{m,\ldots,N\}\setminus\mathcal{I}_{2}.$
Set $\mathcal{J}_{2}:=\{p_{2},\ldots,q_{2}\}$ and
$
\mathscr{G}_{\mathcal{J}_{2}}
:=
\sigma\bigl(
g_{p_{2}-m+1},\ldots,g_{p_{2}-1},
g_{q_{2}+1},\ldots,g_{q_{2}+m-1}
\bigr).
$
Define
\begin{align}\label{eq-ua-06}
	V_{2}^{\prime}
	:=
	\sum_{k=p_{2}}^{q_{2}+m-1}Y_{k}
	-
	\mathbb{E}\biggl[
	\sum_{k=p_{2}}^{q_{2}+m-1}Y_{k}
	\,\biggm|\,
	\mathscr{G}_{\mathcal{J}_{2}}
	\biggr],
\end{align}
and let
$
\mathscr{F}_{\mathcal{J}_{2}}^{\mathrm{out}}
:=
\sigma(
g_{k}:k\geq1,\ k\notin\mathcal{J}_{2}
)
\vee
\sigma(g_{i}',g_{j}^{\prime}).
$
As in the argument used to estimate \(I_{3}\), the definitions of \(V^{(i,j)}\), \(V^{(j)}\), and \(V\) imply that
$(1-t)V^{(i,j)}+tV^{(j)}-V_{2}'$ is $\mathscr{F}_{\mathcal{J}_{2}}^{\mathrm{out}}$-measurable.
Applying conditional Jensen's inequality to \eqref{eq-ag-10},
taking expectations, and then applying \eqref{eq-af-09}
for each $t\in[0,1]$ with
$Z=|(Y_i-Y_i^{(i)})\Gamma_{ij}|$,
$T=(1-t)V^{(i,j)}+tV^{(j)}-V_2'$, and
$D=(1-t)\Delta_j^{(i)}+t\Delta_j$,
we obtain
\begin{align*}
\mathbb{E}\bigl|\text{RHS of \eqref{eq-ag-10}}\bigr|
&\leq \frac{C}{\alpha}
\mathfrak{C}^{[a]}_{\lfloor H/16\rfloor,m}(\alpha)\mathbb{E}\Bigl[
\bigl|(Y_i-Y_i^{(i)})\Gamma_{ij}\bigr|
\bigl(|\Delta_j^{(i)}|+|\Delta_j|\bigr)\Bigr],
\end{align*}
where we have integrated over $t\in[0,1]$ and used
$|(1-t)\Delta_j^{(i)}+t\Delta_j|
\leq |\Delta_j^{(i)}|+|\Delta_j|$.
Using this inequality and \eqref{eq-ae-11}, we have
\begin{align}	\label{eq-ag-12}
	\begin{aligned}
		I_{5}&\leq  \frac{C}{H\alpha}
	\mathfrak{C}^{[a]}_{\lfloor H/16\rfloor,m}(\alpha)
	\sum_{i=m}^{N}
	\sum_{j=i+1}^{\min\{N,i+m-1\}}
	\mathbb{E}\Bigl[
	\bigl|\bigl(Y_{i}-Y_{i}^{(i)}\bigr)\Gamma_{ij}\bigr|
	\bigl(|\Delta_{j}^{(i)}|+|\Delta_{j}|\bigr)
	\Bigr]
\\
	&\leq
	\frac{C\bigl(1+a^{1-\delta}+m^{1-\delta}\bigr)}{\alpha}
	\mathfrak{C}^{[a]}_{\lfloor H/16\rfloor,m}(\alpha).
	\end{aligned}
\end{align}
	
	\smallskip
\noindent
{\it (vi) Upper bound for $I_{6}$.} We first decompose $\widehat{I}_{z,6}$ as follows: 
\[
\widehat{I}_{z,6}:=\widehat{I}_{z,61}+\widehat{I}_{z,62},
\]
where 
\begin{align*}
	\widehat{I}_{z,61}&:=\frac{1}{2Hs_{a,m}^{2}}\sum_{i=m}^{N}
\mathbb{E}\biggl\{
\bigl(Y_i-Y_i^{(i)}\bigr)\bigl(R-R^{(i)}\bigr)\int_0^1
\bigl[\chi_{z,\alpha}((1-t)V^{(i)}+tV)-\chi_{z,\alpha}(V)\bigr]
\,\mathrm{d}t\,\biggm|\,\mathscr G\biggr\},\\
\widehat{I}_{z,62}&:=\frac{1}{2Hs_{a,m}^{2}}\sum_{i=m}^{N}
	\mathbb{E}\bigl\{
	\bigl(Y_{i}-Y_{i}^{(i)}\bigr)
	\bigl(R-R^{(i)}\bigr)
	\chi_{z,\alpha}\bigl(V\bigr)
	\,\bigm|\,\mathscr{G}
	\bigr\}. 
\end{align*}
We first bound $\E\bigl|\widehat{I}_{z,61}\bigr|$
using \Cref{lem-protected}. Note that $R-R^{(i)}=\sum_{j=N+1}^{i+m-1}(Y_{j}-Y_{j}^{(i)})$, where an empty sum is zero. 
Recalling that
$\mathcal{I}_{1}=\{i-m+1,\ldots,i+m-1\}$,
we see that both
	$|(Y_{i}-Y_{i}^{(i)}\bigr)(R-R^{(i)})|$
	and $\Delta_{i}$ are measurable with respect to $\sigma(g_{k}, k\in \mathcal{I}_{1})\vee \sigma(g'_{i})$.
Let $V_{1}^{\prime}$ be as defined in \eqref{eq-ua-04}, and set  $T_{1}:=(1-t)V^{(i)}+tV-V_{1}'$.  The random variable $T_{1}$ is
$\mathscr{F}_{\mathcal{J}_{1}}^{\mathrm{out}}$-measurable. 
	Applying \eqref{eq-af-09} with
$
		Z=\bigl|\bigl(Y_{i}-Y_{i}^{(i)}\bigr)(R-R^{(i)})\bigr|,$
		$T=(1-t)V^{(i)}+tV-V_{1}'$ and
		$D=(1-t)\Delta_{i}$, we have
	\begin{align}	\label{eq-ag-13}
		\begin{aligned}
				\E\bigl|\widehat{I}_{z,61}\bigr|&\leq \frac{C}{H\alpha}
		\mathfrak{C}^{[a]}_{\lfloor H/16\rfloor,m}(\alpha)
		\sum_{i=m}^{N}
		\mathbb{E}\Bigl[
		\bigl|Y_{i}-Y_{i}^{(i)}\bigr|
		|R-R^{(i)}|\,|\Delta_{i}|
		\Bigr]
	\\
		&\leq
		\frac{C\bigl(1+a^{1-\delta}+m^{1-\delta}\bigr)}{\alpha}
		\mathfrak{C}^{[a]}_{\lfloor H/16\rfloor,m}(\alpha),
		\end{aligned}
	\end{align}
where 	the second inequality follows from \eqref{eq-af-06}
	and the identity $N-m+1=H$. We next deal with $\widehat{I}_{z,62}$.
	Integrating with respect to $g_{i}^{\prime}$ gives
	\[
	\mathbb{E}\Bigl[
	\bigl(Y_{i}-Y_{i}^{(i)}\bigr)R\chi_{z,\alpha}(V)
	\,\Bigm|\,\mathscr{G}
	\Bigr]
	=
	\mathbb{E}\bigl[
	Y_{i}R\chi_{z,\alpha}(V)\bigm|\mathscr{G}
	\bigr].
	\]
Moreover,
	exchanging $g_{i}$ and $g_{i}^{\prime}$
	also gives
	\[
	\mathbb{E}\Bigl[
	\bigl(Y_{i}-Y_{i}^{(i)}\bigr)
	R^{(i)}\chi_{z,\alpha}(V^{(i)})
	\,\Bigm|\,\mathscr{G}
	\Bigr]
	=-
	\mathbb{E}\bigl[
	Y_{i}R\chi_{z,\alpha}(V)\bigm|\mathscr{G}
	\bigr].
	\]
	Subtracting and rearranging these equalities yields
	\begin{align}
		&\frac{1}{2}\mathbb{E}\Bigl[
		\bigl(Y_{i}-Y_{i}^{(i)}\bigr)
		\bigl(R-R^{(i)}\bigr)\chi_{z,\alpha}(V)
		\,\Bigm|\,\mathscr{G}
		\Bigr]
		\notag\\
		&\quad=
		\mathbb{E}\bigl[
		Y_{i}R\chi_{z,\alpha}(V)\bigm|\mathscr{G}
		\bigr]-
		\frac{1}{2}\mathbb{E}\Bigl[
		\bigl(Y_{i}-Y_{i}^{(i)}\bigr)R^{(i)}
		\bigl\{\chi_{z,\alpha}(V)-\chi_{z,\alpha}(V^{(i)})\bigr\}
		\,\Bigm|\,\mathscr{G}
		\Bigr].
		\label{eq-ag-14}
	\end{align}
	For the first term on the right of \eqref{eq-ag-14},  	by conditional Jensen's inequality, $|\chi_{z,\alpha}|\leq1$,
	and  the Cauchy--Schwarz inequality, we have
	\begin{align}
		\frac{1}{Hs_{a,m}^{2}}
		\mathbb{E}\Bigl|
		\mathbb{E}\Bigl[\sum_{i=m}^{N} Y_{i}R\chi_{z,\alpha}(V)\Bigm|\mathscr{G}\Bigr]
		\Bigr|&\leq
		\frac{C}{H}\mathbb{E}|S_{H,m}R|
		\leq
		\frac{C}{H}
		\sqrt{\mathbb{E}(S_{H,m}^{2})\mathbb{E}(R^{2})}
		\notag\\
		&\leq
		C\sqrt{\frac{1+a^{1-\delta}+m^{1-\delta}}{H}},
		\label{eq-ag-15}
	\end{align}
	where the  last inequality follows from $\mathbb{E}(S_{H,m}^{2})=Hs_{a,m}^{2}$
	and the unconditional consequence of \eqref{eq-af-04}. For the second term on the right of \eqref{eq-ag-14}, set
\[
\mathcal I_R:=\{N-m+2,\ldots,N+m-1\},
\qquad
\mathcal I_3:=\mathcal I_1\cup\mathcal I_R.
\]
The variables $Y_{N+1},\ldots,Y_{N+m-1}$ and the functions $h_{r,m}$ all depend only on the matrices $(g_{k})_{k\in\mathcal{I}_{R}}$. Thus
$Z=|(Y_i-Y_i^{(i)})R^{(i)}|$ and $D=\Delta_i$ are measurable with respect to
$\sigma(g_k:k\in\mathcal I_3)\vee\sigma(g_i^{\prime})$.
The set $\mathcal I_3$ is the union of at most two intervals and
$|\mathcal I_3|\leq(2m-1)+(2m-2)=4m-3\leq6m$.
By \Cref{lem-protected}, choose deterministic $p_3\leq q_3$ with
$q_3-p_3+1=\lfloor H/16\rfloor$ and
\[
\{p_3-m+1,\ldots,q_3+m-1\}
\subseteq\{m,\ldots,N\}\setminus\mathcal I_3.
\]
Let $\mathcal J_3=\{p_3,\ldots,q_3\}$, and define
$\mathscr G_{\mathcal J_3}$, $V_3^{\prime}$ and
$\mathscr F_{\mathcal J_3}^{\mathrm{out}}$ as in \Cref{lem-protected}, using $p_3,q_3$ and $\mathscr R=\sigma(g_i^{\prime})$.  Since $Y_{k}^{(i)}=Y_{k}$ for
$p_{3}\leq k\leq q_{3}+m-1$, we have
\begin{align*}
	V^{(i)}-V_{3}'
	&=\sum_{k=m}^{N+m-1}Y_{k}^{(i)}
	-\sum_{k=p_{3}}^{q_{3}+m-1}Y_{k}
	-\mathbb{E}\Bigl[
	\sum_{k=m}^{N+m-1}Y_{k}\Bigm|\mathscr{G}
	\Bigr]
	+\mathbb{E}\Bigl[
	\sum_{k=p_{3}}^{q_{3}+m-1}Y_{k}
	\Bigm|\mathscr{G}_{\mathcal{J}_{3}}
	\Bigr]\\
	&=\sum_{k=m}^{p_{3}-1}Y_{k}^{(i)}
	+\sum_{k=q_{3}+m}^{N+m-1}Y_{k}^{(i)}
	-\mathbb{E}\Bigl[
	\sum_{k=m}^{N+m-1}Y_{k}\Bigm|\mathscr{G}
	\Bigr]
	+\mathbb{E}\Bigl[
	\sum_{k=p_{3}}^{q_{3}+m-1}Y_{k}
	\Bigm|\mathscr{G}_{\mathcal{J}_{3}}
	\Bigr].
\end{align*}
Hence, $T=V^{(i)}-V_{3}'$ is
$\mathscr{F}_{\mathcal{J}_{3}}^{\mathrm{out}}$-measurable. 
Applying \eqref{eq-af-09} with this $T$ and the preceding $Z,D$, then using \eqref{eq-af-06}, gives
\begin{align}\label{eq-ag-15.1}\begin{aligned}
	&	\frac{1}{2Hs_{a,m}^{2}}\E\Big\{ \sum_{i=m}^{N} \mathbb{E}\Bigl[
	\big|	\bigl(Y_{i}-Y_{i}^{(i)}\bigr)R^{(i)}
	\bigl\{\chi_{z,\alpha}(V)-\chi_{z,\alpha}(V^{(i)})\bigr\}\big|
	\,\Bigm|\,\mathscr{G}
	\Bigr]\Big\}\\
	&\quad\leq \frac{C}{H\alpha}\mathfrak{C}^{[a]}_{\lfloor H/16\rfloor,m}(\alpha)\sum_{i=m}^{N}\E \big\{ \bigl|\bigl(Y_{i}-Y_{i}^{(i)}\bigr)R^{(i)}\bigr||\Delta_{i}|   \big\}\\
	&\quad\leq 	\frac{C\bigl(1+a^{1-\delta}+m^{1-\delta}\bigr)}{\alpha}
	\mathfrak{C}^{[a]}_{\lfloor H/16\rfloor,m}(\alpha),
	\end{aligned}
\end{align}
where 	the last inequality follows from \eqref{eq-af-06}
and  $N-m+1=H$.
	Combining \eqref{eq-ag-13}, 
	\eqref{eq-ag-15} and \eqref{eq-ag-15.1}, we have 
	\begin{equation}\label{eq-ag-16}
	I_{6}\leq 	C\sqrt{\frac{1+a^{1-\delta}+m^{1-\delta}}{H}}
		+\frac{C\bigl(1+a^{1-\delta}+m^{1-\delta}\bigr)}{\alpha}
		\mathfrak{C}^{[a]}_{\lfloor H/16\rfloor,m}(\alpha).
	\end{equation}
	
Taking expectations in \eqref{eq-ag-03}, applying the
triangle inequality, and using \eqref{eq-ag-05}, \eqref{eq-ag-06}, \eqref{eq-ag-08}, \eqref{eq-ag-09}, \eqref{eq-ag-12}, \eqref{eq-ag-16},   we obtain
\begin{align}\label{eq-ag-16a}
\begin{aligned}
	\mathbb{E}\bigl[\chi_{z,\alpha}(V)\bigr]
	&\leq
	\sum_{k=1}^{6}
	\mathbb{E}\bigl|\widehat{I}_{z,k}\bigr|\\
	&\leq
	C\frac{\alpha+\sqrt{1+a^{1-\delta}+m^{1-\delta}}}{\sqrt{H}}+
	\frac{C\bigl(1+a^{1-\delta}+m^{1-\delta}\bigr)}{\alpha}
	\mathfrak{C}^{[a]}_{\lfloor H/16\rfloor,m}(\alpha).
\end{aligned}
\end{align}
We remark that this bound holds for every real-valued
$\mathscr{G}$-measurable random variable $z$, with the
same constant $C$.

To recover the conditional supremum, fix an enumeration
$q_{1},q_{2},\ldots$ of $\mathbb{Q}$.
For each $n\geq1$, let $z_{n}$ be the first element of
$q_{1},\ldots,q_{n}$ at which
$
\max_{1\leq j\leq n}
\mathbb{E}[
\chi_{q_{j},\alpha}(V)\bigm|\mathscr{G}
]
$
is attained. This defines a $\mathscr{G}$-measurable
random variable, and
\[
\mathbb{E}\bigl[
\chi_{z_{n},\alpha}(V)\bigm|\mathscr{G}
\bigr]
=
\max_{1\leq j\leq n}
\mathbb{E}\bigl[
\chi_{q_{j},\alpha}(V)\bigm|\mathscr{G}
\bigr].
\]
The conditional expectation is Lipschitz in the
threshold. Consequently,
\[
\max_{1\leq j\leq n}
\mathbb{E}\bigl[
\chi_{q_{j},\alpha}(V)\bigm|\mathscr{G}
\bigr]
\uparrow
\sup_{q\in\mathbb{Q}}
\mathbb{E}\bigl[
\chi_{q,\alpha}(V)\bigm|\mathscr{G}
\bigr]=\sup_{q\in\mathbb{R}}
\mathbb{E}\bigl[
\chi_{q,\alpha}(V)\bigm|\mathscr{G}
\bigr]
\qquad\text{almost surely}.
\]
The tower property and the monotone convergence theorem
therefore give
\[
\lim_{n\to\infty}
\mathbb{E}\bigl[\chi_{z_{n},\alpha}(V)\bigr]
=
\mathbb{E}\Bigl[
\sup_{q\in\mathbb{R}}
\mathbb{E}\bigl[
\chi_{q,\alpha}(V)\bigm|\mathscr{G}
\bigr]
\Bigr]
=
\mathfrak{C}^{[a]}_{H,m}(\alpha).
\]
Applying \eqref{eq-ag-16a} with $z=z_{n}$
and letting $n\to\infty$, we obtain \eqref{eq-ag-04}.
\end{proof}

Now, we are ready to prove \Cref{prop-profile}.

\begin{proof}[Proof of \Cref{prop-profile}]
Since $0<\delta\leq1$ and $m\geq1$, we have
$m^{1-\delta}\geq1$. Therefore
\[
1+a^{1-\delta}+m^{1-\delta}
 =1+2m^{1-\delta}\leq3m^{1-\delta}
 \quad\hbox{when }a=m.
\]
With $\alpha=L_{*}m^{1-\delta}$ and $L_{*}\geq1$,
the first term on the right of \eqref{eq-ag-04}
is bounded by $C\alpha/\sqrt{H}$, and the coefficient of
$\mathfrak{C}^{[a]}_{\lfloor H/16\rfloor,m}(\alpha)$
is bounded by $C/L_{*}$.
Enlarge $L_{*}$ until this coefficient is at most
$16^{-\eta}$. The resulting recursion is
\begin{equation}\label{eq-ag-18}
	\mathfrak{C}^{[m]}_{H,m}(\alpha)
	\leq \frac{C\alpha}{\sqrt{H}}
	+16^{-\eta}
	\mathfrak{C}^{[m]}_{\lfloor H/16\rfloor,m}(\alpha).
\end{equation}
The parameters $a,m,\alpha$ are kept fixed throughout its
iteration.

To account for integer parts, put $H_{0}=H$ and
$H_{j+1}=\lfloor H_{j}/16\rfloor$. Let $j_{0}$ be the first
index such that $H_{j_{0}}<64m$. Because
$H_{j_{0}-1}\geq 64m$, 
$
H_{j_{0}}\geq 4m-1\geq m.
$
Thus $\mathfrak{C}^{[m]}_{H_j,m}(\alpha)$ is well defined
for every $0\leq j\leq j_0$. Repeated integer
division gives $H_{j}=\lfloor H/16^{j}\rfloor$ exactly.
For $0\leq j\leq j_{0}$, the elementary inequality
$\lfloor x\rfloor\geq x/2$ for $x\geq1$ gives
$
H_{j}\geq H/(2\cdot16^{j}).
$
Iterating \eqref{eq-ag-18} $j_{0}$ times and using
$\mathfrak{C}^{[m]}_{H_{j_{0}},m}(\alpha)\leq1$ yields
\begin{align*}
\mathfrak{C}^{[m]}_{H,m}(\alpha)
 &\leq C\alpha\sum_{j=0}^{j_{0}-1}
            \frac{16^{-j\eta}}{\sqrt{H_{j}}}
          +16^{-j_{0}\eta}
 \leq\frac{C\alpha}{\sqrt{H}}
        \sum_{j=0}^{j_{0}-1}\bigl(4\cdot16^{-\eta}\bigr)^{j}
          +16^{-j_{0}\eta}.
\end{align*}
The geometric series is bounded uniformly in $j_{0}$ because
$\eta>1$ implies $4\cdot16^{-\eta}<1$.
We next bound $16^{-j_{0}\eta}$. Since $H_{j_{0}}<64m$ and
$H_{j_{0}}=\lfloor H/16^{j_{0}}\rfloor$, we have
\[
\frac{H}{16^{j_{0}}}<64m+1\leq 65m,
\quad \text{and}\quad
16^{-j_{0}\eta}\leq 65^{\eta}
                    \biggl(\frac{m}{H}\biggr)^{\eta}.
\]
Consequently, we have
\begin{equation}\label{eq-ag-19}
\mathfrak{C}^{[m]}_{H,m}(\alpha)
 \leq C\biggl\{\frac{\alpha}{\sqrt{H}}
                +\biggl(\frac{m}{H}\biggr)^{\eta}\biggr\}.
\end{equation}

Finally, split $(z,z+2\alpha]$ into $(z,z+\alpha]$ and
$(z+\alpha,z+2\alpha]$. The first interval is contained in the
region where $\chi_{z,\alpha}=1$, while the second is contained in
the region where $\chi_{z+\alpha,\alpha}=1$. Hence, 
\[
\sup_{z\in\mathbb{R}}
 \mathbb{P}\bigl(z<V\leq z+2\alpha\mid\mathscr{G}\bigr)
 \leq2\sup_{z\in\mathbb{R}}
       \mathbb{E}\bigl(\chi_{z,\alpha}(V)\mid\mathscr{G}\bigr).
\]
Taking expectations and applying \eqref{eq-ag-19}, we obtain
\eqref{eq-ag-17}.
\end{proof}

\section{Proof of \texorpdfstring{\Cref{thm-main}}{Theorem 2.1}}\label{sec-proof}

In this section, we prove \Cref{thm-main} using Stein's method,
establishing the bounds \eqref{eq-tt-01}--\eqref{eq-tt-04} in turn.
Throughout, we set $a=m=\lfloor\sqrt{n}\rfloor$ and write
$s_{m}=s_{m,m}$. Without loss of generality, we assume that $n$ is sufficiently
large so that $m\geq 2$ and $n\geq 512m$.  By \eqref{eq-ad-09}, we may also assume
that $c^{*}_{1}\leq s_{m}/s\leq c^{*}_{2}$, where
$c^{*}_{1}=1/2$ and $c^{*}_{2}=2$.
The remaining finitely many values of $n$ are covered by
enlarging the constant in the four bounds, since the
Kolmogorov distance is at most one.

\subsection{Proof of  \protect\eqref{eq-tt-01}}

Write $Y_i=Y^{[m]}_{i,m}$ and set $H=\lfloor n/4\rfloor$.
Since $n\geq512m$, we have
$
H
\geq n/4-1\geq128m-1\geq64m.
$
Recall that
$
S_{n,m}=\sum_{k=m}^{m+n-1}Y_{k}.
$
Since $(Y_k)_{k\geq m}$ is a stationary martingale difference
sequence, we have
\[
\mathbb{E} S_{n,m}=0,\qquad
\operatorname{Var}(S_{n,m})=\sum_{k=m}^{n+m-1}\mathbb{E} Y_{k}^{2}=n s_{m}^{2}.
\]
Choose the constant $L_{*}$ as in \Cref{prop-profile} with $\eta=2$. Let
$\alpha=L_{*}m^{1-\delta}$, and let $U$ be uniform on $[-\alpha,\alpha]$ and
independent of the initial direction, the sequence $(g_{k})_{k\geq1}$, and all
replacement copies. The same $U$ is used for all replacements below.
We first approximate
\[
W=\frac{S_{n,m}+U}{s_{m}\sqrt{n}}.
\]
To this end, we first establish the following lemma using the
concentration inequality \eqref{eq-ag-17}. It controls the change in
a test function of bounded variation when the additive term $x$ is
replaced by $y$, in terms of $\operatorname{TV}(q)$ and
$\mathbb{E}[|Z|\,|x-y|]$.
\begin{lemma}
	\label{lem-smootheddeletion}
	Let $m\leq \ell_{0}\leq r_{0}$ be deterministic integers such that $H^{\dagger}:=r_{0}-\ell_{0}+1\geq64m.$ Set
	$
	\mathcal{J}^{\dagger}:=\{\ell_{0},\ldots,r_{0}\}$, 	$\mathscr{G}_{\mathcal{J}^{\dagger}}
	:=
	\sigma(
	g_{\ell_{0}-m+1},\ldots,g_{\ell_{0}-1},
	g_{r_{0}+1},\ldots,g_{r_{0}+m-1}
).$ Define 
			\[
	V_{\mathcal{J}^{\dagger}}
	:=
	\sum_{k=\ell_{0}}^{r_{0}+m-1}Y_{k}
	-
	\mathbb{E}\biggl[
	\sum_{k=\ell_{0}}^{r_{0}+m-1}Y_{k}
	\,\biggm|\,
	\mathscr{G}_{\mathcal{J}^{\dagger}}
	\biggr]. 
	\]
	Let $\mathscr{R}^{\dagger}$ be a sigma-field independent of
$
	\sigma(W_{0},U,g_{k}:k\geq1),
$
	and set
	\[
	\mathscr{H}
	:=
	\sigma(W_{0})
	\vee
	\sigma\bigl(
	g_{k}:k\geq1,\ k\notin\mathcal{J}^{\dagger}
	\bigr)
	\vee
	\mathscr{R}^{\dagger}.
	\]
	Suppose that $Z$, $x$, $y$, and $T$ are real-valued
	$\mathscr{H}$-measurable random variables such that
	$\sigma(Z,x,y)$ is independent of
	$\mathscr{G}_{\mathcal{J}^{\dagger}}$ and
$
	\mathbb{E}[|Z|\,|x-y|]<\infty.
$
	Then, for every deterministic function
	$q:\mathbb{R}\to\mathbb{R}$ of bounded variation, we have
	\begin{align}	\label{eq-ah-01}
		\begin{aligned}
				&\mathbb{E}\biggl[
		|Z|
		\biggl|
		q\biggl(
		\frac{V_{\mathcal{J}^{\dagger}}+U+T+x}{s_{m}\sqrt{n}}
		\biggr)
		-
		q\biggl(
		\frac{V_{\mathcal{J}^{\dagger}}+U+T+y}{s_{m}\sqrt{n}}
		\biggr)
		\biggr|
		\biggr]
	\\
		&\quad\leq
		C\biggl\{
		\frac{1}{\sqrt{H^{\dagger}}}
		+
		\frac{1}{\alpha}
		\biggl(\frac{m}{H^{\dagger}}\biggr)^{2}
		\biggr\}
		\operatorname{TV}(q)\,
		\mathbb{E}\bigl[|Z|\,|x-y|\bigr],
		\end{aligned}
	\end{align}
	where
 $\operatorname{TV}(q)$ denotes the total variation
	of $q$ on $\mathbb{R}$, and  $C>0$ depends only on
	$c^{*}_{1}$, $c^{*}_{2}$, $s$, $\mu$, $\delta$, $d$, and the fixed norm.
\end{lemma}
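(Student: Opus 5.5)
We will reduce \eqref{eq-ah-01} to the concentration inequality of \Cref{prop-profile}, applied to the block $\mathcal{J}^{\dagger}$ with $\eta=2$. By stationarity of $(g_k)$, the pair $(V_{\mathcal{J}^{\dagger}},\mathscr{G}_{\mathcal{J}^{\dagger}})$ has the same joint law as $(V,\mathscr{G})$ with $H=H^{\dagger}$. Hence \Cref{prop-profile} gives
\[
\mathbb{E}\Bigl[\sup_{w\in\mathbb{R}}\mathbb{P}\bigl(w<V_{\mathcal{J}^{\dagger}}\leq w+2\alpha\mid\mathscr{G}_{\mathcal{J}^{\dagger}}\bigr)\Bigr]\leq C\Bigl\{\frac{\alpha}{\sqrt{H^{\dagger}}}+\Bigl(\frac{m}{H^{\dagger}}\Bigr)^{2}\Bigr\}.
\]
Next, $V_{\mathcal{J}^{\dagger}}$ is a function of $g_{\ell_0-m+1},\ldots,g_{r_0+m-1}$. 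Conditionally on $\mathscr{H}$, only the central matrices $g_k$ with $k\in\mathcal{J}^{\dagger}$ and $U$ remain random. So $\mathbb{E}[\,\cdot\mid\mathscr{H}]$ only involves the law of $V_{\mathcal{J}^{\dagger}}+U$ given $\mathscr{G}_{\mathcal{J}^{\dagger}}$, and $Z,x,y,T$ are frozen.

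The first step is a smoothing estimate for the density of $V_{\mathcal{J}^{\dagger}}+U$. Since $U$ is uniform on $[-\alpha,\alpha]$ and independent, the conditional law of $V_{\mathcal{J}^{\dagger}}+U$ given $\mathscr{G}_{\mathcal{J}^{\dagger}}$ has density
\[
f(t)=\frac{1}{2\alpha}\,\mathbb{P}\bigl(t-\alpha\leq V_{\mathcal{J}^{\dagger}}< t+\alpha\mid\mathscr{G}_{\mathcal{J}^{\dagger}}\bigr).
\]
Thus $\|f\|_\infty\leq (2\alpha)^{-1}K$, where $K:=\sup_w\mathbb{P}(w<V_{\mathcal{J}^{\dagger}}\leq w+2\alpha\mid\mathscr{G}_{\mathcal{J}^{\dagger}})$. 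The half-open endpoint issue is handled by a limiting argument or by using $\chi$.

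The second step bounds the increment of $q$. Write $q=q_1-q_2$ with $q_1,q_2$ nondecreasing and $\operatorname{TV}(q_1)+\operatorname{TV}(q_2)=\operatorname{TV}(q)$. For monotone $q_1$ and fixed $c:=T+x$, $d:=T+y$ with $c\leq d$, we have
\[
\bigl|q_1\bigl(\tfrac{\xi+c}{s_m\sqrt n}\bigr)-q_1\bigl(\tfrac{\xi+d}{s_m\sqrt n}\bigr)\bigr|=\int \mathbf 1\Bigl\{\tfrac{\xi+c}{s_m\sqrt n}<u\leq\tfrac{\xi+d}{s_m\sqrt n}\Bigr\}\,dq_1(u).
\]
Taking the conditional expectation over $\xi=V_{\mathcal{J}^{\dagger}}+U$ and using Fubini, the inner probability is $\mathbb{P}(\xi\in(us_m\sqrt n-d,\,us_m\sqrt n-c])\leq \|f\|_\infty|x-y|$. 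Hence
\[
\mathbb{E}\bigl[|q(\cdot+x)-q(\cdot+y)|\mid\mathscr{H}\bigr]\leq \operatorname{TV}(q)\,\frac{K}{2\alpha}\,|x-y|.
\]
Multiplying by $|Z|$ and taking expectations, we need $\mathbb{E}[|Z||x-y|K]$. Since $K$ is $\mathscr{G}_{\mathcal{J}^{\dagger}}$-measurable and $\sigma(Z,x,y)$ is independent of $\mathscr{G}_{\mathcal{J}^{\dagger}}$, this factors as $\mathbb{E}[|Z||x-y|]\,\mathbb{E}K$. Inserting the concentration bound gives
\[
\frac{C}{\alpha}\Bigl\{\frac{\alpha}{\sqrt{H^\dagger}}+\Bigl(\frac{m}{H^\dagger}\Bigr)^{2}\Bigr\}\operatorname{TV}(q)\,\mathbb{E}[|Z||x-y|],
\]
which is exactly \eqref{eq-ah-01}. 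The constants depend only on the listed quantities because $s_m\in[c_1^*s,c_2^*s]$ and $L_*$ is fixed.

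The main obstacle is the conditioning bookkeeping. We must verify that, given $\mathscr{H}$, the conditional law of $V_{\mathcal{J}^{\dagger}}$ depends on $\mathscr{H}$ only through $\mathscr{G}_{\mathcal{J}^{\dagger}}$. This holds because the central coordinates and $U$ are independent of $\mathscr{H}$, and $W_0$ does not enter $V_{\mathcal{J}^{\dagger}}$. We also need measurability of $K$, which follows by restricting the supremum to rational $w$ via right-continuity, as in \eqref{eq-af-08}.
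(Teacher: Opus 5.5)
Your proposal is correct and follows essentially the same route as the paper. You bound the conditional density of $V_{\mathcal{J}^{\dagger}}+U$ by $(2\alpha)^{-1}$ times the conditional concentration probability, control its expectation via \Cref{prop-profile} with $\eta=2$ and stationarity, bound the conditional $q$-increment by the density's sup-norm times $|x-y|\operatorname{TV}(q)$, and factor the expectation using the independence of $\sigma(Z,x,y)$ from $\mathscr{G}_{\mathcal{J}^{\dagger}}$. The only difference is cosmetic: you prove the bounded-variation step through a Jordan decomposition and Fubini against $\mathrm{d}q_1$, where the Stieltjes identity should be read as an inequality, or as holding almost surely because $V_{\mathcal{J}^{\dagger}}+U$ has a density. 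The paper instead proves $\int_{\mathbb{R}}|q(v+h)-q(v)|\,\mathrm{d}v\leq|h|\operatorname{TV}(q)$ by partitioning $\mathbb{R}$ into intervals of length $h$.
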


\begin{proof}
	Since $U$ is uniform on $[-\alpha,\alpha]$ and independent of
	the sequence $(g_{k})_{k\geq1}$, a conditional density of
	$V_{\mathcal{J}^{\dagger}}+U$ given
	$\mathscr{G}_{\mathcal{J}^{\dagger}}$ is
	\[
	p_{\mathcal{J}^{\dagger}}(u)
	:=
	\frac{1}{2\alpha}
	\mathbb{P}\bigl(
	u-\alpha<V_{\mathcal{J}^{\dagger}}\leq u+\alpha
	\bigm|
	\mathscr{G}_{\mathcal{J}^{\dagger}}
	\bigr).
	\]
Here the conditional probability is defined by fixing
$(g_{\ell_{0}-m+1},\ldots,g_{\ell_{0}-1},
g_{r_{0}+1},\ldots,g_{r_{0}+m-1})$
and integrating over
$(g_{\ell_{0}},\ldots,g_{r_{0}})$
with respect to $\mu^{\otimes H^{\dagger}}$.
This construction makes $p_{\mathcal{J}^{\dagger}}$
jointly measurable in $(g_{\ell_0-m+1},\ldots,g_{\ell_0-1},
g_{r_0+1},\ldots,g_{r_0+m-1})$ and $u$. 	Write $\lVert p_{\mathcal{J}^{\dagger}}\rVert_{\infty}$ for the
essential supremum with respect to Lebesgue measure. 
Since $p_{\mathcal{J}^{\dagger}}$ is right-continuous
in $u$, we have
\[
\lVert p_{\mathcal{J}^{\dagger}}\rVert_{\infty}
=
\sup_{u\in\mathbb{Q}}p_{\mathcal{J}^{\dagger}}(u),
\]
which is $\mathscr{G}_{\mathcal{J}^{\dagger}}$-measurable. Hence,
	\[
	\lVert p_{\mathcal{J}^{\dagger}}\rVert_{\infty}
	\leq
	\frac{1}{2\alpha}
	\sup_{z\in\mathbb{R}}
	\mathbb{P}\bigl(
	z<V_{\mathcal{J}^{\dagger}}\leq z+2\alpha
	\bigm|
	\mathscr{G}_{\mathcal{J}^{\dagger}}
	\bigr).
	\]
	Taking expectations and using stationarity, together with \eqref{eq-ag-17}, we obtain
	\begin{equation}\label{eq-ah-02}
		\mathbb{E}\lVert p_{\mathcal{J}^{\dagger}}\rVert_{\infty}
		\leq
		C\biggl\{
		\frac{1}{\sqrt{H^{\dagger}}}
		+
		\frac{1}{\alpha}
		\biggl(\frac{m}{H^{\dagger}}\biggr)^{2}
		\biggr\}.
	\end{equation}
By construction, $V_{\mathcal{J}^{\dagger}}$ depends only on $(g_{\ell_{0}-m+1},g_{\ell_{0}-m+2},\ldots,g_{r_{0}+m-2},g_{r_{0}+m-1})$.
	Moreover, $\mathscr{G}_{\mathcal{J}^{\dagger}}\subseteq\mathscr{H}$,
	and the vector
	$(g_{\ell_{0}},\ldots,g_{r_{0}},U)$
	is independent of $\mathscr{H}$.
	Thus $p_{\mathcal{J}^{\dagger}}$ is also a conditional
	density of $V_{\mathcal{J}^{\dagger}}+U$ given $\mathscr{H}$.
Since $T,x,y$ are $\mathscr{H}$-measurable, the conditional
density formula yields, for every bounded Borel function
$f:\mathbb{R}^{3}\to\mathbb{R}$, 
\begin{align}\label{eq-ua-08}
\mathbb{E}\Bigl[
f(V_{\mathcal{J}^{\dagger}}+U,T+x,T+y)
\Bigm|\mathscr{H}
\Bigr]
=
\int_{\mathbb{R}}
f(u,T+x,T+y)
p_{\mathcal{J}^{\dagger}}(u)\,\mathrm{d}u.
\end{align}

	We next use the bounded-variation inequality
	\begin{equation}\label{eq-ah-03}
		\int_{\mathbb{R}}
		|q(v+h)-q(v)|\,\mathrm{d}v
		\leq
		|h|\operatorname{TV}(q),
		\qquad h\in\mathbb{R}.
	\end{equation}
	To verify it for $h>0$, partition $\mathbb{R}$ into
	the intervals $[kh,(k+1)h)$, $k\in\mathbb{Z}$.
	For each $r\in[0,h)$, the definition of total
	variation gives
	\[
	\sum_{k\in\mathbb{Z}}
	\bigl|
	q\bigl(r+(k+1)h\bigr)-q(r+kh)
	\bigr|
	\leq
	\operatorname{TV}(q).
	\]
	Tonelli's theorem now yields
	\[
	\begin{aligned}
		\int_{\mathbb{R}}
		|q(v+h)-q(v)|\,\mathrm{d}v
		&=
		\int_{0}^{h}
		\sum_{k\in\mathbb{Z}}
		\bigl|
		q\bigl(r+(k+1)h\bigr)-q(r+kh)
		\bigr|\,\mathrm{d}r\leq
		h\operatorname{TV}(q).
	\end{aligned}
	\]
	The case $h<0$ follows by a change of variable, and
	the case $h=0$ is immediate.
	
	Applying \eqref{eq-ua-08} and using the
	$\mathscr{H}$-measurability of $Z$, we have
\begin{align}\label{eq-ah-04}
	\begin{aligned}
	&\mathbb{E}\biggl[
	|Z|
	\biggl|
	q\biggl(
	\frac{V_{\mathcal{J}^{\dagger}}+U+T+x}{s_{m}\sqrt{n}}
	\biggr)
	-
	q\biggl(
	\frac{V_{\mathcal{J}^{\dagger}}+U+T+y}{s_{m}\sqrt{n}}
	\biggr)
	\biggr|
	\,\biggm|\,
	\mathscr{H}
	\biggr]\\
	&\quad=
	|Z|
	\int_{\mathbb{R}}
	\biggl|
	q\biggl(\frac{u+T+x}{s_{m}\sqrt{n}}\biggr)
	-
	q\biggl(\frac{u+T+y}{s_{m}\sqrt{n}}\biggr)
	\biggr|
	p_{\mathcal{J}^{\dagger}}(u)\,\mathrm{d}u\\
	&\quad\leq
	|Z|\lVert p_{\mathcal{J}^{\dagger}}\rVert_{\infty}
	\int_{\mathbb{R}}
	\biggl|
	q\biggl(\frac{u+T+x}{s_{m}\sqrt{n}}\biggr)
	-
	q\biggl(\frac{u+T+y}{s_{m}\sqrt{n}}\biggr)
	\biggr|\,\mathrm{d}u\\
	&\quad=
	|Z|\lVert p_{\mathcal{J}^{\dagger}}\rVert_{\infty}s_{m}\sqrt{n}
	\int_{\mathbb{R}}
	\biggl|
	q\biggl(v+\frac{x-y}{s_{m}\sqrt{n}}\biggr)-q(v)
	\biggr|\,\mathrm{d}v\\
	&\quad\leq
	|Z|\lVert p_{\mathcal{J}^{\dagger}}\rVert_{\infty}
	|x-y|\operatorname{TV}(q),
\end{aligned}
\end{align}
	where 
	the second equality uses the change of variable
	$v=(u+T+y)/(s_{m}\sqrt{n})$, and the last inequality follows from
	\eqref{eq-ah-03}.
	Finally, $\lVert p_{\mathcal{J}^{\dagger}}\rVert_{\infty}$
	is $\mathscr{G}_{\mathcal{J}^{\dagger}}$-measurable, whereas
	$\sigma(Z,x,y)$ is independent of
	$\mathscr{G}_{\mathcal{J}^{\dagger}}$.
	Therefore,
	\[
	\mathbb{E}\bigl[
	\lVert p_{\mathcal{J}^{\dagger}}\rVert_{\infty}
	|Z|\,|x-y|
	\bigr]
	=
	\mathbb{E}\lVert p_{\mathcal{J}^{\dagger}}\rVert_{\infty}
	\,
	\mathbb{E}\bigl[|Z|\,|x-y|\bigr].
	\]
	Taking expectations in \eqref{eq-ah-04} and using \eqref{eq-ah-02},  we obtain	\eqref{eq-ah-01}.
\end{proof}

Applying \Cref{lem-smootheddeletion} to indicator
functions yields the following bound on the difference
between distribution functions.
\begin{corollary}\label{cor-cdfcomparison}
	In the setting of \Cref{lem-smootheddeletion},
	let $T$ and $D$ be real-valued
	$\mathscr{H}$-measurable random variables such that
	$\mathbb{E}|D|<\infty$ and $\sigma(D)$
	is independent of $\mathscr{G}_{\mathcal{J}^{\dagger}}$.
	Then,
	\begin{equation}\label{eq-ai-02}
		\begin{aligned}
			&\sup_{z\in\mathbb{R}}
			\Bigl|
			\mathbb{P}\bigl(V_{\mathcal{J}^{\dagger}}+T+U\leq z\bigr)
			-\mathbb{P}\bigl(V_{\mathcal{J}^{\dagger}}+T-D+U\leq z\bigr)
			\Bigr|\\
			&\qquad\leq
			C\biggl\{
			\frac{1}{\sqrt{H^{\dagger}}}
			+\frac{1}{\alpha}
			\biggl(\frac{m}{H^{\dagger}}\biggr)^{2}
			\biggr\}
			\mathbb{E}|D|,
		\end{aligned}
	\end{equation}
	where  $C>0$ depends only on
	$c^{*}_{1}$, $c^{*}_{2}$, $s$, $\mu$, $\delta$, $d$, and the fixed norm.
\end{corollary}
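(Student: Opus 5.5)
The plan is to apply \Cref{lem-smootheddeletion} with the indicator test function $q=\mathbf{1}_{(-\infty,\,z/(s_m\sqrt{n})]}$, after rescaling so that the event $\{V_{\mathcal{J}^{\dagger}}+T+U\leq z\}$ matches the form in \eqref{eq-ah-01}. For fixed $z\in\mathbb{R}$, put $q(w):=\mathbf{1}\{w\leq z/(s_m\sqrt{n})\}$. This is a deterministic function of bounded variation with $\operatorname{TV}(q)=1$. The two events become
\[
\bigl\{V_{\mathcal{J}^{\dagger}}+T+U\leq z\bigr\}=\Bigl\{q\Bigl(\tfrac{V_{\mathcal{J}^{\dagger}}+U+T+x}{s_m\sqrt{n}}\Bigr)=1\Bigr\},\qquad x=0,
\]
and the same identity holds with $y=-D$ for the perturbed event.

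Take $Z\equiv1$, $x\equiv0$ and $y:=-D$. The hypotheses of \Cref{lem-smootheddeletion} can then be checked directly. First, $Z$, $x$, $y$ and $T$ are $\mathscr{H}$-measurable, since $T$ and $D$ are by assumption. Second, $\sigma(Z,x,y)=\sigma(D)$ up to trivial $\sigma$-fields, and this is independent of $\mathscr{G}_{\mathcal{J}^{\dagger}}$ by assumption. Third, $\mathbb{E}[|Z|\,|x-y|]=\mathbb{E}|D|<\infty$.

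Next, the difference of probabilities is bounded by the expected absolute difference of the indicators:
\[
\bigl|\mathbb{P}(V_{\mathcal{J}^{\dagger}}+T+U\leq z)-\mathbb{P}(V_{\mathcal{J}^{\dagger}}+T-D+U\leq z)\bigr|
\leq \mathbb{E}\Bigl|q\Bigl(\tfrac{V_{\mathcal{J}^{\dagger}}+U+T}{s_m\sqrt{n}}\Bigr)-q\Bigl(\tfrac{V_{\mathcal{J}^{\dagger}}+U+T-D}{s_m\sqrt{n}}\Bigr)\Bigr|.
\]
By \eqref{eq-ah-01}, the right-hand side is at most $C\{(H^{\dagger})^{-1/2}+\alpha^{-1}(m/H^{\dagger})^2\}\,\mathbb{E}|D|$.

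The constant $C$ from \Cref{lem-smootheddeletion} does not depend on $z$, because $\operatorname{TV}(q)=1$ for every $z$. Taking the supremum over $z$ therefore gives \eqref{eq-ai-02}.

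No step here is a genuine obstacle. The only points requiring care are that the indicator is deterministic with total variation exactly one, and that the scaling by $s_m\sqrt{n}$ inside $q$ is harmless because $q$ is defined at the rescaled threshold.
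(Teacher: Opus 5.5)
Your proposal is correct and is essentially the paper's own argument. It applies \Cref{lem-smootheddeletion} with $Z=1$, $x=0$, $y=-D$ and the indicator test function $q_z(w)=\mathbf{1}_{\{s_m\sqrt{n}\,w\leq z\}}$, which has total variation one. It then bounds the probability difference by the expected absolute difference of the indicators and takes the supremum over $z$, exactly as the paper does.
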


\begin{proof}
	For each $z\in\mathbb{R}$, apply
	\Cref{lem-smootheddeletion} with
	$Z=1$, $x=0$, $y=-D$, and
	\[
	q_z(w)=\mathbf{1}_{\{s_m\sqrt{n}\,w\leq z\}}.
	\]
	Since $\operatorname{TV}(q_z)=1$, the result follows
	from $|\mathbb{E}A|\leq\mathbb{E}|A|$ and
	taking the supremum over $z$.
\end{proof}

The following proposition establishes a normal approximation for
$S_{n,m}+U$ and then removes the smoothing term $U$ without changing
the convergence rate.
\begin{proposition}\label{prop-Stein}
Under the same assumptions as in \Cref{thm-main}, we have
	\begin{equation}\label{eq-ah-10}
		{d_{\mathrm{K}}}\biggl(\mathcal{L}\biggl(\frac{S_{n,m}+U}{s_{m}\sqrt{n}}\biggr),
		\mathcal{N}(0,1)\biggr)\le Cn^{-\delta/2},
	\end{equation}
		where  $C>0$ depends only on
	 $\mu$, $\delta$, $d$, and the fixed norm.
	The same estimate holds with $S_{n,m}$ in place of $S_{n,m}+U$.
\end{proposition}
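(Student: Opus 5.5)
We prove \eqref{eq-ah-10} by Stein's method with the exchangeable pairs $(S_{n,m}, S_{n,m}^{(i)})$, $m\leq i\leq m+n-1$. Fix $z$ and let $f=f_z$ be the bounded solution of the Stein equation $f'(w)-wf(w)=\mathbf 1_{\{w\leq z\}}-\Phi(z)$, so that $\|f\|_\infty\leq 1$, $\|f'\|_\infty\leq 1$, and $wf(w)$ together with $f'$ is of bounded variation with $\operatorname{TV}$ bounded uniformly in $z$.

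\textbf{Exchangeable-pair identity.} Put $W=(S_{n,m}+U)/(s_m\sqrt n)$ and $W^{(i)}=(S^{(i)}_{n,m}+U)/(s_m\sqrt n)$. Using $\mathbb E[Y_i\mid\mathscr F_{i-1}]=0$ and the exchangeability of $g_i$ and $g_i'$, exactly as in \eqref{eq-ag-01}, we get
\[
\mathbb E[S_{n,m}f(W)]=\tfrac12\sum_i\mathbb E\bigl[(Y_i-Y_i^{(i)})(f(W)-f(W^{(i)}))\bigr].
\]
The variable $U$ is handled separately: $\mathbb E[Uf(W)]$ is bounded by $C\alpha/\sqrt n\cdot\mathbb E|f'|$, giving a term of order $\alpha/\sqrt n\leq Cm^{1-\delta}/\sqrt n\leq Cn^{-\delta/2}$. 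Writing $f(W)-f(W^{(i)})=\int_0^1 f'(W^{(i)}+t\Delta_i/(s_m\sqrt n))\,\mathrm dt\cdot\Delta_i/(s_m\sqrt n)$ and expanding $\Delta_i$ as in \eqref{eq-ag-02} (no $R$ term now), we compare $\mathbb E f'(W)-\mathbb E Wf(W)$ with five error terms that parallel $\widehat I_{z,2},\ldots,\widehat I_{z,5}$:
\begin{itemize}
\item[(a)] the predictable-variance fluctuation, bounded by \Cref{lem-predictable} at order $\sqrt{m^{1-\delta}/n}\leq n^{-\delta/2}$;
\item[(b)] the term $2\widehat Y_i-(Y_i-Y_i^{(i)})^2$, symmetrized as in \eqref{eq-ag-07};
\item[(c)] the difference $f'(W)-f'(W^{(i)}+t\Delta_i/\ldots)$;
\item[(d)] the cross terms $(Y_i-Y_i^{(i)})(Y_j-Y_j^{(i)})$, reduced to $\Gamma_{ij}$ via \eqref{eq-ag-10}.
\end{itemize}

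\textbf{Controlling the non-smooth differences.} In each of (b)--(d) we face $\mathbb E[|Z|\,|q(\cdot+x)-q(\cdot+y)|]$ with $q$ equal to $f'$ or $wf(w)$, which have bounded variation. We handle these with \Cref{lem-smootheddeletion}. For each $i$ (or pair $i,j$), \Cref{lem-protected}(i) provides a block $\mathcal J^\dagger$ of length $\lfloor n/16\rfloor\geq 64m$ that avoids the $O(m)$ indices on which $Z,x,y$ depend. We write $S_{n,m}=V_{\mathcal J^\dagger}+T$ with $T$ measurable with respect to $\mathscr H$, and we take $\mathscr R^\dagger=\sigma(g_i',g_j')$. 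The lemma then gives a factor
\[
C\{n^{-1/2}+\alpha^{-1}(m/n)^2\}\leq Cn^{-1/2}
\]
times $\mathbb E[|Z|\,|x-y|]$. Summing over $i$ with \eqref{eq-ae-10} and \eqref{eq-ae-11} (taking $a=m$) bounds each of (b)--(d) by
\[
\frac{C}{n}\cdot n\cdot m^{1-\delta}\cdot n^{-1/2}=Cm^{1-\delta}n^{-1/2}\leq Cn^{-\delta/2}.
\]
The additional boundedness of $f$ is used for the linear terms. Taking the supremum over $z$ yields \eqref{eq-ah-10}.

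\textbf{Removing $U$.} By \Cref{cor-cdfcomparison} applied with $D=U$ and the full central block (so $T=S_{n,m}-V_{\mathcal J^\dagger}$), the Kolmogorov distance changes by at most $Cn^{-1/2}\mathbb E|U|\leq C\alpha n^{-1/2}\leq Cn^{-\delta/2}$. This requires an independent copy $U'$ so that $U$ appears in the smoothing term while $D$ remains independent; I would apply the corollary with the smoothing variable $U'$ and compare both sides to $S_{n,m}+U'$.

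\textbf{Main obstacle.} The delicate step is checking the measurability and independence hypotheses of \Cref{lem-smootheddeletion} for each error term: the point-evaluation arguments $W^{(i)}+t\Delta_i$ must split as $V_{\mathcal J^\dagger}+U+T+x$, with $Z$ and $x$ independent of $\mathscr G_{\mathcal J^\dagger}$. Everything else is bookkeeping on top of \Cref{prop-profile}.
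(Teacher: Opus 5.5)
Your Stein argument is essentially the paper's proof. You use the exchangeable-pair identity and split the error into four terms: the predictable-variance term, the symmetrized $2\widehat Y_i-(Y_i-Y_i^{(i)})^2$ term, the interpolation term, and the cross terms reduced to $\Gamma_{ij}$. Each is controlled by \Cref{lem-smootheddeletion} together with \eqref{eq-ae-10}--\eqref{eq-ae-11}. This gives \eqref{eq-ah-10} for the smoothed sum $S_{n,m}+U$. (A small slip: the $U$-term in the Stein identity is bounded simply by $\alpha\lVert f_z\rVert_\infty/(s_m\sqrt n)$; no $f'$ enters.)

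The step that fails is removing $U$. \Cref{cor-cdfcomparison} compares the laws of $V_{\mathcal J^\dagger}+T+U'$ and $V_{\mathcal J^\dagger}+T-D+U'$, and the smoothing variable $U'$ appears on both sides. With $D=U$ you only compare $S_{n,m}+U'$ with $S_{n,m}+U'-U$. No admissible $D$ can produce the unsmoothed $S_{n,m}$, because $D$ must be independent of the smoothing variable. You also cannot shrink the smoothing width: \Cref{prop-profile}, on which the lemma rests, is available only for $\alpha=L_*m^{1-\delta}$, and the factor $\alpha^{-1}(m/H^\dagger)^2$ blows up as $\alpha\to0$.

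The fix is elementary, and it is what the paper does. Since $|U|\le\alpha$,
\[
\{S_{n,m}+U\le z-\alpha\}\subseteq\{S_{n,m}\le z\}\subseteq\{S_{n,m}+U\le z+\alpha\}.
\]
Hence the Kolmogorov distance for $S_{n,m}/(s_m\sqrt n)$ exceeds that for $W$ by at most $\alpha/(s_m\sqrt{2\pi n})\le Cm^{1-\delta}/\sqrt n\le Cn^{-\delta/2}$. Alternatively, write $S_{n,m}=V_{\mathcal J^\dagger}+T$ and bound $\mathbb P(z-\alpha<S_{n,m}\le z+\alpha)$ directly by \Cref{prop-profile}; this gives the same order.
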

\begin{proof}
	We first keep $n$ and $m$ separate, subject to
$
	n\geq 512m
$
 and $m\geq 2.$ Throughout the remainder of the proof, we assume that $n$ and $m$
 are sufficiently large, so that $s_{m}>0$.
	Recall that
	\[
	Y_{i}=Y_{i,m}^{[m]},\qquad
	S_{n,m}=\sum_{i=m}^{m+n-1}Y_{i},\qquad
	W=\frac{S_{n,m}+U}{s_{m}\sqrt{n}},\qquad  	\widehat{Y}_{i}
	:=\mathbb{E}\bigl[Y_{i}^{2}\bigm|\mathscr{F}_{i-1}\bigr].
	\]
	Fix $z\in\mathbb{R}$, and let $f_{z}$ be the bounded
	solution to the Stein equation
	\[
	f_{z}'(w)-wf_{z}(w)
	=\mathbf{1}_{\{w\leq z\}}-\Phi(z).
	\]
It can be shown (see, e.g., \cite{ChenGoldsteinShao2011}) that
$
\lVert f_z\rVert_\infty\le C,$
$\lVert f_z'\rVert_\infty\le1.
$
Moreover, applying \cite[Theorem~3.4.4(1)]{Raic2020} with
$h(w)=\mathbf{1}_{\{w\leq z\}}$, we have
$
\operatorname{TV}(f_{z}^{\prime})\leq2,
$
where $f_{z}^{\prime}(z)$ is defined as  the left derivative of $f_{z}$ at $z$.

In what follows, let
$(g_{i}^{\prime})_{i\geq1}$
be an independent copy of
$(g_{i})_{i\geq1}$.
The superscript $(i)$ indicates that
$g_{i}$ is replaced by $g_{i}^{\prime}$,
with all other coordinates unchanged, while $(i,j)$ indicates that
$g_{i}$ and $g_{j}$ are replaced by
$g_{i}^{\prime}$ and $g_{j}^{\prime}$, respectively.
The variable $U$ is left unchanged under all such replacements.

	By the exchangeability of $g_{i}$ and
	$g_{i}^{\prime}$, we have, for each $i\in [m,m+n-1]$,
	\begin{align}\label{eq-ah-06}
		\begin{aligned}
			&\mathbb{E}\Big\{Y_{i}f_{z}\Big(\frac{S_{n,m}+U}{s_{m}\sqrt{n}}\Big)\Big\}\\
			&\,\,\,\,\,=\frac{1}{2}\E\Big\{\big(Y_{i}-Y_{i}^{(i)}\big)\Big[f_{z}\Big(\frac{S_{n,m}+U}{s_{m}\sqrt{n}}\Big)-f_{z}\Big(\frac{S_{n,m}^{(i)}+U}{s_{m}\sqrt{n}}\Big)\Big]\Big\}\\
			&\,\,\,\,\,=\frac{1}{2s_{m}\sqrt{n}}\mathbb{E}\biggl\{
			\bigl(Y_{i}-Y_{i}^{(i)}\bigr)\big(	S_{n,m}-S_{n,m}^{(i)}\big)
			\int_{0}^{1}
			f_{z}'\Bigl(
			\frac{(1-t)S_{n,m}^{(i)}+tS_{n,m}+U}{s_{m}\sqrt{n}}
			\Bigr)\,\mathrm{d}t
			\biggr\}.
		\end{aligned}
	\end{align}
	By the definitions of $S_{n,m}$ and $S_{n,m}^{(i)}$, we have,	for each $m\leq i\leq m+n-1$
	\begin{equation}\label{eq-ah-05}
		S_{n,m}-S_{n,m}^{(i)}
		=Y_{i}-Y_{i}^{(i)}
		+\sum_{j=i+1}^{\min\{i+m-1,m+n-1\}}
		\bigl(Y_{j}-Y_{j}^{(i)}\bigr).
	\end{equation}	
Taking expectations in the Stein equation, summing \eqref{eq-ah-06},
and using $s_{m}\sqrt{n}W=S_{n,m}+U$ and \eqref{eq-ah-05}, we have
	\begin{equation}\label{eq-ah-11}
		\mathbb{P}(W\leq z)-\Phi(z)
		=I_{1}+I_{2}+I_{3}-I_{4}
		-\frac{1}{s_{m}\sqrt{n}}\mathbb{E}\bigl[Uf_{z}(W)\bigr],
	\end{equation}
	where
	\begin{align*}
		I_{1}
		&:=\mathbb{E}\biggl\{
		\Bigl(1-\frac{1}{s_{m}^{2}n}
		\sum_{i=m}^{m+n-1}\widehat{Y}_{i}\Bigr)
		f_{z}'(W)\biggr\},\\	
		I_{2}
		&:=\frac{1}{2s_{m}^{2}n}\sum_{i=m}^{m+n-1}
		\mathbb{E}\Bigl\{\Bigl[
		2\widehat{Y}_{i}
		-\bigl(Y_{i}-Y_{i}^{(i)}\bigr)^{2}\Bigr]
		f_{z}'(W)\Bigr\},\\
		I_{3}
		&:=\frac{1}{2s_{m}^{2}n}\sum_{i=m}^{m+n-1}
		\mathbb{E}\bigg\{
		\bigl(Y_{i}-Y_{i}^{(i)}\bigr)^{2}
		\int_{0}^{1}\Big[
		f_{z}'(W)
		-f_{z}'\Bigl(
		\frac{(1-t)S_{n,m}^{(i)}+tS_{n,m}+U}{s_{m}\sqrt{n}}
		\Bigr)\Big]\,\mathrm{d}t
		\biggr\},\\
		I_{4}
		&:=\frac{1}{2s_{m}^{2}n}\sum_{i=m}^{m+n-1}
		\sum_{j=i+1}^{\min\{i+m-1,m+n-1\}}
		\mathbb{E}\bigg\{
		\bigl(Y_{i}-Y_{i}^{(i)}\bigr)
		\bigl(Y_{j}-Y_{j}^{(i)}\bigr)\\
		&\hspace{60mm}\times
		\int_{0}^{1}
		f_{z}'\Big(
		\frac{(1-t)S_{n,m}^{(i)}+tS_{n,m}+U}{s_{m}\sqrt{n}}
		\Big)\,\mathrm{d}t
		\bigg\}.
	\end{align*}
	
	We next estimate $|I_{1}|$,$|I_{2}|,|I_{3}|, |I_{4}|$ and $\E[Uf_{z}(W)]$ in turn. Since we will repeatedly apply \Cref{lem-smootheddeletion} below,
	we first specify a common construction for these applications.
	Set $H^{\dagger}=\lfloor n/4\rfloor$, so that $H^{\dagger}\geq64m$
	and $H^{\dagger}\geq n/8$.
	For fixed indices $	m\leq i\leq j\leq\min\{i+m-1,m+n-1\},$ define
	\[
	\mathcal{I}_{i,j}:=\{i-m+1,\ldots,j+m-1\}.
	\]
We have $|\mathcal{I}_{i,j}|<3m$.
	Removing this interval from $\{m,\ldots,m+n-1\}$
	leaves at most two consecutive intervals, one of
	length at least $(n-3m)/2$.
	Since $n\geq 512m$, it follows that
$
	(n-3m)/2>\lfloor n/4\rfloor+2(m-1).
$
	We can therefore choose an integer $p$, depending
	only on $i,j,n,m$, such that
	\[
	\mathcal{J}^{\dagger}:=\{p,\ldots,p+H^{\dagger}-1\},
	\quad
	\{p-m+1,\ldots,p+H^{\dagger}+m-2\}
	\subseteq
	\{m,\ldots,m+n-1\}\setminus\mathcal{I}_{i,j}.
	\]
Take $\mathscr{R}^{\dagger}
=\sigma(g_{i}',g_{j}')$,
and let $\mathscr{G}_{\mathcal{J}^{\dagger}}$,
$V_{\mathcal{J}^{\dagger}}$ and $\mathscr{H}$
be as in \Cref{lem-smootheddeletion}. The following decomposition verifies the required
	measurability:
	\[
	\begin{aligned}
		S_{n,m}-V_{\mathcal{J}^{\dagger}}
		&=\sum_{k=m}^{p-1}Y_{k}
		+\sum_{k=p+H^{\dagger}+m-1}^{m+n-1}Y_{k}+
		\mathbb{E}\biggl[
		\sum_{k=p}^{p+H^{\dagger}+m-2}Y_{k}
		\,\biggm|\,\mathscr{G}_{\mathcal{J}^{\dagger}}
		\biggr].
	\end{aligned}
	\]
	The first two terms depend only on the matrices $g_{k}$ with $k\geq1$ and $k\notin\mathcal{J}^{\dagger}$, and the last term is
	$\mathscr{G}_{\mathcal{J}^{\dagger}}$-measurable.
	Thus $S_{n,m}-V_{\mathcal{J}^{\dagger}}$ is
	$\mathscr{H}$-measurable.
	The coordinates $i,j$ lie outside
	$\{p-m+1,\ldots,p+H^{\dagger}+m-2\}$, so the same
	decomposition, with the corresponding replacements
	in the two outside sums, proves that
	\[
	S_{n,m}^{(i)}-V_{\mathcal{J}^{\dagger}},\qquad
	S_{n,m}^{(j)}-V_{\mathcal{J}^{\dagger}},\qquad
	S_{n,m}^{(i,j)}-V_{\mathcal{J}^{\dagger}}
	\]
	are also $\mathscr{H}$-measurable.
	The chosen interval $\mathcal{J}^{\dagger}$ may depend on
	$i,j$, but its length is always $H^{\dagger}$. With this common construction in place, we now turn to the estimates.
	
	\smallskip
	\noindent
	\emph{(i) Upper bound for $|I_{1}|$.}
Using
	$\lVert f_{z}'\rVert_{\infty}\leq1$,
	\Cref{lem-predictable} and $m\geq 2$, we have
\begin{align}\label{eq-mm-02}
	\begin{aligned}
		|I_{1}|
		\leq\frac{1}{s_{m}^{2}n}
		\mathbb{E}\biggl|
		\sum_{i=m}^{m+n-1}
		\bigl(\widehat{Y}_{i}-s_{m}^{2}\bigr)
		\biggr|\leq\frac{C}{n}\sqrt{nm^{1-\delta}}
		=C\frac{m^{(1-\delta)/2}}{\sqrt{n}}.
	\end{aligned}
\end{align}

	\smallskip
	\noindent
	\emph{(ii) Upper bound for $|I_{2}|.$}
	Integrating with respect to $g_{i}^{\prime}$ gives
	\[
	\begin{aligned}
		&\mathbb{E}
		\Bigl\{\Bigl[\bigl(Y_{i}-Y_{i}^{(i)}\bigr)^{2}
		-2\widehat{Y}_{i}\Bigr]f_{z}'(W)
		\Bigr\}=
		\mathbb{E}\Bigl\{
		\bigl(Y_{i}^{2}-\widehat{Y}_{i}\bigr)f_{z}'(W)
		\Bigr\}.
	\end{aligned}
	\]
	Exchanging $g_{i}$ and
	$g_{i}^{\prime}$ then yields
	\begin{equation}\label{eq-ah-07}
		\begin{aligned}
			&\mathbb{E}
			\Bigl\{\Bigl[\bigl(Y_{i}-Y_{i}^{(i)}\bigr)^{2}
			-2\widehat{Y}_{i}\Bigr]f_{z}'(W)
			\Bigr\}\\
			&\quad=\frac{1}{2}\mathbb{E}\bigg\{
			\Big[Y_{i}^{2}-\bigl(Y_{i}^{(i)}\bigr)^{2}\Bigr]
			\Big[
		f_{z}'\Bigl(\frac{S_{n,m}+U}{s_{m}\sqrt{n}}\Bigr)
			-f_{z}'\Bigl(\frac{S_{n,m}^{(i)}+U}{s_{m}\sqrt{n}}\Bigr)
			\Big]\bigg\}.
		\end{aligned}
	\end{equation}
	For each $i$, choose $\mathcal{J}^{\dagger}$ using
	$\mathcal{I}_{i,i}$.  We apply
	\Cref{lem-smootheddeletion} with
	\[
	\begin{aligned}
		q=f_{z}',\qquad
		Z=Y_{i}^{2}-\bigl(Y_{i}^{(i)}\bigr)^{2},\qquad
		T=S_{n,m}^{(i)}-V_{\mathcal{J}^{\dagger}},\qquad
		x=S_{n,m}-S_{n,m}^{(i)},\qquad y=0.
	\end{aligned}
	\]
	Since $Z$ and $x$ depend only on
	$(g_{k})_{k\in\mathcal{I}_{i,i}}$
	and $g_{i}'$, they are jointly
	independent of $\mathscr{G}_{\mathcal{J}^{\dagger}}$.
	Using \eqref{eq-ah-07}, $H^{\dagger}\geq n/8$,
	$\operatorname{TV}(f_{z}')\leq2$, and
	\eqref{eq-ae-10}, we obtain
\begin{align}\label{eq-mm-03}
	\begin{aligned}
		|I_{2}|
		&\leq\frac{C}{s_{m}^{2}n}
		\biggl\{n^{-1/2}
		+\frac{1}{\alpha}\biggl(\frac{m}{H^{\dagger}}\biggr)^{2}\biggr\}
		\sum_{i=m}^{m+n-1}
		\mathbb{E}\Bigl\{
		\bigl|Y_{i}^{2}-\bigl(Y_{i}^{(i)}\bigr)^{2}\bigr|
		|\Delta_{i}|\Bigr\}\\
		&\leq C m^{1-\delta}
		\biggl\{n^{-1/2}
		+\frac{1}{\alpha}\biggl(\frac{m}{n}\biggr)^{2}\biggr\}.
	\end{aligned}
\end{align}
	
	\smallskip
	\noindent
	\emph{(iii) Upper bound for $|I_{3}|.$}
	For each $i$ and $t\in[0,1]$, use the interval
	chosen from $\mathcal{I}_{i,i}$ and apply
	\Cref{lem-smootheddeletion} with
	\[
	\begin{aligned}
		q&=f_{z}',\quad
		Z=\bigl(Y_{i}-Y_{i}^{(i)}\bigr)^{2},\quad
		T=(1-t)S_{n,m}^{(i)}+tS_{n,m}-V_{\mathcal{J}^{\dagger}},\\
		x&=(1-t)(S_{n,m}-S_{n,m}^{(i)}),\quad y=0.
	\end{aligned}
	\]
	The two arguments are $(S_{n,m}+U)/(s_{m}\sqrt{n})$ and
	$((1-t)S_{n,m}^{(i)}+tS_{n,m}+U)/(s_{m}\sqrt{n})$. By 
	the triangle inequality, Tonelli's theorem, $H^{\dagger}\geq n/8$ and
	\eqref{eq-ae-10}, we have
\begin{align}\label{eq-mm-04}
	\begin{aligned}
		|I_{3}|
		&\leq\frac{C}{s_{m}^{2}n}
		\biggl\{n^{-1/2}
		+\frac{1}{\alpha}\biggl(\frac{m}{H^{\dagger}}\biggr)^{2}\biggr\}
		\sum_{i=m}^{m+n-1}\int_{0}^{1}
		(1-t)\mathbb{E}\Bigl\{
		\bigl(Y_{i}-Y_{i}^{(i)}\bigr)^{2}|\Delta_{i}|
		\Bigr\}\,\mathrm{d}t\\
		&\leq C m^{1-\delta}
		\biggl\{n^{-1/2}
		+\frac{1}{\alpha}\biggl(\frac{m}{n}\biggr)^{2}\biggr\}.
	\end{aligned}
\end{align}
	
	\smallskip
	\noindent
	\emph{(iv) Upper bound for $I_{4}.$}
	For $i<j\leq\min\{i+m-1,m+n-1\}$, recall that
	\[
	\Gamma_{ij}
	=Y_{j}-Y_{j}^{(i)}-Y_{j}^{(j)}+Y_{j}^{(i,j)},
	\qquad
	\Delta_{j}^{(i)}
	=S_{n,m}^{(i)}-S_{n,m}^{(i,j)}.
	\]
	Fix $t\in[0,1]$.  By exchangeability, we have
	\begin{equation}\label{eq-ah-08}
		\begin{aligned}
			&\mathbb{E}\bigg\{
			\bigl(Y_{i}-Y_{i}^{(i)}\bigr)
			\bigl(Y_{j}-Y_{j}^{(i)}\bigr)
			f_{z}'\Bigl(
			\frac{(1-t)S_{n,m}^{(i)}+tS_{n,m}+U}{s_{m}\sqrt{n}}
			\Bigr)\biggr\}\\
			&\quad=\frac{1}{2}\mathbb{E}\biggl\{
			\bigl(Y_{i}-Y_{i}^{(i)}\bigr)\Gamma_{ij}
			\biggl[
			f_{z}'\Bigl(
			\frac{(1-t)S_{n,m}^{(i)}+tS_{n,m}+U}{s_{m}\sqrt{n}}
			\Bigr)
			-f_{z}'\Bigl(
			\frac{(1-t)S_{n,m}^{(i,j)}+tS_{n,m}^{(j)}+U}{s_{m}\sqrt{n}}
			\Bigr)
			\biggr]\biggr\}.
		\end{aligned}
	\end{equation}
	The difference of the two unnormalized sums is
	\begin{equation*}
		\begin{aligned}
			&\bigl((1-t)S_{n,m}^{(i)}+tS_{n,m}\bigr)
			-\bigl((1-t)S_{n,m}^{(i,j)}+tS_{n,m}^{(j)}\bigr)=(1-t)\Delta_{j}^{(i)}+t\Delta_{j}.
		\end{aligned}
	\end{equation*}
	Choose $\mathcal{J}^{\dagger}$ from $\mathcal{I}_{i,j}$.
	For each $t\in[0,1]$, apply
	\Cref{lem-smootheddeletion} with
	\[
	\begin{aligned}
		q&=f_{z}',\qquad
		Z=\bigl(Y_{i}-Y_{i}^{(i)}\bigr)\Gamma_{ij},\quad
		T=(1-t)S_{n,m}^{(i,j)}+tS_{n,m}^{(j)}
		-V_{\mathcal{J}^{\dagger}},\\
		x&=(1-t)\Delta_{j}^{(i)}+t\Delta_{j},
		\qquad y=0.
	\end{aligned}
	\]
	The variable $T$ is $\mathscr{H}$-measurable by the
	decompositions above. The pair $(Z,x)$ depends only
	on the matrices $(g_{k})_{k\in\mathcal{I}_{i,j}}$ and the copies
	at $i,j$, so it is jointly independent of
	$\mathscr{G}_{\mathcal{J}^{\dagger}}$.
	Integrating \eqref{eq-ah-08} in $t$, applying
	\Cref{lem-smootheddeletion} with the above choices,  and then using \eqref{eq-ae-11}, we obtain
\begin{align}\label{eq-mm-05}
	\begin{aligned}
		|I_{4}|
		&\leq\frac{C}{n}
		\biggl\{\frac{1}{\sqrt{n}}
		+\frac{1}{\alpha}\biggl(\frac{m}{n}\biggr)^{2}\biggr\}
		\sum_{i=m}^{m+n-1}
		\sum_{j=i+1}^{\min\{i+m-1,m+n-1\}}
		\mathbb{E}\Bigl[
		\bigl|\bigl(Y_{i}-Y_{i}^{(i)}\bigr)\Gamma_{ij}\bigr|
		\bigl(|\Delta_{j}|+|\Delta_{j}^{(i)}|\bigr)
		\Bigr]\\
		&\leq C m^{1-\delta}
		\biggl\{n^{-1/2}
		+\frac{1}{\alpha}\biggl(\frac{m}{n}\biggr)^{2}\biggr\}.
	\end{aligned}
\end{align}
	
	\smallskip
	\noindent
	\emph{(v) Upper bound for $\E[Uf_{z}(W)].$}
	Since $|U|\leq \alpha$ and $\lVert f_{z}\rVert_{\infty}\leq C$, it follows that
\begin{align}\label{eq-mm-06}
	\frac{1}{s_{m}\sqrt{n}}
\bigl|\mathbb{E}[Uf_{z}(W)]\bigr|
\leq C\frac{\alpha}{\sqrt{n}}.
\end{align}
	
	Substituting the bounds in \eqref{eq-mm-02},
	\eqref{eq-mm-03}, \eqref{eq-mm-04}, \eqref{eq-mm-05},
	and \eqref{eq-mm-06} into \eqref{eq-ah-11},
	and taking the supremum over $z$, we obtain  
	\begin{align}	\label{eq-ah-12}
		\begin{aligned}
			d_{\mathrm{K}}\bigl(
	\mathcal{L}(W),\mathcal{N}(0,1)\bigr)
	&\leq C\biggl\{
	\frac{m^{(1-\delta)/2}+m^{1-\delta}+\alpha}{\sqrt{n}}
	+\frac{m^{1-\delta}}{\alpha}
	\biggl(\frac{m}{n}\biggr)^{2}
	\biggr\}
	\\
	&\leq C\biggl\{
	\frac{m^{1-\delta}}{\sqrt{n}}
	+\biggl(\frac{m}{n}\biggr)^{2}
	\biggr\},
		\end{aligned}
	\end{align}
	where 
	the last inequality uses $m^{1-\delta}\geq1$ and
	$\alpha=L_{*}m^{1-\delta}$, with $L_{*}$ fixed.
	For $m=\lfloor\sqrt{n}\rfloor$,
	\[
	\frac{m^{1-\delta}}{\sqrt{n}}
	\leq n^{-\delta/2},
	\qquad
	\biggl(\frac{m}{n}\biggr)^{2}
	\leq Cn^{-1}
	\leq Cn^{-\delta/2},
	\]
	because  $\delta\leq1$.
	This proves \eqref{eq-ah-10} for all sufficiently
	large $n$; any remaining admissible values of $n$
	are covered by enlarging $C$.
	
	Finally, $-\alpha\leq U\leq \alpha$ implies
	\[
	\mathbb{P}\big(W\leq z-\alpha/(s_{m}\sqrt{n})\big)
	\leq
	\mathbb{P}\bigl(S_{n,m}/(s_{m}\sqrt{n})\leq z\bigr)
	\leq
	\mathbb{P}\big(W\leq z+\alpha/(s_{m}\sqrt{n})\big).
	\]
	Using the bound $1/\sqrt{2\pi}$ on the standard
	normal density, we obtain
	\[
	\begin{aligned}
		d_{\mathrm{K}}\bigl(
		\mathcal{L}(S_{n,m}/(s_{m}\sqrt{n})),\mathcal{N}(0,1)
		\bigr)
		&\leq
		d_{\mathrm{K}}\bigl(
		\mathcal{L}(W),\mathcal{N}(0,1)
		\bigr)
		+\frac{\alpha}{s_{m}\sqrt{n}\sqrt{2\pi}}\leq Cn^{-\delta/2}.
	\end{aligned}
	\]
	This proves the assertion without $U$ as well.
\end{proof}

We next compare the stationary censored sum with the sum of the conditional expectations $X_{k,m}^{[a]}$. 
Define
\[
T_{n,m}^{[a]}
:=\sum_{k=m}^{m+n-1}
H^{[a]}(g_{k},W_{k-1}).
\]
By stationarity, this has the same law as
$\sum_{k=1}^{n}H^{[a]}(g_{k},W_{k-1})$.
The shift of the summation interval retains
exactly $n$ terms.

The following proposition shows that replacing the
stationary censored sum by the sum of the conditional expectations $X_{k,m}^{[a]}$
produces an error of $o(m^{-\delta})$ in Kolmogorov
distance after smoothing by $U$.

\begin{proposition}
	\label{prop-localization}
	For $a=m=\lfloor\sqrt{n}\rfloor$, we have
	\begin{equation}\label{eq-ai-04}
		\begin{aligned}
			&\sup_{z\in\mathbb{R}}
			\Bigl|
			\mathbb{P}\bigl(T_{n,m}^{[a]}+U\leq z\bigr)
			-
			\mathbb{P}\Bigl(
			\sum_{k=m}^{m+n-1}X_{k,m}^{[a]}+U\leq z
			\Bigr)
			\Bigr|\leq
			Cm\delta_{1,\infty}(m)
			=o(m^{-\delta}),
		\end{aligned}
	\end{equation}
		where  $C>0$ depends only on
	$\mu$, $\delta$, $d$, and the fixed norm.
\end{proposition}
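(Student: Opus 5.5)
The plan is to compare the two sums through their difference and absorb that difference with the smoothed perturbation bound of \Cref{cor-cdfcomparison}. The variance normalisation $1/\sqrt{H^{\dagger}}\asymp n^{-1/2}$ in that corollary, multiplied by an $L^{1}$ bound of order $n\,\delta_{1,\infty}(m)$ for the difference, gives exactly $\sqrt{n}\,\delta_{1,\infty}(m)\asymp m\,\delta_{1,\infty}(m)$ when $m=\lfloor\sqrt n\rfloor$. The technical work is to arrange the measurability and independence hypotheses of \Cref{lem-smootheddeletion}.

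\textbf{Step 1: the pieces and their $L^1$ size.} Write
\[
D_k:=H^{[a]}(g_k,W_{k-1})-X_{k,m}^{[a]}=\int_X\bigl\{H^{[a]}(g_k,g_{k-1}\cdots g_{k-m+1}W_{k-m})-H^{[a]}(g_k,g_{k-1}\cdots g_{k-m+1}y)\bigr\}\,\nu(\mathrm dy).
\]
By \eqref{eq-ai-03}, $\mathbb E|D_k|\le\delta_{1,\infty}(m)$. Split $\{m,\dots,m+n-1\}$ at $q:=m+\lfloor n/2\rfloor$ and put $D^{\mathrm L}:=\sum_{k<q}D_k$ and $D^{\mathrm R}:=\sum_{k\ge q}D_k$. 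Each has $\mathbb E|\cdot|\le n\,\delta_{1,\infty}(m)$. Each $D_k$ is a function of $W_{k-m}$ and $g_{k-m+1},\dots,g_k$.

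\textbf{Step 2: removing $D^{\mathrm L}$ with a right window.} Let $Q:=\sum_{k<q}X_{k,m}^{[a]}+\sum_{k\ge q}H^{[a]}(g_k,W_{k-1})$. Then $T_{n,m}^{[a]}=Q+D^{\mathrm L}$, so $D^{\mathrm L}$ is the only difference between $T^{[a]}_{n,m}$ and $Q$.

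Choose the window $\mathcal J^{\dagger}$ of length $\lfloor n/4\rfloor$ inside the right half, with its $m$-neighbourhood also inside the right half; this is possible because $n\ge512m$. Then $D^{\mathrm L}$ depends only on $W_0$ and on $g_k$ with $k<q$. Hence it is $\mathscr H$-measurable and independent of $\mathscr G_{\mathcal J^{\dagger}}$.

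To put $Q$ in the form $V_{\mathcal J^{\dagger}}+T$, I would not work with $Q$ itself. Instead I would compare $X$-sums, using the symmetric form of the same identity: first pass from $\sum_k X_{k,m}^{[a]}$ to $\sum_k Y_k$ via \eqref{eq-ad-06}, which changes things only by bounded $\psi_m^{[a]}$ terms at the two ends. Then $\sum_kY_k=V_{\mathcal J^{\dagger}}+(\text{an }\mathscr H\text{-measurable remainder})$, exactly as in the decomposition used in the proof of \Cref{prop-Stein}. With this, \Cref{cor-cdfcomparison} bounds the Kolmogorov change caused by $D^{\mathrm L}$ by
\[
C\bigl\{n^{-1/2}+\alpha^{-1}(m/n)^{2}\bigr\}\,n\,\delta_{1,\infty}(m)\le C\,m\,\delta_{1,\infty}(m).
\]

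\textbf{Step 3: removing $D^{\mathrm R}$, the main obstacle.} Here one would like a window in the left half, but $D^{\mathrm R}$ depends on every earlier matrix through $W_{q-m}$. So I would first pass to a representation in which the right-half terms see the past only through an independent $\nu$-distributed state.

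Concretely, the right-half sum is a function of $(W_{q-m},g_{q-m+1},g_{q-m+2},\dots)$, and $W_{q-m}$ has law $\nu$ independently of $g_{q-m+1},g_{q-m+2},\dots$. I would take the left window $\mathcal J^{\dagger}$ from the $X$/$Y$ part, which after Step 2 involves only $g$'s and no $W$. I would then regard $W_{q-m}$ as the external variable entering through $\mathscr R^{\dagger}$, arranging that the quantity being perturbed uses a copy of $W_{q-m}$ independent of $g_{\mathcal J^{\dagger}}$. With that arrangement, $D^{\mathrm R}$ satisfies the hypotheses of \Cref{lem-smootheddeletion}, and the same estimate $C\,m\,\delta_{1,\infty}(m)$ follows.

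The delicate point is to do this substitution so that the law of the full left-plus-right sum is unchanged. The aim is that, after Step 2, the left part no longer involves $W$ at all, so the substitution is a pure equality in law. This is where I expect the difficulty. If an equality in law cannot be arranged, the fallback is to estimate the coupling error by $\sum_{r}\delta_{1,\infty}(r)$. That would add a term of order $n^{-1/2}$, which is still $o(m^{-\delta})$ but would not give the stated bound $C\,m\,\delta_{1,\infty}(m)$ exactly.

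\textbf{Step 4: the little-$o$.} Finally, $m^{1+\delta}\delta_{1,\infty}(m)\le C\sum_{r=\lceil m/2\rceil}^{m}r^{\delta}\delta_{1,\infty}(r)$, as in \eqref{eq-vv-10}. The right-hand side is the tail of the convergent series in \Cref{lem-1} with $q=2+\delta$ and $p=1$, so it tends to $0$. Hence $m\,\delta_{1,\infty}(m)=o(m^{-\delta})$.
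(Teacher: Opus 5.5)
Your Step 2 fails, and reordering the steps does not rescue it. In Step 2 you compare $T^{[a]}_{n,m}$ with $Q$. Both still contain the right-half terms $H^{[a]}(g_k,W_{k-1})$, $k\ge q$, and that is exactly where you place $\mathcal J^{\dagger}$. Every such term with $k$ in or after $\mathcal J^{\dagger}$ depends on $(g_j)_{j\in\mathcal J^{\dagger}}$, either through $g_k$ or through $W_{k-1}$. So $Q-V_{\mathcal J^{\dagger}}$ is not $\mathscr H$-measurable, and \Cref{cor-cdfcomparison} does not apply. Passing to $\sum_k Y_k$ via \eqref{eq-ad-06} does not help, because that identity only rewrites sums of $X^{[a]}_{k,m}$, and $Q$ is not such a sum on the right half.

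Suppose you instead read Step 2 as comparing $\sum_k X^{[a]}_{k,m}+D^{\mathrm L}$ with $\sum_k X^{[a]}_{k,m}$. That comparison is legitimate, but then Step 3 must start from $T^{[a]}_{n,m}$ itself, whose left half is still of $H$-form. A left window then fails for the same reason, and a right window fails because $D^{\mathrm R}$ lives there.

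The general constraint is this. A window can only sit in a stretch where the base already consists of the local terms $X^{[a]}_{k,m}$, and where no later summand still sees the window matrices. The all-$H$ sum $T^{[a]}_{n,m}$ has no such stretch. Your coupling $W_{q-m}\mapsto W'$ does not create one, since the coupled right-half terms are still nonlocal inside the right half. Hence the first conversion, of whichever half, must be done by smoothing with $U$ alone. That costs of order $n\,\delta_{1,\infty}(m)/\alpha\asymp m^{1+\delta}\delta_{1,\infty}(m)$, a factor $m^{\delta}$ above the claimed bound, and it does not even give $o(m^{-\delta})$.

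The missing idea is the paper's right-to-left sweep over many short blocks. Cut $\{m,\dots,m+n-1\}$ into $\kappa\asymp m^{\delta}$ blocks (the paper's $q$) of length $h\asymp m^{2-\delta}$. Interpolate through hybrids $T_r$ that use $H$-terms up to $b_r$ and $X$-terms afterwards, going from $T_\kappa=T^{[a]}_{n,m}$ down to $T_0=\sum_k X^{[a]}_{k,m}$.

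Only the rightmost block is removed by $U$ alone. Its cost is $h\,\delta_{1,\infty}(m)/(2\alpha)\le Cm\,\delta_{1,\infty}(m)$, and this is exactly why $h$ is chosen with $h/\alpha\lesssim m$. Every other block $r$ is removed with a window of length $H_r\gtrsim(\kappa-r)h$ placed in the already converted $X$-region to its right. There $T_r-V_{\mathcal J^{\dagger}}$ is $\mathscr H$-measurable. The perturbation $D_r$ is $\mathscr F_{b_r}$-measurable, hence independent of $\mathscr G_{\mathcal J^{\dagger}}$, so no coupling of $W_{q-m}$ is ever needed.

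Summing, $h\sum_{j\ge1}(jh)^{-1/2}\le 2\sqrt{\kappa h}\lesssim\sqrt n\asymp m$, and the $(m/H_r)^2$ terms are harmless. This yields $Cm\,\delta_{1,\infty}(m)$. Your Steps 1 and 4 are correct as stated.
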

\begin{proof}
	Fix $K_{1}=2048$, and put
\begin{align}\label{eq-ub-03}
		q:=\biggl\lceil\frac{m^{\delta}}{K_{1}}\biggr\rceil,
	\qquad
	h:=\biggl\lceil\frac{n}{q}\biggr\rceil,
	\qquad
	b_{r}:=m-1+\biggl\lfloor\frac{rn}{q}\biggr\rfloor
	\quad(0\leq r\leq q).
\end{align}
	The intervals
$
	\mathcal{I}_{r}:=\{b_{r-1}+1,\ldots,b_{r}\},$
 $1\leq r\leq q,
$
	partition $\{m,\ldots,m+n-1\}$, and each has length
	$h$ or $h-1$.
	We  assume that $m$ is sufficiently large and
	$m^{\delta}\geq K_{1}$. Then
	$m^{\delta}/K_{1}\leq q\leq2m^{\delta}/K_{1}$.
	Since $m^{2}\leq n<(m+1)^{2}\leq4m^{2}$, we obtain
	\begin{equation}\label{eq-ai-05}
		\frac{K_{1}}{2}m^{2-\delta}
		\leq h\leq5K_{1}m^{2-\delta},
		\qquad n\leq qh\leq2n.
	\end{equation}
	In particular, $h\geq1024m$.	For $0\leq r\leq q$, define
	\[
	T_{r}
	:=
	\sum_{k=m}^{b_{r}}
	H^{[a]}(g_{k},W_{k-1})
	+\sum_{k=b_{r}+1}^{b_{q}}X_{k,m}^{[a]},
	\]
	where empty sums are zero. Thus
$
	T_{q}=T_{n,m}^{[a]},$ and
$
	T_{0}=\sum_{k=m}^{m+n-1}X_{k,m}^{[a]}.$
	For $1\leq r\leq q$, put
	\[
	D_{r}:=T_{r}-T_{r-1}
	=\sum_{k\in\mathcal{I}_{r}}
	\bigl\{
	H^{[a]}(g_{k},W_{k-1})-X_{k,m}^{[a]}
	\bigr\}.
	\]
	The random variable $D_{r}$ is measurable with respect to
	$\mathscr{F}_{b_{r}}
	=\sigma(W_{0},g_1,\ldots,g_{b_{r}})$. Moreover, recalling that 
$
		\mathbb{E}\lvert X^{[a]}_{k,m}-H^{[a]}(g_{k},W_{k-1})|\leq  \delta_{1,\infty}(m)
$
by \eqref{eq-ai-03},
we have
	\[
	\mathbb{E}|D_{r}|
	\leq |\mathcal{I}_{r}|\delta_{1,\infty}(m)
	\leq h\delta_{1,\infty}(m).
	\]
In what follows,	we compare $T_{q},T_{q-1},\ldots,T_{0}$ in this order,
	using the same independent $U$ throughout.
	
	For the first comparison, conditioning on $W_{0}$
	and the sequence $(g_{k})_{k\geq1}$ leaves only $U$ random.
	Since its density is bounded by $1/(2\alpha)$,
	\begin{align}\label{eq-ai-last-step}
		&\sup_{z\in\mathbb{R}}
		\bigl|
		\mathbb{P}(T_{q}+U\leq z)
		-\mathbb{P}(T_{q-1}+U\leq z)
		\bigr|\leq
		\frac{\mathbb{E}|D_{q}|}{2\alpha}
		\leq\frac{h\delta_{1,\infty}(m)}{2\alpha}.
	\end{align}
	
	Now fix $1\leq r<q$, and set
$
	\ell_{r}:=b_{q}-b_{r},$
$H_{r}:=\lfloor\ell_{r}/2\rfloor.$
	Since
	$(q-r)(h-1)\leq\ell_{r}\leq(q-r)h$
	and $h\geq1024m$, we have
	\[
	H_{r}\geq64m,
	\qquad H_{r}+2(m-1)\leq\ell_{r},
	\qquad H_{r}\geq\frac{(q-r)h}{4}.
	\]
	To apply \Cref{cor-cdfcomparison}, take
$
	H^{\dagger}=H_{r},$
$	\mathcal{J}^{\dagger}
	=\{b_{r}+m,\ldots,b_{r}+m+H_{r}-1\},$
	and let $\mathscr{R}^{\dagger}$ be trivial.
	Use $\mathscr{G}_{\mathcal{J}^{\dagger}}$,
	$V_{\mathcal{J}^{\dagger}}$, and $\mathscr{H}$
	as in \Cref{lem-smootheddeletion} for this choice.
	The two boundary intervals are
$
	\{b_{r}+1,\ldots,b_{r}+m-1\},$
and
$	\{b_{r}+m+H_{r},\ldots,b_{r}+H_{r}+2m-2\}.
$
	Thus the center and both boundary intervals lie in
	$\{b_{r}+1,\ldots,b_{q}\}$, and the matrices $g_{b_{r}+1},\ldots,g_{b_{r}+H_{r}+2m-2}$
	are independent of $\mathscr{F}_{b_{r}}$.
	
	With a similar argument as that leading to \eqref{eq-af-03}, we have
	\[
	V_{\mathcal{J}^{\dagger}}
	=\sum_{k=b_{r}+m}^{b_{r}+H_{r}+2m-2}X_{k,m}^{[a]}
	-\mathbb{E}\biggl[
	\sum_{k=b_{r}+m}^{b_{r}+H_{r}+2m-2}X_{k,m}^{[a]}
	\,\biggm|\,\mathscr{G}_{\mathcal{J}^{\dagger}}
	\biggr].
	\]
 Consequently, we have
	\[
	\begin{aligned}
		T_{r}-V_{\mathcal{J}^{\dagger}}
		&=\sum_{k=m}^{b_{r}}
		H^{[a]}(g_{k},W_{k-1})
		+\sum_{k=b_{r}+1}^{b_{r}+m-1}X_{k,m}^{[a]}\\
		&\quad+\sum_{k=b_{r}+H_{r}+2m-1}^{b_{q}}X_{k,m}^{[a]}+\mathbb{E}\biggl[
		\sum_{k=b_{r}+m}^{b_{r}+H_{r}+2m-2}X_{k,m}^{[a]}
		\,\biggm|\,\mathscr{G}_{\mathcal{J}^{\dagger}}
		\biggr].
	\end{aligned}
	\]
Every term on the right is $\mathscr{H}$-measurable:
the first two sums depend only on $W_0$ and
$g_1,\ldots,g_{b_r+m-1}$;
the third sum depends only on
$g_{b_r+H_r+m},\ldots,g_{b_q}$;
and the last term is
$\mathscr{G}_{\mathcal{J}^{\dagger}}$-measurable.
	Moreover, $D_{r}$ is $\mathscr{F}_{b_{r}}$-measurable,
	and hence is $\mathscr{H}$-measurable and independent
	of $\mathscr{G}_{\mathcal{J}^{\dagger}}$.
	Therefore,  applying \Cref{cor-cdfcomparison} with
$
	T=T_{r}-V_{\mathcal{J}^{\dagger}},$
$D=D_{r},$
we have
	\begin{align}\label{eq-ai-intermediate-step}
		&\sup_{z\in\mathbb{R}}
		\bigl|
		\mathbb{P}(T_{r}+U\leq z)
		-\mathbb{P}(T_{r-1}+U\leq z)
		\bigr|\leq
		Ch\delta_{1,\infty}(m)
		\biggl\{
		H_{r}^{-1/2}
		+\frac{1}{\alpha}\biggl(\frac{m}{H_{r}}\biggr)^{2}
		\biggr\}.
	\end{align}
Applying the triangle inequality and using
\eqref{eq-ai-last-step}, \eqref{eq-ai-intermediate-step},
and $H_{r}\geq(q-r)h/4$ for $1\leq r<q$, we obtain
	\begin{equation}\label{eq-ai-06}
		\begin{aligned}
			&\sup_{z\in\mathbb{R}}
			\bigl|
			\mathbb{P}(T_{q}+U\leq z)
			-\mathbb{P}(T_{0}+U\leq z)
			\bigr|\\
			&\quad\leq
			Ch\delta_{1,\infty}(m)
			\biggl\{
			\frac{1}{\alpha}
			+\sum_{j=1}^{q-1}(jh)^{-1/2}
			+\frac{1}{\alpha}\sum_{j=1}^{q-1}
			\biggl(\frac{m}{jh}\biggr)^{2}
			\biggr\}.
		\end{aligned}
	\end{equation}
	Recall that $\alpha=L_{*}m^{1-\delta}$.
	By \eqref{eq-ai-05}, $h\geq m$, and
	$\alpha\geq1$,  we have  $h/\alpha\leq Cm,$
	\[
	\begin{aligned}
		h\sum_{j=1}^{q-1}(jh)^{-1/2}
		&\leq2\sqrt{qh}\leq C\sqrt{n}\leq Cm,\quad
		\frac{h}{\alpha}\sum_{j=1}^{q-1}
		\biggl(\frac{m}{jh}\biggr)^{2}
		\leq\frac{C}{\alpha}m^{2}h^{-1}
		\leq Cm.
	\end{aligned}
	\]
	This proves the first inequality in \eqref{eq-ai-04}
	for sufficiently large $m$.
	For the remaining finitely many values, smoothing
	by $U$ alone bounds the left-hand side by
	$n\delta_{1,\infty}(m)/(2\alpha)$, so the same
	inequality holds after enlarging $C$.
	
	Finally, 
	since $\sum_{k=1}^{\infty}k^{\delta}\delta_{1,\infty}(k)<\infty$ and $\delta_{1,\infty}(k)$ is nonincreasing,  it follows that
	\[
	m^{1+\delta}\delta_{1,\infty}(m)
	\leq C\sum_{k=\lceil m/2\rceil}^{m}
	k^{\delta}\delta_{1,\infty}(k)
	\longrightarrow0\quad \text{as } m\to\infty.  
	\]
	Thus $m\delta_{1,\infty}(m)=o(m^{-\delta})$,
	which completes the proof.
\end{proof}

The following proposition gives a normal approximation for the censored
stationary sum  with rate $n^{-\delta/2}$. 
\begin{proposition}
	\label{prop-stationaryclipped}
	For $a=m=\lfloor\sqrt{n}\rfloor$ and all sufficiently large $n$, we have
	\begin{equation}\label{eq-ai-07}
		d_{\mathrm{K}}\biggl(
		\mathcal{L}\biggl(
		\frac{\sum_{k=1}^{n}
			H^{[a]}(g_{k},W_{k-1})}{s\sqrt{n}}
		\biggr),
		\mathcal{N}(0,1)
		\biggr)
		\leq Cn^{-\delta/2}.
	\end{equation}
	The same bound holds if $U$ is added to the numerator.
\end{proposition}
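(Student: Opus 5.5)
We chain three comparisons, all carried out at the smoothed level with the common uniform variable $U$, and remove $U$ at the end.

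\emph{Step 1 (stationarity).} By stationarity, $\sum_{k=1}^{n}H^{[a]}(g_k,W_{k-1})$ has the same law as $T_{n,m}^{[a]}$. It therefore suffices to treat $T_{n,m}^{[a]}+U$.

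\emph{Step 2 (localization).} By \Cref{prop-localization}, the Kolmogorov distance between $T_{n,m}^{[a]}+U$ and $\sum_{k=m}^{m+n-1}X_{k,m}^{[a]}+U$ is $o(m^{-\delta})=o(n^{-\delta/2})$.

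\emph{Step 3 (martingale correction).} By \eqref{eq-ad-06} with $j=m$,
\[
\sum_{k=m}^{m+n-1}X_{k,m}^{[a]}=S_{n,m}-\psi_m^{[a]}(U_{m+n-1})+\psi_m^{[a]}(U_{m-1}),
\]
and the correction is bounded in absolute value by $2\sup\|\psi_m^{[a]}\|_\infty\le C$, by \Cref{lem-finiteG}(iii). A deterministically bounded perturbation of size $C$ changes Kolmogorov distances to a law with bounded density by at most $C/\sqrt n$ after normalization. So I sandwich $\mathbb{P}(\sum X+U\le z)$ between $\mathbb{P}(S_{n,m}+U\le z\pm C)$. \Cref{prop-Stein} gives $d_{\mathrm K}\le Cn^{-\delta/2}$ for $(S_{n,m}+U)/(s_m\sqrt n)$ against $\Phi$. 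The shifts by $\pm C/(s_m\sqrt n)$ cost $O(n^{-1/2})$ through the Lipschitz property of $\Phi$. This yields
\[
d_{\mathrm K}\Bigl(\mathcal L\bigl((T_{n,m}^{[a]}+U)/(s_m\sqrt n)\bigr),\mathcal N(0,1)\Bigr)\le Cn^{-\delta/2}.
\]

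\emph{Step 4 (change of normalization).} We pass from $s_m$ to $s$. By \Cref{prop-variance} with $a=m=\lfloor\sqrt n\rfloor$, $|s_m^2-s^2|\le Cm^{-\delta}\le Cn^{-\delta/2}$, and hence $|s_m/s-1|\le Cn^{-\delta/2}$ because $s>0$. The standard scaling estimate $\sup_z|\Phi(\lambda z)-\Phi(z)|\le C|\lambda-1|$ for $\lambda$ near $1$ then converts the bound. This gives the statement with $U$ in the numerator.

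\emph{Step 5 (removing $U$).} Since $|U|\le\alpha=L_*m^{1-\delta}$, monotone sandwiching $\mathbb{P}(T+U\le z-\alpha)\le\mathbb{P}(T\le z)\le\mathbb{P}(T+U\le z+\alpha)$ applies, as at the end of the proof of \Cref{prop-Stein}. Combined with the normal density bound, this costs $C\alpha/(s\sqrt n)\le Cm^{1-\delta}/\sqrt n\le Cn^{-\delta/2}$, so \eqref{eq-ai-07} holds without $U$ as well.

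\emph{Main obstacle.} The only delicate point is checking that every error is of order $n^{-\delta/2}$ with the choice $a=m=\lfloor\sqrt n\rfloor$. The $\psi$-correction and the variance mismatch are the terms to watch. Both are harmless because $\psi_m^{[a]}$ is uniformly bounded in $a$ and $m$, and because $m^{-\delta}\asymp n^{-\delta/2}$. Everything substantive has already been absorbed into \Cref{prop-Stein} and \Cref{prop-localization}.
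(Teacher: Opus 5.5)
Your proposal is correct and is essentially the paper's proof. The paper likewise sandwiches $\sum_{k}X_{k,m}^{[a]}+U$ between shifts of $S_{n,m}+U$ using the bounded $\psi_m^{[a]}$-correction and \Cref{prop-Stein}, then uses \Cref{prop-localization}, stationarity, \Cref{prop-variance} to pass from $s_m$ to $s$, and $|U|\le\alpha$ to remove the smoothing. The only difference is the order in which you chain these comparisons, which does not matter since they are combined by the triangle inequality.
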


\begin{proof}
	The Gordin decomposition of the approximating sequence in \eqref{eq-ad-01} gives
	\[
	S_{n,m}-\sum_{k=m}^{m+n-1}X_{k,m}^{[a]}
	=
	\psi_{m}^{[a]}(U_{m+n-1})
	-\psi_{m}^{[a]}(U_{m-1}).
	\]
By \eqref{eq-ad-04}, there exists a constant $C>0$,
independent of $n$, $a$, and $m$, such that, for every
$z\in\mathbb{R}$,
	\[
	\begin{aligned}
		\mathbb{P}(S_{n,m}+U\leq z-C)
		&\leq
		\mathbb{P}\biggl(
		\sum_{k=m}^{m+n-1}X_{k,m}^{[a]}+U\leq z
		\biggr)
		\leq
		\mathbb{P}(S_{n,m}+U\leq z+C).
	\end{aligned}
	\]
	Combining this inequality and
 \Cref{prop-Stein}, we have
	\[
	\begin{aligned}
		&d_{\mathrm{K}}\biggl(
		\mathcal{L}\biggl(
		\frac{\sum_{k=m}^{m+n-1}X_{k,m}^{[a]}+U}
		{s_{m}\sqrt{n}}
		\biggr),\mathcal{N}(0,1)
		\biggr)\leq
		Cn^{-\delta/2}
		+\frac{C}{s_{m}\sqrt{2\pi n}}
		\leq Cn^{-\delta/2}.
	\end{aligned}
	\]
By \Cref{prop-localization}, replacing
$\sum_{k=m}^{m+n-1}X_{k,m}^{[a]}$ by $T_{n,m}^{[a]}$
in the preceding inequality introduces an error of at most
$
Cm\delta_{1,\infty}(m)
=o(m^{-\delta})
=o(n^{-\delta/2}).
$
	Stationarity then gives
\begin{align}\label{eq-ub-01}
	d_{\mathrm{K}}\biggl(
\mathcal{L}\biggl(
\frac{\sum_{k=1}^{n}
	H^{[a]}(g_{k},W_{k-1})+U}
{s_{m}\sqrt{n}}
\biggr),\mathcal{N}(0,1)
\biggr)
\leq Cn^{-\delta/2}.
\end{align}
	It remains to replace $s_{m}$ by $s$.
	By \eqref{eq-ad-09}, we have
	\[
	|s_{m}^{2}-s^{2}|\leq Cm^{-\delta},
	\qquad
	\Bigl|\frac{s_{m}}{s}-1\Bigr|
	\leq Cm^{-\delta},
	\]
	where we use the positive lower bounds on
	$s_{m}$ and $s$.
	Consequently, we have
\begin{align}\label{eq-ub-02}
	\sup_{z\in\mathbb{R}}
	|\Phi(zs/s_{m})-\Phi(z)|
	\leq Cm^{-\delta}\leq Cn^{-\delta/2}.
\end{align}
Combining \eqref{eq-ub-01} and \eqref{eq-ub-02}, we have
	\[
	d_{\mathrm{K}}\biggl(
	\mathcal{L}\biggl(
	\frac{\sum_{k=1}^{n}
		H^{[a]}(g_{k},W_{k-1})+U}
	{s\sqrt{n}}
	\biggr),\mathcal{N}(0,1)
	\biggr)
	\leq Cn^{-\delta/2}.
	\]
Finally, using $|U|\leq\alpha$ and arguing as in
\Cref{prop-Stein}, we remove $U$ with an additional
error of at most
$\alpha/(s\sqrt{2\pi n})=O(n^{-\delta/2})$.
	This proves \eqref{eq-ai-07}.
\end{proof}

Now, we are ready to prove \eqref{eq-tt-01}.
\begin{proof}[Proof of \eqref{eq-tt-01}]
Let 	$S_{n}^{[a]}
=\sum_{k=1}^{n}
H^{[a]}(g_{k},W_{k-1})$.  By \Cref{prop-stationaryclipped}, it suffices to prove the following bound:
\begin{align}\label{eq-uu-01}
				&d_{\mathrm{K}}\biggl(
	\mathcal{L}\biggl(\frac{S_{n}}{s\sqrt{n}}\biggr),
	\mathcal{N}(0,1)\biggr)\leq
	d_{\mathrm{K}}\biggl(
	\mathcal{L}\biggl(\frac{S_{n}^{[a]}}{s\sqrt{n}}\biggr),
	\mathcal{N}(0,1)\biggr)
	+Cn^{-\delta/2}.
\end{align}
	Define
$
\mathcal{E}_{n,a}
:=\bigcap_{k=1}^{n}\{L(g_{k})\leq a\}.
$ Then on $\mathcal{E}_{n,a}$, we have $	S_{n}=S_{n}^{[a]}+nc^{[a]},$ where we recall that $c^{[a]}$ is defined by \eqref{eq-ll-01}.  Therefore, for any fixed $z\in \R$, we have 
\begin{align}\label{eq-uu-02}
	\begin{aligned}
		&\big|
		\mathbb{P}\big(S_{n}/(s\sqrt{n})\leq z\big)
		-\Phi(z)
		\big|\\
		&\quad\leq
		\mathbb{P}(\mathcal{E}_{n,a}^{c})+
		\Big|
		\mathbb{P}\big(
		S_{n}^{[a]}/(s\sqrt{n})
		\leq z-\sqrt{n}\,c^{[a]}/s
		\big)
		-
		\Phi\big(z-\sqrt{n}\,c^{[a]}/s\big)
		\Big|\\
		&\quad\quad+
		\big|
		\Phi\big(z-\sqrt{n}\,c^{[a]}/s\big)
		-\Phi(z)
		\big|.
	\end{aligned}
\end{align}
By the union bound and Markov's inequality, we have $\mathbb{P}(\mathcal{E}_{n,a}^{c})\leq Cn^{-\delta/2}.$ For the third term on the right-hand side of \eqref{eq-uu-02}, using $|\Phi(x)-\Phi(y)|\leq |x-y|$,  \eqref{eq-ac-07} and $a\asymp\sqrt{n}$ , we have 	$|
\Phi\big(z-\sqrt{n}\,c^{[a]}/s\big)
-\Phi(z)
|\leq Cn^{-\delta/2}. $ This proves \eqref{eq-uu-01}.
\end{proof}

\subsection{Proof of  \protect\eqref{eq-tt-02}}\label{sec-5.2}
For any fixed $x\in X$, define
\[
S_{n,x}^{[a]}=\sum_{k=1}^{n}H^{[a]}(g_k,A_{k-1}x).
\]
On $\mathcal E_{n,a}$, we have $S_{n,x}=S_{n,x}^{[a]}+nc^{[a]}$.
Thus, by the same argument as in \eqref{eq-uu-02}, it suffices to prove   
\begin{align}\label{eq-uu-03}
	\sup_{x\in X}
	d_{\mathrm{K}}\Big(
	\mathcal{L}\Big(
	S_{n,x}^{[a]}/(s\sqrt{n})
	\Big),\mathcal{N}(0,1)
	\Big)
	&\leq Cn^{-\delta/2}.
\end{align}
For $x\in X$, define
\[
\widetilde{S}_{n,m}^{[a]}(x)
:=
\sum_{k=1}^{m}H^{[a]}(g_{k},A_{k-1}x)
+\sum_{k=m+1}^{n}X_{k,m}^{[a]}.
\]
Using $|U|\leq\alpha$, $\alpha/\sqrt{n}\leq Cn^{-\delta/2}$,
the triangle inequality, and \Cref{prop-stationaryclipped}, we have 
\begin{align}\label{eq-uc-01}
		\sup_{x\in X}
	d_{\mathrm{K}}\Big(
	\mathcal{L}\Big(
	S_{n,x}^{[a]}/(s\sqrt{n})
	\Big),\mathcal{N}(0,1)
	\Big)\leq I_{1}+I_{2}+I_{3}+Cn^{-\delta/2},\end{align}
where
\begin{align*}
	I_{1}&=\sup_{x\in X}\sup_{z\in\mathbb{R}}
	\bigl|
	\mathbb{P}(S_{n,x}^{[a]}+U\leq z)
	-\mathbb{P}(\widetilde{S}_{n,m}^{[a]}(x)+U\leq z)
	\bigr|,\\
	I_{2}&=\sup_{x\in X}\sup_{z\in\mathbb{R}}
	\bigl|
	\mathbb{P}(\widetilde{S}_{n,m}^{[a]}(x)+U\leq z)
	-\mathbb{P}(\widetilde{S}_{n,m}^{[a]}(W_{0})+U\leq z)
	\bigr|,\\
	I_{3}&=\sup_{z\in\mathbb{R}}
	\bigl|
	\mathbb{P}(S_{n}^{[a]}+U\leq z)
	-\mathbb{P}(\widetilde{S}_{n,m}^{[a]}(W_{0})+U\leq z)
	\bigr|.
\end{align*}

We next apply \Cref{cor-cdfcomparison} to derive upper bounds for
$I_{1}$, $I_{2}$, and $I_{3}$ separately.

\smallskip
\noindent
\emph{(i) Upper bound for $|I_{1}|.$}
For $k>m$, the state $A_{k-m}x$ is independent of
$g_{k-m+1},\ldots,g_{k}$. With a similar argument as that leading to 
\eqref{eq-ai-03}, we have
\[
\sup_{x\in X}\mathbb{E}\bigl|
H^{[a]}(g_{k},A_{k-1}x)-X_{k,m}^{[a]}
\bigr|
\leq\delta_{1,\infty}(m).
\]
As in the proof of \Cref{prop-localization}, we replace
$H^{[a]}(g_{k},A_{k-1}x)$ by $X_{k,m}^{[a]}$
for $k=m+1,\ldots,n$, block by block from right to left, while
the first $m$ terms remain unchanged.
Use the same $q$ as in \eqref{eq-ub-03}, and set
\[
h':=\biggl\lceil\frac{n-m}{q}\biggr\rceil,
\qquad
b'_{r}:=m+\biggl\lfloor\frac{r(n-m)}{q}\biggr\rfloor
\quad(0\leq r\leq q).
\]
The intervals
$\mathcal{I}'_{r}
:=\{b'_{r-1}+1,\ldots,b'_{r}\}$,
$1\leq r\leq q$, partition $\{m+1,\ldots,n\}$, and each has length
$h'$ or $h'-1$.
Using $n-m\geq m^{2}/2$, the same calculation as in
\Cref{prop-localization} gives, for sufficiently large $m$,
$
h'\geq512m,$
$ h'/\alpha\leq Cm,$
$ qh'\leq2n.$
For $0\leq r\leq q$, put
\[
T'_{r}(x)
:=\sum_{k=1}^{b'_{r}}H^{[a]}(g_{k},A_{k-1}x)
+\sum_{k=b'_{r}+1}^{n}X_{k,m}^{[a]}.
\]
Then $T'_{q}(x)=S_{n,x}^{[a]}$ and
$T'_{0}(x)=\widetilde{S}_{n,m}^{[a]}(x)$.
For $1\leq r\leq q$, the difference
\[
D'_{r}(x):=T'_{r}(x)-T'_{r-1}(x)
=\sum_{k\in\mathcal{I}'_{r}}
\bigl\{
H^{[a]}(g_{k},A_{k-1}x)-X_{k,m}^{[a]}
\bigr\}
\]
is $\mathscr{F}_{b'_{r}}$-measurable, where
$\mathscr{F}_{j}=\sigma(W_{0},g_1,\ldots,g_{j})$,
and satisfies
$
\sup_{x\in X}\mathbb{E}|D'_{r}(x)|
\leq h'\delta_{1,\infty}(m).
$
As in the comparison of $T_{q}+U$ and $T_{q-1}+U$
in the proof of \Cref{prop-localization}, we have
\begin{equation}\label{eq-va-block-last}
	\sup_{x\in X}\sup_{z\in\mathbb{R}}
	\bigl|
	\mathbb{P}(T'_{q}(x)+U\leq z)
	-\mathbb{P}(T'_{q-1}(x)+U\leq z)
	\bigr|
	\leq\frac{h'\delta_{1,\infty}(m)}{2\alpha}.
\end{equation}
For $1\leq r<q$, let
$H'_{r}:=\lfloor(n-b'_{r})/2\rfloor$.
The bounds on $h'$ imply
\[
H'_{r}\geq64m,
\qquad H'_{r}+2(m-1)\leq n-b'_{r},
\qquad H'_{r}\geq\frac{(q-r)h'}{4}.
\]
We apply \Cref{cor-cdfcomparison} with
$
H^{\dagger}=H'_{r},$
$\mathcal{J}^{\dagger}
=\{b'_{r}+m,\ldots,b'_{r}+m+H'_{r}-1\},$
and with $\mathscr{R}^{\dagger}$ trivial.
For this choice, use $\mathscr{G}_{\mathcal{J}^{\dagger}}$,
$V_{\mathcal{J}^{\dagger}}$, and $\mathscr{H}$
as in \Cref{lem-smootheddeletion}.
The center and both boundary intervals lie in
$\{b'_{r}+1,\ldots,n\}$, so the matrices $g_{b'_{r}+1},\ldots,g_{b'_{r}+H'_{r}+2m-2}$ are
independent of $\mathscr{F}_{b'_{r}}$.
The  identity \eqref{eq-af-03} gives
\[
\begin{aligned}
	T'_{r}(x)-V_{\mathcal{J}^{\dagger}}
	&=\sum_{k=1}^{b'_{r}}H^{[a]}(g_{k},A_{k-1}x)
	+\sum_{k=b'_{r}+1}^{b'_{r}+m-1}X_{k,m}^{[a]}\\
	&\quad+\sum_{k=b'_{r}+H'_{r}+2m-1}^{n}X_{k,m}^{[a]}+\mathbb{E}\biggl[
	\sum_{k=b'_{r}+m}^{b'_{r}+H'_{r}+2m-2}X_{k,m}^{[a]}
	\,\biggm|\,\mathscr{G}_{\mathcal{J}^{\dagger}}
	\biggr].
\end{aligned}
\]
For fixed $x$, the first two sums depend only on
$g_1,\ldots,g_{b'_r+m-1}$, and the third sum depends only on
$g_{b'_r+H'_r+m},\ldots,g_n$. The last term is
$\mathscr{G}_{\mathcal{J}^{\dagger}}$-measurable,
hence $\mathscr{H}$-measurable.
Thus $T'_{r}(x)-V_{\mathcal{J}^{\dagger}}$ is
$\mathscr{H}$-measurable.
Moreover, $D'_{r}(x)$ is $\mathscr{H}$-measurable and
independent of $\mathscr{G}_{\mathcal{J}^{\dagger}}$.
Applying  \Cref{cor-cdfcomparison} with
$
T=T'_{r}(x)-V_{\mathcal{J}^{\dagger}},$ $D=D'_{r}(x),$
yields
\begin{equation}\label{eq-va-block-step}
	\begin{aligned}
		&\sup_{x\in X}\sup_{z\in\mathbb{R}}
		\bigl|
		\mathbb{P}(T'_{r}(x)+U\leq z)
		-\mathbb{P}(T'_{r-1}(x)+U\leq z)
		\bigr|\\
		&\qquad\leq
		Ch'\delta_{1,\infty}(m)
		\biggl\{
		(H'_{r})^{-1/2}
		+\frac{1}{\alpha}\biggl(\frac{m}{H'_{r}}\biggr)^{2}
		\biggr\}.
	\end{aligned}
\end{equation}
As in the derivation of \eqref{eq-ai-06}, we apply
the triangle inequality, using \eqref{eq-va-block-last}
for $r=q$ and summing \eqref{eq-va-block-step}
over $r=1,\ldots,q-1$.
Using $h'/\alpha\leq Cm$, $qh'\leq2n$, and $h'\geq m$,
we obtain
\begin{align}\label{eq-va-01}
	I_{1}\leq Cm\delta_{1,\infty}(m)\leq Cn^{-\delta/2}.
\end{align}
\smallskip
\noindent
\emph{(ii) Upper bound for $|I_{3}|.$} 
We use the same argument as in the estimate of $I_{1}$, with
$W_{0}$ in place of $x$: the replacement errors remain
$\mathscr{F}_{b'_{r}}$-measurable, and their first absolute moments
satisfy the same bound, since $W_{0}$ is independent of the sequence $(g_{k})_{k\geq1}$.
Recalling that $S_{n}^{[a]}=\sum_{k=1}^{n}H^{[a]}(g_{k},W_{k-1})$,
we therefore  have
\begin{align}\label{eq-va-02}
I_{3}=\sup_{z\in\mathbb{R}}
\bigl|
\mathbb{P}(S_{n}^{[a]}+U\leq z)
-\mathbb{P}(\widetilde{S}_{n,m}^{[a]}(W_{0})+U\leq z)
\bigr|
\leq Cm\delta_{1,\infty}(m)\leq Cn^{-\delta/2}.
\end{align}
\smallskip
\noindent
\emph{(iii) Upper bound for $|I_{2}|.$} 
Let
\begin{align}\label{eq-defi-dx}
\begin{aligned}
	D(x)
	&:=\widetilde{S}_{n,m}^{[a]}(x)
	-\widetilde{S}_{n,m}^{[a]}(W_{0})=\sum_{k=1}^{m}
	\bigl\{
	H^{[a]}(g_{k},A_{k-1}x)
	-H^{[a]}(g_{k},A_{k-1}W_{0})
	\bigr\}.
\end{aligned}
\end{align}
It is $\mathscr{F}_{m}$-measurable.
By the Lipschitz property of $\tau^{[a]}$,
the definition of $\delta_{1,\infty}(k)$, and \Cref{lem-1}, we have
$
\sup_{x\in X}\mathbb{E}|D(x)|
\leq\sum_{k=1}^{m}\delta_{1,\infty}(k)
\leq C.
$
Choose
\[
H^{\dagger}:=\biggl\lfloor\frac{n-m}{2}\biggr\rfloor,
\qquad
\mathcal{J}^{\dagger}:=\{2m,\ldots,2m+H^{\dagger}-1\}.
\]
To apply \Cref{cor-cdfcomparison}, take
$\mathscr{R}^{\dagger}$ to be trivial and use the corresponding
$\mathscr{G}_{\mathcal{J}^{\dagger}}$,
$V_{\mathcal{J}^{\dagger}}$, and $\mathscr{H}$
from \Cref{lem-smootheddeletion}.
For sufficiently large $n$,
$H^{\dagger}\geq64m$ and
$H^{\dagger}+2(m-1)\leq n-m$.
Thus $\mathcal{J}^{\dagger}$ and its adjacent boundary intervals
\[
\{m+1,\ldots,2m-1\},
\qquad
\{2m+H^{\dagger},\ldots,H^{\dagger}+3m-2\}
\]
lie in $\{m+1,\ldots,n\}$.
In particular, $D(x)$ is $\mathscr{H}$-measurable and
independent of $\mathscr{G}_{\mathcal{J}^{\dagger}}$.
Using \eqref{eq-af-03} again, we have
\[
\begin{aligned}
	\widetilde{S}_{n,m}^{[a]}(x)-V_{\mathcal{J}^{\dagger}}
	&=\sum_{k=1}^{m}H^{[a]}(g_{k},A_{k-1}x)
	+\sum_{k=m+1}^{2m-1}X_{k,m}^{[a]}\\
	&\quad+\sum_{k=H^{\dagger}+3m-1}^{n}X_{k,m}^{[a]}+\mathbb{E}\biggl[
	\sum_{k=2m}^{H^{\dagger}+3m-2}X_{k,m}^{[a]}
	\,\biggm|\,\mathscr{G}_{\mathcal{J}^{\dagger}}
	\biggr].
\end{aligned}
\]
Every term on the right is $\mathscr{H}$-measurable,
by the same  argument as above.
Applying \Cref{cor-cdfcomparison} with
$
T=\widetilde{S}_{n,m}^{[a]}(x)-V_{\mathcal{J}^{\dagger}},$
 $D=D(x)$
yields
\begin{align}\label{eq-va-03}
\begin{aligned}
	I_{2}&=\sup_{x\in X}\sup_{z\in\mathbb{R}}
	\bigl|
	\mathbb{P}(\widetilde{S}_{n,m}^{[a]}(x)+U\leq z)
	-\mathbb{P}(\widetilde{S}_{n,m}^{[a]}(W_{0})+U\leq z)
	\bigr|\\
	&\quad\leq
	C\biggl\{
	(H^{\dagger})^{-1/2}
	+\frac{1}{\alpha}\biggl(\frac{m}{H^{\dagger}}\biggr)^{2}
	\biggr\}
	\leq Cn^{-1/2},
\end{aligned}
\end{align}
where the last inequality follows from  $H^{\dagger}\asymp n\asymp m^{2}$,  $\delta\leq1$,
and
\[
\frac{1}{\alpha}\biggl(\frac{m}{H^{\dagger}}\biggr)^{2}
\leq Cm^{\delta-3}
\leq Cm^{-1}
\leq Cn^{-1/2}.
\]

Combining  \eqref{eq-va-01}, \eqref{eq-va-02} and \eqref{eq-va-03}, we obtain \eqref{eq-uu-03}. Hence \eqref{eq-tt-02} holds.
\subsection{Proof of  \protect\eqref{eq-tt-03}}
Let $\overline{S}_{n}^{[a]}
=\int_{X}S_{n,x}^{[a]}\,\nu(\mathrm{d}x).$ As in the proof of \eqref{eq-tt-02}, it suffices to prove that
\begin{align}\label{eq-uu-04}
	d_{\mathrm{K}}\Big(
	\mathcal{L}\Big(
	\overline{S}_{n}^{[a]}/(s\sqrt{n})
	\Big),\mathcal{N}(0,1)
	\Big)
	\leq Cn^{-\delta/2}.
\end{align}
We also work with sufficiently large $n$. Using the notation $q$, $h'$, $b'_{r}$, $\mathcal{I}'_{r}$,
$T'_{r}(x)$, and $D'_{r}(x)$ from \Cref{sec-5.2},
we define
\[
\overline{T}'_{r}
:=\int_{X}T'_{r}(x)\,\nu(\mathrm{d}x)
\quad(0\leq r\leq q).
\]
Then we have
$
\overline{T}'_{q}=\overline{S}_{n}^{[a]},$
$\overline{T}'_{0}
=\int_{X}\widetilde{S}_{n,m}^{[a]}(x)\,\nu(\mathrm{d}x).$
For $1\leq r\leq q$, put
\[
\overline{D}'_{r}
:=\overline{T}'_{r}-\overline{T}'_{r-1}
=\int_{X}D'_{r}(x)\,\nu(\mathrm{d}x).
\]
This random variable is $\mathscr{F}_{b'_{r}}$-measurable.
The triangle inequality, Tonelli's theorem, and the bound
for $D'_{r}(x)$ established in \Cref{sec-5.2} give
\[
\mathbb{E}|\overline{D}'_{r}|
\leq\int_{X}\mathbb{E}|D'_{r}(x)|\,\nu(\mathrm{d}x)
\leq |\mathcal{I}'_{r}|\delta_{1,\infty}(m)
\leq h'\delta_{1,\infty}(m).
\]

For each $1\leq r<q$, use the same choices of
$H^{\dagger}$, $\mathcal{J}^{\dagger}$, and the trivial
$\mathscr{R}^{\dagger}$ as in the comparison of
$T'_{r}(x)$ and $T'_{r-1}(x)$ in \Cref{sec-5.2}.
Since $V_{\mathcal{J}^{\dagger}}$ does not depend on $x$,
\[
\overline{T}'_{r}-V_{\mathcal{J}^{\dagger}}
=\int_{X}\bigl(T'_{r}(x)-V_{\mathcal{J}^{\dagger}}\bigr)
\,\nu(\mathrm{d}x)
\]
is $\mathscr{H}$-measurable. Moreover, $\overline{D}'_{r}$ is
$\mathscr{H}$-measurable and independent of
$\mathscr{G}_{\mathcal{J}^{\dagger}}$, since it is
$\mathscr{F}_{b'_{r}}$-measurable and both boundary
intervals contain only indices greater than $b'_{r}$.
Thus \Cref{cor-cdfcomparison} applies with
$
T=\overline{T}'_{r}-V_{\mathcal{J}^{\dagger}},$
$D=\overline{D}'_{r}.$
For $r=q$, smoothing by $U$ alone gives the bound
$\mathbb{E}|\overline{D}'_{q}|/(2\alpha)$.
Summing the bounds from \Cref{cor-cdfcomparison}
for $1\leq r<q$ and the bound above for $r=q$,
as in the derivation of \eqref{eq-ai-06}, we obtain
\begin{equation}\label{eq-va-04}
	\sup_{z\in\mathbb{R}}
	\bigl|
	\mathbb{P}(\overline{S}_{n}^{[a]}+U\leq z)
	-\mathbb{P}(\overline{T}'_{0}+U\leq z)
	\bigr|
	\leq Cm\delta_{1,\infty}(m).
\end{equation}

We next compare $\overline{T}'_{0}$ with
$\widetilde{S}_{n,m}^{[a]}(W_{0})$.
Using the definition of $D(x)$ in \eqref{eq-defi-dx}, we have
$
\overline{D}
:=\overline{T}'_{0}-\widetilde{S}_{n,m}^{[a]}(W_{0})
=\int_{X}D(x)\,\nu(\mathrm{d}x).
$
It is $\mathscr{F}_{m}$-measurable, and
\[
\mathbb{E}|\overline{D}|
\leq\int_{X}\mathbb{E}|D(x)|\,\nu(\mathrm{d}x)
\leq\sum_{k=1}^{m}\delta_{1,\infty}(k)
\leq C.
\]
Use the same $H^{\dagger}$, $\mathcal{J}^{\dagger}$,
and trivial $\mathscr{R}^{\dagger}$ as in the comparison
of $\widetilde{S}_{n,m}^{[a]}(x)$ and
$\widetilde{S}_{n,m}^{[a]}(W_{0})$ in \Cref{sec-5.2}.
Then $\overline{D}$ is $\mathscr{H}$-measurable and
independent of $\mathscr{G}_{\mathcal{J}^{\dagger}}$.
Furthermore,
\[
\overline{T}'_{0}-V_{\mathcal{J}^{\dagger}}
=\int_{X}\bigl(
\widetilde{S}_{n,m}^{[a]}(x)-V_{\mathcal{J}^{\dagger}}
\bigr)\,\nu(\mathrm{d}x)
\]
is $\mathscr{H}$-measurable.
Applying \Cref{cor-cdfcomparison} with
$T=\overline{T}'_{0}-V_{\mathcal{J}^{\dagger}}$
and $D=\overline{D}$, and using
$\mathbb{E}|\overline{D}|\leq C$ together with
the last inequality in \eqref{eq-va-03}, we obtain
\begin{equation}\label{eq-va-05}
	\sup_{z\in\mathbb{R}}
	\bigl|
	\mathbb{P}(\overline{T}'_{0}+U\leq z)
	-\mathbb{P}(\widetilde{S}_{n,m}^{[a]}(W_{0})+U\leq z)
	\bigr|
	\leq Cn^{-1/2}.
\end{equation}

Combining \eqref{eq-va-04}, \eqref{eq-va-05},
and \eqref{eq-va-02} with \Cref{prop-stationaryclipped},
we obtain
\[
\begin{aligned}
	&d_{\mathrm{K}}\biggl(
	\mathcal{L}\biggl(\frac{\overline{S}_{n}^{[a]}+U}{s\sqrt{n}}\biggr),
	\mathcal{N}(0,1)
	\biggr)\leq
	C\bigl\{n^{-\delta/2}+m\delta_{1,\infty}(m)+n^{-1/2}\bigr\}
	\leq Cn^{-\delta/2}.
\end{aligned}
\]
Finally, using $|U|\leq\alpha$ and arguing as in
\Cref{prop-Stein}, we obtain
\[
\begin{aligned}
	&d_{\mathrm{K}}\biggl(
	\mathcal{L}\biggl(\frac{\overline{S}_{n}^{[a]}}{s\sqrt{n}}\biggr),
	\mathcal{N}(0,1)
	\biggr)\leq
	d_{\mathrm{K}}\biggl(
	\mathcal{L}\biggl(\frac{\overline{S}_{n}^{[a]}+U}{s\sqrt{n}}\biggr),
	\mathcal{N}(0,1)
	\biggr)
	+\frac{\alpha}{s\sqrt{2\pi n}}
	\leq Cn^{-\delta/2}.
\end{aligned}
\]
This proves \eqref{eq-uu-04}.

\subsection{Proof of  \protect\eqref{eq-tt-04}}
By \cite[Proposition~4.5 and Lemma~4.7(i)]{BQ2016}
(see also \cite[Section~8.1, p.~1862]{CDJ2017}),
there exists a constant $C_{\nu}<\infty$ such that,
for every $g\in\mathrm{GL}(d,\mathbb{R})$,
\begin{equation}\label{eq-aj-02}
	0\leq
	\log\lVert g\rVert
	-\int_{X}\sigma(g,x)\,\nu(\mathrm{d}x)
	\leq C_{\nu}.
\end{equation}
Applying \eqref{eq-aj-02} with $g=A_{n}$, we have
\[
\overline{S}_{n}
\leq\log\lVert A_{n}\rVert-n\lambda_{\mu}
\leq\overline{S}_{n}+C_{\nu}.
\]
Thus, for every $z\in\mathbb{R}$,
\[
\begin{aligned}
	\mathbb{P}\Bigl(
	\frac{\overline{S}_{n}}{s\sqrt{n}}
	\leq z-\frac{C_{\nu}}{s\sqrt{n}}
	\Bigr)
	&\leq
	\mathbb{P}\Bigl(
	\frac{\log\lVert A_{n}\rVert-n\lambda_{\mu}}
	{s\sqrt{n}}\leq z
	\Bigr)\leq
	\mathbb{P}\Bigl(
	\frac{\overline{S}_{n}}{s\sqrt{n}}\leq z+\frac{C_{\nu}}{s\sqrt{n}}
	\Bigr).
\end{aligned}
\]
The normal approximation for $\overline{S}_{n}$
and the bound on the normal density imply
\[
\begin{aligned}
	&d_{\mathrm{K}}\biggl(
	\mathcal{L}\biggl(
	\frac{\log\lVert A_{n}\rVert-n\lambda_{\mu}}
	{s\sqrt{n}}
	\biggr),\mathcal{N}(0,1)
	\biggr)\leq
	Cn^{-\delta/2}
	+\frac{C_{\nu}}{s\sqrt{2\pi n}}
	\leq Cn^{-\delta/2}.
\end{aligned}
\]

%

\appendix
\section{\texorpdfstring{\protect Proofs of auxiliary lemmas}{Proofs of auxiliary lemmas}}\label{sec-app-a}
This appendix contains the proofs of \Cref{lem-A1,lem-absmod}.

\begin{proof}[Proof of \Cref{lem-A1}]
	By Lemma~6 of \cite{CDJ2017}, applied with moment order
	$p=2+\delta$, there exists $\ell>0$ such that
	\[
	\sum_{k=1}^{\infty}k^{\delta}
	\max_{k\leq j\leq 2k}\sup_{x,y\in X,\,x\ne y}
	\mathbb{P}\bigl(\log\rho(A_{j-1}x,A_{j-1}y)
	\geq-\ell k\bigr)<\infty.
	\]
Taking \(k=j=r+1\) and \(c=\ell/2\), we have
$
-cr\ge -\ell(r+1)$ and
$r^\delta\le (r+1)^\delta,
$
which yields \eqref{eq-ab-05}.

	To prove \eqref{eq-ab-06}, choose unit
	representatives $v,w$ for $x,y$ and change the sign of $w$, if
	necessary, so that $\langle v,w\rangle\geq 0$. Then
	\[
	\lVert v-w\rVert^{2}=2\bigl(1-\langle v,w\rangle\bigr)
	\leq 2\bigl(1-\langle v,w\rangle^{2}\bigr)
	=2\rho(x,y)^{2}.
	\]
	For positive $u,v$, integrating the derivative of the logarithm
	between $u$ and $v$ gives
	$|\log u-\log v|\leq|u-v|/\min(u,v)$.
	Together with the reverse triangle inequality and
	$\lVert gv\rVert,\lVert gw\rVert\geq\lVert g^{-1}\rVert^{-1}$,
	this yields
	\begin{align*}
		\big|\log\lVert gv\rVert-\log\lVert gw\rVert\big|
		&\leq\lVert g^{-1}\rVert\,
		|\lVert gv\rVert-\lVert gw\rVert|\\
		&\leq\lVert g^{-1}\rVert\lVert g\rVert\lVert v-w\rVert
		\leq\sqrt{2}N(g)^{2}\rho(x,y).
	\end{align*}
For the other bound in \eqref{eq-ab-06}, we have
\[
|\sigma(g,x)-\sigma(g,y)|
\leq |\sigma(g,x)|+|\sigma(g,y)|
\leq 2\log N(g).
\]
By the sine formula and the inequality
$
	\operatorname{dist}(gv,\mathbb{R}gw)
	\leq
	\lVert g\rVert\operatorname{dist}(v,\mathbb{R}w),
$
we have
\begin{align*}
	\rho(gx,gy)
	&=
	\frac{\operatorname{dist}(gv,\mathbb{R}gw)}
	{\lVert gv\rVert}
	\leq
	\frac{\lVert g\rVert
		\operatorname{dist}(v,\mathbb{R}w)}
	{\lVert gv\rVert}\\
	&\leq
	\lVert g\rVert\lVert g^{-1}\rVert\rho(x,y)
	\leq
	N(g)^2\rho(x,y),
\end{align*}
which proves \eqref{eq-ab-07}.

\end{proof}

To prove \Cref{lem-absmod}, we first establish a weighted
summability estimate for sums of nonnegative i.i.d. random
variables.
We will apply the estimate with $L_j=\log N(g_j)$.

\begin{lemma}\label{lem-completeconv}
	Let $(L_{j})_{j\geq 1}$ be independent and identically distributed
	nonnegative random variables with
	$\mathbb{E}L_{1}^{2+\delta}<\infty$, and put
	$T_{n}=\sum_{j=1}^{n}L_{j}$. Then
	\begin{equation}
		\sum_{n=1}^{\infty}n^{\delta-1}
		\mathbb{E}\bigl[T_{n}\mathbf{1}_{\{T_{n}>n(\mathbb{E}L_{1}+1)\}}\bigr]
		<\infty.
		\label{eq-ab-16}
	\end{equation}
\end{lemma}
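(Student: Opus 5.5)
We need to prove $\sum_n n^{\delta-1}\mathbb{E}[T_n\mathbf{1}_{\{T_n>n(\mu_L+1)\}}]<\infty$, where $\mu_L=\mathbb{E}L_1$ and $\mathbb{E}L_1^{2+\delta}<\infty$. The plan is to reduce everything to a tail-probability bound for $T_n-n\mu_L$ and then sum.

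\emph{Step 1: layer-cake reduction.} On the event $\{T_n>n(\mu_L+1)\}$ we have $T_n\le (\mu_L+1)(T_n-n\mu_L)$, because $T_n-n\mu_L\ge T_n/(\mu_L+1)$ there. Hence
\[
\mathbb{E}\bigl[T_n\mathbf{1}_{\{T_n>n(\mu_L+1)\}}\bigr]\le C\,\mathbb{E}\bigl[(T_n-n\mu_L)\mathbf{1}_{\{T_n-n\mu_L>n\}}\bigr].
\]
Writing $Z_n=T_n-n\mu_L$, the layer-cake formula gives
\[
\mathbb{E}\bigl[Z_n\mathbf{1}_{\{Z_n>n\}}\bigr]=n\,\mathbb{P}(Z_n>n)+\int_n^\infty \mathbb{P}(Z_n>t)\,\mathrm{d}t .
\]

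\emph{Step 2: Fuk--Nagaev tail bound.} Apply the Fuk--Nagaev inequality with $y=t/2$ to the centered summands $L_j-\mu_L$, which have finite variance. For $t\ge n$ this yields
\[
\mathbb{P}(Z_n>t)\le n\,\mathbb{P}(L_1>t/2)+\Bigl(\frac{Cn}{t^2}\Bigr)^{2}.
\]
A cruder route also works: truncate at level $t/2$, use a union bound for the large summands, and apply a fourth-moment (or exponential) bound to the truncated centered sum. Since $t\ge n$, its variance is at most $n\sigma^2$, and the Gaussian or polynomial regime gives a decay like $(n/t^2)^2\le n^{-2}$.

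\emph{Step 3: the large-jump term.} This term contributes
\[
n^{\delta-1}\Bigl[n^2\,\mathbb{P}(L_1>n/2)+n\int_n^\infty \mathbb{P}(L_1>t/2)\,\mathrm{d}t\Bigr]
\le C\bigl(n^{1+\delta}\,\mathbb{P}(L_1>n/2)+n^{\delta}\,\mathbb{E}[L_1\mathbf{1}_{\{L_1>n/2\}}]\bigr).
\]
Both pieces are summable over $n$: $\sum_n n^{1+\delta}\mathbb{P}(L_1>n/2)\le C\,\mathbb{E}L_1^{2+\delta}$, and $\sum_n n^{\delta}\,\mathbb{E}[L_1\mathbf{1}_{\{L_1>n/2\}}]\le C\,\mathbb{E}[L_1\cdot L_1^{1+\delta}]$.

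\emph{Step 4: the Gaussian term.} This term contributes
\[
n^{\delta-1}\Bigl[n\,(n/n^2)^{2}+\int_n^\infty n^2t^{-4}\,\mathrm{d}t\Bigr]\le C\,n^{\delta-2},
\]
which is summable because $\delta\le 1$.

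The main obstacle is Step 2: the tail bound has to be sharp enough that the large-jump part uses exactly $2+\delta$ moments while the Gaussian part decays faster than $n^{-\delta}$. The Fuk--Nagaev inequality handles this directly, and the rest of the argument is routine summation.
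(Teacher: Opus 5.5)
Steps 1 and 3 are correct. These are the reduction $T_n\le(\mathbb{E}L_1+1)(T_n-n\mathbb{E}L_1)$ on the event, the layer-cake formula, the truncated form of Fuk--Nagaev, and the large-jump estimate. The gap is in Step 4. The weighted Gaussian contribution is $Cn^{\delta-2}$, and $\sum_n n^{\delta-2}$ converges only for $\delta<1$, not for $\delta\le1$ as you claim. At $\delta=1$ your bound gives the harmonic series. That is the third-moment case: the lemma covers it, and it is the case the paper is really after. The cause is the choice $y=t/2$. With $x/y=2$, the non-jump part of Fuk--Nagaev decays only like $(n/t^{2})^{2}$. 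After you integrate over $t\ge n$ and multiply by $n^{\delta-1}$, this is exactly borderline. The fourth-moment version of your ``cruder route'' has the same defect, because its Gaussian part is again of order $n^{2}/t^{4}$.

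The repair is cheap: take $y=t/r$ with a fixed $r\ge3$, say $r=4$. The non-jump term becomes $(Cn/t^{2})^{r}$, and its weighted contribution is $O(n^{\delta-r})$, which is summable for every $\delta\le1$. The jump term $n\,\mathbb{P}(L_1>t/r)$ is handled exactly as in your Step 3, with different constants. This borderline is why the paper controls the centered truncated sum through its sixth moment rather than its fourth. The paper splits on $\{\max_{j\le n}L_j>n\}$, truncates at the single level $n$, and uses $\mathbb{E}Q_n^6\le Cn\,\mathbb{E}[L_1^6\mathbf{1}_{\{L_1\le n\}}]+Cn^3$; this gives a Gaussian part of order $n^{\delta-3}$ after weighting. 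With the fix, your route is a valid alternative to the paper's: a layer-cake formula with a $t$-dependent truncation via Fuk--Nagaev, instead of one truncation level and Markov's inequality on $Q_n^6$. It cites a standard inequality rather than expanding the sixth moment by hand.
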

\begin{proof}
	Decompose the expectation in \eqref{eq-ab-16} as $I_1+I_2$, where
	\begin{align*}
	I_{1}&=\mathbb{E}\bigl[T_{n}\mathbf{1}_{\{T_{n}>n(\mathbb{E}L_{1}+1)\}}
	\mathbf{1}_{\{\max_{j\leq n}L_{j}>n\}}\bigr],\\
	I_{2}&=\mathbb{E}\bigl[T_{n}\mathbf{1}_{\{T_{n}>n(\mathbb{E}L_{1}+1)\}}
	\mathbf{1}_{\{\max_{j\leq n}L_{j}\leq n\}}\bigr].
	\end{align*}
By independence, nonnegativity, and the fact that the random variables are identically distributed, we have
	\begin{align}	\label{eq-ab-17}
		\begin{aligned}
					I_{1}&\leq \mathbb{E}\bigl[T_{n}\mathbf{1}_{\{\max_{j\leq n}L_{j}>n\}}\bigr]
			\leq\sum_{i=1}^{n}\sum_{j=1}^{n}
			\mathbb{E}\bigl[L_{i}\mathbf{1}_{\{L_{j}>n\}}\bigr]
		\\
			&=n\mathbb{E}\bigl[L_{1}\mathbf{1}_{\{L_{1}>n\}}\bigr]
			+n(n-1)\mathbb{E}L_{1}\,\mathbb{P}(L_{1}>n).
		\end{aligned}
	\end{align}
 By Tonelli's theorem, we have
	\begin{align*}
		\sum_{n=1}^{\infty}n^{\delta}
		\mathbb{E}\bigl[L_{1}\mathbf{1}_{\{L_{1}>n\}}\bigr]
		&=\mathbb{E}\Bigl[L_{1}\sum_{n=1}^{\infty}n^{\delta}\mathbf{1}_{\{L_{1}>n\}}\Bigr]
		\leq C\mathbb{E}L_{1}^{2+\delta},\\
		\sum_{n=1}^{\infty}n^{1+\delta}\mathbb{P}(L_{1}>n)
		&=\mathbb{E}\Bigl[\sum_{n=1}^{\infty}n^{1+\delta}\mathbf{1}_{\{L_{1}>n\}}\Bigr]
		\leq C\mathbb{E}L_{1}^{2+\delta}.
	\end{align*}
 Thus \eqref{eq-ab-17}, multiplied by
	$n^{\delta-1}$, is summable.
	
	We next deal with $I_{2}.$
For convenience, let
	\[
	\xi_{n,j}=L_{j}\mathbf{1}_{\{L_{j}\leq n\}}
	-\mathbb{E}\bigl[L_{1}\mathbf{1}_{\{L_{1}\leq n\}}\bigr],
	\qquad Q_{n}=\sum_{j=1}^{n}\xi_{n,j}.
	\]
On the event $\{\max_{1\leq j\leq n}L_j\leq n\}$, we have
	$T_{n}=Q_{n}+n\mathbb{E}[L_{1}\mathbf{1}_{\{L_{1}\leq n\}}]$.
	If in addition $T_{n}>n(\mathbb{E}L_{1}+1)$, then $Q_{n}>n$.
	Therefore 
	\begin{align}
	I_{2}\leq 	n\mathbb{E}L_{1}\,\mathbb{P}(Q_{n}>n)
		+\mathbb{E}\bigl[Q_{n}\mathbf{1}_{\{Q_{n}>n\}}\bigr]
		&\leq(\mathbb{E}L_{1}+1)n^{-5}\mathbb{E}Q_{n}^{6}.
		\label{eq-ab-18}
	\end{align}
Since	the variables $\xi_{n,j}$ are independent and centered, it follows that
	\begin{align}\label{eq-ab-19}
	\begin{aligned}
		\mathbb{E}Q_{n}^{6}
={}&n\mathbb{E}\xi_{n,1}^{6}
+15n(n-1)\mathbb{E}\xi_{n,1}^{4}\mathbb{E}\xi_{n,1}^{2}
\\
&+10n(n-1)\bigl(\mathbb{E}\xi_{n,1}^{3}\bigr)^{2}
+15n(n-1)(n-2)\bigl(\mathbb{E}\xi_{n,1}^{2}\bigr)^{3}.
	\end{aligned}
	\end{align}
By the
	Cauchy--Schwarz inequality, we have
$
|\mathbb{E}\xi_{n,1}^{3}|^{2}
	\leq\mathbb{E}\xi_{n,1}^{4}\mathbb{E}\xi_{n,1}^{2},$
	$\mathbb{E}\xi_{n,1}^{4}
	\leq (\mathbb{E}\xi_{n,1}^{6}
	\mathbb{E}\xi_{n,1}^{2})^{1/2}.
$
	Consequently, we have
	\begin{align*}
		n^{2}\mathbb{E}\xi_{n,1}^{4}\mathbb{E}\xi_{n,1}^{2}
		&\leq
		\bigl(n\mathbb{E}\xi_{n,1}^{6}\bigr)^{1/2}
		\bigl(n^{3}(\mathbb{E}\xi_{n,1}^{2})^{3}\bigr)^{1/2}\leq\frac{n}{2}\mathbb{E}\xi_{n,1}^{6}
		+\frac{n^{3}}{2}(\mathbb{E}\xi_{n,1}^{2})^{3},
	\end{align*}
	where the last inequality follows from  the basic inequality $uv\leq(u^{2}+v^{2})/2$.
Substituting  the above upper bounds into \eqref{eq-ab-19}, we have 
\begin{align}\label{eq-ab-19a}
	\mathbb{E}Q_{n}^{6}
\leq Cn\mathbb{E}\xi_{n,1}^{6}
+Cn^{3}(\mathbb{E}\xi_{n,1}^{2})^{3}\leq Cn\E[L_{1}^{6}\mathbf{1}_{\{L_{1}\leq n\}}]+Cn^{3}.
\end{align}
	Combining  \eqref{eq-ab-18} and \eqref{eq-ab-19a}, we have
	\begin{align*}
		\begin{aligned}
					\sum_{n=1}^{\infty}n^{\delta-1} I_{2}&\leq C	\sum_{n=1}^{\infty}n^{\delta-5}\E[L_{1}^{6}\mathbf{1}_{\{L_{1}\leq n\}}]+
		C\sum_{n=1}^{\infty}n^{\delta-3}\\
		&\leq   C\E\Big[L_{1}^{6}\sum_{n=1}^{\infty}n^{\delta-5}\mathbf{1}_{\{L_{1}\leq n\}}\Big]+C\\
		&\leq C\E L_{1}^{2+\delta}+C\leq C,
		\end{aligned}
	\end{align*}
	where we used
	$\sum_{n\geq t}n^{\delta-5}\leq C t^{\delta-4}$ for $t\geq1$
	and $t^6\leq t^{2+\delta}$ for $0\leq t\leq1$.
	This proves \eqref{eq-ab-16}.
\end{proof}

Now, we are ready to prove   \Cref{lem-absmod}.

\begin{proof}[Proof of \Cref{lem-absmod}]
	We first prove the first inequality of  \eqref{eq-ac-03}.
	Fix a sufficiently large constant $C_{0}$, independent of $a,m,j,r$.
	We first consider the integers satisfying $r\leq C_{0}(j+1)$.
	By \eqref{eq-ac-01}, we have
	$
	\omega_{j,a}(r)\leq 2\delta_{1,\infty}(j+1).
	$
	Since $0<\delta\leq 1$,  it follows that
	$
	\sum_{1\leq r\leq C_{0}(j+1)}r^{\delta-1}
	\leq C(j+1)^{\delta}.
	$
	Hence
	\[
	\sum_{j=0}^{\infty}\sum_{1\leq r\leq C_{0}(j+1)}r^{\delta-1} \omega_{j,a}(r)\leq C\sum_{j=0}^{\infty}(j+1)^{\delta}\delta_{1,\infty}(j+1)<\infty.
	\]
	We next consider $r>C_{0}(j+1)$. For convenience, we  write
	$L_{i}=\log N(g_{i})$ and
	$T_{j}=\sum_{i=1}^{j}L_{i}$.  These are nonnegative i.i.d. random variables and
	their partial sums.  It follows from the definitions of  $P^{j}$ and $\phi^{[a]}$ that
	\begin{align*}
		&P^{j}\phi^{[a]}(x)
		-P^{j}\phi^{[a]}(y)=
		\mathbb{E}\bigl[
		\tau^{[a]}\bigl(X_{j+1,x}\bigr)
		-\tau^{[a]}\bigl(X_{j+1,y}\bigr)
		\bigr].
	\end{align*}
For $\rho(x,y)\leq e^{-r}$, on the event
	$T_{j+1}\leq r/4$, using the inequality 	$\rho(gx,gy)
	\leq
	N(g)^{2}\rho(x,y)$, we have
	\[
	\rho(A_{j}x,A_{j}y)
	\leq \exp\biggl(2\sum_{i=1}^{j}L_{i}\biggr)\rho(x,y)\leq \exp\biggl(2\sum_{i=1}^{j}L_{i}\biggr)e^{-r}.
	\]
	Combining this inequality with \Cref{lem-A1}(ii) and the $1$-Lipschitz property of $\tau^{[a]}$, we have
	\begin{align}\label{eq-ac-4.1}
		\begin{aligned}
		\bigl|\tau^{[a]}(X_{j+1,x})-\tau^{[a]}(X_{j+1,y})\bigr|
			&\leq |X_{j+1,x}-X_{j+1,y}|\\
			&\leq CN(g_{j+1})^{2}\rho(A_{j}x,A_{j}y)\\
			&\leq C\exp\bigl(2T_{j+1}-r\bigr)\leq C e^{-r/2}.
		\end{aligned}
	\end{align}
	On the complementary event, the same Lipschitz property and $|\sigma(g,x)|\leq \log N(g)$ give
	\[
	\bigl|\tau^{[a]}(X_{j+1,x})-\tau^{[a]}(X_{j+1,y})\bigr|\leq2L_{j+1}.
	\]
Splitting the expectation over these two events and taking the supremum over $\rho(x,y)\leq e^{-r}$, we have
	\begin{equation}\label{eq-ac-05}
		\omega_{j,a}(r)
		\leq C e^{-r/2}
		+C\mathbb{E}\bigl[L_{j+1}\mathbf{1}_{\{T_{j+1}>r/4\}}\bigr].
	\end{equation}
	We sum this estimate with a general weight $r^{\gamma-1}$, where
	$0<\gamma\leq 1$.  The exponential part has a finite   double sum:
	\[
	\sum_{j=0}^{\infty}
	\sum_{r>C_{0}(j+1)}r^{\gamma-1}e^{-r/2}
	\leq \sum_{r=1}^{\infty}r e^{-r/2}<\infty.
	\]
	For the other part,  since for any $t>0$,
	$
	\sum_{C_{0}(j+1)<r<4t}r^{\gamma-1}
	\leq C t^{\gamma}
	\mathbf{1}_{\{t>C_{0}(j+1)/4\}},
	$
	it follows that the sum at a fixed $j$ is at most
	\[
	C\mathbb{E}\bigl[
	L_{j+1}T_{j+1}^{\gamma}
	\mathbf{1}_{\{T_{j+1}>C_{0}(j+1)/4\}}
	\bigr].
	\]
For $j=0$, this is bounded by $C\mathbb{E}L_{1}^{1+\gamma}<\infty$.
For $j\geq1$, choose $c>\mathbb{E} L_{1}+1$ and then choose $C_{0}$ so large that
	$C_{0}/4\geq 2c$.  If both $L_{j+1}\leq cj$ and $T_{j}\leq cj$,
	then $T_{j+1}\leq 2cj<C_{0}(j+1)/4$.  Therefore,
	\[
	\{T_{j+1}>C_{0}(j+1)/4\}
	\subseteq \{L_{j+1}>cj\}\cup\{T_{j}>cj\}.
	\]
	Subadditivity gives
	$(T_{j}+L_{j+1})^{\gamma}\leq T_{j}^{\gamma}+L_{j+1}^{\gamma}$.
	Expanding over the two events,  and then using independence and
	$\mathbb{E} T_{j}^{\gamma}\leq(\mathbb{E} T_{j})^{\gamma}
	=j^{\gamma}(\mathbb{E} L_{1})^{\gamma}$, we have
	\begin{align}	\label{eq-ac-06}
		\begin{aligned}
		\sum_{ r> C_{0}(j+1)}r^{\gamma-1}\mathbb{E}\bigl[L_{j+1}\mathbf{1}_{\{T_{j+1}>r/4\}}\bigr]&\leq C\biggl\{
\mathbb{E}\bigl[L_{1}^{1+\gamma}\mathbf{1}_{\{L_{1}>cj\}}\bigr]
+j^{\gamma}\mathbb{E}\bigl[L_{1}\mathbf{1}_{\{L_{1}>cj\}}\bigr]
\\
&\quad+\mathbb{E} L_{1}^{1+\gamma}\,\mathbb{P}(T_{j}>cj)
+\mathbb{E} L_{1}\,
\mathbb{E}\bigl[T_{j}^{\gamma}\mathbf{1}_{\{T_{j}>cj\}}\bigr]
\biggr\}.
		\end{aligned}
	\end{align}
	Take first $\gamma=\delta$ and sum over all $j\geq 1$.
	For the first term, Tonelli's theorem gives
	\begin{align*}
		\sum_{j=1}^{\infty}
		\mathbb{E}\bigl[L_{1}^{1+\delta}\mathbf{1}_{\{L_{1}>cj\}}\bigr]
		&=\mathbb{E}\biggl[
		L_{1}^{1+\delta}\sum_{j=1}^{\infty}
		\mathbf{1}_{\{j<L_{1}/c\}}
		\biggr]\leq C\mathbb{E} L_{1}^{2+\delta}.
	\end{align*}
	For the second term, using the inequality
	$\sum_{1\leq j<t}j^{\delta}\leq C t^{1+\delta}$, we have
	\begin{align*}
		\sum_{j=1}^{\infty}j^{\delta}
		\mathbb{E}\bigl[L_{1}\mathbf{1}_{\{L_{1}>cj\}}\bigr]
		&=\mathbb{E}\biggl[L_{1}\sum_{1\leq j<L_{1}/c}j^{\delta}\biggr]
		\leq C\mathbb{E} L_{1}^{2+\delta}.
	\end{align*}
	For the probability term, combining
	$
	\mathbb{P}(T_{j}>cj)
	\leq
	\mathbb{E}[T_{j}\mathbf{1}_{\{T_{j}>cj\}}]/(cj)
	$ and
	\Cref{lem-completeconv}, we have
	\begin{equation}\label{eq-ac-prob-summable}
		\sum_{j=1}^{\infty}\mathbb{P}(T_{j}>cj)
		\leq C\sum_{j=1}^{\infty}j^{\delta-1}
		\mathbb{E}\bigl[
		T_{j}\mathbf{1}_{\{T_{j}\geq cj\}}
		\bigr]
		<\infty.
	\end{equation}
	Finally, on $T_{j}>cj$, the fact that
	$\delta-1\leq 0$ implies
	$T_{j}^{\delta}
	=T_{j}T_{j}^{\delta-1}
	\leq (cj)^{\delta-1}T_{j}$.
	Combining this with Lemma~\ref{lem-completeconv},
	we see that the fourth term on the right of
	\eqref{eq-ac-06}, with $\gamma=\delta$,
	is summable over $j\geq 1$.
	This proves the first inequality of \eqref{eq-ac-03}.

	For the second inequality of \eqref{eq-ac-03}, using the inequality $\omega_{j,a}(r)\leq 2\delta_{1,\infty}(j+1)$, we have
	\begin{align*}
		\sum_{j=0}^{m-2}
		\sum_{1\leq r\leq C_{0}(j+1)}\omega_{j,a}(r)
		&\leq C\sum_{j=0}^{m-2}(j+1)\delta_{1,\infty}(j+1)\\
		&\leq C m^{1-\delta}
		\sum_{j=0}^{m-2}(j+1)^{\delta}\delta_{1,\infty}(j+1)\\
		&\leq C m^{1-\delta},
	\end{align*}
	where the second inequality follows from
	$(j+1)^{1-\delta}\leq m^{1-\delta}$.
	For the region $r>C_{0}(j+1)$,  by \eqref{eq-ac-05}, we have
	\begin{align*}
		\sum_{j=0}^{m-2}
		\sum_{ r> C_{0}(j+1)}\omega_{j,a}(r)\leq C+	C\sum_{j=1}^{m-2}\sum_{ r> C_{0}(j+1)}\mathbb{E}\bigl[L_{j+1}\mathbf{1}_{\{T_{j+1}>r/4\}}\bigr].
	\end{align*}
	To bound the second term of the above inequality, we use \eqref{eq-ac-06} with $\gamma=1$. The sums over $j=1,\ldots,m-2$ of the first two terms
	on the right of \eqref{eq-ac-06} are bounded, respectively, by
	\[
	C\mathbb{E}\bigl[L_{1}^{2}\min\{m,L_{1}\}\bigr]
	\quad\hbox{and}\quad
	C\mathbb{E}\bigl[L_{1}\min\{m,L_{1}\}^{2}\bigr].
	\]
Using the inequalities
$t^{2}\min\{m,t\}\leq m^{1-\delta}t^{2+\delta}$
and
$t\min\{m,t\}^{2}\leq m^{1-\delta}t^{2+\delta}$,
we see that both expectations are bounded by $C m^{1-\delta}$.  By \eqref{eq-ac-prob-summable}, the third term
	on the right of \eqref{eq-ac-06}, with $\gamma=1$,
	is summable over $j\geq 1$.
	For the last term, we have
	\begin{align*}
		\sum_{j=1}^{m-2}
		\mathbb{E}\bigl[T_{j}\mathbf{1}_{\{T_{j}>cj\}}\bigr]
		&=\sum_{j=1}^{m-2}j^{1-\delta}
		j^{\delta-1}\mathbb{E}\bigl[T_{j}\mathbf{1}_{\{T_{j}>cj\}}\bigr]\\
		&\leq m^{1-\delta}
		\sum_{j=1}^{\infty}j^{\delta-1}
		\mathbb{E}\bigl[T_{j}\mathbf{1}_{\{T_{j}>cj\}}\bigr]\\
		&\leq C m^{1-\delta}.
	\end{align*}
	This proves the second inequality of  \eqref{eq-ac-03}.
	
	It remains to prove \eqref{eq-ac-04}.  Note that  the function
	$r\mapsto\omega_{j,a}(r)$ is nonincreasing.  For an integer $r\geq 2$, we have
	\begin{align*}
		C
		&\geq\sum_{t=\lceil r/2\rceil}^{r}
		t^{\delta-1}\sum_{j=0}^{\infty}\omega_{j,a}(t)\geq\biggl(\sum_{t=\lceil r/2\rceil}^{r}t^{\delta-1}\biggr)
		\sum_{j=0}^{\infty}\omega_{j,a}(r)\geq c r^{\delta}
		\sum_{j=0}^{\infty}\omega_{j,a}(r).
	\end{align*}
	The case $r=1$ follows directly from the first inequality in
	\eqref{eq-ac-03}.  For real $r\geq 1$, apply the integer
	bound at $\lfloor r\rfloor$ and use
	$\lfloor r\rfloor\geq r/2$.  This proves the same bound for real
	arguments as well.
\end{proof}

\end{document}